\def\acts{\curvearrowright}
\newcommand{\Breg}{{\mathcal{B}_{reg}}}
\newcommand{\tot}{{\mathbf{1}}}
\newcommand{\N}{{\mathbb{N}}}
\newcommand{\Z}{{\mathbb{Z}}}
\newcommand{\C}{{\mathbb{C}}}
\newcommand{\Cu}{{\xi}}
\newcommand{\Sa}{{\mathcal{S}}}
\newcommand{\Ri}{{\mathcal{R}}}
\newcommand{\D}{{\mathcal{D}}}
\newcommand{\Do}{{\mathcal{D}_\alpha}}
\newcommand{\Ra}{{\mathcal{R}_\alpha}}
\newcommand{\Gti}{{\mathcal{G}_{\text{tight}}}}
\newcommand{\F}{{\mathcal{F}}}
\newcommand{\He}{{\mathcal{H}}}
\newcommand{\E}{{\mathcal{E}}}
\newcommand{\ET}{\mathcal{E}(T)}
\newcommand{\Ne}{{\mathcal{N}}}
\newcommand{\Me}{{\mathcal{M}}}
\newcommand{\Labe}{{\mathcal{L}}}
\newcommand{\Clab}{C^*(\B,\Labe,\theta)}
\newcommand{\B}{{\mathcal{B}}}
\newcommand{\Id}{{\mathcal{I}}}
\newcommand{\uloopr}[1]{\ar@'{@+{[0,0]+(-4,5)}@+{[0,0]+(0,10)}@+{[0,0] +(4,5)}}^{#1}}
\newcommand{\uloopd}[1]{\ar@'{@+{[0,0]+(5,4)}@+{[0,0]+(10,0)}@+{[0,0]+ (5,-4)}}^{#1}}
\newcommand{\dloopr}[1]{\ar@'{@+{[0,0]+(-4,-5)}@+{[0,0]+(0,-10)}@+{[0, 0]+(4,-5)}}_{#1}}
\newcommand{\dloopd}[1]{\ar@'{@+{[0,0]+(-5,4)}@+{[0,0]+(-10,0)}@+{[0,0 ]+(-5,-4)}}_{#1}}
\newcommand{\luloop}[1]{\ar@'{@+{[0,0]+(-8,2)}@+{[0,0]+(-10,10)}@+{[0, 0]+(2,2)}}^{#1}}
\newtheorem{lem}{Lemma}[section]
\newtheorem{corol}[lem]{Corollary}
\newtheorem{theor}[lem]{Theorem}
\newtheorem{prop}[lem]{Proposition}
\theoremstyle{definition}
\newtheorem{defi}[lem]{Definition}
\newtheorem{exem}[lem]{Example}
\newtheorem{nota}[lem]{Notation}
\newtheorem{rema}[lem]{Remark}
\newtheorem{noname}[lem]{}
\begin{document}

\title[$C^*$-algebras associated to Boolean dynamical systems]{$C^*$-algebras associated to Boolean dynamical systems}%

\author{Toke Meier Carlsen}
\address{Department of Science and Technology, University of the Faroe Islands\\
N\'{o}at\'{u}n 3\\
F0-100 T\'{o}rshavn\\
The Faroe Islands} \email{tokemc@setur.fo}

\author{Eduard Ortega}
\address{Department of Mathematical Sciences\\
NTNU\\
NO-7491 Trondheim\\
Norway } \email{Eduardo.Ortega@math.ntnu.no}

\author{Enrique Pardo}
\address{Departamento de Matem\'aticas, Facultad de Ciencias\\ Universidad de C\'adiz, Campus de
Puerto Real\\ 11510 Puerto Real (C\'adiz)\\ Spain.}
\email{enrique.pardo@uca.es}\urladdr{https://sites.google.com/a/gm.uca.es/enrique-pardo-s-home-page/}

\dedicatory{Dedicated to the memory of Uffe Haagerup}

\thanks{The third-named author was partially supported by PAI III grants FQM-298 and P11-FQM-7156 of the Junta de Andaluc\'{\i}a, and by the DGI-MINECO and European Regional Development Fund, jointly, through Project MTM2014-53644-P}

\subjclass[2010]{46L05, 46L55}

\keywords{Boolean system, topological graph, $\ast$-inverse semigroup, tight representation, tight groupoid, groupoid $C^*$-algebra}

\date{\today}

\begin{abstract}
The goal of these notes is to present the $C^*$-algebra $\Clab$ of a Boolean dynamical system $(\mathcal{B}, \mathcal{L}, \theta)$, that generalizes the $C^*$-algebra associated to Labelled graphs introduced by Bates and Pask, and to determine its simplicity, its gauge invariant ideals, as well as compute its K-Theory.

\end{abstract}

\maketitle

\section{Introduction}

In 1980 Cuntz and Krieger \cite{CuKr} associated a $C^*$-algebra $\mathcal{O}_A$ to a shift of finite type with transition matrix $A$. Various authors --including Bates, Fowler, Kumjian, Laca, Pask and  Raeburn-- extended the original construction to more general subshifts associated with directed graphs, giving origin to the graph $C^*$-algebra $C^*(E)$ of a directed graph $E$ (see e.g. \cite{FoLaRa,KPRR}). Using a different approach, Exel and Laca \cite{ExLa} generalize Cuntz-Krieger algebras, by associating a $C^*$-algebra to an infinite matrix which $0$ and $1$ entries.  Later, Tomforde \cite{Tom} introduced the class of ultragraph algebras in order to unify Exel-Laca algebras and graph $C^*$-algebras. Also, motivated by Cuntz-Krieger construction, Matsumoto \cite{Mat} introduced a $C^*$-algebra associated with a general two-sided subshift over a finite alphabet. Later, the first named author \cite{Car} extended Matsumoto's construction, by constructing the $C^*$-algebra $\mathcal{O}_\Lambda$ associated with a general one-sided subshift $\Lambda$ over a finite alphabet. 

One of the the underlying ideas of associating a $C^*$-algebra to a dynamical system comes from the Franks classification of irreducible shifts of finite type up to flow equivalence \cite{Franks}. This classification use the Bowen-Franks group of the shift space, that turns out to be the $K_0$ group of the associated Cuntz-Krieger algebra \cite{CuKr}. Therefore, the idea was to study the connection between classification of shift spaces and classification of $C^*$-algebras. Following this point of view, the recent results of Matsumoto and Matui \cite{MaMa} characterize continuous orbit equivalence of shifts of finite type by using $K$-theoretical invariants of the associated $C^*$-algebra. It is natural to try to extend the scope of this strategy to classify shift space over a countable alphabet. By adapting the left-Krieger cover construction given in \cite{Kri}, any shift space over a countable alphabet may be presented by a left-resolving labelled graph. Thus, in the same spirit of the previous constructions, labelled graph algebras, introduced by Bates and Pask in \cite{BPI}, provided a method for associating a $C^*$-algebra to a shift space over a countable alphabet. The class of labelled graph $C^*$-algebras contains, in particular, all the above mentioned classes of $C^*$-algebra. Properties like simplicity, ideal structure and purely infinity was studied in \cite{BPII,JeKiPa}  and the computation of the $K$-theory was achieved in \cite{BPIII}.\vspace{.2truecm}

The original goal of the present paper was to continue the study of the labelled graph $C^*$-algebras, by characterizing them as $0$-dimensional topological graphs \cite{KatsuraI}. However, the topological graph $E$ associated to the data of the labelled graph is just a realization of a Boolean algebra of a family of subsets of vertices of $E$, plus some partial actions given by the arrows of $E$. Thus, we adapt the labelled graph $C^*$-algebra construction, as well as our topological graph characterization, to the context of a $C^*$-algebra associated to a general family of partial actions over a fixed Boolean algebra (we call it a Boolean dynamical system). This class of $C^*$-algebras, that we call Boolean Cuntz-Krieger algebras associated with a Boolean dynamical systems, includes labelled graph $C^*$-algebras, homeomorphism $C^*$-algebras over $0$-dimensional compact spaces, and graph $C^*$-algebras, among others. Essentially, it is not a new class of $C^*$-algebras, since they are ($0$-dimensional) algebras over topological graphs, a class deeply studied by Katsura \cite{KatsuraI,KatsuraIII}. However, the advantage of our approach is that we can skip to deal with the topology of the graph, and instead can concentrate only in combinatorial properties of actions over a Boolean algebra. In particular, we can use a different picture when studying $C^*$-algebras associated to combinatorial objects, by using groupoid $C^*$-algebras. This is a classical approach, used by Kumjian, Pask, Raeburn and Renault \cite{KPRR} when studying graph $C^*$-algebras. This approach attained a new level of efficiency when Exel \cite{Exel} developed a huge machinery that helps to represent any ``combinatorial'' $C^*$-algebra as a full groupoid $C^*$-algebra. The strategy is to  associate to the $C^*$-algebra an $\ast$-inverse semigroup (see e.g. \cite{Lawson}) and a ``tight'' representation (i.e. a representations preserving additive identities on pairwise orthogonal idempotents). When this is possible, there is a standard way of producing a \'etale, second countable topological groupoid which full $C^*$-algebra is isomorphic to the original $C^*$-algebra under consideration. In the case of Boolean Cuntz-Krieger algebras associated to Boolean dynamical system this strategy works, and so we can use all the machinery developed by Exel \cite{Exel, ExelTight} for analyze the structure of the algebras under study. Recent examples of application of such an strategy are \cite {ExPa, ExPa2}.\vspace{.2truecm}

The contents of this paper can be summarized as follows: In Section 2 we recall some Boolean algebra Theory. In particular, we summarize some well-known results about the topology of the space of characters (the Stone's spectrum) of a Boolean algebra. In Section 3 we define Boolean dynamical systems, that are families of partial actions on a Boolean algebra, and their representations in a $C^*$-algebra; the $C^*$-algebra associated to the universal  representation will be the Boolean Cuntz-Krieger algebra. We state the existence of a universal representation and the gauge uniqueness theorem, that will be proved later. In Section 4 we recall the definition of Katsura's topological graph. When $E$ is a $0$-dimensional space, i.e. both the vertex and edge spaces are $0$-dimensional, we construct a Boolean dynamical system that can be represented in the associated topological graph $C^*$-algebra $\mathcal{O}(E)$. In Section 5 we focus on finding a universal representation of a given Boolean dynamical system. This is achieved by constructing a compactly supported $0$-dimensional  topological graph with the data of the Boolean dynamical system, and defining a representation of the Boolean dynamical system in the topological graph $C^*$-algebra. We conclude proving that the Boolean Cuntz-Krieger algebras are isomorphic to a $0$-dimensional topological graph $C^*$-algebra, and using this characterization to compute its $K$-Theory. In Sections 6,7 and 8 we apply Exel's machinery to Boolean Cuntz-Krieger algebras. To this end, we first define an $\ast$-inverse semigroup associated to a Boolean dynamical system, and then we prove that the $C^*$-algebra associated to the universal tight representation of this $\ast$-inverse semigroup is isomorphic to the unitization of our Boolean Cuntz-Krieger algebra. Finally, we define the groupoid of germs of the partial actions of the $\ast$-inverse semigroup on the space of tight filters defined over its semilattice of idempotents. Thus, by using Exel's results \cite{Exel, Reconst}, we can see that the Boolean Cuntz-Krieger algebra is the full $C^*$-algebra of this groupoid. This allows us to work in the realm of groupoid $C^*$-algebra, and to use the known results on this class to characterize properties of Boolean Cuntz-Krieger algebras. In particular, we use the groupoid characterization of the Boolean Cuntz-Krieger algebras in Section 9 to characterize its simplicity in terms of intrinsic properties of the associated Boolean dynamical system. A similar approach was used by Marrero and Muhly for ultragraph $C^*$-algebras \cite{MaMu}, although the way they constructed the groupoid is quite different to ours; also, after the final version of the present paper was ready, we were aware of Boava, de Castro and Mortari's work for labelled graph $C^*$-algebras \cite{BCM}, were they constructed an inverse semigroup in a similar (although abstract) way as our inverse semigroup $T$ (see Section 6), but they concentrated their attention in understanding the nature of the tight spectra, and do not work out either an associated groupoid or a groupoid picture of labelled $C^*$-algebras associated to it. In Section 10 we define an admissible pair for a Boolean dynamical system, and we state an order lattice bijection between the admissible pairs and the gauge invariant ideals of the Boolean Cuntz-Krieger algebras. Finally, we realize the quotient of a Boolean Cuntz-Krieger algebra modulo a gauge invariant ideal as the Boolean Cuntz-Krieger algebra of another induced Boolean dynamical system.

\section{Boolean $C^*$-algebras}
The main objects of this paper is a boolean algebra and its associated $C^*$-algebras. We will first introduce basic definitions and results, mostly well-known, and then we will focus on finding a representation of a boolean algebra as the set of clopen subsets of a topological space (Stone's representation). It turns out that the points of this topological space are the set of the ultrafilters of the elements of the boolean algebra.

\begin{defi}\label{def_Boolean} A \emph{Boolean algebra} is a quadruple $(\B,\cap,\cup,\setminus)$, where $\B$ is a set with a distinguished element $\emptyset\in \B$, that we called \emph{empty}, and maps
$\cup:\B\times \B\rightarrow \B$,  $\cap:\B\times \B\rightarrow \B$ and $\setminus:\B\times \B\rightarrow\B$ that we call the \emph{union}, \emph{intersection} and \emph{relative complement} maps, satisfying the standard axioms (see \cite[Chapter 2]{GiHa}).
The Boolean algebra $\B$ is \emph{unital} if does exist $\tot\in \B$ such that $\tot\cup A=\tot$ and $\tot\cap A=A$ for every $A\in \B$.  A \emph{boolean homomorphism} is a map $\phi$ from one boolean algebra $\B_1$ to another boolean algebra $\B_2$ such that $\phi(A\cap B)=\phi(A)\cap\phi(B)$, $\phi(A\cup B)=\phi(A)\cup\phi(B)$, and $\phi(A\setminus B)=\phi(A)\setminus\phi(B)$ for all $A,B\in\B_1$.
\end{defi}
\begin{rema} What we call a Boolean algebra is sometimes called a Boolean ring, and  what we call a unital Boolean algebra is sometimes simple called a Boolean algebra. The theories of Boolean algebras and Boolean rings are very closely related;
in fact, they are just different ways of looking at the same subject. See \cite{GiHa} for further explanation.
\end{rema}
A subset $\B'\subseteq \B$ is called a \emph{Boolean subalgebra} if $\B'$ is closed by the union, intersection and the relative complement operations.

Given a Boolean algebra $\B$, we can define the following partial order: given $A,B\in \B$ 
$$A\subseteq B\qquad \text{if and only if}\qquad A\cap B=A\,.$$
Then $(\B,\subseteq)$ is a partially ordered set.

\begin{defi} An element $B\in\B$ is called a  \emph{least upper-bound}  for  $\{A_\lambda\}_{\lambda\in \Lambda}$ with $A_\lambda\in \B$ if it is the least element of $\B$ satisfying $A_\lambda \subseteq B$ for every $\lambda\in \Lambda$. We will write the unique least upper-bound as $\bigcup\limits_{\lambda\in \Lambda} A_\lambda$. 
\end{defi}

Observe that least upper-bound do not necessarily exist, but  if $|\Lambda|<\infty$ then the least upper-bound of $\{A_\lambda\}_{\lambda\in \Lambda}$ is $\bigcup\limits_{\lambda\in \Lambda} A_\lambda$.

\begin{defi}
Let $\B$ be a Boolean algebra. We say that a subset $\Id
$ of $\B$ is an \emph{ideal} if given $A,B\in \B$, then:
\begin{enumerate}
\item if $A,B\in \Id$ then $A\cup B\in \Id$,
\item if $A\in \Id$ then $A\cap B\in \Id$.
\end{enumerate}  
\end{defi}

Observe that in particular an ideal $\Id$ of a Boolean algebra $\B$ is a Boolean subalgebra.

Given $A\in \B$ we define $\Id_A:=\{B\in\B: B\subseteq A\}$, that is the  ideal generated by $A$.

\begin{defi} Let $\B$ be the Boolean algebra  and let $\Id$ be an ideal of $\B$. Given $A,B\in \B$, we define the following equivalent relation: $A\sim B$ if and only if there exists $A',B'\in \Id$ such that $A\cup A'=B\cup B'$. We define by $[A]$ the set of all the elements of $\B$ equivalent to $A$, and we denote by $\B/\Id$ the set of all equivalent classes of $\B$. Moreover, we say that $[A]\subseteq [B]$ if and only if there exists $H\in \Id$ such that $A\subseteq B\cup H$.
\end{defi}

\begin{defi} Let $\B$ be a Boolean algebra. A subset $\Cu\subseteq \B$ is called a \emph{filter of $\B$} if it has the following properties:
\begin{description}
\item[F0] $\emptyset\notin \Cu$,
\item[F1] given $B\in \B$ and $A\in \Cu$ with  $A\subseteq B$ then $B\in \Cu$,
\item[F2] given $A,B\in \Cu$ then $A\cap B\in \Cu$.
\end{description}
If moreover $\Cu$ satisfies:
\begin{description}
\item[F3] given $A\in \Cu$ and $B,B'\in \B$ with $A=B\cup B'$ then either $B\in \Cu$ or $B'\in \Cu$,
\end{description}
then it is called an \emph{ultrafilter of $\B$}.
\end{defi}

Given two filters $\Cu_1$ and $\Cu_2$ of $\B$, we say that $\Cu_1\subseteq \Cu_2$ if every $A_1\in \Cu_1$ is also in $\Cu_2$. This defines a partial order on the set of filters of $\B$. Then, an easy application of the Zorn's Lemma shows that an ultrafilter as a maximal filter.\vspace{.2truecm}

We will denote by $\widehat{\B}$ the set of ultrafilters of $\B$. Given any $A\in \B$, we define  the \emph{cylinder set of $A$} as $Z(A):=\{\Cu\in \widehat{\B}: A\in \Cu\}$. It is an easy exercise to show that the family $\{Z(A):A\in \B\}$ defines a topology of $\widehat{\B}$, in which the sets $Z(A)$ are clopen and compact  (see for example \cite[Chapter 34]{GiHa}). We will call $\widehat{\B}$ equiped with this topology the \emph{Stone's spectrum of $\B$}.

\begin{exem}\label{example_1}  Let $X=\N$ and let $\B:=\{F\subseteq \N: F \text{ finite }\}\cup \{\N\setminus F : F \text{ finite }\}$. Clearly, $\B$ is a Boolean algebra.  We will now describe the Stone spectrum for $\widehat{\B}$ of $\B$.

For $i\in\N$, let $$\Cu_i=\{A\in \B:i\in A\}\,,$$ and let 
$$\Cu_\infty:=\{A\in \B: \exists N\in \N\text{ such that }k\in A \text{ }\forall k\geq N \}\,.$$
It is easy to check that $\Cu_\infty$ and each $\Cu_i$ are ultrafilters of $\B$.

We claim that $\widehat{\B}= \{\Cu_i:i\in \N\cup\{\Cu_\infty\}\}$. To see this, let $\Cu$ be an ultrafilter of $\B$ such that $\bigcap\limits_{A\in \Cu} A=\emptyset$. We will show that $\Cu=\Cu_\infty$. Given $k\in \N$, let us denote by $[k,\infty)$ the set $\N\setminus \{1,\ldots, k-1\}\in \B$. Observe that, since $\bigcap\limits_{A\in \Cu} A=\emptyset$, given any $k\in\N$ there exists $n_k\in\N$ and $A_1,\ldots,A_{n_k}\in \Cu$ such that $A_1\cap \cdots\cap A_{n_k}\subseteq [k,\infty)$. Therefore, by $\textbf{F1}$, $[k,\infty)\in \Cu$ for every $k\in \N$. Now, given any $A\in \Cu_\infty$, there exists $k\in \N$ such that $[k,\infty)\subseteq A$, whence $A\in \Cu$ by $\textbf{F1}$. On the other side, given any $A\in \Cu$, we claim that $|A|=\infty$. Otherwise, if $|A|=n<\infty$, then there exist $A_1,\ldots,A_n\in \Cu$ such that $A\cap A_1\cap\cdots\cap A_n=\emptyset$, contradicting condition $\textbf{F2}$. Thus, $|A|=\infty$. Therefore, since $A\in \B$, we have that $A=\N\setminus F$ for some finite set $F$ of $\N$. Then, there exists $k\in \N$ such that $[k,\infty)\subseteq A$. So, since $[k,\infty)\in \Cu_\infty$, condition $\textbf{F1}$ says that $A\in \Cu_\infty$ too. Thus $\Cu=\Cu_\infty$.

Therefore, we have that $\widehat{\B}= \{\Cu_i:i\in \N\cup\{\infty\}\}$. Finally observe that, with the induced  topology, we have that $\widehat{\B}$ is the one point compactification of $\N$.
\end{exem}

Let $\B$ be a Boolean algebra, and let $\Id$ be an ideal of $\B$. Then, the map $\iota:\widehat{\Id}\longrightarrow \widehat{\B}$ defined by $\iota(\Cu)=\{A\in \B: B\subseteq A\text{ for some }B\in \Cu\}$ is injective. So, given $A\in\B$, we have that $Z(A)=\iota(\widehat{\Id_A})$. Therefore, we will identify $\widehat{\Id_A}$ with $Z(A)$, so $\widehat{\Id_A}\subseteq \widehat{\B}$ for every $A\in\B$.

Moreover, there exists a bijection between the ultrafilters of $\B/\Id$ and the ultrafilters of $\B$ that do not contain any element of $\Id$. Therefore, the natural map $\pi:\B\rightarrow \B/\Id$ is surjective, and it induces an injective map $\widehat{\pi}: \widehat{\B/\Id }\longrightarrow \widehat{\B}$ given by $[\Cu]\rightarrow\pi^{-1}([\Cu])=\{A\in \B: [A]\in [\Cu]\}$ for every $[\Cu]\in \widehat{\B/\Id}$. Therefore, we will identify $\widehat{\B/\Id}$ with $\widehat{\pi}(\widehat{\B/\Id})$, so $\widehat{\B/\Id}\subseteq \widehat{\B}$.

\begin{rema}\label{rema_sub}
Let $\Id$ be an ideal of $\B$, then $\widehat{\Id}\cap \widehat{\B/\Id}=\emptyset$ and $\widehat{\B}=\widehat{\Id}\cup \widehat{\B/\Id}$.
\end{rema}

\begin{lem}\label{induced_map}
Let $\B_1$ and $\B_2$ be two Boolean algebras, and let  $\varphi:\B_1\rightarrow \B_2$ be a Boolean algebras homomorphism with  $\varphi(\emptyset)=\emptyset$ such that for every $A\in\B_2$ there exists $B\in\B_1$ such that $A\subseteq \varphi(B)$.

 Then this map induces continuous  map $\widehat{\varphi}:\widehat{\B_2}\rightarrow \widehat{\B_1}$ defined as $$\widehat{\varphi}(\xi)=\{A\in \B_1:\varphi(A)\in\xi \}$$ for every $\xi\in \B_2$. 
\end{lem}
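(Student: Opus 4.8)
The plan is to check two things separately: first, that for each ultrafilter $\xi$ of $\B_2$ the set $\widehat{\varphi}(\xi)=\{A\in\B_1:\varphi(A)\in\xi\}$ is genuinely an ultrafilter of $\B_1$, so that $\widehat{\varphi}$ really does map $\widehat{\B_2}$ into $\widehat{\B_1}$; and second, that $\widehat{\varphi}$ is continuous for the Stone topologies. The second part will turn out to be almost immediate once the first is in place, so essentially all the work is concentrated in verifying the ultrafilter axioms. (I would also flag the evident typo in the statement, where ``$\xi\in\B_2$'' should read ``$\xi\in\widehat{\B_2}$''.)

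For the first part, I would fix $\xi\in\widehat{\B_2}$ and verify axioms \textbf{F0}--\textbf{F3} for $\widehat{\varphi}(\xi)$ one by one, each reducing to the corresponding property of $\xi$ together with the fact that $\varphi$ is a Boolean homomorphism. Property \textbf{F0} follows because $\varphi(\emptyset)=\emptyset\notin\xi$. For \textbf{F1}, I would first observe that $\varphi$ is order-preserving: if $A\subseteq B$ then $A\cap B=A$, so $\varphi(A)=\varphi(A)\cap\varphi(B)$, i.e. $\varphi(A)\subseteq\varphi(B)$; hence $\varphi(A)\in\xi$ forces $\varphi(B)\in\xi$ by \textbf{F1} for $\xi$. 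Property \textbf{F2} uses $\varphi(A\cap B)=\varphi(A)\cap\varphi(B)$ together with \textbf{F2} for $\xi$, while \textbf{F3} uses $\varphi(B\cup B')=\varphi(B)\cup\varphi(B')$ together with \textbf{F3} for $\xi$.

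The one step that is not purely formal, and where the standing hypothesis is essential, is showing that $\widehat{\varphi}(\xi)$ is non-empty (without it the image could fail to be a proper filter at all; for instance if $\varphi$ were identically $\emptyset$ then $\widehat{\varphi}(\xi)=\emptyset$). Here I would pick any $A\in\xi$, use the hypothesis to find $B\in\B_1$ with $A\subseteq\varphi(B)$, and conclude from \textbf{F1} for $\xi$ that $\varphi(B)\in\xi$, so that $B\in\widehat{\varphi}(\xi)$. This is the main, and indeed the only genuinely non-trivial, obstacle in the argument.

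Finally, for continuity it suffices to check that the preimage of each basic clopen set $Z(A)$, $A\in\B_1$, is open in $\widehat{\B_2}$. Unwinding the definitions gives
$$\widehat{\varphi}^{-1}(Z(A))=\{\xi\in\widehat{\B_2}:A\in\widehat{\varphi}(\xi)\}=\{\xi\in\widehat{\B_2}:\varphi(A)\in\xi\}=Z(\varphi(A)),$$
which is clopen in $\widehat{\B_2}$. Hence $\widehat{\varphi}$ is continuous, as the preimage of every basic clopen set is again clopen.
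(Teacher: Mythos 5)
Your proposal is correct and follows essentially the same route as the paper's proof: show that $\widehat{\varphi}(\xi)$ is an ultrafilter of $\B_1$ so the map is well defined, and then obtain continuity from the identity $\widehat{\varphi}^{-1}(Z(A))=Z(\varphi(A))$ for $A\in\B_1$. The only difference is presentational and in your favor: you carry out the \textbf{F0}--\textbf{F3} verification that the paper dismisses as routine, and you correctly isolate the role of the covering hypothesis (guaranteeing $\widehat{\varphi}(\xi)\neq\emptyset$), whereas the paper instead inserts a superfluous Zorn's-lemma filter-extension argument into the continuity step, which, as you observe, is immediate once well-definedness is established.
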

\begin{proof}
Let $\varphi:\widehat{\B_2}\longrightarrow\widehat{\B_2}$ be the map given by $\varphi(\Cu)=\{A\in\widehat{\B_1}: \varphi(A)\in \Cu\}$. It is routine to check that $\{A\in\widehat{\B_1}: \varphi(A)\in \Cu\}$ is an ultrafilter of $\widehat{\B_1}$. Thus, $\widehat{\varphi}$ is a well-defined map. If $A\in \B_2$, then we claim that $\widehat{\varphi}^{-1}(Z(A))=\{\Cu\in\widehat{\B_2}: \varphi(A)\in \Cu\}$. 
Indeed, the inclusion $\subseteq$ is clear. For the inclusion $\supseteq$, let $\xi\in\widehat{\B_2}$ with $\varphi(A)\in \xi$, and let us define the set $\F=\{B\in \B_1: \varphi(B)\in \Cu\}$. By hypothesis, we have that $A\in\F$, so $\textbf{F0}$ is satisfied. $\textbf{F1}$ and $\textbf{F2}$ follows because of  conditions $\textbf{F1}$ and $\textbf{F2}$ of $\Cu$, and the fact that $\varphi$ preserves intersections. Thus, $\F$ is a filter. Then by an easy application of the Zorn's Lemma we can find a maximal filter $\zeta$ containing $\F$. Thus, $\zeta\in\widehat{\B_1}$ such that $\varphi(B)\in\xi$ for every $B\in \zeta$, so $\widehat{\varphi}(\zeta)=\Cu$ with $\zeta\in Z(A)$, as desired.

Then $\widehat{\varphi}^{-1}(Z(A))=\{\Cu\in\widehat{\B_2}: \varphi(A)\in \Cu\}=Z(\varphi(A))$ that is an open subset. Thus, $\widehat{\varphi}$ is a continuous map.
\end{proof}

Given a Boolean algebra $\B$ and given $A\in \B$ we let $\chi_A:\B\rightarrow \mathbb{C}$ denote the function defined on $\B$ by

$$\chi_A(B)=\left\lbrace \begin{array}{ll} 1 & \text{if } A\cap B\neq \emptyset \\ 0 & \text{otherwise }\end{array}\right. .$$

We will regard $\chi_A$ as an element of the $C^*$-algebra of bounded operators on $\ell^2(\B)$. 

\begin{defi}
Let $\B$ be a Boolean algebra. Then we define the \emph{Boolean $C^*$-algebra of $\B$} as the sub-$C^*$-algebra of the $B(\ell^2(\B))$ generated by $\{\chi_A : A \in  \B\}$.  We denote it as $C^*(\B)$.
\end{defi}

$C^*(\B)$ is a commutative $C^*$-algebra, and given $A,B\in \B$ we have that 
$$\chi_A\cdot \chi_B=\chi_{A\cap B}\qquad \text{and}\qquad \chi_{A\cup B}=\chi_A+\chi_B-\chi_{A\cap B}\,,$$
where $\chi_{\emptyset}=0$. Thus, $C^*(\B)=\overline{\text{span}}\{\chi_A:A\in\B\}$.

First, recall that the spectrum of $C^*(\B)$, denoted by $\widehat{C^{*}(\B)}$,  is the set of characters of $C^*(\B)$. Observe that an additive map $\eta:C^*(\B)\longrightarrow \C$ is a $*$-homomorphism if and only if given $A,B\in \B$
$$\begin{array}{ll} (C1) &  \eta(\chi_A)\eta(\chi_B)=\eta(\chi_{A\cap B}) \\
(C2) & \eta(\chi_{A\cup B})=\eta(\chi_A)+\eta(\chi_B)-\eta(\chi_{A\cap B})\,.\end{array}$$

If $\eta$ is a character of $C^*(\B)$, then we define 
$$\Cu_\eta:=\{A\in \B: \eta(\chi_A)=1\}\,.$$
Recall that, since $\chi_A$ is a projection for every $A\in \B$ and $\eta$ is a $*$-homomorphism, $\eta(\chi_A)$ is either $0$ or $1$. Then the following lemma is straightforward.
\begin{lem}\label{lemma_1} If  $\eta$ is a character of $C^*(\B)$, then $\Cu_\eta$ is an ultrafilter of $\B$.
\end{lem}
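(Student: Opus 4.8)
The plan is to verify that $\Cu_\eta:=\{A\in\B:\eta(\chi_A)=1\}$ satisfies the four filter axioms \textbf{F0}, \textbf{F1}, \textbf{F2}, and \textbf{F3}, using only the character identities $(C1)$ and $(C2)$ together with the fact (already noted) that $\eta(\chi_A)\in\{0,1\}$ for every $A\in\B$. Since $\eta$ is a nonzero $*$-homomorphism, it respects the multiplicative and additive structure of $C^*(\B)$, and each axiom should translate directly into an elementary computation with these identities.

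First I would check \textbf{F0}: because $\chi_\emptyset=0$ we have $\eta(\chi_\emptyset)=0\neq 1$, so $\emptyset\notin\Cu_\eta$. For \textbf{F2}, if $A,B\in\Cu_\eta$ then $\eta(\chi_A)=\eta(\chi_B)=1$, and $(C1)$ gives $\eta(\chi_{A\cap B})=\eta(\chi_A)\eta(\chi_B)=1$, so $A\cap B\in\Cu_\eta$. For \textbf{F1}, suppose $A\in\Cu_\eta$ and $A\subseteq B$; then $A\cap B=A$, so $(C1)$ yields $\eta(\chi_A)=\eta(\chi_A)\eta(\chi_B)$, i.e. $1=\eta(\chi_B)$, whence $B\in\Cu_\eta$. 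For \textbf{F3}, suppose $A\in\Cu_\eta$ and $A=B\cup B'$; applying $(C2)$ gives $1=\eta(\chi_A)=\eta(\chi_B)+\eta(\chi_{B'})-\eta(\chi_{B\cap B'})$, and since each of the three terms lies in $\{0,1\}$ with $\eta(\chi_{B\cap B'})=\eta(\chi_B)\eta(\chi_{B'})$ by $(C1)$, a quick case analysis forces at least one of $\eta(\chi_B)$, $\eta(\chi_{B'})$ to equal $1$, giving $B\in\Cu_\eta$ or $B'\in\Cu_\eta$.

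I do not anticipate a genuine obstacle here, since the statement is correctly labelled ``straightforward''; the only point requiring a moment of care is \textbf{F3}, where one must use $(C1)$ to control the cross term $\eta(\chi_{B\cap B'})$ before concluding. The identity $\eta(\chi_{B\cap B'})=\eta(\chi_B)\eta(\chi_{B'})$ rules out the problematic scenario $\eta(\chi_B)=\eta(\chi_{B'})=0$ with a nonzero correction, and it confirms that when both summands vanish the cross term vanishes too, so the equation $1=0+0-0$ cannot hold; this is precisely the contradiction that rescues \textbf{F3}.

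One should also confirm tacitly that $\Cu_\eta$ is nonempty or, more to the point, that $\eta$ being a genuine character (hence $\eta(\tot)=1$ in the unital case, or at least $\eta$ nonzero) prevents $\Cu_\eta$ from being vacuous in a way that would trivialize the filter conditions; but the axioms \textbf{F0}--\textbf{F3} as stated make no nonemptiness demand, so the verification above suffices to conclude that $\Cu_\eta$ is an ultrafilter of $\B$.
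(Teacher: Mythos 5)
Your verification is correct, and it is precisely the argument the paper intends: the paper states Lemma \ref{lemma_1} without proof (calling it straightforward), and your check of \textbf{F0}--\textbf{F3} from $(C1)$, $(C2)$ and the fact that each $\eta(\chi_A)\in\{0,1\}$ — including the case analysis for \textbf{F3} controlling the cross term $\eta(\chi_{B\cap B'})=\eta(\chi_B)\eta(\chi_{B'})$ — mirrors exactly the computation the paper does write out for the converse direction in Lemma \ref{lemma_2}. Your closing remark is also the right disposal of the nonemptiness point: the paper's filter axioms impose no such requirement, and in any case $\eta\neq 0$ forces $\Cu_\eta\neq\emptyset$ since $C^*(\B)=\overline{\spa}\{\chi_A:A\in\B\}$.
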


Given an ultrafilter $\Cu$ of $\B$, we define the unique additive map $\eta_\Cu:C^*(\B)\longrightarrow \C$ such that 
$$\eta_\Cu(\chi_A)=\left\lbrace \begin{array}{ll} 1 & \text{if }A\in \Cu \\ 0 & \text{if }A\notin \Cu\end{array}\right. $$

\begin{lem}\label{lemma_2} $\eta_\Cu$ is a character of $C^*(\B)$.
\end{lem}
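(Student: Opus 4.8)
The plan is to invoke the criterion displayed just before the lemma: an additive map $\eta:C^*(\B)\to\C$ is a $*$-homomorphism precisely when it satisfies $(C1)$ and $(C2)$. Since the preceding paragraph already posits $\eta_\Cu$ as a well-defined additive map on $C^*(\B)$, it will be a character as soon as we verify $(C1)$, $(C2)$, and that $\eta_\Cu\neq 0$. I would carry out the verification by a direct case analysis on the four possibilities for the pair $(A,B)$ according to whether or not each of $A,B$ lies in $\Cu$, treating $\eta_\Cu(\chi_A)$ as the indicator of the event $A\in\Cu$.

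For $(C1)$, namely $\eta_\Cu(\chi_A)\eta_\Cu(\chi_B)=\eta_\Cu(\chi_{A\cap B})$: when $A,B\in\Cu$, axiom $\textbf{F2}$ gives $A\cap B\in\Cu$, so both sides equal $1$; in the remaining cases at least one factor on the left vanishes, and since $A\cap B\subseteq A$ (resp.\ $A\cap B\subseteq B$), axiom $\textbf{F1}$ shows $A\cap B\notin\Cu$, so both sides are $0$. Thus $(C1)$ in fact holds for any filter. For $(C2)$, namely $\eta_\Cu(\chi_{A\cup B})=\eta_\Cu(\chi_A)+\eta_\Cu(\chi_B)-\eta_\Cu(\chi_{A\cap B})$: if at least one of $A,B$ lies in $\Cu$, then $A\subseteq A\cup B$ together with $\textbf{F1}$ forces $A\cup B\in\Cu$, and plugging in the value of $\eta_\Cu(\chi_{A\cap B})$ computed above makes both sides agree. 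The only delicate case is $A,B\notin\Cu$, where the right-hand side is $0$ and one must show $A\cup B\notin\Cu$; this is exactly the contrapositive of $\textbf{F3}$, and is the single place where the \emph{ultrafilter} hypothesis (as opposed to merely being a filter) is used.

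Finally, $\eta_\Cu$ is nonzero: as an ultrafilter, $\Cu$ is nonempty while $\emptyset\notin\Cu$ by $\textbf{F0}$, so choosing any $A\in\Cu$ yields $\eta_\Cu(\chi_A)=1$. Hence $\eta_\Cu$ is a nonzero $*$-homomorphism, i.e.\ a character of $C^*(\B)$. I expect the case analysis itself to be entirely routine; the only conceptual point, and the sole spot requiring $\textbf{F3}$, is the case $A,B\notin\Cu$ of $(C2)$. (If one wishes to be scrupulous about $\eta_\Cu$ being well defined and bounded on all of $C^*(\B)$, one notes that $(C2)$ together with $\eta_\Cu(\chi_\emptyset)=0$ makes the assignment $\chi_A\mapsto\eta_\Cu(\chi_A)$ compatible with the defining relation $\chi_{A\cup B}=\chi_A+\chi_B-\chi_{A\cap B}$ on the dense subalgebra $\spa\{\chi_A:A\in\B\}$, on which it is then a $*$-homomorphism into $\C$ and hence automatically contractive, so it extends continuously to $C^*(\B)$.)
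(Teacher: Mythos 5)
Your proposal is correct and follows essentially the same route as the paper: a direct case analysis verifying $(C1)$ and $(C2)$ via the filter axioms $\textbf{F1}$, $\textbf{F2}$, and (only for the union identity) $\textbf{F3}$. The only differences are cosmetic — you organize cases by membership of $A,B$ in $\Cu$ rather than by the value of $\eta_\Cu$ on $\chi_{A\cap B}$ and $\chi_{A\cup B}$, and you add the (welcome but routine) checks that $\eta_\Cu$ is nonzero and well defined on the dense span, which the paper leaves implicit.
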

\begin{proof} We must check that $\eta_\Cu$ satisfies $C1$ and $C2$. For $C1$, let $A,B\in \B$, and recall that $\chi_A\cdot \chi_B=\chi_{A\cap B}$. First, suppose that $\eta_\Cu(\chi_{A\cap B})=0$. Therefore, $A\cap B\notin \Cu$ and hence, by $\textbf{F2}$, either $A$ or $B$ are not in $\Cu$. Thus, $\eta_\Cu(A)\eta_\Cu(A)=0=\eta_\Cu(\chi_{A\cap B})$, as desired. Now, suppose that $\eta_\Cu(\chi_{A\cap B})=1$, so $A\cap B\in \Cu$. Therefore, by $\textbf{F1}$, it follows that $A,B\in \Cu$ too, and hence $\eta_\Cu(A)\eta_\Cu(A)=1=\eta_\Cu(\chi_{A\cap B})$, as desired. Thus, $C1$ is verified.

For $C2$, let $A,B\in \B$. First, suppose that $\eta_\Cu(\chi_{A\cup B})=0$. So, $A\cup B\notin \Cu$, and since $A,B,A\cap B\subseteq A\cup B$, it follows from $\textbf{F1}$  that $A,B,A\cap B\notin \Cu$. Therefore, 
$$\eta_\Cu(\chi_{A\cup B})=0=\eta_\Cu(\chi_A)+\eta_\Cu(\chi_B)-\eta(\chi_{A\cap B})\,.$$
Finally, suppose that $A\cup B\in \Cu$. Hence, by $\textbf{F3}$, either $A$ or $B$ belongs to $\Cu$. First suppose that $A,B\in \Cu$. Then, by $\textbf{F2}$ so does $A\cap B$. Therefore, 
$$\eta_\Cu(\chi_{A\cup B})=1+1-1=\eta_\Cu(\chi_A)+\eta_\Cu(\chi_B)-\eta(\chi_{A\cap B})\,,$$
as desired.
Now, suppose that  $A\in \Cu$ but $B\notin \Cu$. By $\textbf{F2}$, we have that $A\cap B\notin \Cu$, so 
$$\eta_\Cu(\chi_{A\cup B})=1+0-0=\eta_\Cu(\chi_A)+\eta_\Cu(\chi_B)-\eta(\chi_{A\cap B})\,,$$
as desired.
\end{proof}

The following result follows directly from the definitions.

\begin{prop}\label{proposition_1} Let $\Cu$ be an ultrafilter of $\B$ and let $\eta$ a character of $\mathcal{A}$. Then $\Cu_{\eta_\Cu}=\Cu$ and $\eta_{\Cu_\eta}=\eta$. Therefore, there is a bijection between the ultrafilters of $\B$ and the characters of $\mathcal{A}$.
\end{prop}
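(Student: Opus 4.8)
The plan is to verify directly that the two assignments $\Cu \mapsto \eta_\Cu$ and $\eta \mapsto \Cu_\eta$ are mutually inverse; once this is done, the claimed bijection between the ultrafilters of $\B$ and the characters of $C^*(\B)$ follows immediately. Both composites will be checked by unwinding the definitions of $\Cu_\eta$ and $\eta_\Cu$ given just before the statement, together with Lemmas \ref{lemma_1} and \ref{lemma_2}, which guarantee that the maps land in the right sets.

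First I would establish $\Cu_{\eta_\Cu} = \Cu$. By definition $\Cu_{\eta_\Cu} = \{A \in \B : \eta_\Cu(\chi_A) = 1\}$, and by the very construction of $\eta_\Cu$ we have $\eta_\Cu(\chi_A) = 1$ precisely when $A \in \Cu$ and $\eta_\Cu(\chi_A)=0$ otherwise. Hence $\Cu_{\eta_\Cu} = \{A \in \B : A \in \Cu\} = \Cu$, with no hypotheses beyond the definitions required.

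Next I would establish $\eta_{\Cu_\eta} = \eta$. The key observation is that both $\eta_{\Cu_\eta}$ and $\eta$ are characters of $C^*(\B)$, hence bounded and linear, and that $C^*(\B) = \overline{\text{span}}\{\chi_A : A \in \B\}$; so it suffices to check agreement on each generator $\chi_A$. Fix $A \in \B$. Since $\chi_A$ is a projection and $\eta$ is a $*$-homomorphism, $\eta(\chi_A) \in \{0,1\}$. By definition $A \in \Cu_\eta$ if and only if $\eta(\chi_A) = 1$, while $\eta_{\Cu_\eta}(\chi_A) = 1$ if and only if $A \in \Cu_\eta$; comparing the two possible values shows $\eta_{\Cu_\eta}(\chi_A) = \eta(\chi_A)$ in both cases. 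By linearity and continuity the two characters then coincide on all of $C^*(\B)$.

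Finally, having shown that $\Cu \mapsto \eta_\Cu$ and $\eta \mapsto \Cu_\eta$ are mutually inverse, I conclude that each is a bijection, which is exactly the asserted statement. The argument is entirely formal, as the author remarks; the only point demanding a little attention — rather than a genuine obstacle — is the passage from agreement on the generators $\chi_A$ to agreement on the whole algebra, which rests on the density of $\operatorname{span}\{\chi_A : A \in \B\}$ together with the fact that $\eta(\chi_A)$ can only take the values $0$ and $1$.
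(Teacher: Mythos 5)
Your proposal is correct and matches the paper's approach: the paper simply states that the result ``follows directly from the definitions,'' and your argument is exactly that unwinding, with the only mildly non-trivial point (agreement on the generators $\chi_A$ extending to all of $C^*(\B)$ by linearity, continuity, and density) handled correctly, relying on Lemmas \ref{lemma_1} and \ref{lemma_2} just as the paper implicitly does.
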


By Proposition \ref{proposition_1} there is a bijection between $\widehat{\B}$ and the set of characters of $C^*(\B)$. 
Recall that by the Gelfand-Naimark Theorem $C^*(\B)\cong C_0(\widehat{C^{*}(\B)})$, where $\widehat{C^{*}(\B)}$ has the Jacobson topology. Recall that, given a subset of $Y$ of $\widehat{C^{*}(\B)}$, we define the closure of $Y$ as $\{\eta\in \widehat{C^{*}(\B)}: \text{Ker }\eta\supseteq \bigcap\limits_{\rho\in Y}\text{Ker }\rho\}$.

\begin{prop}[Stone's Representation Theorem]\label{proposition_2}  Let $\B$ be a Boolean algebra and let $\widehat{\B}$ be the Stone's spectrum of $\B$. Then $\widehat{C^{*}(\B)}$ and $\widehat{\B}$ are homeomorphic topological spaces. Therefore, $C^{*}(\B)\cong C_0( \widehat{\B})$. 
\end{prop}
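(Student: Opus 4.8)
The plan is to promote the set-theoretic bijection supplied by Proposition \ref{proposition_1} to a homeomorphism, and then read off the algebra isomorphism from Gelfand--Naimark. Write $\Phi\colon\widehat{\B}\to\widehat{C^{*}(\B)}$ for the map $\Cu\mapsto\eta_\Cu$; by Proposition \ref{proposition_1} it is a bijection with inverse $\eta\mapsto\Cu_\eta$. I would first recall that, since $C^{*}(\B)$ is commutative, the Jacobson topology on its spectrum coincides with the weak-$*$ (Gelfand) topology, so a subbasis of $\widehat{C^{*}(\B)}$ is given by the sets $\{\eta:|\eta(a)-\eta_0(a)|<\varepsilon\}$; because $C^{*}(\B)=\overline{\text{span}}\{\chi_A:A\in\B\}$ and every character is contractive, it suffices to let $a$ range over the generators $\chi_A$.

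The first key observation is that each $\chi_A$ is a projection, so for every character $\eta$ one has $\eta(\chi_A)\in\{0,1\}$. Consequently the Gelfand transform of $\chi_A$ is the indicator of the clopen set $U_A:=\{\eta\in\widehat{C^{*}(\B)}:\eta(\chi_A)=1\}$, and the subbasic neighbourhoods above collapse (taking $\varepsilon<1$) to the two clopen sets $U_A$ and $\widehat{C^{*}(\B)}\setminus U_A$. Thus $\{U_A,\,\widehat{C^{*}(\B)}\setminus U_A:A\in\B\}$ generates the topology of $\widehat{C^{*}(\B)}$. On the other side, the sets $Z(A)$ are clopen (as recalled in the excerpt) and form a basis of $\widehat{\B}$, since $Z(A)\cap Z(B)=Z(A\cap B)$ by $\textbf{F1}$ and $\textbf{F2}$.

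Now I would simply compute the two images. Directly from the definition of $\eta_\Cu$, a point $\Cu$ satisfies $\eta_\Cu(\chi_A)=1$ exactly when $A\in\Cu$, so $\Phi^{-1}(U_A)=Z(A)$ and, equivalently, $\Phi(Z(A))=U_A$. Since $Z(A)$ and $\widehat{\B}\setminus Z(A)$ are open in $\widehat{\B}$, the identities $\Phi^{-1}(U_A)=Z(A)$ and $\Phi^{-1}(\widehat{C^{*}(\B)}\setminus U_A)=\widehat{\B}\setminus Z(A)$ show that preimages of subbasic opens are open, so $\Phi$ is continuous; since $\{Z(A)\}$ is a basis and $\Phi$ is a bijection carrying each $Z(A)$ onto the open set $U_A$, the map $\Phi$ is also open, hence a homeomorphism. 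Finally, a homeomorphism of locally compact Hausdorff spaces induces an isomorphism between the algebras of functions vanishing at infinity, so Gelfand--Naimark yields $C^{*}(\B)\cong C_0(\widehat{C^{*}(\B)})\cong C_0(\widehat{\B})$.

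I expect the only real obstacle to be the comparison of the two a priori unrelated topologies --- the hull--kernel (Jacobson, equivalently weak-$*$) topology on one side and the combinatorial cylinder-set topology on the other. The device that dissolves this difficulty is the projection property $\eta(\chi_A)\in\{0,1\}$, which makes every relevant Gelfand transform locally constant, turns the basic sets clopen on both sides, and reduces the topological comparison to the elementary identity $\Phi(Z(A))=U_A$. A secondary point to keep in mind is that $\B$ need not be unital, so neither space is assumed compact; one therefore works with locally compact Hausdorff spaces and obtains $C_0$ rather than $C$, which is exactly why the conclusion is phrased with $C_0(\widehat{\B})$.
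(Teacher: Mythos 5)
Your proof is correct, but it follows a genuinely different route from the paper's. The paper never leaves the hull--kernel framework: after invoking Proposition \ref{proposition_1}, it computes the ideals $I_Y=\bigcap_{\Cu\in Y}\text{Ker}\,\eta_\Cu$ explicitly and verifies by hand, in both directions, that a subset of $\widehat{\B}$ is closed in the cylinder-set topology exactly when it is closed under the Jacobson closure operation; in effect it proves, for this particular algebra, the regularity property that your argument imports as a black box. You instead cite the standard fact that for a commutative $C^*$-algebra the Jacobson topology on the character space coincides with the weak-$*$ (Gelfand) topology, and then exploit the projection property $\eta(\chi_A)\in\{0,1\}$ to collapse the weak-$*$ subbasis to the clopen sets $U_A$ and their complements, after which the single identity $\Phi^{-1}(U_A)=Z(A)$ finishes the topological comparison. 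Your route is shorter and has the virtue of exhibiting the homeomorphism explicitly on a clopen subbasis, matching $Z(A)$ with the support of the Gelfand transform of $\chi_A$; its cost is the reliance on the Jacobson-equals-Gelfand theorem, which is a nontrivial (though standard and non-circular, since it rests only on Gelfand--Naimark and regularity of $C_0(X)$) input that the paper deliberately avoids. The paper's argument is longer but self-contained at the level of the definitions it introduces: it uses only the definition of the Jacobson closure and the explicit description of $\text{Ker}\,\eta_\Cu$ as the closed span of $\{\chi_B:B\notin\Cu\}$. Both proofs ultimately rest on the same two pillars --- the bijection of Proposition \ref{proposition_1} and the fact that the generators $\chi_A$ are projections --- and both correctly handle the non-unital case, where the spaces are only locally compact and the conclusion is $C^*(\B)\cong C_0(\widehat{\B})$ rather than $C(\widehat{\B})$.
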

\begin{proof} First recall that, using Proposition \ref{proposition_1}, we identify a character $\eta$ of $C^{*}(\B)$ with its associated ultrafilter $\Cu_\eta$. Observe that, given $\Cu\in \widehat{\B}$, we have $\text{Ker }\eta_\Cu=\{ \chi_B : B\notin \Cu\}$. Then, given a set $Y\subseteq \widehat{\B}$, we define 
$$I_Y:=\bigcap\limits_{\Cu\in Y} \text{Ker }\eta_\Cu=\overline{\text{span}}\{\chi_B:B\notin \Cu,\,\forall \Cu\in Y\}\,.$$
Using the definitions, it is straightforward to check that $I_Y=\overline{\text{span}}\{\chi_B:B\in \B,\,Y\cap Z(B)=\emptyset\}$.

Let $\{A_\lambda\}_{\lambda\in \Lambda}$ be a family of elements of $\B$ and let us consider $V:=\bigcup\limits_{\lambda\in\Lambda}Z(A_\lambda)$.  We will prove that $Y:=\widehat{\B}\setminus V$ is closed in the Jacobson topology, whence every closed subset of $\widehat{\B}$  is also closed with respect to the Jacobson topology. Hence, $I_Y=\overline{\text{span}}\{\chi_B:B\in \B,\,Z(B)\subseteq V\}$. Then, the closure of $Y$ with respect the Jacobson topology is the set
$$\{\Cu\in \widehat{\B}: \text{Ker }\eta_\Cu\supseteq I_Y\}=\{\Cu\in \widehat{\B}: \text{ if }B\in \Cu\text{ then }Z(B)\nsubseteq V\}\,.$$
Let $\Cu\notin Y$ but in the closure of $Y$ with respect to the Jacobson topology. Then, $\Cu\in V=\bigcup\limits_{\lambda\in\Lambda}Z(A_\lambda)$. So, there exists $\lambda'\in \Lambda$ such that $\Cu\in Z(A_{\lambda'})$. But since $Z(A_{\lambda'})\subseteq V$, this contradicts that $A_{\lambda'}\in \Cu$. Therefore, $Y$ is closed with respect to the  Jacobson topology, as desired.  So, every closed subset of $\widehat{\B}$ is also closed with the Jacobson topology.

Now, let $Y$ be a closed subset of $\widehat{\B}$ with respect the Jacobson topology, and let $\Cu$ be an ultrafilter that does not belong to $Y$. Therefore, we have that $\text{Ker }\eta_\Cu\nsupseteq I_Y$. This is equivalent to say that there exists $B_\Cu\in \Cu$ such that $Z(B_\Cu)\cap Y = \emptyset$. Thus, for every $\Cu\in \widehat{\B}\setminus Y$ we can find $B_\Cu\in \B$ such that $Z(B_\Cu)\cap Y = \emptyset$. Then, we have that $\widehat{\B}\setminus Y=\bigcup\limits_{\Cu\in\widehat{\B}\setminus Y} Z(B_\Cu)$.  Hence, $\widehat{\B}\setminus Y$ is an open set because it is a union of open subsets. Therefore, $Y$ is a closed subset of $\widehat{\B}$.\end{proof}


\begin{corol}\label{corollary_1}   Let $\B$ be a Boolean algebra and let $\widehat{\B}$ be the Stone's spectrum of $\B$. Then, given any $A\in \B$, we have that $\widehat{\Id_A}$ is a compact subspace of $\widehat{\B}$.
\end{corol}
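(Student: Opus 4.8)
The plan is to deduce the statement from Stone's Representation Theorem (Proposition \ref{proposition_2}) together with the identification of $\widehat{\Id_A}$ with the cylinder set $Z(A)\subseteq\widehat{\B}$ recorded just before the corollary. First I would recall from Proposition \ref{proposition_2} and the Gelfand--Naimark Theorem that $C^*(\B)\cong C_0(\widehat{\B})$, where a character $\eta_\Cu$ is identified with the ultrafilter $\Cu$. Under this isomorphism the element $\chi_A$ corresponds to the continuous function $\widehat{\B}\to\C$ given by $\Cu\mapsto\eta_\Cu(\chi_A)$, which by the definition of $\eta_\Cu$ equals $1$ precisely when $A\in\Cu$, that is, when $\Cu\in Z(A)$, and $0$ otherwise. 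In other words, $\chi_A$ is (the image of) the characteristic function of $Z(A)$.

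Next I would invoke the standard fact that if $f\in C_0(X)$ and $\varepsilon>0$, then the set $\{x\in X:|f(x)|\geq\varepsilon\}$ is compact. Applying this to $f=\chi_A\in C^*(\B)\cong C_0(\widehat{\B})$ with $\varepsilon=\tfrac12$, and using that $\chi_A$ takes only the values $0$ and $1$, one sees that
$$Z(A)=\{\Cu\in\widehat{\B}:|\eta_\Cu(\chi_A)|\geq\tfrac12\}$$
is a compact subset of $\widehat{\B}$. Since $\widehat{\Id_A}$ has been identified with $Z(A)$ as subspaces of $\widehat{\B}$, this shows that $\widehat{\Id_A}$ is a compact subspace of $\widehat{\B}$, as required.

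I do not expect a genuine obstacle here; the points that need care are bookkeeping rather than difficulty. One must make sure that the Gelfand transform really sends $\chi_A$ to the indicator function of $Z(A)$ (immediate from the description of $\eta_\Cu$), and that the identification $\iota:\widehat{\Id_A}\to Z(A)$ is a homeomorphism onto the subspace $Z(A)$, so that compactness transfers; the latter is built into the identification made before the statement. Alternatively, one could bypass the $C_0$ argument by observing that $\Id_A$ is a unital Boolean algebra with unit $A$, so that $\chi_A$ is a unit for $C^*(\Id_A)$; then Proposition \ref{proposition_2} applied to $\Id_A$ shows $C^*(\Id_A)\cong C_0(\widehat{\Id_A})$ is unital, which forces $\widehat{\Id_A}$ to be compact.
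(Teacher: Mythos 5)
Your proposal is correct. The paper itself gives no written proof of this corollary: it has already recorded (with a citation to Givant--Halmos) that the cylinder sets $Z(A)$ are clopen and compact in the Stone topology, and it has identified $\widehat{\Id_A}$ with $Z(A)$ via the map $\iota$, so the corollary is treated as an immediate consequence of that identification together with Proposition \ref{proposition_2}. What you do differently is supply an actual $C^*$-algebraic derivation: you transport $\chi_A$ through the Gelfand isomorphism $C^*(\B)\cong C_0(\widehat{\B})$ to the indicator function of $Z(A)$, and then use the vanishing-at-infinity property of $C_0$-functions ($\{|f|\geq\varepsilon\}$ compact) to get compactness of $Z(A)$. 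This is a legitimate and self-contained route, and it is arguably what the placement of the statement as a ``corollary'' of the Stone Representation Theorem intends; the only point you rightly flag is that the identification $\iota:\widehat{\Id_A}\to Z(A)$ must be a homeomorphism onto its image, which is routine since $\iota$ matches the basic clopen sets $\{Z(B):B\in\Id_A\}$ of $\widehat{\Id_A}$ with the basic clopen sets $Z(B)=Z(B)\cap Z(A)$ of the subspace $Z(A)$. Your alternative argument --- $\Id_A$ is unital with unit $A$, so $C^*(\Id_A)\cong C_0(\widehat{\Id_A})$ is unital and hence $\widehat{\Id_A}$ is compact --- is also valid and is perhaps the cleanest of the three, since it avoids both the subspace bookkeeping and the explicit $\varepsilon$-argument.
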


\section{Actions on Boolean spaces and crossed products}\label{section_2}

By the previous results, it is possible to define a partial action on the Boolean  $C^*$-algebra by describing a partial action on the Boolean algebra. This gives a more intuitive way to understand the actions at the level of the $C^*$-algebra, and to extract information of this action by understanding the dynamics of the elements of the Boolean algebra. In this section, we will introduce dynamical systems on a Boolean algebra, and define what is a Cuntz-Krieger representation of this dynamical system on a $C^*$-algebra. Essentially, this is a generalization of a Cuntz-Krieger representation of directed graphs, considering the set of vertices the Boolean algebra, and the set of edges the partially defined actions on the vertices. 

\begin{defi} Let $\B$ be a Boolean algebra, we say that a map $\theta:\B\longrightarrow \B$ is an \emph{action on $\B$} if $\theta$ is a Boolean algebras homomorphism with $\theta(\emptyset)=\emptyset$. 
We say that the action has \emph{compact range} if $\{\theta(A)\}_{A\in \B}$ has least upper-bound, that we will denote $\mathcal{R}_\theta$. Moreover, we say that the action has \emph{closed domain}  if there exists $\mathcal{D}_\theta\in \B$ such that $\theta(\mathcal{D}_\theta)=\mathcal{R}_\theta$.
\end{defi}

\begin{rema} Observe that given an action $\theta$ with compact range and closed domain, there is not necessarily a unique $\D_\theta$ with $\theta(\D_\alpha)=\mathcal{R}_\theta$, but we will assume that in the definition there is a fixed one.
\end{rema}

Given a set $\Labe$, and given any $n\in\N$, we define $\Labe^n=\{(\alpha_1,\ldots,\alpha_n):\alpha_i\in \Labe)\}$, and $\Labe^*=\bigcup\limits_{n=0}^\infty\Labe^n$, where $\Labe^0=\{\emptyset\}$.  Given $\alpha\in \Labe^n$ for $n\geq 1$, we will write it as $\alpha=\alpha_1\cdots\alpha_n$ where $\alpha_i\in \Labe$. Given $1\leq l\leq k\leq n$, we define $\alpha_{[l,k]}:=\alpha_l\cdots\alpha_k$. We can also endow an order on $\Labe^*$ as follows: given $\alpha\in \Labe^n$ and $\beta\in \Labe^m$,  
$$\alpha\leq \beta \qquad\text{if and only if }\qquad n\leq m\text{ and }\alpha=\beta_{[1,n]}\,.$$
In case that $\alpha\leq \beta$, we define $\beta\setminus \alpha:=\beta_{[n+1,m]}$ if $n<m$ and $\emptyset$ otherwise.

\begin{defi} 
A \emph{Boolean dynamical system} on a Boolean algebra $\B$ is a triple $(\B,\Labe,\theta)$ such that $\Labe$ is a set, and $\{\theta_\alpha\}_{\alpha\in \Labe}$ is a set of actions on $\B$. Moreover, given $\alpha=(\alpha_1,\ldots,\alpha_n)\in \Labe^{\geq 1}$ the action $\theta_\alpha:\B\longrightarrow \B$ defined as $\theta_\alpha=\theta_{\alpha_n}\circ\cdots\circ\theta_{\alpha_1}$ has compact range and closed domain. 
\end{defi}

\begin{nota}
Given any $\alpha\in \Labe^*$, we will write $\mathcal{D}_\alpha:=\mathcal{D}_{\theta_\alpha}$ and $\mathcal{R}_\alpha:=\mathcal{R}_{\theta_\alpha}$. Also, when $\alpha=\emptyset$, we will define $\theta_{\emptyset}=\text{Id}$, and we will formally assume that $\mathcal{R}_{\emptyset}=\mathcal{D}_{\emptyset}:=\bigcup\limits_{A\in \mathcal{B}}A$, in order to guarantee that $A\subseteq \mathcal{R}_{\emptyset}$ for every $A\in \mathcal{B}$.
\end{nota}

\begin{defi}  Let $(\B,\Labe,\theta)$ be a  Boolean dynamical system. Given $B\in \B$ we define 
$$\Delta_{B}:=\{\alpha\in \Labe: \theta_\alpha(B)\neq\emptyset\}\qquad\text{and}\qquad \lambda_{B}:=|\Delta_{B}|\,.$$  We say that $A\in \B$ is a \emph{regular set} if given any $\emptyset\neq B\in \B$ with $B\subseteq A$ we have that $0<\lambda_B<\infty $, otherwise is called a \emph{singular set}. We denote by $\Breg$ the set of all regular sets where we will include $\emptyset$.
\end{defi}

\begin{defi}\label{loc_finite}
A Boolean dynamical system $(\B,\Labe,\theta)$  is \emph{locally finite} if given $\Cu\in\widehat{\B}$ there exists $A\in \Cu$ such that for every $B\in\xi$ the set 
$$\{\alpha\in\Labe: \theta_{\alpha}(A\cap B)\neq \emptyset\}$$
is finite.
\end{defi}

Observe that if $|\Labe|<\infty$ then $(\B,\Labe,\theta)$ is locally finite.
 
\begin{defi}\label{DefAlgebra}
A \emph{Cuntz-Krieger representation of the Boolean dynamical system} $(\B,\Labe, \theta)$ in a $C^*$-algebra $\mathcal{A}$ consists of a family of projections $\{P_A : A\in\B\}$ and partial isometries $\{S_\alpha : \alpha\in \Labe\}$ in $\mathcal{A}$, with the following properties:
\begin{enumerate}
\item If $A,B\in\B$, then $P_A\cdot P_B = P_{A\cap B}$ and $P_{A\cup B} = P_A + P_B-P_{A\cap B}$, where $P_{\emptyset}=0$.
\item If $\alpha\in \Labe$ and $A\in\B$, then $P_A\cdot  S_\alpha = S_\alpha  \cdot  P_{\theta_\alpha(A)}$. 
\item If $\alpha, \beta\in \Labe$ then $S_\alpha^* \cdot S_\beta = \delta_{\alpha,\beta} \cdot  P_{\Ra}$.
\item Given $A\in \Breg$ we have that
$$P_A =\sum_{\alpha\in\Delta_A} S_\alpha\cdot  P_{\theta_\alpha(A)}\cdot  S_\alpha^*\,.$$
\end{enumerate}
A representation is called \emph{faithful} if $P_A\neq 0$ for every $A\in\B$.
\end{defi}

Given a representation $\{P_A,S_\alpha\}$ of a Boolean dynamical system $(\B,\Labe, \theta)$ in a $C^*$-algebra $\mathcal{A}$, we define $C^*(P_A,S_\alpha)$ to be the sub-$C^*$-algebra of $\mathcal{A}$ generated by $\{P_A,S_\alpha:A\in \B,\,\alpha\in \Labe\}$.

A \emph{universal representation} $\{p_A,s_\alpha\}$ of a Boolean dynamical system $(\B,\Labe, \theta)$ is a representation satisfying the following universal property: given a representation $\{P_A,S_\alpha\}$ of $(\B,\Labe,\theta)$ in a $C^*$-algebra $\mathcal{A}$, there exists a non-degenerate $*$-homomorphism $\pi_{S,P}:C^*(p_A,s_\alpha)\longrightarrow \mathcal{A}$ such that $\pi_{S,P}(p_A)=P_A$ and $\pi_{S,P}(s_\alpha)=S_\alpha$ for $A\in \B$ and $\alpha\in \Labe$.  We will set $C^*(\B,\Labe,\theta):=C^*(p_A,s_\alpha)$. The existence of the universal representation can be found in \cite{BPII}, but we will show it in a different way  in Section \ref{section_4}: given a Boolean dynamical system $(\B,\Labe, \theta)$, we will construct a topological graph  $E$ \cite{KatsuraI}, and we will prove that there exists a one to one correspondence between Cuntz-Krieger representations of $(\B,\Labe, \theta)$ and Cuntz-Krieger representations of $E$. Hence, the universal $C^*$-algebra $C^*(\B,\Labe,\theta)$ is isomorphic to the universal  $C^*$-algebra $\mathcal{O}(E)$ associated to the topological graph $E$.

\begin{theor}[Existence of a Universal representation]\label{theorem_1} Given a Boolean dynamical system $(\B,\Labe,\theta)$ there exists a unique universal representation of $(\B,\Labe,\theta)$. If $C^*(\B,\Labe,\theta)$ is the associated $C^*$-algebra, we will call $C^*(\B,\Labe,\theta)$ the Cuntz-Krieger Boolean algebra of the Boolean dynamical system $(\B,\Labe,\theta)$.
\end{theor}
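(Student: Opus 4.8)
The plan is to construct the universal $C^*$-algebra directly as a universal object for the relations (1)--(4), rather than relying on the topological graph construction promised in Section 5. The standard approach for universal $C^*$-algebras defined by generators and relations is the following: consider the free $*$-algebra $\mathcal{F}$ on the generating symbols $\{p_A : A \in \B\}$ and $\{s_\alpha : \alpha \in \Labe\}$, quotient by the two-sided $*$-ideal generated by the relations (1)--(4) (reading each relation as ``left side minus right side''), and then complete with respect to the largest $C^*$-seminorm. **First I would** observe that the relations force every generator to be bounded in any representation: since each $p_A$ is forced to be a projection by relation (1) (indeed $p_A^2 = p_{A \cap A} = p_A$ and self-adjointness is imposed), we have $\|P_A\| \leq 1$ in any representation, and relation (3) with $\alpha = \beta$ gives $\|S_\alpha\|^2 = \|S_\alpha^* S_\alpha\| = \|P_{\Ra}\| \leq 1$, so each $\|S_\alpha\| \leq 1$. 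Hence for any element $x \in \mathcal{F}$, the quantity $\sup\{\|\pi(x)\| : \pi \text{ a } *\text{-representation respecting the relations}\}$ is finite (bounded by the obvious word-length estimate), which is exactly what is needed for a well-defined $C^*$-seminorm to exist.

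**Next I would** verify that this supremum defines a genuine $C^*$-seminorm $\|\cdot\|_u$ on $\mathcal{F}/\Id$ where $\Id$ is the relation ideal: it is submultiplicative, satisfies the $C^*$-identity on each representation and hence in the supremum, and the collection of admissible representations is nonempty (the zero representation always works, though one wants a nontrivial one to conclude faithfulness separately). Defining $C^*(\B,\Labe,\theta)$ to be the Hausdorff completion of $\mathcal{F}/\Id$ in $\|\cdot\|_u$, and letting $p_A, s_\alpha$ denote the images of the generators, the universal property is then essentially formal: given any representation $\{P_A, S_\alpha\}$ in a $C^*$-algebra $\mathcal{A}$, the assignment $p_A \mapsto P_A$, $s_\alpha \mapsto S_\alpha$ extends to a $*$-homomorphism on $\mathcal{F}$ that kills $\Id$ (since $\{P_A, S_\alpha\}$ satisfy the relations), hence descends to $\mathcal{F}/\Id$, and is $\|\cdot\|_u$-contractive by construction, so it extends continuously to the completion, giving the desired $\pi_{S,P}$. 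Non-degeneracy and the uniqueness clause of the theorem follow by the usual argument that two universal objects admit mutually inverse canonical $*$-homomorphisms.

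**The main obstacle** I anticipate is that this abstract construction, while it readily produces \emph{a} universal $C^*$-algebra, does not by itself guarantee that the canonical generators $p_A$ are nonzero or even that distinct relations have not collapsed the algebra to zero --- in other words, that the universal representation is faithful and nontrivial. This genuinely requires exhibiting at least one faithful Cuntz-Krieger representation, and producing such a representation concretely is precisely the content of the topological graph realization in Section 5 (where one represents the system on $\mathcal{O}(E)$ and checks $P_A \neq 0$). The authors themselves flag this by deferring the proof and pointing to the topological graph construction, so \textbf{the honest plan} is: establish the existence and universal property of $C^*(\B,\Labe,\theta)$ by the generators-and-relations argument above (which is self-contained given only that the generators are uniformly bounded), and defer the faithfulness/nontriviality of the universal representation --- that the bijection between representations of $(\B,\Labe,\theta)$ and of the associated topological graph $E$ of Section 5 produces a faithful model. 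The cleanest alternative, and likely what the paper actually does, is to \emph{define} $C^*(\B,\Labe,\theta) := \mathcal{O}(E)$ via the explicit correspondence and read off both existence and the universal property from Katsura's theory of topological graph algebras, thereby obtaining faithfulness for free from Katsura's gauge-invariant uniqueness theorem.
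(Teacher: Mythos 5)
Your proposal is correct and proves the theorem as literally stated, but by a genuinely different route than the paper. The paper never runs the generators-and-relations argument: it defers the proof to Section \ref{section_4}, where it builds the $0$-dimensional topological graph $E$ of Proposition \ref{prop_top} (vertex space the Stone spectrum $\widehat{\B}$, edge space the disjoint union of the spectra $\widehat{\Id_{\Ra}}$), shows in Corollary \ref{faithful_rep} that the universal Cuntz-Krieger $E$-pair $(t^0,t^1)$ yields a \emph{faithful} representation $p_A:=t^0(\chi_{\Ne_A})$, $s_\alpha:=t^1(\chi_{E^1_\alpha})$, and then proves in Theorem \ref{proposition_51} that this representation is universal, by showing that every Cuntz-Krieger representation $\{P_A,S_\alpha\}$ of $(\B,\Labe,\theta)$ induces a Cuntz-Krieger $E$-pair $(T^0,T^1)$ with $T^0(\chi_{\Ne_A})=P_A$ and $T^1(\chi_{\Me^\alpha_B})=S_\alpha P_B$; existence, uniqueness, and faithfulness are then all inherited from Katsura's $\mathcal{O}(E)$. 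This is precisely the ``cleanest alternative'' you name at the end. Your free $*$-algebra construction is sound as far as it goes: the relations are genuinely $*$-algebraic (the sum in relation (4) is finite since $\lambda_A<\infty$ for $A\in\Breg$), they force $\|P_A\|\le 1$ and $\|S_\alpha\|\le 1$ in every representation, so the universal $C^*$-seminorm is finite and the universal property and uniqueness become formal. What your route buys is a short, self-contained proof of the statement, which---as you correctly observe---does not itself assert $p_A\neq 0$. What it does not deliver, and you flag this honestly, is faithfulness of the universal representation, which the paper needs repeatedly afterwards (e.g.\ in Lemma \ref{lem3}, in Proposition \ref{prop1} where the elements $s_\alpha p_A s_\beta^*$ of the inverse semigroup $T$ must be nonzero, and in the Cuntz-Krieger uniqueness theorem); the paper's detour through $\mathcal{O}(E)$ yields that nonvanishing together with nuclearity, the UCT, the gauge action comparison, and the $K$-theory sequence, all from the one construction.
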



By the universality of $C^*(\B,\Labe,\theta)$, there exists a strongly continuous action  $\beta:\mathbb{T}\curvearrowright \text{Aut }(C^*(\B,\Labe,\theta))$ such that $\beta_z(p_A)=p_A$ and $\beta_z(s_\alpha)=zs_\alpha$ for every $A\in \B$, $\alpha\in \Labe$ and $z\in \mathbb{T}$. The action $\beta$ is called the \emph{gauge action}

Therefore, we can use the representation of $C^*(\B,\Labe,\theta)$ as a topological graph $C^*$-algebra to obtain a gauge uniqueness theorem \cite[Theorem 4.5]{KatsuraI}.

\begin{theor}[Gauge Uniqueness Theorem] Let $(\B,\Labe,\theta)$ be a Boolean dynamical system and let $\{P_A,S_\alpha\}$ be a representation of $(\B,\Labe,\theta)$ in $\mathcal{A}$. Suppose that $P_A\neq 0$ whenever $A\neq\emptyset$, and that there is a strongly continuous action $\gamma$ of $\mathbb{T}$ on $C^*(P_A,S_\alpha)\subseteq \mathcal{A}$, such that for all $z\in \mathbb{T}$ we have that $\gamma_z\circ \pi_{S,P}=\pi_{S,P}\circ \beta_z$. Then, $\pi_{S,T}$ is injective.
\end{theor}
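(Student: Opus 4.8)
The plan is to deduce this statement from Katsura's gauge-invariant uniqueness theorem for topological graphs, \cite[Theorem 4.5]{KatsuraI}, by transporting everything through the correspondence between Cuntz--Krieger representations of $(\B,\Labe,\theta)$ and Cuntz--Krieger representations of the associated topological graph $E$ that is set up in Section~\ref{section_4}. Under that correspondence the universal objects match: one has a $\mathbb{T}$-equivariant isomorphism $\Clab\cong\mathcal{O}(E)$ carrying the gauge action $\beta$ to the canonical gauge action on $\mathcal{O}(E)$. Consequently the whole problem reduces to rephrasing the two hypotheses of the present statement as the two hypotheses of Katsura's theorem, after which the conclusion is immediate.

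First I would translate the given representation $\{P_A,S_\alpha\}$ into the corresponding Cuntz--Krieger $E$-pair $(t^0,t^1)$, with $t^0:C_0(E^0)\to\mathcal{A}$ and $t^1:C_0(E^1)\to\mathcal{A}$ the associated maps on the vertex and edge spaces of $E$. Via Stone's Representation Theorem (Proposition~\ref{proposition_2}) the vertex algebra $C_0(E^0)$ is identified with $C^*(\B)\cong C_0(\widehat{\B})$, and under this identification $t^0$ is exactly the $*$-homomorphism sending the projection $\chi_A$ (equivalently, the characteristic function $1_{Z(A)}$ of the cylinder set $Z(A)$) to $P_A$. The homomorphism $\pi_{S,P}$ then corresponds, under $\Clab\cong\mathcal{O}(E)$, to the canonical map $\mathcal{O}(E)\to\mathcal{A}$ induced by $(t^0,t^1)$, and the given action $\gamma$ becomes a strongly continuous $\mathbb{T}$-action on $C^*(t^0,t^1)$ intertwining this induced map with the gauge action on $\mathcal{O}(E)$.

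With the data transported, I would verify the two hypotheses of \cite[Theorem 4.5]{KatsuraI}. The gauge-compatibility hypothesis is immediate from the intertwining relation $\gamma_z\circ\pi_{S,P}=\pi_{S,P}\circ\beta_z$. For the injectivity of $t^0$, observe that the sets $\{Z(A):A\in\B\}$ form a basis of compact-open sets of $\widehat{\B}$, so the projections $1_{Z(A)}$ generate and separate points of $C_0(\widehat{\B})$; a short argument then shows that a $*$-homomorphism out of $C_0(\widehat{\B})$ is injective precisely when it is nonzero on each such basic projection. Hence injectivity of $t^0$ is equivalent to $t^0(1_{Z(A)})=P_A\neq 0$ for every $A\neq\emptyset$, which is exactly the standing hypothesis. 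Katsura's theorem then gives that the induced map $\mathcal{O}(E)\to\mathcal{A}$, and therefore $\pi_{S,P}$, is injective.

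The main obstacle I anticipate is not analytic but lies in the bookkeeping of the translation step: one must check that relations (1)--(4) of Definition~\ref{DefAlgebra} correspond term by term to the defining relations of a Cuntz--Krieger $E$-pair for the topological graph built in Section~\ref{section_4}, and in particular that the relation $P_A=\sum_{\alpha\in\Delta_A}S_\alpha\,P_{\theta_\alpha(A)}\,S_\alpha^*$ for $A\in\Breg$ matches Katsura's Cuntz--Krieger condition over the regular part of the vertex space $E^0$. Once this dictionary is in place (which is precisely what the correspondence of Section~\ref{section_4} provides) and the injectivity of $t^0$ has been detected on the dense family of characteristic projections $1_{Z(A)}$, the statement follows directly.
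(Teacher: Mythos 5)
Your proposal is correct and takes essentially the same route as the paper: the paper's proof of this statement (restated as Theorem \ref{theorem_2}) consists precisely of invoking the isomorphism $\Clab\cong\mathcal{O}(E)$ from Theorem \ref{proposition_51}, the observation that this isomorphism intertwines the two gauge actions, and Katsura's gauge-invariant uniqueness theorem \cite[Theorem 4.5]{KatsuraI}. Your explicit verification that $P_A\neq 0$ for all $A\neq\emptyset$ forces injectivity of $T^0$ (detected on the basis of compact-open cylinder sets $Z(A)$, using that the kernel is $C_0(U)$ for some open $U$) is exactly the step the paper leaves implicit in its one-line proof.
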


\section{$0$-dimensional topological graphs}

Our goal in this section is to use a topological graph $E=(E^0,E^1,d,r)$ with $E^0$ and $E^1$ being second countable, locally compact $0$-dimensional spaces (i.e., Hausdorff, totally disconnected and having a basis consisting of clopen sets) to construct a Boolean dynamical system.

First, we should recall the definition of topological graph given in \cite{KatsuraI}.

\begin{defi}
Let $E^0$ and $E^1$ be locally compact spaces, let $d:E^1\rightarrow E^0$ be a local homeomorphism, and let $r:E^1\rightarrow E^0$ be a continuous map. Then, the quadruple $E=(E^0,E^1,d,r)$ is called a \emph{topological graph}. We will call $E$ a $0$-dimensional graph if $E^0$ and $E^1$ have $0$ covering dimension.
\end{defi}
Let us denote $C_d(E^1)$ the set of continuous functions on $E^1$ such that 
$$\langle \xi|\xi\rangle(v):=\sum_{e\in d^{-1}(v)}|\xi(e)|^2<\infty$$ 
for any $v\in E^0$ and $\langle\xi|\xi\rangle\in C_0(E^0)$. For $\xi,\zeta\in C_d(E^1)$ and $f\in C_0(E^0)$, we define $\xi f\in C_d(E^1)$ and $\langle\xi|\zeta\rangle\in C_0(E^1)$ by
$$(\xi f)(e)=\xi(e)f(d(e))\qquad \text{for }e\in E^1$$
$$\langle\xi|\zeta\rangle(v)=\sum_{e\in d^{-1}(v)}\overline{\xi(e)}\zeta(e) \qquad\text{for }v\in \overline{E}^0\,.$$
With these operations, $C_d(E^1)$ is a right Hilbert $C_0(E^0)$-module. We define a left action $\pi_r$ of $C_0(E^0)$ on $C_d(E^1)$ by $(\pi_r(f)\xi)(e)=f(r(e))\xi(e)$ for $e\in E^1$, $\xi\in C_d(E^1)$ and $f\in C_0(E^0)$. In this way, we define a $C^*$-correspondence $C_d(E^1)$ over $C_0(E^0)$.

\begin{defi} A \emph{Toeplitz $E$-pair} on a $C^*$-algebra $\mathcal{A}$ is a pair of maps $T=(T^0,T^1)$, where $T^0:C_0(E^0)\longrightarrow \mathcal{A}$ is a $*$-homomorphism and $T^1:C_d(E^1)\longrightarrow\mathcal{A}$ is a linear map, satisfying:
\begin{enumerate}
\item $T^1(\xi)^*T^1(\zeta)=T^0(\langle \xi|\zeta\rangle)$ for $\xi,\zeta\in C_d(E^1)$,
\item $T^0(f)T^1(\xi)=T^1(\pi_r(f)\xi)$ for $f\in C_0(E^0)$ and $\xi\in C_d(E^1)$.
\end{enumerate}
We will denote by  $C^*(T^0,T^1)$  the sub-$C^*$-algebra of $\mathcal{A}$ generated by the Toeplitz $E$-pair $(T0, T^1)$.
\end{defi}

Given a topological graph $E$, we define the following $3$ open subsets of $E^0$:
$$E_{sce}:=E^0\setminus \overline{r(E^0)}\,,$$
$$E^0_{fin}:=\{v\in E^0:\exists V \text{neighborhood of }v \text{ such that }r^{-1}(V)\text{ is compact}\} \text{, and }$$
$$E^0_{rg}:=E^0_{fin}\setminus \overline{E^0_{sce}}\,.$$
We have that $\pi_r^{-1}(\mathcal{K}(C_d(E^1)))=C_0(E^0_{fin})$ and $\text{Ker }\pi_r=C_0(E^0_{sce})$. For a Toeplitz $E$-pair $T=(T^0,T^1)$, we define a $*$-homomorphism $\Phi:\mathcal{K}(C_d(E^1))\longrightarrow \mathcal{A}$ by $\Phi(\theta_{\xi,\zeta})=T^1(\xi)T^1(\zeta)^*$ for $\xi,\zeta\in C_d(E^1)$.

\begin{defi} A Toeplitz $E$-pair $T=(T^0,T^1)$ is called a \emph{Cuntz-Krieger $E$-pair} if $T^0(f)=\Phi(\pi_r(f))$ for any $f\in C_0(E^0_{rg})$. We denote by $\mathcal{O}(E)$ the $C^*$-algebra  is generated by the universal Cuntz-Krieger $E$-pair $t=(t^0,t^1)$.
\end{defi}

Therefore, $\mathcal{O}(E)$ is generated by $\{t^0(f):f\in C_0(E^0)\}$ and $\{t^1(\xi):\xi\in C_d(E^1)\}$, where $(t^0,t^1)$ is a universal Cuntz-Krieger pair of $E$.

\begin{defi}\label{compactsup}
Let $E$ be a topological graph, then a family $\{V_\alpha\}_{\alpha\in \Labe}$ of subsets of $E^1$ \emph{compactly supports} $E$ if it satisfies the  conditions:
\begin{enumerate}
\item $V_\alpha$ is a compact clopen set of $E^1$ for every $\alpha\in\Labe$,
\item $E=\bigcup\limits_{\alpha\in \Labe} V_\alpha$,
\item $V_\alpha\cap V_\beta=\emptyset$ when $\alpha\neq \beta$,
\item the restriction $d_{| V_\alpha}$ is a homeomorphism for every $\alpha\in \Labe$,
\item there exists a compact clopen $\Do$ with $r(V_\alpha)\subseteq \Do$ for every $\alpha\in\Labe$.
\end{enumerate}
\end{defi}

\begin{rema} If $E$ is a topological graph with $E^0$ and $E^1$ being second countable and locally compact $0$-dimensional spaces, then it always exists  $\{V_\alpha\}_{\alpha\in \Labe}$ that compactly supports $E$.
\end{rema}

Then, we can trivially define a Boolean dynamical system.

\begin{lem}\label{boolean_dynamical}
Let $E$ be a $0$-dimensional topological graph that has a family of subsets $\{V_\alpha\}_{\alpha\in \Labe}$ of $E^1$ that compactly supports $E$. Then if $\B$ is the Boolean algebra of the compact and clopen subsets of $E^0$, and given $\alpha\in\Labe$ we define the  $\theta_\alpha(A):=d(r^{-1}(A)\cap V_\alpha)$ for every $A\in \B$, then $(\B,\Labe,\theta)$ is a Boolean dynamical system.
\end{lem}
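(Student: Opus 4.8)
The plan is to verify the three requirements in the definition of a Boolean dynamical system: that each $\theta_\alpha$ with $\alpha\in\Labe$ is an action on $\B$ (a Boolean homomorphism fixing $\emptyset$), and that each $\theta_\alpha$ with $\alpha\in\Labe^{\geq 1}$ has compact range and closed domain. First I would fix $\alpha\in\Labe$ and $A\in\B$ and check that $\theta_\alpha(A)=d(r^{-1}(A)\cap V_\alpha)$ lands in $\B$. Since $A$ is clopen and $r$ is continuous, $r^{-1}(A)$ is clopen; intersecting with the compact clopen $V_\alpha$ gives a compact open (hence compact clopen) subset of $V_\alpha$. As $d$ is a local homeomorphism it is an open map, so the image is open, and being a continuous image of a compact set inside the Hausdorff space $E^0$ it is also closed; thus $\theta_\alpha(A)$ is compact clopen, i.e. $\theta_\alpha(A)\in\B$.

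Next I would check that $\theta_\alpha$ is a Boolean homomorphism with $\theta_\alpha(\emptyset)=\emptyset$. The equality $\theta_\alpha(\emptyset)=\emptyset$ is immediate. For the three operations the crucial point is that $d_{|V_\alpha}$ is a homeomorphism, hence injective on $V_\alpha$; combined with the fact that $r^{-1}$ preserves $\cap,\cup,\setminus$, and that images under an injective map commute with $\cap$ and with $\setminus$ (while images always commute with $\cup$), one obtains $\theta_\alpha(A\cap B)=\theta_\alpha(A)\cap\theta_\alpha(B)$, $\theta_\alpha(A\cup B)=\theta_\alpha(A)\cup\theta_\alpha(B)$, and $\theta_\alpha(A\setminus B)=\theta_\alpha(A)\setminus\theta_\alpha(B)$. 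This establishes that each $\theta_\alpha$, $\alpha\in\Labe$, is an action on $\B$.

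Then I would turn to compact range and closed domain, taking $\Do$ to be the common compact clopen set from condition (5) of Definition \ref{compactsup}, so that $r(V_\alpha)\subseteq\Do$, equivalently $V_\alpha\subseteq r^{-1}(\Do)$, whence $r^{-1}(\Do)\cap V_\alpha=V_\alpha$ and $\theta_\alpha(\Do)=d(V_\alpha)$ for each single letter $\alpha$. Every Boolean homomorphism is monotone (if $A\subseteq B$ then $\theta_\alpha(A)=\theta_\alpha(A\cap B)=\theta_\alpha(A)\cap\theta_\alpha(B)\subseteq\theta_\alpha(B)$), and since $\theta_\alpha(A)=d(r^{-1}(A)\cap V_\alpha)\subseteq d(V_\alpha)=\theta_\alpha(\Do)$ holds for every $A\in\B$, the family $\{\theta_\alpha(A)\}_{A\in\B}$ has the element $\theta_\alpha(\Do)=d(V_\alpha)\in\B$ as a maximum, hence as its least upper bound $\Ra$; taking $\Do$ as domain gives closed domain. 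For a word $\alpha=(\alpha_1,\ldots,\alpha_n)$ the map $\theta_\alpha=\theta_{\alpha_n}\circ\cdots\circ\theta_{\alpha_1}$ is a composite of actions, so it is again a Boolean homomorphism fixing $\emptyset$ that maps $\B$ into $\B$, and it is monotone. Applying the single-letter bound $\theta_{\alpha_1}(A)\subseteq\theta_{\alpha_1}(\Do)$ (valid for \emph{all} $A\in\B$, not only $A\subseteq\Do$) and pushing it successively through the monotone maps $\theta_{\alpha_2},\ldots,\theta_{\alpha_n}$ yields $\theta_\alpha(A)\subseteq\theta_\alpha(\Do)$ for every $A\in\B$. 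Hence $\theta_\alpha(\Do)\in\B$ is the maximum of the range, so $\Ra:=\theta_\alpha(\Do)$ is its least upper bound and $\Do$ witnesses the closed domain; in particular the single domain $\Do$ serves for every word.

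The only genuinely non-formal step is the first one, showing that $\theta_\alpha(A)$ is again compact clopen in $E^0$: this is exactly where the topological hypotheses are essential, since local homeomorphy of $d$ supplies openness of the image while the Hausdorff property of $E^0$ upgrades compactness to closedness, and without both $\theta_\alpha$ would fail to land in $\B$. Everything afterwards is formal manipulation using injectivity of $d_{|V_\alpha}$, monotonicity, and the common bound $\Do$ provided by condition (5).
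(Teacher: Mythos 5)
Your proof is correct and takes essentially the same approach as the paper, whose entire proof of this lemma is the one-line assertion that it is straightforward to check that $\theta_\alpha$ is an action on $\B$ with compact range $\mathcal{R}_\alpha=d(V_\alpha)$ and domain $\mathcal{D}_\alpha$ --- exactly the sets you identify. Your detailed verification (clopenness and compactness of $\theta_\alpha(A)$ via openness of the local homeomorphism $d$ and Hausdorffness of $E^0$, the homomorphism property via injectivity of $d_{|V_\alpha}$, and the treatment of words by monotonicity) simply supplies the routine details the paper leaves to the reader.
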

\begin{proof} It is straightforward to check that $\theta_\alpha$ is an action on $\B$ with compact range $\Ra:=d(V_\alpha)$ and compact domain $\Do$. 
\end{proof}
 
\begin{rema} Observe that if $E$ is $0$-dimensional topological graph, then we can construct a Boolean dynamical system. However, it is not unique, because it could exist several $\{V_\alpha\}_{\alpha\in \Labe}\subseteq E^1$  satisfying the above conditions. We will see that, despite of the choice of the above pairs of sets, the $C^*$-algebras of the associated Boolean dynamical systems are isomorphic. 
\end{rema}

\begin{lem}\label{vertices}
Let $E$ be a $0$-dimensional topological graph and let $\{V_\alpha\}_{\alpha\in \Labe}$ be a family of subsets of $E^1$ satisfying conditions of the Definition \ref{compactsup}. Then if  $(\B,\Labe,\theta)$ is the associated Boolean dynamical system defined in Lemma \ref{boolean_dynamical} then  given $A\in \B$ we have that
\begin{enumerate}
\item $A\subseteq  E^0_{sce}$ if and only if $\lambda_A=0$.
\item $A\subseteq E^0_{fin}$ if and only if  $\lambda_A<\infty$.
\item  $A\subseteq  E^0_{rg}$ if and only if $A\in \Breg$.
\end{enumerate}
\end{lem}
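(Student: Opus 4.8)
The plan is to prove the three equivalences by unwinding the definitions on both sides and translating between the topological‑graph data and the combinatorial invariants $\lambda_A$, $\Breg$. Throughout I keep in mind the identifications from Lemma~\ref{boolean_dynamical}: $\theta_\alpha(A)=d(r^{-1}(A)\cap V_\alpha)$, and $A$ ranges over compact clopen subsets of $E^0$. The key observation linking the two worlds is that for $A\in\B$ and $\alpha\in\Labe$ one has $\theta_\alpha(A)\neq\emptyset$ if and only if $r^{-1}(A)\cap V_\alpha\neq\emptyset$, i.e. if and only if there is an edge $e\in V_\alpha$ with $r(e)\in A$. Because the $\{V_\alpha\}$ partition $E^1$ (conditions (2),(3) of Definition~\ref{compactsup}), the set $\Delta_A=\{\alpha:\theta_\alpha(A)\neq\emptyset\}$ records exactly which ``pieces'' of $r^{-1}(A)$ are nonempty, and $\lambda_A=|\Delta_A|$ counts them.

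For (1): First I would show $\lambda_A=0$ iff $r^{-1}(A)=\emptyset$. Indeed $\lambda_A=0$ means $\theta_\alpha(A)=\emptyset$ for all $\alpha$, i.e. $r^{-1}(A)\cap V_\alpha=\emptyset$ for every $\alpha$; since $\bigcup_\alpha V_\alpha=E^1$, this is equivalent to $r^{-1}(A)=\emptyset$, which says $A\cap r(E^1)=\emptyset$. Now I must connect this to $A\subseteq E_{sce}=E^0\setminus\overline{r(E^1)}$. The forward direction is immediate: if $A\subseteq E_{sce}$ then $A$ is disjoint from $\overline{r(E^1)}\supseteq r(E^1)$, so $r^{-1}(A)=\emptyset$. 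The converse uses that $A$ is clopen: if $A\cap r(E^1)=\emptyset$ then, since $A$ is open and disjoint from $r(E^1)$, it is disjoint from its closure $\overline{r(E^1)}$, whence $A\subseteq E_{sce}$. The clopenness of $A$ is what makes ``disjoint from $r(E^1)$'' upgrade to ``disjoint from $\overline{r(E^1)}$.''

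For (2): The plan is to relate $\lambda_A<\infty$ to the finiteness/compactness condition defining $E^0_{fin}$. Since $d$ restricted to each $V_\alpha$ is a homeomorphism (condition (4)) and each $V_\alpha$ is compact clopen (condition (1)), the intersection $r^{-1}(A)\cap V_\alpha$ is compact for each $\alpha$; the only way $r^{-1}(A)$ can fail to be compact is by meeting infinitely many of the disjoint clopen pieces $V_\alpha$. Thus $\lambda_A<\infty$ is equivalent to $r^{-1}(A)$ being contained in finitely many $V_\alpha$, hence to $r^{-1}(A)$ being compact. I would then argue this is equivalent to $A\subseteq E^0_{fin}$: if $r^{-1}(A)$ is compact then each $v\in A$ has the neighborhood $A$ with $r^{-1}(A)$ compact, so $A\subseteq E^0_{fin}$; conversely, if $A\subseteq E^0_{fin}$, then $A$ is covered by the open sets $V$ with $r^{-1}(V)$ compact, and compactness of $A$ extracts a finite subcover giving $r^{-1}(A)$ compact. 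I expect this equivalence---reconciling the pointwise ``exists a neighborhood'' definition of $E^0_{fin}$ with the global compactness of $r^{-1}(A)$---to be the main obstacle, since it requires a careful compactness argument and the observation that one may intersect the covering neighborhoods with the clopen set $A$.

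For (3): This should follow by combining (1) and (2) with the definitions $E^0_{rg}=E^0_{fin}\setminus\overline{E^0_{sce}}$ and of $\Breg$. Recall $A\in\Breg$ means that every nonempty clopen $B\subseteq A$ satisfies $0<\lambda_B<\infty$. By part (2) applied to subsets, $A\subseteq E^0_{fin}$ is equivalent to $\lambda_B<\infty$ for all clopen $B\subseteq A$ (using that $E^0_{fin}$ is hereditary for clopen subsets); by part (1), $\lambda_B>0$ for all nonempty clopen $B\subseteq A$ is equivalent to $A$ containing no nonempty clopen piece inside $E_{sce}$, i.e. to $A$ being disjoint from $\overline{E^0_{sce}}$ (again upgrading disjointness from the open set to its closure via clopenness of $A$). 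Intersecting these two conditions yields precisely $A\subseteq E^0_{fin}\setminus\overline{E^0_{sce}}=E^0_{rg}$, completing the proof.
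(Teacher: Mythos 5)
Your proof is correct and follows essentially the same route as the paper's: translate everything through $\theta_\alpha(A)=d(r^{-1}(A)\cap V_\alpha)$ and the fact that the $V_\alpha$ form a disjoint compact clopen cover of $E^1$, then use compactness of the sets in $\B$. You are in fact more careful than the paper, whose proof only writes out the forward implications of (1) and (2) (treating $A\subseteq E^0_{sce}\Leftrightarrow A\cap r(E^1)=\emptyset$ and the compactness of $r^{-1}(A)$ as immediate) and dismisses (3) as clear; your explicit use of the clopenness of $A$ to upgrade disjointness from $r(E^1)$ (resp.\ $E^0_{sce}$) to disjointness from its closure, and the finite-subcover argument for $E^0_{fin}$, are precisely the points the paper leaves implicit.
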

\begin{proof}
(1) We have that $A\subseteq E^0_{sce}$, this means that 
$$\emptyset= A \cap r(E^1)=A\cap r(\bigcup\limits_{\alpha\in \Labe}V_\alpha)=\bigcup\limits_{\alpha\in \Labe} A\cap r(V_\alpha)\,,$$
so $A\cap r(V_\alpha)=\emptyset$ for every $\alpha\in\Labe$, but it is equivalent to $r^{-1}(A)\cap V_\alpha=\emptyset$ for every $\alpha\in\Labe$. Then by definition $\theta_\alpha(A)=\emptyset$ for every $\alpha\in \Labe$, whence $\lambda_A=0$.

(2) Let $A\subseteq E^0_{fin}$, by definition $r^{-1}(A)$ must be compact. Then since $\bigcup\limits_{\alpha\in \Labe}(r^{-1}(A)\cap V_\alpha)$ is an disjoint  open covering of $r^{-1}(A)$, only a finite number of $r^{-1}(A)\cap V_\alpha$ can be non-empty. But this is equivalent to say that only a finite number of $\theta_\alpha(A)=d(r^{-1}(A)\cap V_\alpha)$ is non-empty, whence $\lambda_A<\infty$.

(3) This is clear using (1) and (3)
\end{proof}

\begin{prop}\label{proposition_5} Let $E$ be a $0$-dimensional topological graph and let $\{V_\alpha\}_{\alpha\in \Labe}$ be a family of subsets of $E^1$ satisfying conditions of the Definition \ref{compactsup}. Then if  $(\B,\Labe,\theta)$ is the associated Boolean dynamical system defined in Lemma \ref{boolean_dynamical}, given any Cuntz-Krieger $E$-representation $(T^0,T^1)$ on $\mathcal{A}$, the  family of elements of $\mathcal{A}$ defined by 
$$P_A   :=T^0(\chi_{A})\text{ and } S_\alpha:=T^1(\chi_{V_\alpha})\,.$$
for every $A\in \B$ and $\alpha\in \Labe$, is a representation of $(\B,\Labe,\theta)$ on $\mathcal{A}$, i.e.,

\begin{enumerate}
\item If $A,B\in \B$ then $P_AP_B=P_{A\cap B}$ and $P_{A\cup B}=P_A+P_B-P_{A\cap B}$, where $P_\emptyset=0$.
\item If $\alpha\in \Labe$ and $A\in \B$ then $P_AS_\alpha=S_\alpha P_{\theta_\alpha(A)}$.
\item If $\alpha,\beta\in \Labe$ then $S^*_\alpha S_\alpha=P_{\Ra}$, and $S^*_\alpha S_\beta=0$ unless $\alpha=\beta$.
\item For $A\in \Breg$, we have 
$$P_A=\sum_{\alpha\in\Delta_A}S_\alpha P_{\theta_\alpha(A)} S^*_\alpha\,.$$   
\end{enumerate} 
\end{prop}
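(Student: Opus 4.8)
The plan is to substitute $P_A=T^0(\chi_A)$ and $S_\alpha=T^1(\chi_{V_\alpha})$ into the four conditions and verify each directly from the axioms of a Cuntz-Krieger $E$-pair. Three preliminary facts should be recorded first. Since each $V_\alpha$ is compact and clopen in $E^1$, the indicator $\chi_{V_\alpha}$ is continuous with compact support, so $\chi_{V_\alpha}\in C_d(E^1)$ and $S_\alpha$ makes sense. Because $d|_{V_\alpha}$ is injective, each fibre $d^{-1}(v)$ meets $V_\alpha$ in at most one point, so a short computation gives $\langle\chi_{V_\alpha}|\chi_{V_\beta}\rangle=\delta_{\alpha,\beta}\,\chi_{d(V_\alpha)}=\delta_{\alpha,\beta}\,\chi_{\Ra}$, using $V_\alpha\cap V_\beta=\emptyset$ for $\alpha\neq\beta$. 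Finally, I would establish the module-multiplicativity $T^1(\xi f)=T^1(\xi)\,T^0(f)$ for all $\xi\in C_d(E^1)$ and $f\in C_0(E^0)$; this is a standard consequence of Toeplitz axiom (1), obtained by expanding $(T^1(\xi f)-T^1(\xi)T^0(f))^*(T^1(\xi f)-T^1(\xi)T^0(f))$, replacing each product of $T^1$'s by $T^0$ of the relevant inner product, and checking that all four terms cancel.

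Conditions (1)--(3) are then routine. Condition (1) follows because $T^0$ is a $*$-homomorphism and $\chi_A\chi_B=\chi_{A\cap B}$, $\chi_{A\cup B}=\chi_A+\chi_B-\chi_{A\cap B}$ as functions on $E^0$, with $\chi_\emptyset=0$. Condition (3) follows from Toeplitz axiom (1) and the inner-product computation above: $S_\alpha^*S_\beta=T^0(\langle\chi_{V_\alpha}|\chi_{V_\beta}\rangle)=\delta_{\alpha,\beta}T^0(\chi_{\Ra})=\delta_{\alpha,\beta}P_{\Ra}$. For condition (2), Toeplitz axiom (2) gives $P_AS_\alpha=T^0(\chi_A)T^1(\chi_{V_\alpha})=T^1(\pi_r(\chi_A)\chi_{V_\alpha})=T^1(\chi_{r^{-1}(A)\cap V_\alpha})$, while module-multiplicativity gives $S_\alpha P_{\theta_\alpha(A)}=T^1(\chi_{V_\alpha})T^0(\chi_{\theta_\alpha(A)})=T^1(\chi_{V_\alpha}\,\chi_{\theta_\alpha(A)})$. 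The two agree because, for $e\in V_\alpha$, one has $d(e)\in\theta_\alpha(A)=d(r^{-1}(A)\cap V_\alpha)$ if and only if $e\in r^{-1}(A)\cap V_\alpha$: the injectivity of $d|_{V_\alpha}$ rules out a second edge of $V_\alpha$ sharing the source $d(e)$.

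The main work is condition (4), which is where the Cuntz-Krieger relation and the regularity hypothesis enter. Given $A\in\Breg$, Lemma~\ref{vertices}(3) yields $A\subseteq E^0_{rg}$, so $\chi_A\in C_0(E^0_{rg})$ and the Cuntz-Krieger $E$-pair condition gives $P_A=T^0(\chi_A)=\Phi(\pi_r(\chi_A))$. The crux is to rewrite the operator $\pi_r(\chi_A)\in\mathcal{K}(C_d(E^1))$ as a finite sum of rank-one operators. Setting $\eta_\alpha:=\chi_{V_\alpha\cap r^{-1}(A)}\in C_d(E^1)$, which vanishes exactly when $\alpha\notin\Delta_A$, I would show the identity
$$\pi_r(\chi_A)=\sum_{\alpha\in\Labe}\theta_{\eta_\alpha,\chi_{V_\alpha}}$$
by evaluating both sides on an arbitrary $\xi\in C_d(E^1)$: using injectivity of $d|_{V_\alpha}$ one finds $(\theta_{\eta_\alpha,\chi_{V_\alpha}}\xi)(e)=\chi_{V_\alpha\cap r^{-1}(A)}(e)\,\xi(e)$, and summing over $\alpha$ the disjoint cover $E^1=\bigcup_\alpha V_\alpha$ collapses $\sum_\alpha\chi_{V_\alpha\cap r^{-1}(A)}$ to $\chi_{r^{-1}(A)}$, which is exactly the multiplier defining $\pi_r(\chi_A)$. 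Only the terms with $\alpha\in\Delta_A$ survive, and these are finite in number precisely because $A$ is regular, so $\lambda_A=|\Delta_A|<\infty$. Applying the $*$-homomorphism $\Phi$, using $\Phi(\theta_{\eta_\alpha,\chi_{V_\alpha}})=T^1(\eta_\alpha)T^1(\chi_{V_\alpha})^*$ together with $T^1(\eta_\alpha)=T^1(\chi_{V_\alpha}\chi_{\theta_\alpha(A)})=S_\alpha P_{\theta_\alpha(A)}$ (as in condition (2)), then yields
$$P_A=\sum_{\alpha\in\Delta_A}T^1(\eta_\alpha)T^1(\chi_{V_\alpha})^*=\sum_{\alpha\in\Delta_A}S_\alpha P_{\theta_\alpha(A)}S_\alpha^*,$$
as required. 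I expect the rank-one decomposition of $\pi_r(\chi_A)$---verifying the operator identity in $\mathcal{K}(C_d(E^1))$ and correctly invoking the finiteness of $\Delta_A$---to be the only genuinely delicate step; conditions (1)--(3) and the module-multiplicativity lemma are bookkeeping.
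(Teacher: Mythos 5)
Your proposal is correct and follows essentially the same route as the paper's proof: conditions (1)--(3) from the Toeplitz axioms and the disjointness/injectivity of the $V_\alpha$'s, and condition (4) via Lemma \ref{vertices}, a finite rank-one decomposition of $\pi_r(\chi_A)$, and the Cuntz-Krieger relation applied through $\Phi$. The only cosmetic differences are that you make the module-multiplicativity $T^1(\xi f)=T^1(\xi)T^0(f)$ explicit (the paper uses it silently) and you write the rank-one operators as $\theta_{\eta_\alpha,\chi_{V_\alpha}}$ rather than the paper's $\theta_{\chi_{V_\alpha},\chi_{V_\alpha}\chi_{\theta_\alpha(A)}}$, which are the same operators since both act as multiplication by $\chi_{V_\alpha\cap r^{-1}(A)}$.
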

\begin{proof} For $(1)$, observe that $\{P_A\}_{A\in\B}$ is a family of commuting projections. Then, $P_{A\cap B}=P_AP_B$ and $P_{A\cup B}=P_A+P_B-P_{A\cap B}$ for every $A,B\in \B$ follows from the fact that $T^0$ is a homomorphism. For $(2)$, given $A\in\B$ and $\alpha\in \Labe$, we have that 
\begin{align*}P_AS_\alpha & =T^0(\chi_{A})T^1(\chi_{V_\alpha})=T^1(\pi_r(\chi_{A})\chi_{V_\alpha}) =T^1((\chi_{A}\circ r)\chi_{V_\alpha}) \\ & =T^1(\chi_{r^{-1}(A)}\chi_{V_\alpha})=T^1(\chi_{r^{-1}(A)\cap V_\alpha}) \\ & =T^1(\chi_{V_\alpha})T^0(\chi_{d(r^{-1}(A)\cap V_\alpha})=T^1(\chi_{V_\alpha})T^0(\chi_{\theta_\alpha(A)})=S_\alpha P_{\theta_\alpha(A)}\,.\end{align*}
For $(3)$, we look at the equality 
$$S_\alpha^*S_\beta=T^1(\chi_{V_\alpha})^*T^1(\chi_{V_\beta})=T^0(\langle\chi_{V_\alpha}|\chi_{V_\beta}\rangle)\,.$$
By the definition,
$$\langle \chi_{V_\alpha}|\chi_{V_\beta}\rangle (v) = \sum_{e\in d^{-1}(v)}\overline{\chi_{V_\alpha}(e)}\chi_{V_\beta}(e)\,,$$
for any $v\in E^0$. Since $V_\alpha\cap V_\beta=\emptyset$  whenever $\alpha\neq \beta$, we get that this expression will sum $0$ if $\alpha\neq \beta$. Now, since $d_{|V_\alpha}$ is a homeomorphism it follows that 
$$\sum_{e\in d^{-1}(v)}|\chi_{V_\alpha}(e)|^2=|\{e\in V_\alpha:d(e)=v\}|=\chi_{d(V_\alpha)}(v)=\chi_{\Ra}(v)\,.$$
For $(4)$, we will use the Cuntz-Krieger relation 
$$T^0(f)=\Phi(\pi_r(f)),$$
which holds whenever $f\in C_0(E^0_{rg})$. Since $A\in \Breg$, by the Lemma  \ref{vertices} we have that $A\subseteq E^0_{rg}$. So, it is  enough to show that 
$$\pi_r(\chi_{A})=\sum_{\alpha\in \Delta_A}\theta_{\chi_{V_\alpha},\chi_{V_\alpha}\cdot \chi_{\theta_{\alpha}(A)}}\,.$$ 
Evaluating at $\xi\in C_d(E^1)$ and $e\in E^1$, we have that
$$\sum_{\alpha\in \Delta_A}\theta_{\chi_{V_\alpha},\chi_{V_\alpha}\cdot \chi_{\theta_{\alpha}(A)}}(\xi)(e)=$$
$$\sum_{\alpha\in \Delta_A}\chi_{V_\alpha}(e)\langle\chi_{V_\alpha}\cdot \chi_{\theta_{\alpha}(A)}|\xi\rangle (d(e))=$$
$$\sum_{\alpha\in \Delta_A}\chi_{V_\alpha}(e)\left(\sum_{d(e')=d(e)} \chi_{V_\alpha}(e')\chi_{\theta_{\alpha}(A)}(d(e'))\xi(e')\right)\,.$$
Whenever $e,e'\in V_\alpha$ for some $\alpha\in \Labe$, since $d(e)=d(e')$ if and only if $e=e'$, this reduces to 
$$\sum_{\alpha\in \Delta_A}\chi_{\mathcal{R}_\alpha}(e)\chi_{\theta_\alpha(A)}(d(e))\xi(e)=\left\lbrace  \begin{array}{ll} \chi_{\theta_{\alpha}(A)}(d(e))\xi(e) & \text{whenever }e\in V_\alpha\text{ for }\alpha\in \Delta_A \\ 0 & \text{otherwise}\end{array}\right. \,.$$
In addition, $\theta_\alpha(A)=\emptyset$ when $\alpha\notin \Delta_A$. Thus, we can omit the case clause.  What remains is $\chi_{\theta_\alpha(A)}(d(e))\xi(e)$ when $e\in V_\alpha$ for any $\alpha\in \Labe$. On the other hand,
$$(\pi_r(\chi_{A})	\xi)(e)=\chi_{A}(r(e))\xi(e)\,.$$
Now, when $e\in V_\alpha$ for some $\alpha\in \Labe$, we get that $\chi_{A}(r(e))=\chi_{d(r^{-1}(A)\cap V_\alpha)}(d(e))=\chi_{\theta_\alpha(A)}(d(e))$, so we are done.   
\end{proof}

\section{A faithful representation of  $(\B,\Labe,\theta)$.}\label{section_4}

Now, given a Boolean dynamical system $(\B,\Labe,\theta)$, we will construct a faithful representation of $(\B,\Labe,\theta)$ in $\mathcal{O}(E)$, where $E$ is a   $0$-dimensional topological graph.

Let $(\B,\Labe,\theta)$ be a Boolean dynamical system.  We define $E^0$ to be  the Stone's spectrum $\widehat{\B}$ of $\B$, and $E^1$ to be the disjoint union 
$$E^1=\bigsqcup\limits_{\alpha\in \Labe} \widehat{\Id_{\Ra}}\,,$$
of Stone's spectrums of the principal ideals of $\B$ generated by the range  $\mathcal{R}_\alpha$ of the actions $\theta_\alpha$.
Since $\widehat{\B}$ and each $\widehat{\Id_{\Ra}}$ have a basis of clopen sets, they are $0$-dimensional spaces, and since they are totally disconnected spaces they are locally compact Hausdorff spaces too. These properties are transfered to arbitrary unions of such spaces, so $E^0$ and $E^1$ are also locally compact Hausdorff $0$-dimensional spaces. Also observe that, given any $\alpha\in \Labe$, then $\widehat{\Id_{\Ra}}$ is a clopen and compact subset of $\widehat{\B}$. 

\begin{nota} To distinguish the edge and the vertex space of the topological graph $E$, we will denote 
$$E^0=\{v_\Cu:\Cu\in \widehat{\B}\}\qquad \text{ and }\qquad E^1=\bigsqcup\limits_{\alpha\in\Labe}E^1_\alpha\,,$$
where $E^1_\alpha=\{e^{\alpha}_{\Cu}:\Cu\in \widehat{\Id_{\Ra}}\}$. 
Given $\alpha\in \Labe$ and $A,B\in \B$ with  $B\subseteq \mathcal{R}_\alpha$, we define the clopen and compact subsets 
$$\Ne_A:=\{v_\Cu:A\in \Cu\}\subseteq E^0\qquad\text{and}\qquad \Me^\alpha_B:=\{e^\alpha_\Cu:B\in \Cu\}\subseteq E_\alpha^1\,.$$
\end{nota}

\begin{prop}\label{prop_top} Let $(\B,\Labe,\theta)$ be a Boolean dynamical system, and let $E^0=\widehat{\B}$ and $E^{1}=E^1=\bigsqcup\limits_{\alpha\in \Labe} \widehat{\Id_{\Ra}}$. If we define the maps $d,r:E^1\rightarrow E^0$ as 
$$d(e^\alpha_\Cu)=v_\Cu\qquad \text{and}\qquad r(e^\alpha_\Cu)=v_{\widehat{\theta_\alpha}(\Cu)}\,,$$
for every $\alpha\in\Labe$ and $\Cu\in \widehat{\Id_{\Ra}}$, then $(E^0,E^1,d,r)$ is a topological graph.
\end{prop}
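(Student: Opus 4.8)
The plan is to verify the two defining conditions of a topological graph (Definition before the statement): that $d\colon E^1\to E^0$ is a local homeomorphism and that $r\colon E^1\to E^0$ is continuous. Both maps are defined componentwise on each piece $E^1_\alpha=\widehat{\Id_{\Ra}}$ of the disjoint union, so it suffices to check the claims on each $E^1_\alpha$ separately, the pieces being clopen in $E^1$.

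First I would treat $d$. On $E^1_\alpha$ the map sends $e^\alpha_\Cu\mapsto v_\Cu$, which is literally the inclusion $\widehat{\Id_{\Ra}}\hookrightarrow\widehat{\B}$ that identifies $\widehat{\Id_{\Ra}}$ with the clopen compact set $Z(\Ra)=\Ne_{\Ra}\subseteq E^0$ (via the identification recorded just after Example~\ref{example_1} and Corollary~\ref{corollary_1}). This inclusion is a homeomorphism onto its open image, so $d$ restricted to each $E^1_\alpha$ is a homeomorphism onto the clopen set $\Ne_{\Ra}$; since the $E^1_\alpha$ are open in $E^1$, $d$ is a local homeomorphism globally. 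Concretely I would check that $d$ carries the basic clopen set $\Me^\alpha_B$ (for $B\subseteq\Ra$) onto $\Ne_B$ and that this correspondence is a bijection with continuous inverse, which is immediate from the defining formulas for $\Ne$ and $\Me$.

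Next I would treat $r$. On $E^1_\alpha$ we have $r(e^\alpha_\Cu)=v_{\widehat{\theta_\alpha}(\Cu)}$, where $\widehat{\theta_\alpha}$ is the map on Stone spectra induced by the action $\theta_\alpha$. The key point is that $\theta_\alpha$ with $\theta_\alpha(\emptyset)=\emptyset$ satisfies exactly the hypotheses of Lemma~\ref{induced_map}: it is a Boolean homomorphism, and because it has compact range $\Ra$ and closed domain, for every $A\in\Id_{\Ra}$ (i.e.\ $A\subseteq\Ra$) there exists $B\in\B$ with $A\subseteq\theta_\alpha(B)$ (taking $B=\D_\alpha$ gives $\theta_\alpha(\D_\alpha)=\Ra\supseteq A$). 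Hence Lemma~\ref{induced_map} yields a continuous map $\widehat{\theta_\alpha}\colon\widehat{\Id_{\Ra}}\to\widehat{\B}$, and composing with the homeomorphism $\widehat{\B}\cong E^0$ shows $r$ is continuous on each clopen piece $E^1_\alpha$, hence continuous on all of $E^1$.

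The only genuinely delicate step is confirming that the range-in-$\varphi(\B_1)$ hypothesis of Lemma~\ref{induced_map} holds, so that $\widehat{\theta_\alpha}$ is legitimately defined on $\widehat{\Id_{\Ra}}$ rather than on all of $\widehat{\B}$; this is precisely where the \emph{compact range and closed domain} assumptions built into the definition of a Boolean dynamical system are used, via the element $\D_\alpha$ with $\theta_\alpha(\D_\alpha)=\Ra$. Everything else is bookkeeping: checking that $\widehat{\theta_\alpha}$ as given by Lemma~\ref{induced_map} agrees with the stated formula $r(e^\alpha_\Cu)=v_{\widehat{\theta_\alpha}(\Cu)}$, and that the componentwise verifications patch together over the disjoint union. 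I would therefore structure the proof as: (1) reduce to each $E^1_\alpha$; (2) identify $d|_{E^1_\alpha}$ with the clopen inclusion and conclude it is a local homeomorphism; (3) invoke Lemma~\ref{induced_map} for $\theta_\alpha$ to get continuity of $r|_{E^1_\alpha}$.
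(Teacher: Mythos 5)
Your proof is correct and follows essentially the same route as the paper's: the paper likewise observes that $d$ restricts to a homeomorphism on each clopen component $E^1_\alpha$ (hence is a local homeomorphism) and obtains continuity of $r$ from Lemma \ref{induced_map} applied to $\theta_\alpha$, viewed as a homomorphism into $\Id_{\Ra}$. Your explicit check that the hypothesis of Lemma \ref{induced_map} is met because $\theta_\alpha(\Do)=\Ra$ is a detail the paper leaves implicit, but it is the same argument.
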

\begin{proof} First, by the above arguments, we have that $E^0$ and $E^1$ are locally compact Hausdorff spaces. Let   $d:E^1\longrightarrow E^0$ be the map defined by $d(e^\alpha_\Cu)=v_\Cu$ for some $e^\alpha_\Cu\in E^1_\alpha$. Every point of $E^1$ belongs to a component $E^1_\alpha$ for some $\alpha\in \Labe$, and clearly we have that $d_{|E^1_\alpha}$ is an homeomorphism. Thus, $d$ is a local homeomorphism. 

Let $\widehat{\theta\alpha}:\widehat{\Id_{\Ra}}\longrightarrow\widehat{\Id_{\Do}}$ be the induced map, that is continuous by Lemma \ref{induced_map}. Thus, $(E^0,E^1,d,r)$ is a $0$-dimensional topological graph. 
\end{proof}

\begin{corol}\label{faithful_rep} Let $(\B,\Labe,\theta)$ be a Boolean dynamical system, let $E$ be the associated topological graph defined in Proposition \ref{prop_top}, and let $(t^0,t^1)$ the universal Cuntz-Krieger $E$-pair. Then,
$$p_A:=t^0(\chi_{\Ne_A})\qquad\text{and}\qquad s_\alpha:=t^1(\chi_{E^1_\alpha})$$
 for $A\in \B$ and $\alpha\in \Labe$, defines a  faithful representation of $(\B,\Labe,\theta)$ in $\mathcal{O}(E)$.
\end{corol}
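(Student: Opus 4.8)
The plan is to verify that the pair $(p_A, s_\alpha)$ defined via the universal Cuntz-Krieger $E$-pair satisfies the four axioms of a representation of $(\B,\Labe,\theta)$ from Definition \ref{DefAlgebra}, and then separately establish faithfulness. The verification of axioms (1)--(4) is essentially already done: Proposition \ref{proposition_5} shows that for \emph{any} Cuntz-Krieger $E$-pair $(T^0,T^1)$ and any family $\{V_\alpha\}$ compactly supporting $E$, the elements $T^0(\chi_A)$ and $T^1(\chi_{V_\alpha})$ form a representation of the associated Boolean dynamical system. So the main preliminary step is to recognize that the family $\{E^1_\alpha\}_{\alpha\in\Labe}$ compactly supports the topological graph $E$ constructed in Proposition \ref{prop_top}, and that the Boolean dynamical system it induces via Lemma \ref{boolean_dynamical} is (naturally identified with) the original system $(\B,\Labe,\theta)$.

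Concretely, I would first check the five conditions of Definition \ref{compactsup} for $\{E^1_\alpha\}$. Each $E^1_\alpha=\widehat{\Id_{\Ra}}$ is compact clopen in $E^1$ (noted in the text after the construction of $E^1$); the $E^1_\alpha$ are disjoint and cover $E^1$ by the very definition of $E^1$ as a disjoint union; the restriction $d_{|E^1_\alpha}$ is the homeomorphism $e^\alpha_\Cu\mapsto v_\Cu$ onto $\Ne_{\Ra}$, already observed in Proposition \ref{prop_top}; and $r(E^1_\alpha)\subseteq \Ne_{\Do}$ with $\Ne_{\Do}$ compact clopen, so one may take a single compact clopen set dominating all ranges (when the domains $\Do$ vary with $\alpha$ one uses the identification of $\Ne_A$ with $Z(A)=\widehat{\Id_A}$ inside $\widehat{\B}$). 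Then I would confirm that the action $\theta^E_\alpha(A):=d(r^{-1}(\Ne_A)\cap E^1_\alpha)$ produced by Lemma \ref{boolean_dynamical} reproduces $\theta_\alpha$ under the identification $A\leftrightarrow\Ne_A$; this is a direct filter computation: $r^{-1}(\Ne_A)\cap E^1_\alpha$ consists of those $e^\alpha_\Cu$ with $\widehat{\theta_\alpha}(\Cu)\ni A$, and pushing forward by $d$ recovers $Z(\theta_\alpha(A))=\Ne_{\theta_\alpha(A)}$. With these identifications in place, Proposition \ref{proposition_5} applied to $(t^0,t^1)$ yields axioms (1)--(4) immediately, since $p_A=t^0(\chi_{\Ne_A})$ and $s_\alpha=t^1(\chi_{E^1_\alpha})$ are exactly the elements it produces.

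The substantive part, and the step I expect to be the main obstacle, is \textbf{faithfulness}: showing $p_A\neq 0$ whenever $A\neq\emptyset$. By Stone's Representation Theorem (Proposition \ref{proposition_2}) the Boolean $C^*$-algebra is $C_0(\widehat{\B})$, and the universal coefficient map $t^0:C_0(E^0)\to\mathcal{O}(E)$ is known to be injective for a topological graph (this is part of the structure theory of $\mathcal{O}(E)$ in \cite{KatsuraI}, where the Cuntz-Krieger $E$-pair $(t^0,t^1)$ is faithful on $C_0(E^0)$). Since $A\neq\emptyset$ forces $\Ne_A=Z(A)$ to be a nonempty clopen subset of $E^0=\widehat{\B}$ (nonemptiness of $Z(A)$ follows because every nonzero element of a Boolean algebra lies in some ultrafilter, by the Zorn's Lemma argument underlying ultrafilter existence), the characteristic function $\chi_{\Ne_A}$ is a nonzero element of $C_0(E^0)$, whence $p_A=t^0(\chi_{\Ne_A})\neq 0$.

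Thus the core of the argument reduces to two inputs: the injectivity of $t^0$ from Katsura's theory, and the nonvanishing of the cylinder sets $Z(A)$ for nonempty $A$. I would state the former as a citation to \cite{KatsuraI} and prove the latter inline. Once faithfulness is secured, combining it with axioms (1)--(4) gives that $(p_A,s_\alpha)$ is a faithful representation of $(\B,\Labe,\theta)$ in $\mathcal{O}(E)$, which is exactly the claim of Corollary \ref{faithful_rep}.
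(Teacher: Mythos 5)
Your proposal is correct and takes essentially the same route as the paper's proof: observe that $\{E^1_\alpha\}_{\alpha\in\Labe}$ compactly supports $E$, check that Lemma \ref{boolean_dynamical} applied to this family recovers $(\B,\Labe,\theta)$ under the identification $A\leftrightarrow\Ne_A$, and then invoke Proposition \ref{proposition_5} with the universal Cuntz-Krieger pair $(t^0,t^1)$. The only difference is one of explicitness: the paper compresses faithfulness into the phrase ``universal faithful representation $(t^0,t^1)$,'' while you correctly unpack it as the injectivity of $t^0$ on $C_0(E^0)$ from Katsura's theory together with the nonemptiness of the cylinder sets $Z(A)$ for $A\neq\emptyset$.
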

\begin{proof} Let $E=(E^0,E^1,d,r)$ be the topological graph defined in Proposition \ref{prop_top}.  Observe that  $\{E_\alpha^1\}_{\alpha\in \Labe}$ compactly supports $E$. It is straightforward to check that the  Boolean dynamical system associated to $E$ defined in Lemma \ref{boolean_dynamical}  is $(\B,\Labe,\theta)$ again. Now, using Proposition \ref{proposition_5} with the universal faithful representation $(t^0,t^1)$ of $\mathcal{O}(E)$, we conclude the proof. 
\end{proof}

Our next step is to prove that the faithful representation constructed in Corollary \ref{faithful_rep} is the universal one. To do that, we first have to look closer at the topological graph $E$ associated to a Boolean dynamical system.

The following lemma will be useful in the sequel.

\begin{lem}\label{lemma_81} Let $(\B,\Labe,\theta)$ be a Boolean dynamical system, and let $\alpha\in \Labe$ and $\Cu\in \widehat{\Id_{\Do}}$. Then, given any $\Cu'\in \widehat{\Id_{\Ra}}$ such that $\theta_{\alpha}(A)\in \Cu'$ for every $A\in \Cu$, we have that $\Cu=\{B\in \Id_{\Do}:\theta_\alpha(B)\in \Cu'\}$.
\end{lem}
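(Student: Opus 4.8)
The plan is to identify the right-hand set as a filter of $\Id_{\Do}$ that contains the ultrafilter $\Cu$, and then conclude by maximality. Write $\F:=\{B\in \Id_{\Do}:\theta_\alpha(B)\in \Cu'\}$ for the set appearing on the right of the asserted equality; the goal is to show $\Cu=\F$.

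First I would dispatch the easy inclusion $\Cu\subseteq \F$. Any $A\in \Cu$ lies in $\Id_{\Do}$, since $\Cu$ is an ultrafilter of $\Id_{\Do}$, and $\theta_\alpha(A)\in \Cu'$ holds by the standing hypothesis on $\Cu'$; hence $A\in \F$ by definition of $\F$.

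The core of the argument is to verify that $\F$ is a filter of $\Id_{\Do}$. Condition $\textbf{F0}$ is immediate, as $\theta_\alpha(\emptyset)=\emptyset\notin \Cu'$, so $\emptyset\notin \F$. For $\textbf{F1}$, suppose $B\in \F$ and $B\subseteq C$ with $C\in \Id_{\Do}$; since $\theta_\alpha$ is a Boolean homomorphism it is order preserving (from $B\cap C=B$ one gets $\theta_\alpha(B)=\theta_\alpha(B)\cap\theta_\alpha(C)$), so $\theta_\alpha(B)\subseteq \theta_\alpha(C)$, and then $\textbf{F1}$ for $\Cu'$ gives $\theta_\alpha(C)\in \Cu'$, i.e. $C\in \F$. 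For $\textbf{F2}$, if $B_1,B_2\in \F$ then $\theta_\alpha(B_1\cap B_2)=\theta_\alpha(B_1)\cap \theta_\alpha(B_2)\in \Cu'$ by $\textbf{F2}$ for $\Cu'$, while $B_1\cap B_2\in \Id_{\Do}$, so $B_1\cap B_2\in \F$.

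Finally, since $\Cu$ is an ultrafilter of $\Id_{\Do}$ it is a maximal filter, and we have exhibited a (proper) filter $\F$ of $\Id_{\Do}$ with $\Cu\subseteq \F$; maximality then forces $\Cu=\F$, which is exactly the claim. I expect the only point demanding care to be inside the check of $\textbf{F1}$: one must confirm that $\theta_\alpha(C)$ genuinely lies in $\Id_{\Ra}$, so that asking whether $\theta_\alpha(C)\in \Cu'$ is even meaningful. This is where the closed-domain property $\theta_\alpha(\Do)=\Ra$ enters, for $C\subseteq \Do$ yields $\theta_\alpha(C)\subseteq \theta_\alpha(\Do)=\Ra$; beyond this bookkeeping I anticipate no real obstacle, since everything reduces to the homomorphism properties of $\theta_\alpha$ and the filter axioms for $\Cu'$.
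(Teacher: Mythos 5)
Your proof is correct, but it takes a genuinely different route from the paper's. The paper argues element by element: for $B\in \Id_{\Do}$ with $\theta_\alpha(B)\in\Cu'$ and an arbitrary $A\in\Cu$, it decomposes $A=(A\cap B)\cup(A\setminus(A\cap B))$, applies the primeness axiom $\textbf{F3}$ of the ultrafilter $\Cu$, rules out $A\setminus(A\cap B)\in\Cu$ because its $\theta_\alpha$-image would be an element of $\Cu'$ disjoint from $\theta_\alpha(A\cap B)\in\Cu'$, contradicting $\textbf{F2}$ of $\Cu'$, and then deduces $B\in\Cu$ from $A\cap B\in\Cu$ via $\textbf{F1}$. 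You instead recognize the right-hand side as the pullback $\F=\theta_\alpha^{-1}(\Cu')\cap\Id_{\Do}$, check that it is a filter of $\Id_{\Do}$ using only the homomorphism properties of $\theta_\alpha$ and the filter axioms of $\Cu'$, and conclude by maximality of $\Cu$. The difference lies in where the ultrafilter property of $\Cu$ is used: the paper invokes $\textbf{F3}$ directly and never needs maximality, whereas you never touch $\textbf{F3}$ and instead rely on the identification of ultrafilters with maximal filters, which the paper asserts in Section 2 (and which, in this not-necessarily-unital setting, is itself proved by exactly the paper's decomposition trick, so the combinatorial core is the same, merely relocated into a cited general fact). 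Your packaging is more modular and conceptually cleaner: your $\F$ is precisely $\widehat{\theta_\alpha}(\Cu')$ in the notation of Lemma \ref{induced_map}, so the lemma reduces to the slogan that a filter containing an ultrafilter equals it; moreover, your explicit verification that $\theta_\alpha(C)\in\Id_{\Ra}$ (via $\theta_\alpha(\Do)=\Ra$, or simply via $\theta_\alpha(C)\subseteq\Ra$ from the compact-range property) settles a bookkeeping point that the paper glosses over, as it even writes ``let $B\in\B$'' where $B\in\Id_{\Do}$ is meant. The one cost of your route is the reliance on the ultrafilter-equals-maximal-filter fact, which the paper supports only by a brief appeal to Zorn's Lemma; for full self-containedness you could add the short argument that an $\textbf{F3}$-ultrafilter is maximal, which is again the same $A=(A\cap B)\cup(A\setminus(A\cap B))$ computation.
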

\begin{proof} The first inclusion is clear because $\Cu'$ contains  $\theta_\alpha(A)$ for every $A\in \Cu$. Now, let $B\in \B$ such that $\theta_\alpha(B)\in \Cu'$. Then, given any $A\in \Cu$ we have that $\theta_\alpha(A)\in \Cu'$. So, we have that 
$$\emptyset\neq \theta_\alpha(A)\cap \theta_\alpha(B)=\theta_\alpha(A\cap B)\in \Cu'\,.$$
Thus, $A\cap B\neq \emptyset$. Then, $A=(A\cap B)\cup (A\setminus (A\cap B))$, but by condition $\textbf{F3}$ it follows that either $A\cap B$ or $A\setminus (A\cap B)$ belongs to $\Cu$. Observe that   $A\setminus (A\cap B)$ cannot belong to $\Cu$, as otherwise $$\theta_\alpha(A\cap B)\cap \theta_\alpha(A\setminus (A\cap B))=\emptyset,$$
contradicting condition $\textbf{F2}$ of the ultrafilter $\Cu'$. Therefore, $A\cap B\in \Cu$, whence so does $B$ by condition $\textbf{F1}$. 
\end{proof}


\begin{lem}\label{lemma_82} Let $(\B,\Labe,\theta)$ be a Boolean dynamical system, and let $E$ be the topological graph defined in Proposition \ref{prop_top}. Then, given $e\in E^1_\alpha$, the following statements are equivalent:
\begin{enumerate}
\item  $r(e)\in \Ne_A$. 
\item $d(e)\in \Ne_{\theta_\alpha(A)}$. 
\item $e\in \Me^\alpha_{\theta_\alpha(A)}$.
\end{enumerate}
\end{lem}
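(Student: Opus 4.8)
The plan is to verify the chain of equivalences by unwinding the definitions of the range and domain maps from Proposition \ref{prop_top} together with the notation for the cylinder sets $\Ne_A$ and $\Me^\alpha_B$, and then to invoke Lemma \ref{lemma_81} at the crucial point where the ultrafilter structure is needed. Throughout I fix $\alpha\in\Labe$ and an edge $e=e^\alpha_\Cu\in E^1_\alpha$, so that $\Cu\in\widehat{\Id_{\Ra}}$; by the definitions $d(e)=v_\Cu$ and $r(e)=v_{\widehat{\theta_\alpha}(\Cu)}$, where $\widehat{\theta_\alpha}(\Cu)=\{B\in\Id_{\Do}:\theta_\alpha(B)\in\Cu\}$ is the induced ultrafilter furnished by Lemma \ref{induced_map}.

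The easiest equivalence is (2) $\Leftrightarrow$ (3). By definition $d(e)=v_\Cu\in\Ne_{\theta_\alpha(A)}$ holds if and only if $\theta_\alpha(A)\in\Cu$, while $e=e^\alpha_\Cu\in\Me^\alpha_{\theta_\alpha(A)}$ holds if and only if $\theta_\alpha(A)\in\Cu$ (here one should note that $\theta_\alpha(A)\subseteq\Ra$, so that $\theta_\alpha(A)\in\Id_{\Ra}$ and the membership $\Me^\alpha_{\theta_\alpha(A)}$ is legitimately defined). These two conditions are literally identical, so (2) and (3) are equivalent with no further work.

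The substantive equivalence is (1) $\Leftrightarrow$ (2). Unwinding (1), we have $r(e)=v_{\widehat{\theta_\alpha}(\Cu)}\in\Ne_A$ precisely when $A\in\widehat{\theta_\alpha}(\Cu)$, i.e.\ when $\theta_\alpha(A)\in\Cu$, which is exactly (2). Thus in the reading where $A\in\Id_{\Do}$ this is again immediate from the definition of the induced map. The care that is needed — and where I expect the only real friction — concerns general $A\in\B$ rather than $A\in\Id_{\Do}$: the induced ultrafilter $\widehat{\theta_\alpha}(\Cu)$ is defined as a filter on $\Id_{\Do}$, so to talk about ``$A\in\widehat{\theta_\alpha}(\Cu)$'' for an arbitrary $A\in\B$ one identifies $A$ with $A\cap\Do$ (or uses that $\theta_\alpha(A)=\theta_\alpha(A\cap\Do)$ since $\theta_\alpha$ is supported on $\Do$). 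It is precisely here that Lemma \ref{lemma_81} does its job: it guarantees that the ultrafilter $\widehat{\theta_\alpha}(\Cu)$ is exactly $\{B\in\Id_{\Do}:\theta_\alpha(B)\in\Cu\}$, so that the equivalence ``$A\in\widehat{\theta_\alpha}(\Cu)\Leftrightarrow\theta_\alpha(A)\in\Cu$'' is not merely a definitional identity but a genuine characterization valid for the whole filter.

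Putting the two pieces together closes the cycle (1) $\Leftrightarrow$ (2) $\Leftrightarrow$ (3). In summary, the proof is essentially a definition-chase; the main (mild) obstacle is bookkeeping about domains, namely ensuring that the elements $\theta_\alpha(A)$ lie in $\Id_{\Ra}$ and that the induced-map description from Lemma \ref{lemma_81} is used to legitimately pass between membership of $A$ in $\widehat{\theta_\alpha}(\Cu)$ and membership of $\theta_\alpha(A)$ in $\Cu$. No estimate or topological argument is required, only the ultrafilter axioms \textbf{F1}--\textbf{F3} already exploited in Lemma \ref{lemma_81}.
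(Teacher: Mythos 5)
Your proof is correct and takes essentially the same route as the paper's: $(2)\Leftrightarrow(3)$ by definition, and $(1)\Leftrightarrow(2)$ by unwinding $r(e^{\alpha}_{\Cu})=v_{\Cu'}$ with $\Cu'=\{B\in\Id_{\Do}:\theta_{\alpha}(B)\in\Cu\}$, with the same domain bookkeeping (e.g.\ $\theta_{\alpha}(A)=\theta_{\alpha}(A\cap\Do)$ since $\theta_{\alpha}$ is a Boolean homomorphism and $\theta_{\alpha}(A)\subseteq\Ra$). One remark: your appeal to Lemma \ref{lemma_81} is unnecessary and slightly misattributed --- the preimage description of $\widehat{\theta_{\alpha}}(\Cu)$ is precisely the definition of the induced map from Lemma \ref{induced_map}, which is built into the definition of $r$ in Proposition \ref{prop_top}, whereas Lemma \ref{lemma_81} asserts the converse statement (any ultrafilter $\Cu''\in\widehat{\Id_{\Do}}$ whose image under $\theta_{\alpha}$ lies in $\Cu$ must equal that preimage), which this lemma does not need; the paper's proof accordingly does not invoke it.
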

\begin{proof} $(2)\Leftrightarrow (3)$ is clear by definition. Now, let $e=e^\alpha_\Cu$ for some $\alpha\in \Labe$ and $\Cu\in \widehat{\Id_{\Ra}}$. Suppose that $v_{\Cu'}=r(e^\alpha_\Cu)\in \Ne_A$, where $\Cu'=\{B\in \Id_{\Do}:\theta_\alpha(B)\in \Cu\}$, whence $v_{\Cu}\in \Ne_{\theta_{\alpha}(B)}$ for every $B\in \Cu'$. Since $A\in \Cu'$, it follows that $v_\Cu\in \Ne_{\theta_\alpha(A)}$, as desired. Now, let us suppose that $d(e^\alpha_\Cu)=v_\Cu\in \Ne_{\theta_{\alpha}(A)}$, so that $\theta_{\alpha}(A)\in \Cu$. Since $r(e^\alpha_\Cu)=v_{\Cu'}$, where $\Cu'=\{B\in\Id_{\Do}:\theta_{\alpha}(B)\in \Cu\}$, it follows that $A\in \Cu'$. Thus, $v_{\Cu'}\in \Ne_A$, as desired. 
\end{proof}

\begin{exem} \label{example_3} Let  $X=\N\cup \{w\}$, and let  $\B$ be the minimal Boolean space generated by the subsets $\{F\subseteq \N: F \text{ finite }\}\cup \{\N\setminus F : F \text{ finite }\}\cup\{w\}$. We have that $\widehat{\B}$ is the compact space  $\{v_{C_i}:i=1,2,\ldots,\infty\}\cup\{v_{\Cu_w}\}$, where $\Cu_w=\{A\in \B:w\in A\}$. Let $\Labe=\{\alpha\}$, and define
$$\theta_\alpha(A)=\left\lbrace \begin{array}{ll} \N & \text{if }A=\{w\} \\ \emptyset & \text{otherwise}\end{array}\right.,$$
that is an action on the Boolean space $\B$. Therefore, $(\B,\Labe,\theta)$ is a Boolean dynamical system, and let $E$ be its associated topological graph. Thus, $E^0=\{v_{C_i}:i=1,2,\ldots,\infty\}\cup\{v_{\Cu_w}\}$ and $E^1=\{e^\alpha_{\Cu_i}:i=1,\ldots,\infty\}$. Then, $d(e^\alpha_{\Cu_i})=v_{\Cu_i}$ and $r(e^\alpha_{\Cu_i})=v_{\Cu_w}$ for every $i=1,2,\ldots,\infty$. A picture of this topological graph will be as follows:
$$\xymatrix{\bullet^{\Cu_1}\ar[dr]_{e^\alpha_{\Cu_1}} & \bullet^{\Cu_2}\ar[d]_{e^\alpha_{\Cu_2}} & {\cdots}\ar@{.>}[dl] & \bullet^{\Cu_\infty}\ar[dll]^{e^\alpha_{\Cu_\infty}}\\ & {\bullet_{\Cu_w}}& }$$
\end{exem}

\begin{exem}\label{example_4} Let $\B$ be the minimal Boolean algebra generated by $$\{F: F\subseteq \Z\text{ finite }\}\cup \{\Z\setminus F:F\subseteq \Z \text{ finite}\}\,.$$ 
Let  $\theta_a$, $\theta_b$ and $\theta_c$ be actions on $\B$ given by the following graph
$$\xymatrix{{\cdots}\ar[r]^b & {\bullet}_{-2}\ar	@/^6pt/ [l]^c\ar[r]^b & {\bullet}_{-1}\ar@/^6pt/ [l]^c\ar[r]^b & {\bullet}_0\ar@/^6pt/ [l]^c\ar[r]^b\uloopr{}^a & {\bullet}_1\ar@/^6pt/ [l]^c\ar[r]^b & {\bullet}_2\ar@/^6pt/ [l]^c\ar[r]^b & {\cdots}\ar@/^6pt/ [l]^c }$$
We have that $\widehat{\B}=\{\Cu_{n}:n\in \Z\}\cup \{\Cu_\infty\}$ where $\Cu_n=\{A\in \B:n\in A\}$ and $\Cu_\infty=\{\Z\setminus F: F\subseteq \Z \text{ finite}\}$. 

Let us consider its associated topological graph $E$, where  $E^0=\{v_{\Cu_n}:n\in \Z\}\cup\{v_{\Cu_\infty}\}$ is the one point compactification of $\Z$, $E^1_a=\{e^a_{\Cu_0}\}$, $E^1_b=\{e^b_{\Cu_n}:n\in \Z\}\cup \{e^b_{\Cu_\infty}\}$ and $E^1_c=\{e^c_{\Cu_n}:n\in \Z\}\cup \{e^c_{\Cu_\infty}\}$. Hence, 
$$E^1=E^1_a\sqcup E^1_b \sqcup E^1_c$$
 is a compact space because $E^1_a$, $E^1_b$ and $E^1_c$ are compact by Corollary \ref{corollary_1}. Then, we have that $d(e^a_{\Cu_0})=v_{\Cu_0}$ and $r(e^a_{\Cu_0})=v_{\Cu_0}$. Given $n\in \Z$, we have that $d(e^b_{\Cu_n})=v_{\Cu_n}$ and $r(e^b_{\Cu_n})=v_{\Cu_{n-1}}$, and  $d(e^c_{\Cu_n})=v_{\Cu_n}$ and $r(e^c_{\Cu_n})=v_{\Cu_{n-1}}$. Finally, $d(e^b_{\Cu_\infty})=d(e^c_{\Cu_\infty})=v_{\Cu_\infty}$ and $r(e^b_{\Cu_\infty})=r(e^c_{\Cu_\infty})=v_{\Cu_\infty}$.
\end{exem}


Now, using Lemma \ref{vertices} we can characterize the following sets: given $A\in\B$
\begin{enumerate}
\item $\Ne_A\subseteq E^0_{sce}$ if and only if $\lambda_A=0$,
\item $\Ne_A\subseteq E^0_{fin}$ if and only if $\lambda_A<\infty$,
\item $\Ne_A\subseteq E^0_{reg}$ if and only if for every  $\emptyset\neq B\subseteq A$ we have that $0<\lambda_A<\infty$,
\item $\Ne_A\subseteq E^0_{sg}$ if and only if there exists $\emptyset\neq B\subseteq A$ such that $\lambda_A\in\{0,\infty\}$.
\end{enumerate}

\begin{theor}\label{proposition_51} Let $(\B,\Labe,\theta)$ be a Boolean dynamical system, and let $E$ be the associated topological graph defined in Proposition \ref{prop_top}. Then, the faithful representation constructed in Corollary \ref{faithful_rep} is universal. Therefore, $\Clab\cong\mathcal{O}(E)$.
\end{theor}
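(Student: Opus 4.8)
The plan is to prove universality by checking the universal property directly, using the universal property of the topological graph $C^*$-algebra $\mathcal{O}(E)$ as the engine. Concretely, I want to show that given any Cuntz-Krieger representation $\{P_A, S_\alpha\}$ of $(\B,\Labe,\theta)$ in a $C^*$-algebra $\mathcal{A}$, there is a non-degenerate $*$-homomorphism $\pi_{S,P}\colon \mathcal{O}(E) \cong C^*(p_A,s_\alpha) \to \mathcal{A}$ sending $p_A \mapsto P_A$ and $s_\alpha \mapsto S_\alpha$. Since by Corollary \ref{faithful_rep} the pair $p_A = t^0(\chi_{\Ne_A})$, $s_\alpha = t^1(\chi_{E^1_\alpha})$ generates $\mathcal{O}(E)$, and since by Proposition \ref{proposition_5} the passage from a Cuntz-Krieger $E$-pair to a representation of $(\B,\Labe,\theta)$ is the canonical one, the crux is to run this correspondence \emph{backwards}: manufacture from an abstract representation $\{P_A,S_\alpha\}$ a genuine Cuntz-Krieger $E$-pair $(T^0,T^1)$ on $\mathcal{A}$, and then invoke the universal property of $(t^0,t^1)$.

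First I would build $T^0$ and $T^1$. For $T^0$, recall that by Proposition \ref{proposition_2} (Stone's Representation) $C_0(E^0) = C_0(\widehat{\B}) \cong C^*(\B) = \overline{\spa}\{\chi_{\Ne_A} : A \in \B\}$, so the assignment $\chi_{\Ne_A}\mapsto P_A$ extends (using relation (1) of Definition \ref{DefAlgebra}, which says precisely that $A\mapsto P_A$ is a representation of the Boolean $C^*$-algebra) to a genuine $*$-homomorphism $T^0\colon C_0(E^0)\to\mathcal{A}$. For $T^1$, I would use that $E^1=\bigsqcup_\alpha E^1_\alpha$ with each $E^1_\alpha$ compact clopen and $d|_{E^1_\alpha}$ a homeomorphism, so $C_d(E^1)$ is spanned by elements supported on single components of the form $\chi_{E^1_\alpha}\cdot(\text{pullback of a function on }E^0_\alpha)$; declaring $T^1(\chi_{\Me^\alpha_B}) := S_\alpha P_{B'}$ for appropriate $B'\subseteq\Ra$ (via Lemma \ref{lemma_82}, identifying $\Me^\alpha_{\theta_\alpha(A)}$ with $\Ne_{\theta_\alpha(A)}$ under $d$) and extending linearly should define $T^1$. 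Relations (2) and (3) of Definition \ref{DefAlgebra} are exactly what is needed to verify the two Toeplitz axioms; the computation mirrors the forward direction in Proposition \ref{proposition_5} read in reverse.

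The harder step, which I expect to be the main obstacle, is verifying the Cuntz-Krieger condition $T^0(f)=\Phi(\pi_r(f))$ for $f\in C_0(E^0_{rg})$, i.e. matching the abstract relation (4) of Definition \ref{DefAlgebra} (summation over $\Delta_A$ for regular $A$) against Katsura's analytic Cuntz-Krieger condition involving the compact operators $\mathcal{K}(C_d(E^1))$. By Lemma \ref{vertices}(3), $A\in\Breg$ corresponds to $\Ne_A\subseteq E^0_{rg}$, so the relevant functions are the $\chi_{\Ne_A}$ with $A$ regular; I would need to show $\pi_r(\chi_{\Ne_A})=\sum_{\alpha\in\Delta_A}\theta_{\chi_{V_\alpha},\,\chi_{V_\alpha}\cdot\chi_{\theta_\alpha(A)}}$ (exactly the identity established inside the proof of Proposition \ref{proposition_5}) and then apply $\Phi$, using that $\Phi(\theta_{\xi,\zeta})=T^1(\xi)T^1(\zeta)^*$, to land on $\sum_{\alpha\in\Delta_A}S_\alpha P_{\theta_\alpha(A)}S_\alpha^*$, which equals $P_A=T^0(\chi_{\Ne_A})$ by relation (4). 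The subtlety is that $\Delta_A$ can be infinite a priori, but regularity forces $0<\lambda_A<\infty$, so the sum is finite and the compact-operator manipulations are legitimate.

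Once $(T^0,T^1)$ is a bona fide Cuntz-Krieger $E$-pair, the universal property of $\mathcal{O}(E)$ yields a $*$-homomorphism $\pi\colon\mathcal{O}(E)\to\mathcal{A}$ with $\pi(t^0(f))=T^0(f)$ and $\pi(t^1(\xi))=T^1(\xi)$; restricting to the generators gives $\pi(p_A)=\pi(t^0(\chi_{\Ne_A}))=T^0(\chi_{\Ne_A})=P_A$ and $\pi(s_\alpha)=S_\alpha$, so $\pi$ is the desired $\pi_{S,P}$, and non-degeneracy is inherited from that of the Toeplitz pair. Combined with faithfulness from Corollary \ref{faithful_rep}, this shows $\{p_A,s_\alpha\}$ satisfies the universal property of Definition \ref{DefAlgebra}, whence $\Clab\cong\mathcal{O}(E)$; uniqueness of the universal representation (Theorem \ref{theorem_1}) then identifies the two algebras canonically.
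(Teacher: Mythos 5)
Your proposal follows essentially the same route as the paper's proof: from an arbitrary representation $\{P_A,S_\alpha\}$ of $(\B,\Labe,\theta)$ you manufacture a Cuntz--Krieger $E$-pair $(T^0,T^1)$ with $T^0(\chi_{\Ne_A})=P_A$ and $T^1(\chi_{\Me^\alpha_B})=S_\alpha P_B$, verify the two Toeplitz axioms from relations (2)--(3), match Katsura's Cuntz--Krieger condition against relation (4) through the identity $\pi_r(\chi_{\Ne_A})=\sum_{\alpha\in\Delta_A}\theta_{\chi_{\Me^\alpha_{\theta_\alpha(A)}},\chi_{\Me^\alpha_{\theta_\alpha(A)}}}$ for $A\in\Breg$, and then invoke the universal property of $(t^0,t^1)$, exactly as the paper does. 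The one step you compress --- that it suffices to check the Cuntz--Krieger condition on the functions $\chi_{\Ne_A}$ with $A\in\Breg$ --- is precisely what the paper's $\varepsilon$-approximation argument (building $f'=\sum_i f(v_i)\chi_{\Ne_{A_{v_i}}}$ with pairwise disjoint regular $A_{v_i}$) supplies, namely density of the span of these functions in $C_0(E^0_{rg})$ combined with continuity of $T^0$ and $\Phi\circ\pi_r$; so this is a difference of exposition, not of substance.
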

\begin{proof} Our strategy will be to prove that any representation $\{P_A,S_\alpha\}$ of $(\B,\Labe,\theta)$ induces a representation $(T^0,T^1)$ of the associated topological graph $E$ constructed in Proposition \ref{prop_top}, such that $T^0(\chi_{\Ne_A})=P_A$ and $T^1(\chi_{E^1_\alpha})=S_\alpha$. Then the universality of $(t^0,t^1)$ will induce the map $\eta:\mathcal{O}(E)\rightarrow C^*(P_A,S_\alpha)$ with $p_A=t^0(\chi_{\Ne_A})\mapsto T^0(\chi_{\Ne_A})=P_A$ and $s_\alpha=t^1(\chi_{E^1_\alpha})\mapsto T^1(\chi_{E^1_\alpha})=S_\alpha$.

 First, we claim that the families $\{\chi_{\Ne_A}:A\in \B\}$ and $\{\chi_{\Me^\alpha_A}:\alpha\in \Labe,\,A\in \Id_{\Ra}\}$ generate $C_0(E^0)$ and $C_d(E^1)$ respectively.  Recall that the definition for $\xi\in C_0(E^1)$ to be in $C_d(E^1)$ is that
$$\sum_{e\in d^{-1}(v)}|\xi(e)|^2<\infty$$
for all $v\in E^0$. Since $d$ is injective on a given $E^1_\alpha$, we just show that $\{\chi_{\Me^\alpha_A}:A\in \Id_{\Ra}\}$ generates $C(E^1_\alpha)$ for each $\alpha\in \Labe$. Then, Proposition \ref{proposition_2} proves the claim.

Therefore, we define $T^0:C_0(E^0)\longrightarrow \mathcal{A}$ by $\chi_{\Ne_A}\longmapsto P_A$ for every $A\in \B$, and $T^1:C_d(E^1)\longrightarrow \mathcal{A}$ by $\chi_{\Me^\alpha_A}\longmapsto S_\alpha P_A$ for every $A\in \Id_{\Ra}$ and $\alpha\in \Labe$.  $T^0$ is an $*$-homomorphism by \cite[Lemma B.1]{Toke1}, and $T^1$ is a well-defined linear map since it decreases the  norm. Given $\alpha,\beta\in \Labe$ , $A\in \Id_{\Ra}$ and $B\in \Id_{\mathcal{R}_\beta}$,
$$T^1(\chi_{\Me^\alpha_A})^*T^1(\chi_{\Me^\beta_B})=(S_\alpha P_A)^*S_\beta P_B=\delta_{\alpha,\beta} P_{A\cap B}.$$ 
Observe that, given $e\neq e'$ with $d(e)=d(e')=v$, if $e\in E^1_\alpha$ for some $\alpha\in \Labe$ then $e'\notin E^1_\alpha$. Indeed, let $e=e^\alpha_{\Cu}$ and $e'=e^\beta_{\Cu'}$ for some $\alpha,\beta\in \Labe$, $\Cu\in\widehat{\Id_{\Ra}}$ and $\Cu'\in \widehat{\Id_{\mathcal{R}_\beta}}$. By hypothesis  $v_{\Cu}=d(e^\alpha_{\Cu})=d(e^\beta_{\Cu'})=v_{\Cu'}$, so $\Cu=\Cu'$. But since $e^\alpha_{\Cu}\neq e^\beta_{\Cu}$, it implies that $\alpha\neq \beta$.

Therefore, 
\begin{align*}\langle\chi_{\Me^\alpha_A}|\chi_{\Me^\beta_B}\rangle(v) & =\sum_{d(e)=v}\overline{\chi_{\Me^\alpha_A}(e)}\chi_{\Me^\beta_B}(e) \\ & =\delta_{\alpha,\beta}\chi_{\Ne_A}\chi_{\Ne_B}(v)=\delta_{\alpha,\beta}\chi_{\Ne_A\cap\Ne_B}(v)\,,\end{align*}
and hence 
$$T^0(\langle\chi_{\Me^\alpha_A}|\chi_{\Me^\beta_B}\rangle)=\delta_{\alpha,\beta}P_{A\cap B}=T^1(\chi_{\Me^\alpha_A})^*T^1(\chi_{\Me^\beta_B})\,,$$
as desired.
Now let $\alpha\in \Labe$, $A\in \B$ and $B\in \Id_{\Ra}$. Then, 
$$T^0(\chi_{\Ne_A})T^1(\chi_{\Me^\alpha_B})=P_AS_\alpha P_B=S_{\alpha}P_{\theta_\alpha(A)}P_{B}=S_\alpha P_{\theta_\alpha(A)\cap B}\,.$$

Thus, given $e\in E^1$, and Lemma \ref{lemma_82}, we have that 
\begin{align*}\pi_r(\chi_{\Ne_A})(\chi_{\Me^\alpha_B})(e) & =\chi_{\Ne_A}(r(e))\chi_{\Me^\alpha_B}(e) \\ & =\chi_{\Me^\alpha_{\theta_\alpha(A)}}(e)\chi_{\Me^\alpha_B}(e) \\  & =\chi_{\Me^\alpha_{\theta_\alpha(A)}\cap\Ne_B}(e)\,.\end{align*}
Hence, 
$$T^0(\chi_{\Ne_A})T^1(\chi_{\Me^\alpha_B})=S_\alpha P_{\theta_\alpha(A)\cap B}=T^1(\pi_r(\chi_{\Ne_A})(\chi_{\Me^\alpha_B}))\,,$$
whence $(T^0,T^1)$ is a Toeplitz $E$-pair.

Finally, let $f\in C_0(E^0_{rg})$. We need to prove that $T^0(f)=\Phi(\pi_r(f))$, where $\Phi: \mathcal{K}(C_d(E^1))\longrightarrow B$ is the associated $*$-homomorphism associated to $(T^0,T^1)$. Given $\varepsilon>0$, we will construct $f'\in C_0(E^0_{rg})$ such that $\|f-f'\|<\varepsilon$ and such that $\Phi(\pi_r(f'))=T^0(f')$.  Let $K$ be a compact subset of $E^0_{rg}$ such that $\|f_{E^0_{rg}\setminus K}\|<\varepsilon$. Given $v\in K$, we define the open subset $Z_v:=\{w\in E^0_{rg}:\|f(v)-f(w)\|<\varepsilon\}$. Then, we can find $A_v\in \Breg$ such that $v\in \Ne_{A_v}\subseteq Z_v$. Therefore, we have that $K\subseteq \bigcup\limits_{v\in K}\Ne_{A_v}\subseteq E^0_{rg}$, but since $K$ is compact, there exist $v_1,\ldots,v_n\in K$ such that $K\subseteq \bigcup\limits^n_{i=1}\Ne_{A_{v_i}}$.  Observe that we also can assume that $\Ne_{A_{v_i}}\cap \Ne_{A_{v_j}}=\emptyset$ for $i\neq j$, and that $K=\bigcup\limits^n_{i=1}\Ne_{A_{v_i}}$. 
Then, we define $f':=\sum_{i=1}^n f(v_i)\chi_{\Ne_{A_{v_i}}}\in C_0(E^0_{rg})$. Clearly, $\|f-f'\|<\varepsilon$. We claim that 
$$\pi_r(f')=\sum_{i=1}^nf(v_i)\left(\sum_{\alpha\in \Delta_{A_{v_i}}} \theta_{	\chi_{\Me^\alpha_{\theta_\alpha(A_{v_i})}},\chi_{\Me^\alpha_{\theta_\alpha(A_{v_i})}}}\right)\,.$$
Indeed, let $\xi\in C_d(E^1)$ and $e\in E^1_\alpha$. Observe that $0<|\Delta_{A_{v_i}}|=\lambda_{A_{v_i}}<\infty$ for every $i=1,\ldots,n$. Then  we have that 
$$\left(\sum_{i=1}^nf(v_i)\left(\sum_{\beta\in \Delta_{A_{v_i}}} \theta_{	\chi_{\Me^\beta_{\theta_\beta(A_{v_i})}},\chi_{\Me^\beta_{\theta_\beta(A_{v_i})}}}\right)\right)(\xi)(e)=$$
$$\sum_{i=1}^nf(v_i)\left(\sum_{\beta\in \Delta_{A_{v_i}}} \chi_{\Me^\beta_{\theta_\beta(A_{v_i})}}(e)\langle \chi_{\Me^\beta_{\theta_\beta(A_{v_i})}}|\xi\rangle(d(e))\right)=$$
$$\sum_{i=1}^nf(v_i)\left(\sum_{\beta\in \Delta_{A_{v_i}}} \chi_{\Me^\beta_{\theta_\beta(A_{v_i})}}(e)\left(\sum_{d(e')=d(e)}\overline{ \chi_{\Me^\beta_{\theta_\beta(A_{v_i})}}(e')}\xi(e')\right)\right)=$$
$$\sum_{i=1}^nf(v_i)\left(\sum_{\beta\in \Delta_{A_{v_i}}} \chi_{\Me^\beta_{\theta_\beta(A_{v_i})}}(e)\xi(e)\right)\,.$$

Observe that, by Lemma \ref{lemma_82} and the fact that $\Ne_{A_{v_i}}\cap \Ne_{A_{v_j}}=\emptyset$ for $i\neq j$, we have that $r(e)\in K$ if and only if there exists a unique $1\leq k\leq n$ such that  $e\in \Me^\alpha_{\theta_\alpha(A_{v_k})}$.
Then, 
$$\sum_{i=1}^nf(v_i)\left(\sum_{\beta\in \Delta_{A_{v_i}}} \chi_{\Me^\beta_{\theta_\beta(A_{v_i})}}(e)\xi(e)\right)=\sum_{i=1}^n f(v_i)\chi_{\Ne_{A_{v_i}}}(r(e))\xi(e)=\pi_r(f')\xi(e)\,,$$
as desired.

Finally, since $\{P_A,S_\alpha\}$ is a representation of $(\B,\Labe,\theta)$, we have that
$$T^0(f')=\sum_{i=1}^n f(v_i) T^0\left(\chi_{\Ne_{A_{v_i}}}\right)=\sum_{i=1}^n f(v_i)P_{A_{v_i}}=\sum_{i=1}^n f(v_i)\sum_{\alpha\in \Delta_{A_{v_i}}}S_\alpha P_{\theta_{\alpha}(A_{v_i})}S_\alpha^*\,,$$
because $A_{v_i}\in \Breg$.
But 
$$\Phi(\pi_r(f'))=\Phi\left(\sum_{i=1}^nf(v_i)\left(\sum_{\alpha\in \Delta_{A_{v_i}}} \theta_{	\chi_{\Me^\alpha_{\theta_\alpha(A_{v_i})}},\chi_{\Me^\alpha_{\theta_\alpha(A_{v_i})}}}\right)\right)=$$
$$\sum_{i=1}^nf(v_i)\left(\sum_{\alpha\in \Delta_{A_{v_i}}} T^1(\chi_{\Me^\alpha_{\theta_\alpha(A_{v_i})}})T^1(\chi_{\Me^\alpha_{\theta_\alpha(A_{v_i})}})^*\right)=\sum_{i=1}^nf(v_i)\sum_{\alpha\in \Delta_{A_{v_i}}} S_\alpha P_{\theta_\alpha(A_{v_i})}(S_\alpha P_{\theta_\alpha(A_{v_i})} )^*=$$
$$\sum_{i=1}^n f(v_i)\sum_{\alpha\in \Delta_{A_{v_i}}}S_\alpha P_{\theta_{\alpha}(A_{v_i})}S_\alpha^*=T^0(f')\,.$$
Thus, $(T^0,T^1)$ is a Cuntz-Krieger $E$-pair, as desired.
\end{proof}

We can use the characterization of $\Clab$ as a topological graph to deduce the following results:

\begin{corol}\label{CK_unital}\cite[Section 6]{KatsuraI} Let $(\B,\Labe,\theta)$ be a Boolean dynamical system. 
\begin{enumerate}
\item $\Clab$ is nuclear,
\item if $\B$ is a unital Boolean algebra then $\Clab$ is unital,
\item if $\B$  and $\Labe$ are countable then $\Clab$ satisfies the Universal Coefficients Theorem.
\end{enumerate}
\end{corol}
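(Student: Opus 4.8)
The plan is to derive all three statements from the isomorphism $\Clab\cong\mathcal{O}(E)$ established in Theorem \ref{proposition_51}, combined with the corresponding structural results for topological graph $C^*$-algebras proved by Katsura in \cite[Section 6]{KatsuraI}. Since the properties in question (nuclearity, unitality, the UCT) are all isomorphism invariants, the only genuine work is to translate the hypotheses on the Boolean dynamical system $(\B,\Labe,\theta)$ into the topological conditions on $E$ that Katsura's theorems require, using the Stone duality developed in Section 2.

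For (1), I would simply invoke Katsura's theorem that the $C^*$-algebra of an arbitrary topological graph is nuclear; since $\Clab\cong\mathcal{O}(E)$, nuclearity transfers at once. For (2), the relevant fact is that $\mathcal{O}(E)$ is unital exactly when $E^0$ is compact, the unit being $t^0(1_{E^0})$ (indeed, when $E^0$ is compact, $C_0(E^0)=C(E^0)$ is unital, and $t^0(1)$ acts as a unit on the generators via $t^0(1)t^0(f)=t^0(f)$ and $t^0(1)t^1(\xi)=t^1(\pi_r(1)\xi)=t^1(\xi)$). So I would check that unitality of $\B$ forces $E^0=\widehat{\B}$ to be compact: if $\B$ is unital with top element $\tot$, then every $A\in\B$ satisfies $A=A\cap\tot\subseteq\tot$, so $\tot\in\Cu$ for each ultrafilter $\Cu\in\widehat{\B}$ by $\textbf{F1}$; hence $E^0=\widehat{\B}=Z(\tot)=\widehat{\Id_{\tot}}$, which is compact by Corollary \ref{corollary_1}.

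For (3), I would verify that $E^0$ and $E^1$ are second countable, which is the hypothesis under which Katsura's UCT result applies. When $\B$ is countable, the cylinder sets $\{Z(A):A\in\B\}$ form a countable basis for the Stone's spectrum $\widehat{\B}=E^0$, so $E^0$ is second countable; likewise each $\widehat{\Id_{\Ra}}$ is second countable, and since $\Labe$ is countable the disjoint union $E^1=\bigsqcup_{\alpha\in\Labe}\widehat{\Id_{\Ra}}$ is second countable as well. The UCT then follows from Katsura's corresponding result. The main point to take care over — and the closest thing to an obstacle — is precisely this bookkeeping of the topological translations (compactness from unitality, second countability from countability); but each of these is immediate from the explicit description of the basis $\{Z(A)\}$ and from Corollary \ref{corollary_1}, so no serious difficulty is expected.
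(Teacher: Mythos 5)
Your proposal is correct and follows exactly the route the paper takes: the paper deduces this corollary from the isomorphism $\Clab\cong\mathcal{O}(E)$ of Theorem \ref{proposition_51} together with Katsura's results in \cite[Section 6]{KatsuraI}, which is precisely your plan. The translation details you supply (unitality of $\B$ forces $\widehat{\B}=Z(\tot)$ to be compact, and countability of $\B$ and $\Labe$ gives second countability of $E^0$ and $E^1$) are accurate and are exactly the bookkeeping the paper leaves implicit.
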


Our intention now is to state a gauge invariant theorem for $C^*(\B,\Labe,\theta)$. By the universality of $\mathcal{O}(E)$, there exists a gauge action $\beta':\mathbb{T}\curvearrowright \text{Aut }(\mathcal{O}(E))$ defined by $\beta'_z(t^0(f))=t^0(f)$ and $\beta'_z(t^1(\xi))=zt^1(\xi)$ for $f\in C_0(E^0)$, $\xi\in C_d(E^1)$ and $z\in \mathbb{T}$. Moreover, the map $\Phi:C^*(\B,\Labe,\theta)\longrightarrow \mathcal{O}(E)$, defined by $p_A\longmapsto t^0(\chi_{\Ne_A})$ and $s_\alpha\longmapsto t^1(\chi_{\Me_\alpha})$ for $A\in\B$ and $\alpha\in \Labe$, is an isomorphism. Then, it is clear that $\beta'_z\circ \Psi=\Psi\circ \beta_z$ for $z\in \mathbb{T}$, where $\beta$ is the gauge action of $C^*(\B,\Labe,\theta)$ defined in Section \ref{section_2}. Therefore, using the above isomorphism $\Psi$, we will not make distinction between $C^*(\B,\Labe,\theta)$ and $\mathcal{O}(E)$, and between their respective gauge actions $\beta$ and $\beta'$.

\begin{theor}\label{theorem_2} Let $(\B,\Labe,\theta)$ be a Boolean dynamical system, and let $\{P_A,S_\alpha\}$ be a Cuntz-Krieger representation of $(\B,\Labe,\theta)$ in $\mathcal{A}$. Suppose that $P_A\neq 0$ whenever $A\neq\emptyset$, and that there is a strongly continuous action $\gamma$ of $\mathbb{T}$ on $C^*(P_A,S_\alpha)\subseteq \mathcal{A}$, such that for all $z\in \mathbb{T}$ we have that $\gamma_z\circ \pi_{S,P}=\pi_{S,P}\circ \beta_z$. Then, $\pi_{S,P}$ is injective.
\end{theor}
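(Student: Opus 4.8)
The plan is to reduce the statement to Katsura's gauge-invariant uniqueness theorem \cite[Theorem 4.5]{KatsuraI} for the topological graph $C^*$-algebra $\mathcal{O}(E)$, transporting everything through the isomorphism $\Clab\cong\mathcal{O}(E)$ established in Theorem \ref{proposition_51}. First I would feed the given Cuntz-Krieger representation $\{P_A,S_\alpha\}$ into the construction carried out in the proof of Theorem \ref{proposition_51}: that argument produces a Cuntz-Krieger $E$-pair $(T^0,T^1)$ on $\mathcal{A}$ determined by $T^0(\chi_{\Ne_A})=P_A$ and $T^1(\chi_{E^1_\alpha})=S_\alpha$, and the universality of $\mathcal{O}(E)$ induces a $*$-homomorphism $\rho\colon\mathcal{O}(E)\to C^*(T^0,T^1)=C^*(P_A,S_\alpha)$ which coincides with $\pi_{S,P}$ after precomposition with the isomorphism of Theorem \ref{proposition_51}. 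It then suffices to prove that $\rho$ is injective, and Katsura's theorem requires exactly two things: that $T^0$ be injective, and that $C^*(T^0,T^1)$ carry a gauge action compatible with the $E$-pair structure.

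For injectivity of $T^0$, recall that $\ker T^0$ is an ideal of $C_0(E^0)=C_0(\widehat{\B})$, hence of the form $C_0(U)$ for some open $U\subseteq\widehat{\B}$. By Proposition \ref{proposition_2} the compact open sets $\Ne_A=Z(A)$ with $A\in\B$ form a basis for $\widehat{\B}$, so if $U$ were nonempty it would contain some $\Ne_A$ with $A\neq\emptyset$; but then $\chi_{\Ne_A}\in C_0(U)$ while $T^0(\chi_{\Ne_A})=P_A\neq 0$ by hypothesis, a contradiction. Hence $U=\emptyset$ and $T^0$ is injective. This is precisely where the full strength of the assumption ``$P_A\neq 0$ whenever $A\neq\emptyset$'' is used.

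For the gauge action I would use the given $\gamma$ on $C^*(P_A,S_\alpha)$. The intertwining relation $\gamma_z\circ\pi_{S,P}=\pi_{S,P}\circ\beta_z$ together with $\beta_z(p_A)=p_A$ and $\beta_z(s_\alpha)=zs_\alpha$ yields $\gamma_z(T^0(\chi_{\Ne_A}))=\gamma_z(P_A)=P_A=T^0(\chi_{\Ne_A})$ and $\gamma_z(T^1(\chi_{E^1_\alpha}))=\gamma_z(S_\alpha)=zS_\alpha=zT^1(\chi_{E^1_\alpha})$, so $\gamma$ is a gauge action for $(T^0,T^1)$ in Katsura's sense. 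Applying \cite[Theorem 4.5]{KatsuraI} then gives that $\rho$ is injective (indeed an isomorphism), and since $\pi_{S,P}$ equals $\rho$ precomposed with the isomorphism of Theorem \ref{proposition_51}, we conclude that $\pi_{S,P}$ is injective.

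The bulk of the work has already been done: the construction of $(T^0,T^1)$ and the verification that it is a Cuntz-Krieger $E$-pair are the content of Theorem \ref{proposition_51}, while the hard analytic core is Katsura's theorem itself. The only genuinely new points are the injectivity of $T^0$ and the elementary check that $\gamma$ restricts to a gauge action on the generators. I expect the bookkeeping around the isomorphism of Theorem \ref{proposition_51}—making sure the two gauge actions $\beta$, $\beta'$ are identified correctly so that the hypothesis of Katsura's theorem is met on all of $C^*(T^0,T^1)$—to be the only place requiring genuine care; everything else is a direct appeal to results already established.
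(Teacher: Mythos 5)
Your proposal is correct and takes essentially the same route as the paper: the paper's own proof consists of citing Theorem \ref{proposition_51}, the preceding identification of the gauge actions $\beta$ and $\beta'$ under the isomorphism $\Psi$, and \cite[Theorem 4.5]{KatsuraI}. You merely spell out the two details the paper leaves implicit, namely the injectivity of $T^0$ (deduced from $P_A\neq 0$ for $A\neq\emptyset$ via the basis of compact open sets $\Ne_A$) and the check that $\gamma$ is a gauge action for the pair $(T^0,T^1)$.
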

\begin{proof} The result follows  by Theorem \ref{proposition_51}, the above comment and \cite[Theorem 4.5]{KatsuraI}.
\end{proof}

Finally we will compute the $K$-Theory of Cuntz-Krieger Boolean algebras. To do that, we will use the above characterization as topological graph $C^*$-algebra, and then we will use the results of Katsura \cite[Section 6]{KatsuraI} to give a $6$-term exact sequence that allows to compute the $K$-Theory of the Cuntz-Krieger Boolean algebra. The peculiarity of the space, that is $0$-dimensional, implies that this computation reduces to computing the kernel and cokernel of a map between the $K$-groups of certain subspaces of the vertex spaces.

First recall that, given a topological graph $E$, there is a $6$-term exact sequence
$$\xymatrix{K_0(C_0(E^0_{rg})) \ar[rr]^{\iota_*-[\pi_r]} & & K_0(C_0(E^0))\ar[rr]&  &K_0(\mathcal{O}(E))\ar[d] \\ K_1(\mathcal{O}(E)) \ar[u] &  &K_1(C_0(E_{rg}))\ar[ll]& & K_1(C_0(E^0_{rg}))\ar[ll]_{\iota_*-[\pi_r] } }$$
where $\iota:C_0(E^0_{rg})\rightarrow C_0(E^0)$ is the natural  map, and $\pi_r:C_0(E^0_{rg})\rightarrow \mathcal{K}(C_d(E^1))$.

Let $(\B,\Labe,\theta)$ be a Boolean dynamical system, and let $E$ be the associated topological graph. Recall that  $E^0=\widehat{\B}$, that $E^0_{rg}=\widehat{\Breg}$, and that by the Stone's Representation Theorem we have that
$$\xymatrix{K_0(C^*(\Breg)) \ar[rr]^{\iota_*-[\pi_r]} & & K_0(C^*(\B))\ar[rr]&  &K_0(\mathcal{O}(E))\ar[d] \\ K_1(\mathcal{O}(E)) \ar[u] &  & 0\ar[ll]& & 0\ar[ll] }\,.$$
Observe that, since $E^0$ is a $0$-dimensional space,  we have that 
$$K_0(C^*(\B))=K_0(C_0(E^0))=C_0(E^0,\Z)=C(\B,\Z)\,,$$ 
where $C(\B,\Z)$ is the $\Z$-linear span of the functions  defined on $\B$ by
$$\chi_A(B)=\left\lbrace \begin{array}{ll} 1 & \text{if } A\cap B\neq \emptyset \\ 0 & \text{otherwise }\end{array}\right. $$
for $A,B\in \B$. 

Now, given $A\in \Breg$, we have that the characteristic function $\chi_{\Ne_A}\in C_0(E^0_{rg})$, and hence $\pi_r(\chi_{\Ne_A})=\sum_{\alpha\in \Delta_A}\Theta_{\chi_{\Me^\alpha_{\theta_{\alpha}(A)}},\chi_{\Me^\alpha_{\theta_{\alpha}(A)}}}$. Therefore, the map $[\pi_r]:C(\Breg,\Z)\rightarrow C(\B,\Z)$ is given by $\chi_A\mapsto \sum_{\alpha\in \Delta_A}\chi_{\theta_{\alpha}(A)}$ for every $A\in \Breg$.

\begin{prop}[{cf. \cite[Proposition 6.9]{KatsuraI}}]\label{K-theory} Let $(\B,\Labe,\theta)$ be a Boolean dynamical system. Then, $K_0(\Clab)\cong \text{Ker }(Id-[\pi_r])$ and $K_1(\Clab)\cong \text{Coker }(Id-[\pi_r])$, where $Id-[\pi_r]:C(\Breg,\Z)\rightarrow C(\B,\Z)$ is given by $\chi_A\mapsto \chi_A-\sum_{\alpha\in \Delta_A}\chi_{\theta_{\alpha(A)}}$ for $A\in \Breg$. 
\end{prop}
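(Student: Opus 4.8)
The plan is to reduce the statement to the six-term exact sequence for a topological graph $C^*$-algebra recalled just above, and then to identify all the relevant $K$-groups and maps explicitly using the $0$-dimensionality of the space $E^0=\widehat{\B}$. By Theorem \ref{proposition_51} we have $\Clab\cong\mathcal{O}(E)$, so it suffices to compute $K_*(\mathcal{O}(E))$. The key structural input is that $E^0$ and $E^1$ are locally compact, totally disconnected, second countable spaces, so that $C_0(E^0)$ and $C_0(E^0_{rg})$ are AF (indeed commutative with totally disconnected spectrum), whence their $K_1$ groups vanish and their $K_0$ groups are the groups of compactly supported continuous $\Z$-valued functions. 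This is exactly what collapses the general six-term sequence to the reduced one displayed in the excerpt, with the two $K_1$-terms $K_1(C_0(E^0))$ and $K_1(C_0(E^0_{rg}))$ replaced by $0$.

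The steps I would carry out, in order, are as follows. First I would invoke the isomorphism $\Clab\cong\mathcal{O}(E)$ from Theorem \ref{proposition_51} and the recalled six-term sequence of Katsura \cite[Section 6]{KatsuraI}. Second, I would record the identifications $E^0_{rg}=\widehat{\Breg}$ and $E^0=\widehat{\B}$ coming from Lemma \ref{vertices} together with the Stone representation (Proposition \ref{proposition_2}), and use $0$-dimensionality to write
$$K_0(C_0(E^0))=C(\B,\Z),\qquad K_0(C_0(E^0_{rg}))=C(\Breg,\Z),\qquad K_1(C_0(E^0))=K_1(C_0(E^0_{rg}))=0,$$
so that the sequence degenerates. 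Third, since both $K_1$ terms feeding the boundary maps vanish, the six-term sequence splits into the single short exact sequence
$$0\longrightarrow \operatorname{Coker}(\iota_*-[\pi_r])\longrightarrow K_0(\mathcal{O}(E)) \longrightarrow \operatorname{Ker}(\iota_*-[\pi_r])\longrightarrow 0$$
shifted appropriately; reading off the exactness at the four surviving corners gives $K_0(\mathcal{O}(E))\cong\operatorname{Ker}(\iota_*-[\pi_r])$ and $K_1(\mathcal{O}(E))\cong\operatorname{Coker}(\iota_*-[\pi_r])$. Fourth, I would identify the map on $K_0$: the natural inclusion $\iota\colon C_0(E^0_{rg})\hookrightarrow C_0(E^0)$ induces on $\Z$-valued functions simply the inclusion $\chi_A\mapsto\chi_A$ (so $\iota_*=\operatorname{Id}$ after the identification $C(\Breg,\Z)\subseteq C(\B,\Z)$), while the computation already performed in the excerpt, namely $\pi_r(\chi_{\Ne_A})=\sum_{\alpha\in\Delta_A}\Theta_{\chi_{\Me^\alpha_{\theta_\alpha(A)}},\chi_{\Me^\alpha_{\theta_\alpha(A)}}}$, shows that $[\pi_r]$ sends $\chi_A\mapsto\sum_{\alpha\in\Delta_A}\chi_{\theta_\alpha(A)}$. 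Hence $\iota_*-[\pi_r]$ is precisely $\operatorname{Id}-[\pi_r]\colon C(\Breg,\Z)\to C(\B,\Z)$ with the stated formula, completing the proof.

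The main obstacle, and the point requiring the most care, is justifying that $\iota_*$ really is the inclusion of $C(\Breg,\Z)$ into $C(\B,\Z)$ at the level of $K_0$ rather than some more subtle map, and dually that $[\pi_r]$ computes the class of the compact operator $\pi_r(\chi_{\Ne_A})$ correctly as a class in $K_0(C_0(E^0))=K_0(\mathcal{K}(C_d(E^1)))$ under the Morita-type identification implicit in Katsura's setup. Concretely, one must check that a rank-one projection $\Theta_{\chi_{\Me^\alpha_{\theta_\alpha(A)}},\chi_{\Me^\alpha_{\theta_\alpha(A)}}}$ in $\mathcal{K}(C_d(E^1))$ has $K_0$-class equal to $[\chi_{\theta_\alpha(A)}]$ under the isomorphism $K_0(\mathcal{K}(C_d(E^1)))\cong K_0(C_0(E^0))$; this is where the hypothesis $A\in\Breg$, forcing $0<\lambda_A<\infty$ so that the relevant sums are finite and the elements genuinely compact, is essential. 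Once these identifications are nailed down, everything else is a formal consequence of exactness, and the proposition follows as the $0$-dimensional specialization of \cite[Proposition 6.9]{KatsuraI}.
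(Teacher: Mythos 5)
Your route is the same as the paper's: identify $\Clab$ with $\mathcal{O}(E)$ via Theorem \ref{proposition_51}, feed this into Katsura's six-term sequence, use Stone duality and $0$-dimensionality to write $K_0(C_0(E^0))=C(\B,\Z)$, $K_0(C_0(E^0_{rg}))=C(\Breg,\Z)$ and $K_1(C_0(E^0))=K_1(C_0(E^0_{rg}))=0$, and identify $\iota_*-[\pi_r]$ with $\mathrm{Id}-[\pi_r]$ using $\pi_r(\chi_{\Ne_A})=\sum_{\alpha\in\Delta_A}\Theta_{\chi_{\Me^\alpha_{\theta_\alpha(A)}},\chi_{\Me^\alpha_{\theta_\alpha(A)}}}$. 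Your point of care about the rank-one projections is also settled correctly: for $\xi=\chi_{\Me^\alpha_{\theta_\alpha(A)}}$ one has $\langle\xi|\xi\rangle=\chi_{\Ne_{\theta_\alpha(A)}}$, so $[\Theta_{\xi,\xi}]$ corresponds to $[\chi_{\theta_\alpha(A)}]$ under $K_0(\mathcal{K}(C_d(E^1)))\cong K_0(C_0(E^0))$. All of this matches the discussion the paper places immediately before the proposition.

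The genuine error is in your third step, the homological bookkeeping. When both $K_1$-terms vanish, the cyclic six-term sequence does not produce the short exact sequence $0\to\operatorname{Coker}(\iota_*-[\pi_r])\to K_0(\mathcal{O}(E))\to\operatorname{Ker}(\iota_*-[\pi_r])\to 0$ that you wrote down (no such extension problem arises here); it collapses to the four-term exact sequence
$$0\longrightarrow K_1(\mathcal{O}(E))\longrightarrow C(\Breg,\Z)\xrightarrow{\ \iota_*-[\pi_r]\ }C(\B,\Z)\longrightarrow K_0(\mathcal{O}(E))\longrightarrow 0\,,$$
which forces $K_1(\mathcal{O}(E))\cong\operatorname{Ker}(\mathrm{Id}-[\pi_r])$ and $K_0(\mathcal{O}(E))\cong\operatorname{Coker}(\mathrm{Id}-[\pi_r])$ --- the transpose of what you read off. (Compare the classical Cuntz--Krieger computation: $K_0(\mathcal{O}_A)\cong\operatorname{Coker}(I-A^t)$ and $K_1(\mathcal{O}_A)\cong\operatorname{Ker}(I-A^t)$; your assignment would give $K_0(\mathcal{O}_\infty)=0$, which is false.) Your conclusion agrees with the proposition only because the printed statement itself interchanges $\operatorname{Ker}$ and $\operatorname{Coker}$; that this is a misprint is visible from the paper's own later example, where $\Breg=\emptyset$, so $\mathrm{Id}-[\pi_r]$ has zero domain, hence $\operatorname{Ker}=0$ and $\operatorname{Coker}=C(\B,\Z)$, and the example records $K_0(\Clab)=\bigl(\bigoplus_{i\in\Z}\Z\bigr)^1\neq 0$ and $K_1(\Clab)=0$, i.e.\ $K_0\cong\operatorname{Coker}$ and $K_1\cong\operatorname{Ker}$. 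So your derivation contains a real mistake in reading exactness, which happens to land on the misprinted statement; a correct reading would have produced the intended (transposed) result and exposed the typo.
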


\begin{rema}  We would like to remark that Corollary \ref{CK_unital} is a generalization
of \cite[Corollary 3.11]{BPIII}, that Theorem \ref{theorem_2} is a generalization of \cite[Corollary 3.10]{BPIII},  and that Proposition \ref{K-theory}  is a generalization of \cite[Theorem 4.4]{BPIII}.
\end{rema}

\section{An $*$-inverse semigroup}

In this section we will associate to $\Clab$ an $\ast$-inverse semigroup, which will help us to construct the groupoid used to represent the above algebra as a groupoid $C^*$-algebra. In order to attain our goal we will first associate to $\Clab$ a suitable $*$-inverse semigroup. 
  
\begin{defi}
$$T=T_{(\B,\Labe,\theta)}:=\{s_\alpha p_A s^*_\beta: \alpha,\beta\in \Labe^*\,, A\in \B\,, A\subseteq \Ri_\alpha\cap\Ri_\beta\neq \emptyset\}\cup\{0\}\subseteq \Clab\,.$$
\end{defi}


\begin{prop}\label{prop1}
$T$ is an $*$-inverse semigroup.
\end{prop}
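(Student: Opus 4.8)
The plan is to verify directly that $T$ is closed under multiplication and under the $*$-operation inherited from $\Clab$, and that every element has an inverse in the inverse-semigroup sense; associativity is free since $T\subseteq\Clab$ and multiplication is the ambient $C^*$-algebra product. The key computational input is the product rule for the generators $s_\alpha p_A s_\beta^*$, which I would extract from relations (1)--(3) of Definition \ref{DefAlgebra}. The crucial observation is that relation (3), $s_\alpha^* s_\beta=\delta_{\alpha,\beta}p_{\Ri_\alpha}$ (extended multiplicatively to words in $\Labe^*$), forces $s_\beta^* s_\gamma$ to vanish unless one of $\beta,\gamma$ is an initial segment of the other. Concretely, I expect to show that for a product $(s_\alpha p_A s_\beta^*)(s_\gamma p_B s_\delta^*)$ the middle factor $s_\beta^* s_\gamma$ is nonzero only when $\beta\leq\gamma$ or $\gamma\leq\beta$, and in those cases it telescopes using relation (2), $p_A s_\mu=s_\mu p_{\theta_\mu(A)}$, to pull the remaining edge word through the projection. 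This should yield a product of the same form $s_{\alpha'}p_{C}s_{\delta'}^*$, up to checking that the index set condition $C\subseteq\Ri_{\alpha'}\cap\Ri_{\delta'}\neq\emptyset$ is preserved (and setting the product to $0$ when $C=\emptyset$, which is why $0$ is adjoined to $T$).

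First I would record the two reduction cases explicitly. If $\beta\leq\gamma$, write $\gamma=\beta\gamma'$; then $s_\beta^* s_\gamma=s_\beta^* s_\beta s_{\gamma'}=p_{\Ri_\beta}s_{\gamma'}$, and pushing $p_A$ and $p_{\Ri_\beta}$ through $s_{\gamma'}$ via relation (2) produces $s_\alpha p_{A'} s_{\gamma'\delta}^*$ for an appropriate $A'=A\cap\theta_{\gamma'}^{-1}$-type intersection expressed through the $\theta$'s and ranges. The case $\gamma\leq\beta$ is symmetric and gives an element whose second word extends $\delta$. In the remaining case the product is $0$. So the product always lands in $T$.

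For the involution, I would check that $(s_\alpha p_A s_\beta^*)^*=s_\beta p_A s_\alpha^*$, which is again of the required form since the defining condition $A\subseteq\Ri_\alpha\cap\Ri_\beta$ is symmetric in $\alpha,\beta$; hence $T$ is closed under $*$. It then remains to verify the inverse-semigroup axioms: that $xx^*x=x$ and that idempotents commute. The relation $xx^*x=x$ for $x=s_\alpha p_A s_\beta^*$ follows from $s_\beta^* s_\alpha p_A s_\beta^* s_\alpha=p_A$-type simplifications using relations (2) and (3), noting $A\subseteq\Ri_\alpha\cap\Ri_\beta$ so that the projections absorb correctly. For commuting idempotents, I would identify the idempotents of $T$ as (limits of) elements of the form $s_\alpha p_A s_\alpha^*$ and show any two such commute, reducing again to the telescoping product computation together with the fact that the $p_A$ are mutually commuting projections by relation (1).

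The main obstacle I anticipate is bookkeeping the index-set condition $C\subseteq\Ri_{\alpha'}\cap\Ri_{\delta'}$ through the telescoping: one must verify that the intersection governing where the surviving projection lives is exactly the nonempty set required, and correctly handle the degenerate subcases where a word equals $\emptyset$ (using the convention $\theta_\emptyset=\mathrm{Id}$ and $\Ri_\emptyset=\D_\emptyset$ from the Notation). The algebra is routine once the two reduction cases for $s_\beta^* s_\gamma$ are in hand, so the real content is organizing this case analysis cleanly rather than any conceptual difficulty.
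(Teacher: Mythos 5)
Your proposal takes essentially the same route as the paper's proof: direct verification that $T$ is closed under products (via the case analysis on whether one of the two middle words is an initial segment of the other), closure under $*$, the partial-isometry identity $ss^*s=s$, and finally the identification of the idempotents as the elements $s_\alpha p_A s_\alpha^*$ together with the check that they pairwise commute, concluding by the standard characterization of inverse semigroups (the paper cites \cite[Theorem 1.1.3]{Lawson}). One concrete correction is needed in your telescoping formula: in the case $\beta\leq\gamma$, writing $\gamma=\beta\gamma'$, the surviving unstarred factor $s_{\gamma'}$ merges to the \emph{left}, so the product is $s_{\alpha\gamma'}\,p_{\theta_{\gamma'}(A)\cap B}\,s_\delta^*$ --- the first word is extended and the projection is the pushforward $\theta_{\gamma'}(A)\cap B$ (using $p_A s_{\gamma'}=s_{\gamma'}p_{\theta_{\gamma'}(A)}$ and $A\subseteq\Ri_\beta$) --- not $s_\alpha p_{A'}s_{\gamma'\delta}^*$ with a ``$\theta_{\gamma'}^{-1}$-type'' set as you wrote. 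As stated, your formula also contradicts your own remark that it is the symmetric case $\gamma\leq\beta$ which extends $\delta$ (there one indeed gets $s_\alpha p_{A\cap\theta_{\beta'}(B)}s_{\delta\beta'}^*$). Finally, drop ``(limits of)'' in your description of the idempotents: $T$ is a purely algebraic set of elements of $\Clab$, no closure is taken, and the product table shows directly that every nonzero idempotent is exactly of the form $s_\alpha p_A s_\alpha^*$. With these slips repaired, your argument is precisely the paper's proof.
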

\begin{proof}
First notice that, given $\alpha, \beta\in \Labe^*$ and $A\in \B$,
$$s_\alpha p_A s^*_\beta=s_\alpha p_{A\cap\Ri_\alpha\cap\Ri_\beta} s^*_\beta\,,$$
so the assumption implies that $s_\alpha p_A s^*_\beta\neq 0$.

 Now given $s_\alpha p_A s^*_\beta$, $s_\gamma p_B s^*_\delta$, we have that 
$$s_\alpha p_A s^*_\beta\cdot s_\gamma p_B s^*_\delta=\left\{
\begin{array}{ll}
s_{\alpha\gamma'}p_{\theta_{\gamma'}(A)\cap B} s_\delta^* & \text{if } \gamma=\beta\gamma'\text {and } \Ri_{\alpha\gamma'}\cap \Ri_\delta\neq \emptyset \\
s_\alpha p_{A\cap \theta_{\beta'}(B)} s^*_{\delta\beta'}& \text{if }\beta=\gamma\beta'\text{ and } \Ri_\alpha\cap \Ri_{\delta\beta'}\neq\emptyset \\
s_\alpha p_{A\cap B} s_\delta& \text{if }\gamma=\beta\text{ and } \Ri_\alpha\cap \Ri_\gamma\neq \emptyset \\
0 & \text{otherwise}
\end{array}
\right.$$
So $T$ is closed under multiplication. Moreover, 
$$(s_\alpha p_A s^*_\beta)^*=s_\beta p_As^*_\alpha$$
for every $\alpha,\beta\in \Labe^*$ $A\in \B$  with $A\subseteq \Ri_\alpha\cap \Ri_\beta\neq \emptyset$. Thus, $T$ is an $*$-semigroup with $0$.\vspace{.2truecm}

Next, notice that for any $s=s_\alpha p_A s^*_\beta\in T$, we have that $s=ss^*s$: 
$$ss^*s=(s_\alpha p_A s^*_\beta\cdot s_\beta p_A s^*_\alpha)\cdot s_\alpha p_A s^*_\beta=s_\alpha p_A s^*_\alpha\cdot s_\alpha p_A s^*_\beta=s_\alpha p_A s^*_\beta=s\,.$$
Thus, every $s\in T$ is a partial isometry.\vspace{.2truecm}

Finally, notice that the idempotents $ss^*$, for $s\in T$, have the form $s_\alpha p_A s^*_\alpha$. Hence,
$$s_\alpha p_A s^*_\alpha \cdot s_\beta p_B s^*_\beta=\left\{
\begin{array}{ll}
s_{\beta}p_{\theta_{\beta'}(A)\cap B} s^*_\beta & \text{if } \beta=\alpha\beta' \\
 s_\alpha p_{A\cap \theta_{\alpha'}(B)} s^*_{\alpha}& \text{if }\alpha=\beta\alpha'\\
s_\alpha p_{A\cap B} s^*_\alpha & \text{if }\alpha=\beta \\
0 & \text{otherwise}
\end{array}
\right.$$
and it is straightforward to check that these projections pairwise commute. Thus, $T$ is an $\ast$-inverse semigroup by \cite[Theorem 1.1.3]{Lawson}.
\end{proof}

\begin{corol}\label{corol1}
$\Clab=\overline{\text{span}}\{x:x\in T\}$
\end{corol}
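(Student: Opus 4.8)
The plan is to prove the two inclusions separately. The inclusion $\overline{\text{span}}\{x:x\in T\}\subseteq\Clab$ is immediate, since $T\subseteq\Clab$ by definition and $\Clab$ is norm-closed; so all the work lies in the reverse inclusion. The key structural observation is that, by Proposition \ref{prop1}, $T$ is closed under both multiplication and the adjoint. From this I would deduce that $\text{span}\{x:x\in T\}$ is a $*$-subalgebra of $\Clab$: it is manifestly closed under the (conjugate-linear) $*$ operation because $T$ is, and it is closed under multiplication because, by bilinearity, a product of two finite linear combinations of elements of $T$ expands into a linear combination of products $x_iy_j$ with $x_i,y_j\in T$, each of which lies in $T$ again by the multiplication formula of Proposition \ref{prop1}. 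Consequently its norm closure $\overline{\text{span}}\{x:x\in T\}$ is a $C^*$-subalgebra of $\Clab$.

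It then suffices to check that this $C^*$-subalgebra contains the generators $\{p_A:A\in\B\}$ and $\{s_\alpha:\alpha\in\Labe\}$ of $\Clab$, and I would do this by exhibiting each generator as an element of $T$ itself. For the projections, taking $\alpha=\beta=\emptyset$ and recalling $\Ri_\emptyset=\bigcup_{A\in\B}A$, one has $p_A=s_\emptyset p_A s_\emptyset^*\in T$ for every $A\in\B$ (trivially $p_\emptyset=0\in T$, while for $A\neq\emptyset$ we have $A\subseteq\Ri_\emptyset$ with $\Ri_\emptyset\neq\emptyset$). For the partial isometries I would use relation (3) of Definition \ref{DefAlgebra}, namely $s_\alpha^*s_\alpha=p_{\Ri_\alpha}$, together with the partial-isometry identity $s_\alpha s_\alpha^*s_\alpha=s_\alpha$, to compute $s_\alpha p_{\Ri_\alpha}=s_\alpha s_\alpha^*s_\alpha=s_\alpha$; hence, taking $\beta=\emptyset$ and $A=\Ri_\alpha$, we obtain $s_\alpha=s_\alpha p_{\Ri_\alpha}s_\emptyset^*\in T$ (and $s_\alpha=0\in T$ in the degenerate case $\Ri_\alpha=\emptyset$). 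Since $\Clab=C^*(p_A,s_\alpha)$ is by definition the smallest $C^*$-subalgebra containing all $p_A$ and $s_\alpha$, and these all lie in the $C^*$-subalgebra $\overline{\text{span}}\{x:x\in T\}$, the reverse inclusion $\Clab\subseteq\overline{\text{span}}\{x:x\in T\}$ follows, yielding the equality.

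The only point requiring care — more bookkeeping than a genuine obstacle — is the interpretation of the empty-word symbol $s_\emptyset$ in the (possibly non-unital) algebra $\Clab$: the expressions $s_\emptyset p_A s_\emptyset^*$ and $s_\alpha p_{\Ri_\alpha}s_\emptyset^*$ must be read simply as $p_A$ and $s_\alpha p_{\Ri_\alpha}$, with $s_\emptyset$ absorbed as a formal unit, so that no actual identity of $\Clab$ is invoked. Apart from this convention, the entire argument rests on just two ingredients already available: Proposition \ref{prop1}, which makes $\text{span}\{x:x\in T\}$ a $*$-algebra, and the defining relations of a Cuntz-Krieger representation, which place the generators inside $T$.
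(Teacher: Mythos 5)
Your proposal is correct and is precisely the argument the paper leaves implicit: Corollary \ref{corol1} is stated without proof because, as you show, Proposition \ref{prop1} makes $\text{span}\{x:x\in T\}$ a $*$-subalgebra, while the conventions $\Ri_\emptyset=\bigcup_{A\in\B}A$ and $s_\emptyset=$ formal unit place the generators $p_A=s_\emptyset p_A s_\emptyset^*$ and $s_\alpha=s_\alpha p_{\Ri_\alpha}s_\emptyset^*$ (or $0$ when $\Ri_\alpha=\emptyset$) inside $T$, so that $\overline{\text{span}}\,T$ is a $C^*$-subalgebra of $\Clab$ containing all generators. Your care with the degenerate cases ($p_\emptyset=0$, $\Ri_\alpha=\emptyset$) and with reading $s_\emptyset$ as an absorbed formal unit matches the paper's own usage, so nothing is missing.
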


\begin{defi} 
We will define $\ET$ to be the set of idempotents of $T$.
\end{defi}

In order to go forward, we want to keep control of the natural ordering of $\ET$.

\begin{lem}\label{lem2} Let $\alpha,\beta\in \Labe^*$, $A\in \B$. Then:
\begin{enumerate}
\item If either $\alpha\ne \emptyset$ or $\alpha=\beta=\emptyset$, then $s_\alpha p_A s^*_\alpha\leq s_\beta p_B s^*_\beta$ if and only if $\alpha=\beta\alpha'$ and $A\subseteq \theta_{\alpha'}(B)$.
\item If $\alpha=\emptyset$ and $\beta \ne \emptyset$, then $s_\alpha p_A s^*_\alpha\leq s_\beta p_B s^*_\beta$ if and only if: (i) $\Delta_A=\{\beta\}$ and (ii) $\theta_\beta(A)\subseteq B$.
\end{enumerate} 
\end{lem}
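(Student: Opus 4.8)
The plan is to carry out all computations inside $\Clab$, where for the (mutually commuting) idempotents of $T$ the natural order of the inverse semigroup coincides with the order of projections: $e\le f$ precisely when $ef=e$. So in each case I must evaluate the product $s_\alpha p_A s^*_\alpha\cdot s_\beta p_B s^*_\beta$ and decide when it returns $s_\alpha p_A s^*_\alpha$. The three computational tools are the idempotent multiplication table established in the proof of Proposition~\ref{prop1}, relation~(2) of Definition~\ref{DefAlgebra} in its iterated form $p_A s_\gamma=s_\gamma p_{\theta_\gamma(A)}$, and the corner isomorphism: for $C,C'\subseteq\Ri_\gamma$ one has $s_\gamma p_C s^*_\gamma=s_\gamma p_{C'}s^*_\gamma$ if and only if $C=C'$, obtained by compressing with $s^*_\gamma(\,\cdot\,)s_\gamma$, using $s^*_\gamma s_\gamma=p_{\Ri_\gamma}$ and the faithfulness of the universal representation.

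For part~(1) the case $\alpha=\beta=\emptyset$ is immediate, since $p_Ap_B=p_{A\cap B}$ gives $p_A\le p_B$ exactly when $A\subseteq B$, matching the stated condition with $\alpha'=\emptyset$. If $\alpha\ne\emptyset$ and $\beta$ is a prefix of $\alpha$, say $\alpha=\beta\alpha'$, the table returns $s_\alpha p_{A\cap\theta_{\alpha'}(B)}s^*_\alpha$, and equating this with $s_\alpha p_A s^*_\alpha$ through the corner isomorphism forces $A\cap\theta_{\alpha'}(B)=A$, i.e. $A\subseteq\theta_{\alpha'}(B)$; the converse is clear. If $\alpha$ and $\beta$ are incomparable the product is $0\ne s_\alpha p_A s^*_\alpha$, so no relation holds. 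The remaining possibility, that $\beta$ strictly extends $\alpha$, reduces after compression by $s^*_\alpha(\,\cdot\,)s_\alpha$ to the identity of part~(2) for the pair $(A,\beta')$, and is therefore governed by the same Cuntz--Krieger mechanism analysed below.

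Part~(2) is the heart of the matter. With $\alpha=\emptyset$, relation~(2) gives $p_A\cdot s_\beta p_B s^*_\beta=s_\beta p_{\theta_\beta(A)\cap B}s^*_\beta$, so $p_A\le s_\beta p_B s^*_\beta$ if and only if $p_A=s_\beta p_{\theta_\beta(A)\cap B}s^*_\beta$. Compressing by $s^*_\beta(\,\cdot\,)s_\beta$ identifies the inner projection as $p_{\theta_\beta(A)}$ and forces $\theta_\beta(A)\cap B=\theta_\beta(A)$, which is condition~(ii) $\theta_\beta(A)\subseteq B$; everything then reduces to the single identity $p_A=s_\beta p_{\theta_\beta(A)}s^*_\beta$. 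I would first deduce that $A$ is regular from faithfulness: if $A$ were singular then, since $\lambda_A=1$, some nonempty $A_0\subseteq A$ satisfies $\theta_\beta(A_0)=\emptyset$; multiplying the identity on the left by $p_{A_0}$ and using $p_{A_0}s_\beta=s_\beta p_{\theta_\beta(A_0)}=0$ would give $p_{A_0}=0$, contradicting $p_{A_0}\ne0$. With $A\in\Breg$, the Cuntz--Krieger relation~(4) reads $p_A=\sum_{\gamma\in\Delta_A}s_\gamma p_{\theta_\gamma(A)}s^*_\gamma$; comparing this with $p_A=s_\beta p_{\theta_\beta(A)}s^*_\beta$, compressing by $s^*_\gamma(\,\cdot\,)s_\gamma$ and using $s^*_\gamma s_\delta=\delta_{\gamma,\delta}p_{\Ri_\gamma}$ together with the mutual orthogonality of the summands, annihilates every $\gamma\in\Delta_A$ except the first letter of $\beta$, leaving $\Delta_A$ a singleton. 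Condition~(i) $\Delta_A=\{\beta\}$ then pins this letter down as $\beta$ itself, forcing $\beta\in\Labe$; conversely, if $\Delta_A=\{\beta\}$ with $A\in\Breg$ then relation~(4) collapses to $p_A=s_\beta p_{\theta_\beta(A)}s^*_\beta$, and with $\theta_\beta(A)\subseteq B$ this gives $p_A\le s_\beta p_B s^*_\beta$.

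I expect the forward direction of part~(2) to be the main obstacle. The fact that a degree-zero projection $p_A$ can lie below one of positive length is invisible to the bare semigroup multiplication and is dictated solely by the Cuntz--Krieger relation~(4); making it precise requires both the faithfulness input, to discard singular $A$ for which (4) is silent, and the orthogonality of the edge-indexed summands, to isolate a single edge. It is also here that one must be careful that condition~(i) is adequate only when $\beta$ is a single letter and $A$ is regular, the degree drop for longer $\beta$ recursing through a chain of single-edge regular sets.
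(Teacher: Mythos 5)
Your computational framework is the paper's own: evaluate products with the table of Proposition~\ref{prop1}, compress by $s^*_\gamma(\,\cdot\,)s_\gamma$, and use faithfulness of the universal representation to pass from $p_C=p_{C'}$ to $C=C'$; the cases you settle in part (1) and the reduction of part (2) to $p_A=s_\beta p_{\theta_\beta(A)}s^*_\beta$ together with $\theta_\beta(A)\subseteq B$ coincide with the paper's argument. The first genuine gap is that part (1) is never finished: the case $\beta=\alpha\beta'$ with $\beta'\neq\emptyset$ is only ``deferred'' to the mechanism of part (2), and that deferral cannot deliver the stated ``only if'', because by your own reduction the inequality in this case holds precisely when $p_A=s_{\beta'}p_{\theta_{\beta'}(A)}s^*_{\beta'}$ and $\theta_{\beta'}(A)\subseteq B$ --- and part (2) says exactly that this \emph{can} happen. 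Concretely, let $\B$ be the Boolean algebra of subsets of $\{v,w,u\}$, let $\Labe=\{e,f\}$ with $\theta_e(\{v\})=\{w\}$, $\theta_f(\{w\})=\{u\}$ and all other values empty. Then $\{w\}\in\Breg$ with $\Delta_{\{w\}}=\{f\}$, so relation (4) of Definition~\ref{DefAlgebra} gives $p_{\{w\}}=s_f p_{\{u\}}s^*_f$, hence $s_e p_{\{w\}}s^*_e=s_{ef}p_{\{u\}}s^*_{ef}\leq s_{ef}p_{\{u\}}s^*_{ef}$: the inequality of (1) holds with $\alpha=e\neq\emptyset$ and $\beta=ef$, yet $\beta$ is not a prefix of $\alpha$. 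So this case genuinely occurs and contradicts the statement being proved; a complete treatment must confront it rather than defer it. (The paper's one-line proof of (1) has the same blind spot --- it tacitly assumes the product $s_\beta p_{\theta_{\beta'}(A)\cap B}s^*_\beta$ can never equal $s_\alpha p_A s^*_\alpha$ --- so what you have actually uncovered is a crack in the lemma itself; but your write-up neither closes the case nor flags the contradiction.)

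The second gap is a circularity in part (2): you justify regularity of $A$ by invoking $\lambda_A=1$, but $\lambda_A=1$ is only derived afterwards from relation (4), which is applicable only once $A\in\Breg$ is known; and without it the singular-case step fails, since a singular $A$ may contain $A_0$ with $\lambda_{A_0}=\infty$ and $\theta_\beta(A_0)\neq\emptyset$. The circle can be broken with tools you already have on the table: for every $\emptyset\neq A_0\subseteq A$ one has $p_{A_0}=p_{A_0}p_Ap_{A_0}=s_\beta p_{\theta_\beta(A_0)}s^*_\beta$, and compressing by $s^*_\gamma(\,\cdot\,)s_\gamma$ for any letter $\gamma$ different from the first letter $\beta_1$ of $\beta$ kills the right-hand side (as $s^*_\gamma s_{\beta_1}=0$), so $\theta_\gamma(A_0)=\emptyset$ by faithfulness; since $p_{A_0}\neq 0$ also forces $\theta_{\beta_1}(A_0)\neq\emptyset$, one gets $\Delta_{A_0}=\{\beta_1\}$ for all such $A_0$, which yields $A\in\Breg$ and the singleton at one stroke, before relation (4) is ever invoked; then (4) and the recursion you sketch for $|\beta|>1$ complete the argument. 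Note finally that the paper sidesteps this entire discussion by reading condition (i) as mere notation for the identity $p_A=s_\beta p_{\theta_\beta(A)}s^*_\beta$; your literal combinatorial reading of $\Delta_A=\{\beta\}$ is the more informative one, but it is precisely what creates the proof obligations above.
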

\begin{proof}
$(1)$ $s_\alpha p_A s^*_\alpha\leq s_\beta p_B s^*_\beta$ if and only if $s_\alpha p_A s^*_\alpha=s_\alpha p_A s^*_\alpha\cdot s_\beta p_B s^*_\beta$ if and only if $\alpha=\beta\alpha'$ and $A\subseteq \theta_{\alpha'}(B)$ by Proposition \ref{prop1}.

$(2)$ If $\alpha=\emptyset$, then $s_\alpha p_A s^*_\alpha=p_A$. Hence, if $p_A\leq s_\beta p_B s^*_\beta$, then $p_A=p_A\cdot s_\beta p_B s^*_\beta=s_\beta p_{\theta_\beta(A)\cap B} s^*_\beta$. Multiplying on the right side by $s_\beta$ we have that $p_A s_\beta=s_\beta p_{\theta_\beta(A)\cap B}$, and multiplying on the left side by $s^*_\beta$ we have that $s^*_\beta p_A s_\beta=p_{\theta_\beta(A)\cap B}$. Since $s^*_\beta p_As_\beta=p_{\theta_{\beta}(A)}$, we have $\theta_\beta(A)\subseteq B$. Moreover, $p_A=s_\beta p_{\theta_\beta(A)} s^*_\beta$ means that $\Delta_A=\{\beta\}$.

Conversely, if $\Delta_A=\{\beta\}$ and $\theta_\beta(A)\subseteq B$, then $p_A=s_\alpha p_{\theta_\beta(A)} s^*_\beta=s_\beta p_{\theta_\beta(A)\cap B} s^*_\beta=p_A\cdot s_\beta p_B s^*_\beta$, whence $p_A\leq s_\beta p_B s^*_\beta$.  
\end{proof}

In order to prove the next property of $T$, we need a technical result.

\begin{lem}\label{lem3}
If $\emptyset \ne\alpha\in \Labe^*$ and $A\in \B$ with $A\subseteq \Ri_\alpha$, then $p_A\ne p_A s^*_\alpha$.
\end{lem}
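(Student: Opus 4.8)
The plan is to exploit the gauge action $\beta\colon\mathbb{T}\curvearrowright\aut(\Clab)$ furnished by the universal property of $\Clab$. The guiding idea is that $p_A$ is gauge-invariant, i.e.\ it sits in degree $0$, whereas $p_As^*_\alpha$ carries a strictly negative gauge degree $-|\alpha|$; so an equality $p_A=p_As^*_\alpha$ can only survive the gauge action if $p_A$ itself vanishes. First I would recall that $\beta_z(p_A)=p_A$ for every $A\in\B$, and that, writing $\alpha=\alpha_1\cdots\alpha_n\in\Labe^n$ with $n=|\alpha|\geq 1$ (this is exactly where the hypothesis $\alpha\neq\emptyset$ is used), one has $s_\alpha=s_{\alpha_1}\cdots s_{\alpha_n}$, whence $\beta_z(s_\alpha)=z^n s_\alpha$ and therefore $\beta_z(s^*_\alpha)=\beta_z(s_\alpha)^*=\bar z^{\,n}s^*_\alpha=z^{-n}s^*_\alpha$ for all $z\in\mathbb{T}$, since $\beta_z$ is a $*$-automorphism and $|z|=1$.

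Next I would argue by contradiction. Assuming $p_A=p_As^*_\alpha$ and applying $\beta_z$ to both sides, using gauge-invariance of $p_A$ together with the computation above, I would obtain
$$p_A=\beta_z(p_A)=\beta_z(p_As^*_\alpha)=p_A\,\beta_z(s^*_\alpha)=z^{-n}\,p_As^*_\alpha=z^{-n}p_A$$
for every $z\in\mathbb{T}$. Choosing any $z\in\mathbb{T}$ with $z^{n}\neq 1$ (possible precisely because $n\geq 1$) yields $(1-z^{-n})p_A=0$ with the scalar $1-z^{-n}\neq 0$, forcing $p_A=0$.

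It then remains only to exclude $p_A=0$. In the context of $T$ the relevant case is $A\neq\emptyset$ (if $A=\emptyset$ both $p_A$ and $p_As^*_\alpha$ are trivially $0$, so this degenerate situation is outside $T$), and for $A\neq\emptyset$ the universal representation is faithful: by Corollary \ref{faithful_rep} together with Theorem \ref{proposition_51}, the generators $p_A=t^0(\chi_{\Ne_A})$ and $s_\alpha=t^1(\chi_{E^1_\alpha})$ realize the faithful Cuntz--Krieger $E$-pair of $\mathcal{O}(E)$, so $p_A\neq 0$. This contradicts $p_A=0$ and completes the proof.

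I do not anticipate a serious obstacle here; the argument is a clean application of the gauge action. The only points requiring care are the bookkeeping of the gauge degree for a word $\alpha$ of length $\geq 1$ (to guarantee $n\geq1$ and hence a genuinely nontrivial character $z\mapsto z^{-n}$) and the appeal to faithfulness to ensure $p_A\neq 0$. A gauge-free alternative would be to work directly inside $\mathcal{O}(E)$ and separate the degree-$0$ element $p_A=t^0(\chi_{\Ne_A})$ from the degree-$(-n)$ element $p_As^*_\alpha$ via Katsura's spectral-subspace decomposition of a topological graph algebra, but the gauge-action argument above is shorter and entirely self-contained.
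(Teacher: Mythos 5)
Your proof is correct, but it follows a genuinely different route from the paper's. You use the spectral-subspace (gauge-degree) argument: $p_A$ is fixed by the gauge action $\beta$ while $p_As^*_\alpha$ is scaled by $z^{-|\alpha|}$, so the assumed equality forces $p_A=0$, contradicting the faithfulness of the universal representation ($p_A\neq 0$ for $A\neq\emptyset$, via Corollary \ref{faithful_rep} and Theorem \ref{proposition_51}). The paper instead argues algebraically inside the inverse semigroup: taking adjoints in $p_A=p_As^*_\alpha$ gives $p_A=s_\alpha p_A=s_\alpha p_A s^*_\alpha$, which by the order analysis (Lemma \ref{lem2}) forces $\Delta_A=\{\alpha\}$ and $\theta_\alpha(A)=A$, and in fact $\theta_\alpha(B)=B$ for every $\emptyset\neq B\subseteq A$; the restricted system $(\Id_A,\{\alpha\},\theta_\alpha)$ is then the identity action, whose $C^*$-algebra is a commutative algebra of functions on $\widehat{\Id_A}\times\mathbb{T}$ in which the relation visibly fails, and pulling this back through a representation of $(\B,\Labe,\theta)$ yields the contradiction. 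Your argument is shorter and entirely self-contained, resting only on the gauge action and faithfulness, both already available by Section 6; it also sidesteps the paper's ``induces a representation'' step, which is stated without detail. The paper's approach, on the other hand, is gauge-free and extracts structural information -- that the offending equality would force $\theta_\alpha$ to act trivially on all of $\Id_A$, a degenerate cycle -- which foreshadows the later treatment of cycles and condition $(L_\B)$. Finally, you are right to flag that the statement implicitly assumes $A\neq\emptyset$ (for $A=\emptyset$ both sides vanish and the claim is false as literally written); the paper's own proof makes the same tacit assumption when it invokes $0\neq p_B$ for $\emptyset\neq B\subseteq A$.
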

\begin{proof}
Suppose that $p_A=p_A s^*_\alpha$. Since $p_A$ is a projection, we have that $$p_A=p_As^*_\alpha=(p_As^*_\alpha)^*=s_\alpha p_A\,,$$
whence $p_A=s_\alpha p_A s^*_\alpha$, which only occurs if $\Delta_A=\{\alpha\}$ and $\theta_\alpha(A)=A$. Now, given any $\emptyset\neq B\subseteq A$, it also follows that $0\neq p_B=p_Bp_A=p_Bp_As^*_\alpha=p_Bs^*_\alpha$, so  $\theta_\alpha(B)=B$ by the above argument.

Now, consider the ideal $\Id_A$
with unique action $\theta_\alpha$. Then $(\theta_\alpha)_{|\tilde{A}}=\text{id}$, whence $C^*(\Id_A,\alpha,\theta_\alpha)\cong C(\widehat{\Id_A},\mathbb{T})$. Since $C^*(\Id_A,\alpha,\theta_\alpha)$ has a faithful representation and any representation of 
$(\Id_A,\alpha,\theta_\alpha)$ induces a representation of $(\B,\Labe,\theta)$, we get a contradiction.
\end{proof}

\begin{defi} A $*$-inverse semigroup  $T$ is $E^*$-unitary if for every $s\in T$, $e\in \mathcal{E}(T)$, if $e\leq s$ then $s\in \mathcal{E}(T)$.
\end{defi}
\begin{prop}\label{prop2} 
$T$ is a $E^*$-unitary inverse semigroup.
\end{prop}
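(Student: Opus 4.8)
The plan is to prove the contrapositive of the degenerate case together with a direct structural argument. As is standard for inverse semigroups with zero, the $E^*$-unitary condition concerns only \emph{nonzero} idempotents (the element $0$ lies below everything), so I would fix $0\neq e\leq s$ with $e\in\ET$ and aim to conclude $s\in\ET$. Write $s=s_\alpha p_A s^*_\beta$ and $e=s_\gamma p_C s^*_\gamma$, and recall that $e\leq s$ is equivalent to each of $e=se$, $e=ss^*e$ and $e=es^*s$. A short computation using Proposition \ref{prop1} gives $ss^*=s_\alpha p_A s^*_\alpha$ and $s^*s=s_\beta p_A s^*_\beta$, both idempotents; moreover, since idempotents of $T$ commute, the relations $e=(ss^*)e$ and $e=e(s^*s)$ say precisely that $e\leq ss^*$ and $e\leq s^*s$ in $\ET$.

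First I would pin down the degrees $|\alpha|$ and $|\beta|$ by means of the gauge action. Every idempotent of $T$ has the form $s_\mu p_X s^*_\mu$, hence degree $|\mu|-|\mu|=0$, so it is fixed by $\beta_z$; on the other hand $\beta_z(s)=z^{|\alpha|-|\beta|}s$. Applying $\beta_z$ to $e=se$ therefore yields $e=z^{|\alpha|-|\beta|}e$ for every $z\in\mathbb{T}$, and since $e\neq 0$ this forces $|\alpha|=|\beta|$.

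Next I would locate $\alpha$ and $\beta$ as initial segments of $\gamma$. Applying Lemma \ref{lem2} to the two idempotent inequalities $e\leq ss^*=s_\alpha p_A s^*_\alpha$ and $e\leq s^*s=s_\beta p_A s^*_\beta$ gives, in the generic case, $\gamma=\alpha\gamma'$ and $\gamma=\beta\gamma''$ for suitable $\gamma',\gamma''\in\Labe^*$; that is, both $\alpha$ and $\beta$ are prefixes of $\gamma$. Combined with $|\alpha|=|\beta|=:k$, the two prefixes must agree, $\alpha=\gamma_{[1,k]}=\beta$, whence $s=s_\alpha p_A s^*_\alpha\in\ET$, as desired.

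I expect the main obstacle to be the edge case $\gamma=\emptyset$ (that is, $e=p_C$) with $\alpha$ or $\beta$ nonempty, where one is forced into part (2) of Lemma \ref{lem2} rather than part (1). Here one must exclude the degenerate possibility that a vertex projection $p_C$ sits below a genuinely shifting element $s_\alpha p_A s^*_\alpha$ with $\alpha\neq\emptyset$ in a way that breaks the prefix comparison; this amounts to ruling out that some $s_\alpha$ acts as a partial identity on $p_C$, i.e. that $p_C=p_C s^*_\alpha$ (equivalently $p_C=s_\alpha p_{\theta_\alpha(C)}s^*_\alpha$ with $\theta_\alpha(C)=C$). This is exactly the content of Lemma \ref{lem3}, which forbids such fixed-point behaviour for $\alpha\neq\emptyset$. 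Once this bookkeeping is carried out, Lemma \ref{lem2}(2) pins $\Delta_C$ to a single common letter on both sides, again giving $\alpha=\beta$; so the degree count, the prefix argument, and Lemma \ref{lem3} together close the proof.
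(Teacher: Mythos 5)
Your proof is correct, but it follows a genuinely different route from the paper's. The paper argues by a six-case direct computation: it writes $e\leq s$ as $e=se$, multiplies out using the product rules of Proposition~\ref{prop1}, and reads $\alpha=\beta$ (or $\alpha=\emptyset$) off the resulting word identities; the delicate cases are those where $e=p_B$ is a plain projection, and there it invokes Lemma~\ref{lem3} to rule out identities of the type $p_B=s_\alpha p_{A\cap B}$. You instead (i) apply the gauge action to the relation $e=se$ with $e\neq 0$ to force $|\alpha|=|\beta|$, and (ii) replace $e\leq s$ by the weaker pair of idempotent inequalities $e\leq ss^*=s_\alpha p_A s^*_\alpha$ and $e\leq s^*s=s_\beta p_A s^*_\beta$, which Lemma~\ref{lem2} converts into combinatorial conditions: when $\gamma\neq\emptyset$ both $\alpha$ and $\beta$ are prefixes of $\gamma$, so equal length gives $\alpha=\beta$; when $\gamma=\emptyset$, the degree count makes $\alpha,\beta$ simultaneously empty (then $s=p_A\in\ET$ trivially) or simultaneously nonempty, in which case Lemma~\ref{lem2}(2) gives $\Delta_C=\{\alpha\}$ and $\Delta_C=\{\beta\}$, again forcing $\alpha=\beta$. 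Your version compresses six cases to two, and it trades the paper's analytic input (Lemma~\ref{lem3}, proved via a faithful representation of a sub-system) for a different analytic input (existence of the gauge action, from universality); the grading trick is arguably the more standard and more portable argument.

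Two small corrections. First, your opening claim that $e\leq s$ is \emph{equivalent} to each of $e=ss^*e$ and $e=es^*s$ is false when $s$ is not idempotent: $e=ss^*$ always satisfies $e=ss^*e$, yet $ss^*\leq s$ would give $ss^*=sss^*$ and hence $s=s^2$. Only the forward implications hold; since those are the only ones you use, nothing breaks. Second, Lemma~\ref{lem3} is in fact never needed in your argument: because you compare $e$ with the idempotents $ss^*$ and $s^*s$ rather than with the mixed element $s$ itself, the degenerate identity $p_C=p_Cs^*_\alpha$ you propose to exclude never arises, and the edge case is closed by the two applications of Lemma~\ref{lem2}(2) alone, exactly as your final sentence says. (It is the paper's proof, which compares $e$ directly with $s$, that genuinely needs Lemma~\ref{lem3}.)
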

\begin{proof} We need to check the $6$ possible cases:
\begin{enumerate}
\item $s_\gamma p_B s^*_\gamma\leq s_\alpha p_A s^*_\beta$ if and only if 
$$s_\gamma p_B s^*_\gamma=s_\alpha p_A s^*_\beta\cdot s_\gamma p_B s^*_\gamma=s_\alpha s^*_\beta s_\beta s_\delta  p_B s^*_\gamma=\qquad (\gamma=\beta\delta) $$
$$=s_\alpha p_A p_{\Ra_\beta} s_\delta p_B s^*_\gamma=s_\alpha p_A s_\delta p_B s^*_\gamma=s_{\alpha\delta} p_{\theta_\delta(A)\cap B} s^*_\gamma$$
if and only if $\alpha\delta=\gamma=\beta \delta$, whence $\alpha=\beta$ and then $s_\alpha p_A s^*_\alpha\in \ET$.
\item $s_\gamma p_Bs^*_\gamma\leq s_\alpha p_A$ if and only if
$$s_\gamma p_Bs^*_\gamma=s_\alpha p_A\cdot s_\gamma p_Bs^*_\gamma=s_{\alpha\gamma}p_{\theta_\gamma(A)\cap B} s^*_\gamma$$ if and only if $\alpha\gamma=\gamma$, i.e., $\alpha=\emptyset$, whence $s_\alpha p_A=p_A\in \ET$.
\item  $s_\gamma p_Bs^*_\gamma\leq  p_A s^*_\alpha$, this case is analog to $(2)$.
\item $p_B\leq s_\alpha p_A s^*_\beta$ if and only if 
$$p_B=s_\alpha p_A s^*_\beta \cdot p_B=s_\alpha p_{A\cap \theta_\beta(B)} s^*_\beta=s_\beta p_{A\cap \theta_\beta(B)} s^*_\alpha\,.$$
Thus, 
$$p_{A\cap \theta_\beta(B)}=s^*_\alpha p_B s_\beta=s^*_\alpha s_\beta p_{\theta_\beta(B)}\,.$$
By Lemma \ref{lem3} , the only possibility is that $\alpha=\beta$, whence $s_\alpha p_A s^*_\alpha\in \ET$.
\item $p_B\leq s_\alpha p_A$ if and only if $p_B=s_\alpha p_A\cdot p_B= s_\alpha p_{A\cap B}$. Thus, by Lemma \ref{lem3} $\alpha=\emptyset$, whence $ s_\alpha p_A\in \ET$.
\item $p_B\leq p_A s^*_\alpha$, this case is analog to case $(5)$.
\end{enumerate}
\end{proof}

Proposition \ref{prop2} will play an important role in the sequel. We also need to determine the orthogonality of idempotents.
\begin{lem} 
$s_\alpha p_A s^*_\alpha\cdot s_\beta p_B s^*_\beta=0$ if and only if either
\begin{enumerate}
\item  $\alpha\nleq \beta$ and $\beta\nleq \alpha$, or
\item $\beta=\alpha\beta'$ and $\theta_{\beta'}(A)\cap B=\emptyset$, or 
\item $\alpha=\beta \alpha'$ and $\theta_{\alpha'}(B)\cap A=\emptyset$.
\end{enumerate}
\end{lem}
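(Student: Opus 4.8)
The plan is to compute the product $s_\alpha p_A s^*_\alpha\cdot s_\beta p_B s^*_\beta$ explicitly using the multiplication rule for idempotents already established in the proof of Proposition \ref{prop1}, and then read off exactly when the result equals $0$. Recall that for the idempotents $s_\alpha p_A s^*_\alpha$ and $s_\beta p_B s^*_\beta$ the product was computed there to be $s_\beta p_{\theta_{\beta'}(A)\cap B}s^*_\beta$ when $\beta=\alpha\beta'$, it is $s_\alpha p_{A\cap\theta_{\alpha'}(B)}s^*_\alpha$ when $\alpha=\beta\alpha'$, it is $s_\alpha p_{A\cap B}s^*_\alpha$ when $\alpha=\beta$, and it is $0$ otherwise. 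So the multiplication rule already isolates the case $\alpha\nleq\beta$ and $\beta\nleq\alpha$ (neither word is an initial segment of the other) as giving product $0$; this is item (1) of the statement. The remaining work is to determine, in the two comparable cases, precisely when the surviving expression vanishes.

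The key observation is that an element $s_\gamma p_C s^*_\gamma$ of $T$ is zero if and only if $p_C=0$, i.e. if and only if $C=\emptyset$. This follows because the definition of $T$ requires $C\subseteq\Ri_\gamma\cap\Ri_\gamma\neq\emptyset$ for a nonzero element, and more concretely because $s_\gamma p_C s^*_\gamma\cdot s_\gamma p_C s^*_\gamma = s_\gamma p_C s^*_\gamma$ is a projection whose value is controlled by $p_C$; since the universal representation is faithful, $p_C=0$ exactly when $C=\emptyset$. Applying this to the case $\beta=\alpha\beta'$, the product $s_\beta p_{\theta_{\beta'}(A)\cap B}s^*_\beta$ is zero if and only if $\theta_{\beta'}(A)\cap B=\emptyset$, which is precisely item (2). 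Symmetrically, in the case $\alpha=\beta\alpha'$, the product $s_\alpha p_{A\cap\theta_{\alpha'}(B)}s^*_\alpha$ is zero if and only if $A\cap\theta_{\alpha'}(B)=\emptyset$, which is item (3). (When $\alpha=\beta$, one has $\beta'=\alpha'=\emptyset$ and $\theta_\emptyset=\text{Id}$, so items (2) and (3) both collapse to the condition $A\cap B=\emptyset$, consistently covering this subcase.)

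The proof thus amounts to reorganizing the four-line case split from Proposition \ref{prop1} into the three stated cases and invoking faithfulness of the universal representation to translate ``$p_C=0$'' into ``$C=\emptyset$.'' First I would restate the idempotent multiplication rule, then handle the incomparable case directly (product is $0$ unconditionally, matching (1)), and finally treat the two comparable cases by applying the vanishing criterion for $p_C$. The only subtlety to flag carefully is the overlap at $\alpha=\beta$: one must check that the degenerate readings $\beta'=\emptyset$ in (2) and $\alpha'=\emptyset$ in (3) are genuinely equivalent and do not produce a spurious extra case, which is immediate once one recalls the convention $\theta_\emptyset=\text{Id}$ from the Notation following the definition of a Boolean dynamical system.

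The main obstacle is essentially bookkeeping rather than a genuine mathematical difficulty: one must be careful that the conditions in (1)--(3) are mutually exhaustive and non-redundant as a description of when the product vanishes, and in particular that the boundary case $\alpha=\beta$ is correctly subsumed. No new machinery beyond Proposition \ref{prop1} and faithfulness is required.
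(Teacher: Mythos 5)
Your proposal is correct and follows exactly the route the paper takes: the paper's entire proof is ``It is a simple computation, according Proposition \ref{prop1},'' i.e.\ read off the vanishing cases from the idempotent multiplication rule established there. Your additional care in invoking faithfulness of the universal representation (so that $p_C=0$ iff $C=\emptyset$) and in checking the degenerate overlap at $\alpha=\beta$ simply makes explicit what the paper leaves implicit.
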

\begin{proof} It is a simple computation, according Proposition \ref{prop1}. 
\end{proof}

\section{Tight representations of T.}

This intermediate step will help us to connect $\Clab$ with a universal $C^*$-algebra for a suitable family of representations of $T$. Concretely, the goal of this section is to prove that the map
$$\iota:T\longrightarrow \Clab^{\sim}$$
is the universal tight representation of $T$. Here, given a $C^*$-algebra $A$, $A^{\sim}$ will denote the (minimal) unitization of $A$, with the convention that $A^{\sim}=A$ in case $A$ already has a unit.\vspace{.2truecm}

First we recall some definitions from  \cite{Exel}.

\begin{defi} Set $\E=\E(T)$. Then:
\begin{enumerate}
\item Given $X,Y\subseteq \E$ finite subsets,
$$\E^{X,Y}:=\{z\in \E:z\leq x\, \text{ for all } x\in X\text{ and }z\bot y\,\text{ for all }  y\in Y\}\,.$$
\item Given any $F\subseteq \E$, we say that $Z\subseteq F$ is a cover for $F$ if for every $0\neq x\in F$ there exists $z\in Z$ such that $zx\neq 0$. $Z$ is cover for $y\in \E$ if it is a cover for $F=\{x\in \E: x\leq y\}$.
\item A representation $\varphi$ of $\E$ is tight if for every $X,Y\subseteq \E$ finite subsets, and for every finite cover $Z\subseteq \E^{X,Y}$,
$$\bigvee_{z\in Z}\varphi(z)\geq \bigwedge_{x\in X}\varphi(x)\wedge \bigwedge_{y\in Y}\neg\varphi(y)\hspace{.5truecm} (\dag),$$
where ``$\bigvee$'' refers to the operation of taking supremum of a commuting set of projections.
\end{enumerate} 
\end{defi}

\begin{rema}\label{Rem:Unitization}
{\rm
In terms of the algebra, identity $(\dag)$ above becomes

$$\bigvee_{z\in Z}\varphi(z)= \prod_{x\in X}\varphi(x) \prod_{y\in Y}(1-\varphi(y)),$$
and so, when looking for the tightness of a map, we shall assume that we are working with unital algebras, by using the unitization of an algebra when necessary.}
\end{rema}

Next result will help us to determine when a representation $\varphi$ is tight.

\begin{prop}[{\cite[Prop. 11.8]{Exel}}]\label{prop4} If $\varphi$ is a representation of $\E$ which satisfies :
\begin{enumerate}
\item $\E$ contains $X\subseteq \E$ finite such that $\bigvee\limits_{x\in X}\varphi(x)=1$, or 
\item $\E$ admits no finite cover,
\end{enumerate}
then $\varphi$ is tight if and only if for every $x\in \E$ and for every finite cover $Z\subseteq \E$ for $x$,
$$\bigvee_{z\in Z}\varphi(z)\geq \varphi(x)\,.$$
\end{prop}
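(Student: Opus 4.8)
The plan is to prove both implications, the forward (only if) direction being immediate and the converse being the substantive one. Throughout I work, as licensed by Remark \ref{Rem:Unitization}, inside the Boolean algebra of commuting projections generated by $\{\varphi(e):e\in\E\}$ in the unital algebra, so that all finite joins $\bigvee$, finite meets, the complement $\neg$, and the distributive law are available; recall also that $\E$ is a meet-semilattice and that $\varphi$ is multiplicative, so $\varphi(e\wedge f)=\varphi(e)\varphi(f)$ and $\varphi(\bigwedge X)=\bigwedge_{x\in X}\varphi(x)$ for finite $X\subseteq\E$. For the \emph{only if} direction, assume $\varphi$ is tight, fix $x\in\E$ and a finite cover $Z$ for $x$. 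By definition $Z\subseteq\E^{\{x\},\emptyset}=\{w\in\E:w\le x\}$ and $Z$ covers this set, so taking $X=\{x\}$ and $Y=\emptyset$ in the tightness inequality $(\dag)$ gives $\bigvee_{z\in Z}\varphi(z)\ge\varphi(x)$, as wanted.

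For the \emph{if} direction, assume the single-element condition and fix finite $X,Y\subseteq\E$ together with a finite cover $Z\subseteq\E^{X,Y}$; the target is $q:=\bigwedge_{x\in X}\varphi(x)\wedge\bigwedge_{y\in Y}\neg\varphi(y)$. The reverse inequality $\bigvee_{z\in Z}\varphi(z)\le q$ always holds, since every $z\in Z$ satisfies $z\le x$ and $z\bot y$ for $x\in X$, $y\in Y$. I split according to whether $X$ is empty. In the main case $X\ne\emptyset$, set $x_0=\bigwedge X\in\E$, so $\E^{X,Y}=\E^{\{x_0\},Y}$ and $\bigwedge_{x\in X}\varphi(x)=\varphi(x_0)$. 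The key trick is to enlarge the cover to $Z'=Z\cup\{x_0\wedge y:y\in Y\}\subseteq\{w\le x_0\}$, which is a cover for $x_0$: given $0\ne w\le x_0$, either $w\bot y$ for all $y\in Y$, so $w\in\E^{\{x_0\},Y}$ and some $z\in Z$ meets $w$, or $w\wedge y\ne0$ for some $y$, and then $x_0\wedge y\in Z'$ meets $w$. Applying the single-element hypothesis to $Z'$ yields $\varphi(x_0)\le P\vee\bigvee_{y\in Y}(\varphi(x_0)\wedge\varphi(y))$, where $P=\bigvee_{z\in Z}\varphi(z)$; writing $b=\bigvee_{y\in Y}\varphi(y)$ this is $\varphi(x_0)\le P\vee(\varphi(x_0)\wedge b)$, and meeting with $\neg b$ and using distributivity collapses the last term, giving $q=\varphi(x_0)\wedge\neg b\le P$.

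The empty case $X=\emptyset$, with target $q=\bigwedge_{y\in Y}\neg\varphi(y)$, is where hypotheses (1) and (2) enter. Under (2) I claim that $\E^{\emptyset,Y}$ admits no finite cover, so the condition is vacuous: if $Z$ were one, then $Z\cup Y$ would be a finite cover of all of $\E$ (a nonzero $w$ either avoids every $y\in Y$, hence lies in $\E^{\emptyset,Y}$ and meets some $z\in Z$, or meets some $y\in Y$), contradicting (2). Under (1), fix finite $X_0$ with $\bigvee_{x\in X_0}\varphi(x)=1$; for each $x\in X_0$ the family $\{z\wedge x:z\in Z\}$ lies in $\E^{\{x\},Y}$ and is a cover of it, so the main case gives $\bigvee_{z\in Z}\varphi(z)\ge\varphi(x)\wedge\bigwedge_{y\in Y}\neg\varphi(y)$, and joining over $x\in X_0$ with distributivity produces $\bigvee_{z\in Z}\varphi(z)\ge\bigl(\bigvee_{x\in X_0}\varphi(x)\bigr)\wedge\bigwedge_{y\in Y}\neg\varphi(y)=q$.

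The main obstacle is precisely the empty-$X$ case: there is no largest element of $\E$ to play the role of $x_0$, and this gap is exactly what the dichotomy (1)/(2) is designed to bridge. The other delicate point, though routine once set up, is checking that each modified family ($Z\cup\{x_0\wedge y\}$, $Z\cup Y$, and $\{z\wedge x\}$) is genuinely a cover of the intended set and sits inside $\E$; after that, everything reduces to distributive-lattice manipulations valid in the Boolean algebra of projections furnished by Remark \ref{Rem:Unitization}.
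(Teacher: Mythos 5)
The paper contains no proof of this proposition at all---it is imported verbatim from Exel's paper (Proposition 11.8)---so the only possible comparison is with Exel's original argument, which your proof essentially reconstructs. Your argument is correct in all details: the forward direction is the specialization $X=\{x\}$, $Y=\emptyset$ of tightness; and the converse rests on exactly the standard reductions, namely replacing a nonempty $X$ by $x_0=\bigwedge X$ (so that $\E^{X,Y}=\E^{\{x_0\},Y}$) and enlarging the given cover to $Z\cup\{x_0\wedge y: y\in Y\}$, which is a legitimate finite cover for $x_0$, then handling the $X=\emptyset$ case by the dichotomy: under hypothesis (2) a finite cover of $\E^{\emptyset,Y}$ together with $Y$ would yield a finite cover of $\E$ (so the case is vacuous), while under hypothesis (1) one meets the cover with the elements of $X_0$ and finishes by distributivity in the Boolean algebra of commuting projections.
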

\vspace{.2truecm}

In order to apply Proposition \ref{prop4} to our case, first observe that $\Clab$ is unital if and only if $\B$ is a unital Boolean algebra, with  suprema $\mathbf{1}$, and in this case $p_\mathbf{1}$ will be a finite cover for $T$. If $\Clab$ is not unital, then we have that $\{p_A\}_{A\in \B}$ is an approximate unit of projections. In particular, given a finite set $Y$ of elements of $\E$, there exists $A$ such that $p_Bep_B=e$ for every $e\in Y$ and $B\in \B$ with $A\subseteq B$.

Now, let $X\subseteq \E$ be a finite cover. Then, $X$ is of the form
$$\{p_A\}\cup \{s_{\alpha_i}p_{B_i}s^*_{\alpha_i}\}_{i=1}^n\,.$$
Let us define $C:=A\cup \bigcup\limits_{i=1}^n\D_{\alpha_i}\in \B$. Since $\Clab$ is not unital, and hence $\B$ has not suprema, there exists $\emptyset\neq D\in \B$ with  $C\cap D=\emptyset$. Therefore 
$$p_D\cap p_A=\emptyset\qquad \text{and}\qquad p_D\cdot s_{\alpha_i}p_{B_i}s^*_{\alpha_i}=0\qquad \forall i\in \{1,\ldots,n\}.$$

Then,

\begin{corol}\label{prop5} 
Proposition \ref{prop4} apply to $\ET$ for every $(\B,\Labe,\theta)$.
\end{corol}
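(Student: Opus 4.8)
The plan is to verify, for every Boolean dynamical system $(\B,\Labe,\theta)$, that one of the two alternatives in the hypothesis of Proposition \ref{prop4} holds for $\ET$, and the natural dividing line is whether $\B$ is unital. First I would recall from Corollary \ref{CK_unital} that $\Clab$ is unital exactly when $\B$ is unital, in which case the unit is $p_{\tot}$. In that case $p_{\tot}=s_\emptyset p_{\tot}s^*_\emptyset$ is the maximum element of $\ET$, so $\{p_{\tot}\}$ is a one-element finite cover of $\ET$; and under the standing convention of Remark \ref{Rem:Unitization} that we pass to unitizations and work with unital algebras, the representations under consideration send $p_{\tot}$ to the unit, so $\bigvee_{x\in\{p_{\tot}\}}\varphi(x)=\varphi(p_{\tot})=1$. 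This is precisely alternative (1).

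The substance lies in the non-unital case, where I would show that $\ET$ admits no finite cover, which is alternative (2). The key sublemma to isolate first is that for any $\alpha\in\Labe^*$ and any $D\in\B$ with $D\cap\Do=\emptyset$ one has $\theta_\alpha(D)=\emptyset$. Indeed $\theta_\alpha(D)\subseteq\Ra=\theta_\alpha(\Do)$, and since $\theta_\alpha$ preserves intersections, $\theta_\alpha(D)=\theta_\alpha(D)\cap\theta_\alpha(\Do)=\theta_\alpha(D\cap\Do)=\theta_\alpha(\emptyset)=\emptyset$. Next, recalling from the proof of Proposition \ref{prop1} that every element of $\ET$ has the form $s_\alpha p_A s^*_\alpha$, I would note that an arbitrary finite subset of $\ET$ can be written as $\{p_A\}\cup\{s_{\alpha_i}p_{B_i}s^*_{\alpha_i}\}_{i=1}^n$, where the single $p_A$ is the (finite, hence existing) union of all the empty-word idempotents.

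Given such a finite set, I would set $C:=A\cup\bigcup_{i=1}^n\D_{\alpha_i}\in\B$. Since $\B$ is non-unital, $C$ cannot meet every nonzero element of $\B$ (otherwise $A'\setminus C=\emptyset$ for all $A'\in\B$, forcing $C=\tot$), so there is $\emptyset\ne D\in\B$ with $C\cap D=\emptyset$. Then $p_D\ne0$ is an idempotent of $\ET$ with $p_D\,p_A=p_{D\cap A}=0$, and, using the representation relation $p_D s_{\alpha_i}=s_{\alpha_i}p_{\theta_{\alpha_i}(D)}$ together with the sublemma (valid since $\D_{\alpha_i}\subseteq C$ forces $D\cap\D_{\alpha_i}=\emptyset$), one gets $p_D\cdot s_{\alpha_i}p_{B_i}s^*_{\alpha_i}=s_{\alpha_i}p_{\theta_{\alpha_i}(D)\cap B_i}s^*_{\alpha_i}=0$ for every $i$; the same orthogonalities may be read off directly from the orthogonality lemma for idempotents. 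Hence $p_D$ has zero product with every element of the given set, so it is not a cover, and $\ET$ has no finite cover. Combining the two cases yields the statement.

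The step I expect to be the main obstacle is the bookkeeping in the non-unital case: justifying that a general finite subset of $\ET$ really consolidates into the displayed normal form (in particular that the empty-word idempotents combine into a single $p_A$ via finite unions), and checking that the single chosen $D$ is simultaneously orthogonal to every listed idempotent. The orthogonality computation rests entirely on the sublemma $\theta_\alpha(D)=\emptyset$, so deriving that cleanly from the closed-domain axiom $\theta_\alpha(\Do)=\Ra$ is the conceptual heart of the argument.
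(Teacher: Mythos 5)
Your proof is correct and follows essentially the same route as the paper: the unital/non-unital dichotomy, with $\{p_{\tot}\}$ as a one-element finite cover in the unital case, and in the non-unital case the set $C=A\cup\bigcup_{i=1}^n\D_{\alpha_i}$ together with a nonzero $D\in\B$ disjoint from $C$, producing an idempotent $p_D$ orthogonal to every element of the purported cover. The only difference is that you spell out two points the paper leaves implicit — the sublemma that $\theta_\alpha(D)=\emptyset$ whenever $D\cap\Do=\emptyset$, and the argument that non-unitality of $\B$ guarantees such a $D$ exists — which is a welcome sharpening, not a change of method.
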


Next step is to identify finite covers for $\Sigma_x=\{y\in \ET: y\leq x\}$, $x\in \ET$. But first a (probably well known) result.

\begin{lem}\label{lem_cover}
Let $T$ be any $*$-inverse semigroup, and let $\ET$ its semilattice of idempotents. Let $x\in E$ and $s\in S$ such that $x\leq s^*s$. Then $\{e_1\ldots,e_n\}$ is a finite cover for $\Sigma_x$ if and only if $\{se_1s^*,\ldots,se_n s^*\}$ is a finite cover for $\Sigma_{sxs^*}$. 
\end{lem}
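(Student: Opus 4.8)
The plan is to exploit the fact that conjugation by the partial isometry $s$ gives an order isomorphism between the down-sets $\Sigma_x$ and $\Sigma_{sxs^*}$, and that this isomorphism preserves the relation ``meets nontrivially'' that defines covers. First I would record the basic calculus of the map $e\mapsto ses^*$ on idempotents below $s^*s$. Since $x\leq s^*s$, for any $e\in\Sigma_x$ we have $e\leq x\leq s^*s$, so $s^*se=e$; hence $s^*(ses^*)s=e$, and $ses^*\leq sxs^*\leq ss^*$ is an idempotent (using that $e$ is an idempotent commuting appropriately and $ss^*,s^*s$ are idempotents in the inverse semigroup). Thus $e\mapsto ses^*$ maps $\Sigma_x$ into $\Sigma_{sxs^*}$, with inverse $f\mapsto s^*fs$ on $\Sigma_{sxs^*}$ (again because $f\leq ss^*$ forces $ss^*f=f$, so $s(s^*fs)s^*=f$). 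These two maps are mutually inverse and order-preserving, so conjugation by $s$ is an order isomorphism $\Sigma_x\to\Sigma_{sxs^*}$.

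Next I would translate the cover condition through this isomorphism. The key algebraic identity to verify is that for $e\in\Sigma_x$ and $y\in\Sigma_x$,
\begin{equation*}
(ses^*)(sys^*)=s(ey)s^*,
\end{equation*}
which follows from $s^*se=e$ and $s^*sy=y$ together with associativity in $T$. From this identity one reads off the crucial equivalence: $ey\neq 0$ if and only if $s(ey)s^*\neq 0$, i.e. $(ses^*)(sys^*)\neq 0$. The forward direction is immediate since $s(ey)s^*=0$ whenever $ey=0$; the reverse direction uses that $s^*(s(ey)s^*)s=ey$, so if $ey=0$ then $s(ey)s^*=0$ as well, and conversely a nonzero $s(ey)s^*$ forces $ey\neq 0$. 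This is exactly the statement that $e$ meets $y$ nontrivially in $\Sigma_x$ precisely when $ses^*$ meets $sys^*$ nontrivially in $\Sigma_{sxs^*}$.

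With these two ingredients the proof essentially writes itself. Suppose $\{e_1,\dots,e_n\}$ is a cover for $\Sigma_x$. Given any nonzero $f\in\Sigma_{sxs^*}$, its preimage $s^*fs$ is a nonzero element of $\Sigma_x$ (nonzero because $s(s^*fs)s^*=f\neq 0$), so some $e_i$ satisfies $e_i(s^*fs)\neq 0$; conjugating back and using the equivalence above yields $(se_is^*)f\neq 0$. Hence $\{se_1s^*,\dots,se_ns^*\}$ is a cover for $\Sigma_{sxs^*}$. The converse is identical, running the argument through the inverse isomorphism $f\mapsto s^*fs$ and noting $s^*(se_is^*)s=e_i$, so that a cover of $\Sigma_{sxs^*}$ pulls back to a cover of $\Sigma_x$.

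I expect the only real care to be taken is in the bookkeeping of the inverse-semigroup identities $s^*se=e$ and $ss^*f=f$, which rely precisely on the hypotheses $e\leq s^*s$ and $f\leq ss^*$; these are where the assumption $x\leq s^*s$ is used, and getting the orthogonality-preservation identity $(ses^*)(sys^*)=s(ey)s^*$ correct is the one place where a wrong order of factors would break the argument. Everything else is formal manipulation valid in an arbitrary $*$-inverse semigroup, which is why the lemma is stated at that level of generality rather than specifically for our $T$.
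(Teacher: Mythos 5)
Your proof is correct. Note that the paper itself offers no proof of this lemma at all --- it introduces it as ``a (probably well known) result'' and states it bare --- so there is nothing to compare against; your argument supplies exactly the proof one would expect. The three ingredients you isolate are the right ones and are each verified correctly: (i) $e\mapsto ses^*$ and $f\mapsto s^*fs$ are mutually inverse order-preserving bijections between $\Sigma_x$ and $\Sigma_{sxs^*}$, which hinges on $s^*se=e$ for $e\leq s^*s$ and $ss^*f=f$ for $f\leq ss^*$; (ii) the identity $(ses^*)(sys^*)=s(ey)s^*$; and (iii) the fact that $s(ey)s^*=0$ forces $ey=s^*\bigl(s(ey)s^*\bigr)s=0$, which is legitimate because $ey$ is itself an idempotent below $s^*s$ (idempotents commute in an inverse semigroup, so $ey\leq e\leq x\leq s^*s$). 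One stylistic remark: your sentence explaining the zero-preservation step conflates the two directions slightly --- the identity $s^*(s(ey)s^*)s=ey$ is what gives the nontrivial implication ($ey\neq 0$ implies $s(ey)s^*\neq 0$, by contraposition), while the other direction is trivial --- but the mathematics underneath is sound, and the cover equivalence then follows formally exactly as you describe.
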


Now, we need to fix a concept.

\begin{defi}
Given  $\emptyset\neq A\in \B$, we define an expansion of $A$ to be a finite set $\{\alpha_1,\ldots,\alpha_n\}\subseteq \Labe^*$ such that $\theta_{\alpha_i}(A)\neq \emptyset$ for every $1\leq i\leq n$. Moreover, we say that an expansion of $A$ is complete if $\alpha_i\nleq \alpha_j$ and $\alpha_j\nleq \alpha_i$ whenever $i\neq j$, and for every $\beta\in \Labe^*$ with $\theta_\beta(A)\neq\emptyset$ there exists $i$ such that either $\alpha_i\leq\beta$ or $\beta\leq \alpha_i$. Equivalently, $\{\alpha_1,\ldots,\alpha_n\}$ is a complete expansion for $A$ if $p_A=\sum_{i=1}^ns_{\alpha_i} p_{\theta_{\alpha_i}(A)}s^*_{\alpha_i}$.
\end{defi}

\begin{defi}
Given $\emptyset\neq A\in \B$, and $n\in \N$, we define 
$$\Delta^n_A:=\{\alpha\in \Labe^n:\theta_\alpha(A)\neq \emptyset\}\,,$$
and $\Delta_A^{\leq n}=\bigcup\limits_{k=1}^n \Delta^k_A$.
\end{defi}

\begin{defi}
Given a cover $Z$ of $\Sigma$, we say that $\hat{Z}$ is a refinement of $Z$ if $\hat{Z}$ is a cover of $\Sigma$, and for every element $x\in \hat{Z}$ there exists $y\in Z$ with $x\leq y$.

\end{defi}

\begin{noname}\label{Expansion}
{\rm 
Now we will analyse how look like the finite covers of $\Sigma_x$ for $x=p_A$ and $x=s_\alpha p_A s^*_\alpha$. By Lemma \ref{lem_cover} it will be enough to look at $x=p_A$. Then a finite cover for $\Sigma_x$ has the form 
$$Z=\{p_{B_i}\}_{i=1}^n\cup \{s_{\gamma_j}p_{C_j}s^*_{\gamma_j}\}_{j=1}^m\subseteq \Sigma_x\,.$$
Observe that we can joint all the idempotents $\{p_{B_i}\}_{i=1}^n$ in a single idempotent $p_B$ where $B:=\bigcup\limits_{i=1}^nB_i$, so 
$$Z=\{p_{B}\}\cup \{s_{\gamma_j}p_{C_j}s^*_{\gamma_j}\}_{j=1}^m\subseteq \Sigma_x\,.$$

Now, if $A\setminus B=A\setminus (A\cap B)\notin \Breg$, it means that there exists $C\subseteq A\setminus B$ with either $\lambda_C=0$ or $\lambda_C=\infty$. If $\lambda_C=0$ then we have that 
$$p_C\cdot s_{\gamma_j}p_{C_j}s^*_{\gamma_j}=s_{\gamma_j}p_{\theta_{\gamma_j}(C)\cap C_j}s^*_{\gamma_j}=s_{\gamma_j}p_{\emptyset}s^*_{\gamma_j}=0\qquad \forall j\in\{1,\ldots,m\}\,,$$
contradicting the fact that $Z$ is a cover of $p_A$. If $\lambda_C=\infty$, there exists $\beta\in \Labe$ such that $\beta\nleq \gamma_i$ for $1\leq i\leq m$. Thus, if we consider the element $s_\beta p_{\theta_\beta(C)} s^*_\beta$, then 
$$s_\beta p_{\theta_\beta(C)} s^*_\beta\cdot s_{\gamma_j}p_{C_j}s^*_{\gamma_j}=0\qquad \forall j\in\{1,\ldots,m\}\,,$$
and moreover, since
$$p_A\cdot s_\beta p_{\theta_\beta(C)} s^*_\beta=p_A\cdot p_C s_\beta s^*_\beta=0,$$
this contradicts that $Z$ is a cover for $\Sigma_x$. Therefore, $A\setminus B$ must be in $\Breg$ for $Z$ to be a cover.

Notice that $p_B$ covers all the elements of $\Sigma_x$ that are dominated by $p_{A\cap B}$. Thus, without loss of generality, we can assume that $$Z=\{s_{\gamma_i}p_{C_i}s^*_{\gamma_i}\}_{i=1}^n\,,$$
since $Z\subseteq \Sigma_x$ with $\theta_{\gamma_i}(A)\neq\emptyset$ for every $1\leq i\leq n$, where $x=p_A$ with $A\in \Breg$, and that $\gamma_i\neq \gamma_j$ whenever $i\neq j$.
\vspace{.2truecm}

Next, we see that $\{\gamma_i\}_{i=1}^n$ must contain a complete expansion for $A$. Otherwise, there exists $\beta\in \Labe^*$ with $\theta_\beta(A)\neq \emptyset$ with $\alpha_i\nleq \beta$ and $\beta\nleq \alpha_i$ for every $1\leq i\leq n$, and then $s_\beta p_{\theta_\beta(A)} s^*_{\beta}\leq p_A$ and $s_\beta p_{\theta_\beta(A)} s^*_{\beta}\leq p_A\cdot s_{\gamma_i}p_{C_i}s^*_{\gamma_i}=0$ for every $1\leq i\leq n$, contradicting that $Z$ is a cover for $p_A$. We relabel the complete expansion as $\gamma_1,\ldots,\gamma_l$ for some $1\leq l \leq n$. We can also take it minimal, so for every $k\geq l$ there exists $1\leq i\leq l$ with $\gamma_i\leq \gamma_k$.

Another important observation is that  $D_i:=\theta_{\gamma_i}(A) \setminus C_i\in \Breg$ whenever $1\leq i \leq l$. Indeed, let us first suppose that $\lambda_{D_i}=0$. Then, $0\neq s_{\gamma_i}p_{D_i} s^*_{\gamma_i}$ is the element that leads to contradiction with $Z$ being a cover of $p_A$. Now suppose that there exists $E_i\subseteq D_i$ with $\lambda_{E_i}=\infty$. Then, there exists $\beta\in \Delta_{E_i}$ such that $\gamma_i\beta\nleq \gamma_i$ for every $\gamma_j$ with $l+1\leq j \leq n$. Thus, the element $s_{\gamma_i\beta} p_{\theta_\beta(E_i)}s^*_{\gamma_i\beta}$ is the element that leads to contradiction with $Z$ being a cover of $p_A$.

We also have that, given $\gamma_i$ with $1\leq i\leq l$ such that $\gamma_i\nleq \gamma_j$ for every $j\geq l+1$, it must be $\theta_{\gamma_i}(A)\subseteq C_i$. Otherwise, the element $s_{\gamma_i}p_{\theta_{\gamma_i}(A)\setminus C_i} s^*_{\gamma_i}$ is the element that leads to contradiction with $Z$ being a cover of $p_A$. 

Now, we define $A_i:=\theta_{\gamma_i}(A)\setminus C_i$ for those $i\leq l$ such that $A_i\neq \emptyset$. So, there exist $\gamma_{i_1},\ldots,\gamma_{i_{k(i)}}$ with $\gamma_i\lneq \gamma_{i_{j}}$ for $1\leq j\leq k(i)$, and we define $E_{i,j}:=C_{i_j}$ for $1\leq j\leq k(i)$. We can relabel the $A_i$s as $A_1,\ldots, A_m$, and if we define $\beta_{i,j}:=\gamma_{i_j}\setminus \gamma_i$ for $1\leq j\leq k(i)$, then the sets $Z_i:=\{s_{\beta_{i,j}}p_{E_{i,j}}s_{\beta_{i,j}}^*\}$ are finite covers of $p_{A_i}$ for $1\leq i\leq m$. 

Now, must proceed as above with this new covers as many time as we need, and since they are finite covers, each step will have less elements than the previous. So, in a finite number of steps,  there will be a refinement of the cover that will contain a complete expansion $\{\gamma_i\}$ of $A$ with $C_i=\theta_{\gamma_i}(A)$.
  }
\end{noname}
 
Summarizing

\begin{lem}\label{lem5} If $Z\subseteq\Sigma_x$ is a finite cover for $x\in \ET$, there exists a refinement of $\hat{Z}$ of $Z$ such that:
\begin{enumerate}
\item $\hat{Z}\subseteq \Sigma_x$ is a finite cover,
\item The elements in $\hat{Z}$ are pairwise orthogonal,
\item $\bigvee\limits_{z\in Z}\rho(z)=\sum\limits_{\hat{z}\in \hat{Z}}\rho(\hat{z})$ for every representation $\rho$ of $\ET$. 	
\end{enumerate}
\end{lem}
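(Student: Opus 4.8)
The plan is to read off the three assertions from the iterative refinement already carried out in \ref{Expansion}, so that the proof becomes essentially a matter of bookkeeping plus one genuine point. First I would use Lemma \ref{lem_cover} to reduce to the case $x=p_A$: conjugation by a suitable $s_\alpha$ carries a finite cover of $\Sigma_x$ to a finite cover of $\Sigma_{p_A}$, and since the orthogonal-sum identities used below are identities of projections they conjugate back verbatim, the reduction is harmless. For $x=p_A$ I would then invoke the normalisation of \ref{Expansion}: absorb the purely idempotent part $\{p_{B_i}\}$ of the cover into a single $p_B$ with $B=\bigcup_i B_i$, record that $A\setminus B$ is forced to lie in $\Breg$, and split $\Sigma_{p_A}$ into the piece dominated by $p_{A\cap B}$ (which $p_{A\cap B}$ covers on its own) and the piece dominated by $p_{A\setminus B}$, with $A\setminus B\in\Breg$ covered by the remaining elements $\{s_{\gamma_i}p_{C_i}s^*_{\gamma_i}\}$.

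Next I would run the refinement of \ref{Expansion} on this second piece: starting from $\{s_{\gamma_i}p_{C_i}s^*_{\gamma_i}\}$ covering $\Sigma_{p_{A\setminus B}}$, successively replace elements — using that each $D_i=\theta_{\gamma_i}(A\setminus B)\setminus C_i$ must itself be regular for the family to remain a cover — until one reaches a refinement in which the surviving words $\gamma_1,\dots,\gamma_m$ form a complete expansion of $A\setminus B$ and $C_i=\theta_{\gamma_i}(A\setminus B)$. Termination holds because at each stage the cover is finite and the words strictly lengthen. I would then set
$$\hat Z=\{p_{A\cap B}\}\cup\{s_{\gamma_i}p_{\theta_{\gamma_i}(A\setminus B)}s^*_{\gamma_i}\}_{i=1}^{m},$$
which yields assertion (1), since a complete expansion covers $\Sigma_{p_{A\setminus B}}$ and $p_{A\cap B}$ covers the complementary piece. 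For (2) I would verify orthogonality directly: $p_{A\cap B}$ is orthogonal to each expansion element because $\theta_{\gamma_i}(A\cap B)\cap\theta_{\gamma_i}(A\setminus B)=\theta_{\gamma_i}\bigl((A\cap B)\cap(A\setminus B)\bigr)=\emptyset$, while the expansion elements are pairwise orthogonal because, being a complete expansion, the $\gamma_i$ form an antichain ($\gamma_i\nleq\gamma_j$ for $i\neq j$), so the orthogonality criterion for idempotents established at the end of the previous section forces the relevant products to vanish.

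Finally, for (3) I would argue as follows. Since the members of $\hat Z$ are pairwise orthogonal, their images under any representation $\rho$ are pairwise orthogonal projections, so $\sum_{\hat z\in\hat Z}\rho(\hat z)=\bigvee_{\hat z\in\hat Z}\rho(\hat z)$; and since $\hat Z$ refines $Z$ we get $\bigvee_{\hat Z}\rho(\hat z)\leq\bigvee_{Z}\rho(z)$ immediately. The reverse inequality is the real content. It rests on the observation that each $z\in Z$ is the \emph{orthogonal sum} of those members of $\hat Z$ lying below it, which is precisely a complete-expansion identity of the form $p_{A'}=\sum_i s_{\gamma_i}p_{\theta_{\gamma_i}(A')}s^*_{\gamma_i}$ (valid because the relevant set is regular) applied at every stage of the iteration. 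Using that $\rho$ is additive on such orthogonal sums, one gets $\rho(z)=\sum_{\hat z\leq z}\rho(\hat z)\leq\sum_{\hat Z}\rho(\hat z)$, hence $\bigvee_{Z}\rho(z)\leq\sum_{\hat Z}\rho(\hat z)$, and (3) follows. I expect the main obstacle to be exactly this last step: one must check, stage by stage in \ref{Expansion}, that every replacement of a cover element is by an orthogonal family summing to it — this is where the regularity of $A\setminus B$ and of the sets $D_i$ is indispensable — so that the join over the original cover is genuinely unchanged and can be rewritten as the orthogonal sum over $\hat Z$.
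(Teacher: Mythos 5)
Your construction of $\hat Z$ is exactly the paper's: reduce to $x=p_A$ by Lemma \ref{lem_cover}, absorb and orthogonalise the idempotent part of the cover, and iterate the procedure of \ref{Expansion} until the surviving words form a complete expansion; your verifications of (1) and (2) (the antichain property of a complete expansion combined with the orthogonality criterion, and $\theta_{\gamma_i}(A\cap B)\cap\theta_{\gamma_i}(A\setminus B)=\theta_{\gamma_i}(\emptyset)=\emptyset$) are fine. The genuine gap is in your argument for (3), which rests on two claims that are both false. First, the structural claim that each $z\in Z$ is the orthogonal sum of the members of $\hat Z$ lying below it fails: if $A\in\Breg$ with $\Delta_A=\{a,b\}$ and $Z=\{s_ap_{\theta_a(A)}s^*_a,\; s_bp_{\theta_b(A)}s^*_b,\; s_ap_Cs^*_a\}$ with $\emptyset\neq C\subsetneq\theta_a(A)$, then $\hat Z=\{s_ap_{\theta_a(A)}s^*_a,\,s_bp_{\theta_b(A)}s^*_b\}$ and \emph{no} element of $\hat Z$ lies below $z=s_ap_Cs^*_a$ (rather, $z$ lies below an element of $\hat Z$), so your identity would force $\rho(z)=0$. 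Second, and more fundamentally, a representation of the semilattice $\ET$ preserves only meets and $0$; it is \emph{not} ``additive on orthogonal sums''. It need not preserve $p_{A'}=\sum_i s_{\gamma_i}p_{\theta_{\gamma_i}(A')}s^*_{\gamma_i}$, nor even $p_{B_1\cup B_2}=p_{B_1}+p_{B_2}-p_{B_1\cap B_2}$: preservation of exactly such cover identities is the \emph{definition} of tightness, and if every representation had it, Theorem \ref{theor1} would be vacuous. Concretely, the character attached to a principal filter $\{f\in\ET : f\geq e\}$ is a representation in $\C$ killing everything not above $e$, and it destroys every proper orthogonal decomposition of $e$.

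In fact item (3), read for arbitrary representations of $\ET$, is not provable because it is false whenever $\B$ contains two overlapping incomparable sets $B_1,B_2$: for the cover $Z=\{p_{B_1},p_{B_2}\}$ of $\Sigma_{p_{B_1\cup B_2}}$, evaluating (3) at the two principal-filter characters forces $p_{B_1},p_{B_2}\in\hat Z$, contradicting (2). So (3) must be understood for the representations to which the lemma is actually applied in Theorem \ref{theor1}, and for those the correct argument is much simpler than what you attempt. If $\rho$ is \emph{tight}, then by Corollary \ref{prop5} and Proposition \ref{prop4} the join over any finite cover of $\Sigma_x$ dominates $\rho(x)$, while $\rho(z)\leq\rho(x)$ for every $z\leq x$; hence $\bigvee_{z\in Z}\rho(z)=\rho(x)=\bigvee_{\hat z\in\hat Z}\rho(\hat z)$, and (2) converts the last join into $\sum_{\hat z\in\hat Z}\rho(\hat z)$ --- i.e.\ (3) is an immediate consequence of (1) and (2). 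For the canonical map $\iota:T\to\Clab^{\sim}$, whose tightness is the point of Theorem \ref{theor1} and so cannot be assumed, one instead uses the explicit shape of $\hat Z$ (disjoint Boolean pieces together with telescoped complete expansions of regular sets): relations (1) and (4) of Definition \ref{DefAlgebra} give $\sum_{\hat z\in\hat Z}\iota(\hat z)=\iota(x)$ in $\Clab$, and then $\iota(x)=\bigvee_{\hat z\in\hat Z}\iota(\hat z)\leq\bigvee_{z\in Z}\iota(z)\leq\iota(x)$ closes the argument. Your telescoping instinct is the right one for this last step, but it lives in $\Clab$, where the Cuntz--Krieger relations hold; it cannot be transported through an arbitrary representation of $\ET$ as your proof requires.
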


We are ready to prove the main  result of this section. Notice that, because of Remark \ref{Rem:Unitization}, we need to require to the universal algebra for tight representations of $T$ being unital. Hence, we have the following

\begin{theor}
\label{theor1}
The representation $\iota:T\longrightarrow \Clab^{\sim}$ is the universal tight representation of $T$.
\end{theor}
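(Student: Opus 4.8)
The plan is to prove the statement in two stages: first that $\iota$ is itself a tight representation, and then that it enjoys the universal property among tight representations of $T$. Since $T\subseteq\Clab\subseteq\Clab^{\sim}$ and $\iota$ is the inclusion, it is automatically a representation of $T$ (it preserves products, adjoints and $0$), so the whole content lies in tightness and universality.

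For tightness I would invoke Corollary \ref{prop5}, which guarantees that Proposition \ref{prop4} applies to $\ET$ for every $(\B,\Labe,\theta)$; thus it suffices to check that for each $x\in\ET$ and each finite cover $Z\subseteq\ET$ of $x$ one has $\bigvee_{z\in Z}\iota(z)\geq\iota(x)$. Using Lemma \ref{lem_cover} with $s=s_\alpha$ and $x=p_A$ (noting that $A\subseteq\Ra$ and $s_\alpha^*s_\alpha=p_{\Ra}$, so $p_A\leq s_\alpha^*s_\alpha$ for every $s_\alpha p_A s_\alpha^*\in\ET$), I would reduce to the case $x=p_A$. Then I would appeal to Lemma \ref{lem5} together with the refinement analysis of \ref{Expansion}: every finite cover of $\Sigma_{p_A}$ admits a pairwise orthogonal refinement $\hat Z=\{s_{\gamma_i}p_{\theta_{\gamma_i}(A)}s_{\gamma_i}^*\}_i$, where $\{\gamma_i\}$ is a complete expansion of $A$, and moreover $\bigvee_{z\in Z}\iota(z)=\sum_{\hat z\in\hat Z}\iota(\hat z)$. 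By the defining identity of a complete expansion, $\sum_i s_{\gamma_i}p_{\theta_{\gamma_i}(A)}s_{\gamma_i}^*=p_A$, so $\bigvee_{z\in Z}\iota(z)=\iota(p_A)=\iota(x)\geq\iota(x)$. This establishes tightness.

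For universality, let $\varphi\colon T\to B$ be any tight representation; by Remark \ref{Rem:Unitization} I may assume $B$ is unital. I would set $P_A:=\varphi(p_A)$ and $S_\alpha:=\varphi(s_\alpha)$ for $A\in\B$, $\alpha\in\Labe$, and verify that $\{P_A,S_\alpha\}$ is a Cuntz-Krieger representation of $(\B,\Labe,\theta)$ in $B$. Relations $(2)$ and $(3)$ of Definition \ref{DefAlgebra} already hold at the level of $T$ (the identities $p_As_\alpha=s_\alpha p_{\theta_\alpha(A)}$ and $s_\alpha^*s_\beta=\delta_{\alpha,\beta}p_{\Ra}$ follow from Proposition \ref{prop1}), hence pass through the $*$-homomorphism $\varphi$; the first half of $(1)$ is likewise immediate from multiplicativity. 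The additivity $P_{A\cup B}=P_A+P_B-P_{A\cap B}$ and the Cuntz-Krieger relation $(4)$ are precisely where tightness is needed. Writing $A\cup B$ as the disjoint union $(A\setminus B)\sqcup(A\cap B)\sqcup(B\setminus A)$, the three corresponding idempotents form a pairwise orthogonal cover of $p_{A\cup B}$, and tightness (combined with monotonicity of $\varphi$ on $\ET$) forces $\varphi(p_{A\cup B})=\varphi(p_{A\setminus B})+\varphi(p_{A\cap B})+\varphi(p_{B\setminus A})$, from which $(1)$ follows after regrouping using $A=(A\setminus B)\sqcup(A\cap B)$ and $B=(B\setminus A)\sqcup(A\cap B)$. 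Similarly, for $A\in\Breg$ the finite family $\{s_\alpha p_{\theta_\alpha(A)}s_\alpha^*\}_{\alpha\in\Delta_A}$ is a pairwise orthogonal cover of $p_A$, so tightness yields $P_A=\sum_{\alpha\in\Delta_A}S_\alpha P_{\theta_\alpha(A)}S_\alpha^*$.

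Having produced a Cuntz-Krieger representation, the universal property of $\Clab$ (Theorem \ref{theorem_1}) supplies a $*$-homomorphism $\pi_{S,P}\colon\Clab\to B$ with $\pi_{S,P}(p_A)=P_A$ and $\pi_{S,P}(s_\alpha)=S_\alpha$, which I would extend to a unital $*$-homomorphism $\pi\colon\Clab^{\sim}\to B$. Since $\pi\circ\iota$ and $\varphi$ are both multiplicative and agree on the generators $p_A$, $s_\alpha$, $s_\alpha^*$, they agree on every $s_\alpha p_A s_\beta^*\in T$; and $\pi$ is unique because $\Clab^{\sim}$ is generated by $\iota(T)$ together with $1$, by Corollary \ref{corol1}. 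I expect the main obstacle to be the tightness of $\iota$, since it rests on the delicate cover-refinement argument of \ref{Expansion} and Lemma \ref{lem5} that identifies the orthogonal refinement with a complete expansion; once this is in hand, the universality half is essentially a translation of the tightness conditions into the Cuntz-Krieger relations, followed by an invocation of the universal property of $\Clab$.
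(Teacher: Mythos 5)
Your proposal is correct and follows essentially the same route as the paper: tightness of $\iota$ via Corollary \ref{prop5} together with the orthogonal-refinement analysis of \ref{Expansion} and Lemma \ref{lem5}, and universality by converting an arbitrary tight representation into a Cuntz-Krieger representation of $(\B,\Labe,\theta)$ — using exactly the same orthogonal covers $\{p_{A\setminus B},p_{A\cap B},p_{B\setminus A}\}$ for the additivity relation and $\{s_\alpha p_{\theta_\alpha(A)}s^*_\alpha\}_{\alpha\in\Delta_A}$ for the relation on $\Breg$ — and then invoking the universal property of $\Clab$ and Corollary \ref{corol1}. The only difference is cosmetic: you spell out the tightness verification (reduction to $x=p_A$ via Lemma \ref{lem_cover}, refinement to a complete expansion summing to $p_A$) that the paper compresses into a citation of Corollary \ref{prop5} and Lemma \ref{lem5}.
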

\begin{proof}
First notice that, because of Corollary \ref{prop5} and Lemma \ref{lem5}, the representation $\iota: T\rightarrow \Clab^{\sim}$ is tight.

Now, let $A$ be any unital $C^*$-algebra, and suppose that $\rho: T\rightarrow A$ is a tight representation. Consider $\hat{s}_a:=\rho (s_a)$ for every $a\in \mathcal{L}$, and $\hat{p}_A:=\rho(p_A)$ for every $A\in\mathcal{B}$. Then, $\{\hat{s}_a : a\in \mathcal{L}\}\cup\{\hat{p}_A :A\in \mathcal{B}\}\subset A$, and clearly:
\begin{enumerate}
\item $\{\hat{p}_A :A\in \mathcal{B}\}$ is a set of projections in $A$.
\item $\{\hat{s}_a : a\in \mathcal{L}\}$ is a set of partial isometries in $A$.
\end{enumerate}

Since $\rho$ is a $\ast$-homomorphism of semigroups, we clearly have that:
\begin{enumerate}
\item $\hat{p}_A\hat{p}_B=\hat{p}_{A\cap B}$ for every $A,B\in \mathcal{B}$.
\item $\hat{p}_A\hat{s}_{a}=\hat{s}_{a}\hat{p}_{\theta_{a}(A)}$ for every $a \in \mathcal{L}$ and $A\in \mathcal{B}$.
\item $\hat{s}^*_a\hat{s}_b=\delta_{a,b}\hat{p}_{\mathcal{R}_a}$ for every $a,b\in \mathcal{L}$.
\end{enumerate}

In order to prove the two remaining identities, we will use the fact that $\rho$ is tight:
\begin{enumerate}
\item Take $A,B\in \mathcal{B}$. Then, it is clear that $\{p_{A\setminus B}, p_{A\cap B}\}$ is a finite orthogonal cover of $p_A$, and so does $\{p_{B\setminus A}, p_{A\cap B}\}$ of $p_B$. Hence, $\hat{p}_A=\hat{p}_{A\setminus B}+\hat{p}_{A\cap B}$ and $\hat{p}_B=\hat{p}_{B\setminus A}+\hat{p}_{A\cap B}$, whence $\hat{p}_A+\hat{p}_B-\hat{p}_{A\cap B}=\hat{p}_{A\setminus B}+\hat{p}_{B\setminus A}+\hat{p}_{A\cap B}$. Since $\{p_{A\setminus B}, p_{B\setminus A}, p_{A\cap B}\}$ is an orthogonal finite cover of $p_{A\cup B}$, we conclude that $\hat{p}_{A\cup B}=\hat{p}_{A\setminus B}+\hat{p}_{B\setminus A}+\hat{p}_{A\cap B}=\hat{p}_A+\hat{p}_B-\hat{p}_{A\cap B}$, as desired.
\item If $A\in \Breg$, then $\{s_ap_{\theta_a(A)}s^*_a : a\in \Delta_A\}$ is an orthogonal finite cover of $p_A$. Hence,
$$\hat{p}_A=\rho(p_A)=\bigvee\limits_{a\in \Delta_A}\rho (s_ap_{\theta_a(A)}s^*_a)=\bigvee\limits_{a\in \Delta_A}\hat{s}_a\hat{p}_{\theta_a(A)}\hat{s}^*_a=\sum\limits_{a\in \Delta_A}\hat{s}_a\hat{p}_{\theta_a(A)}\hat{s}^*_a,$$
so we are done.
\end{enumerate}

Thus, by the Universal Property of $\Clab$, there exists a unique $\ast$-homomorphism

$$
\begin{array}{crcc}
\psi : &  \Clab^{\sim} & \rightarrow  & A  \\
 & s_a & \mapsto  & \hat{s}_a \\ 
 & p_A & \mapsto   &  \hat{p}_A\\
 &  1 & \mapsto& 1
\end{array}.
$$

Since $\psi\circ \iota=\rho$, the universality of $\iota$ is proved.
\end{proof}


\begin{corol}\label{corol3} $\Clab^{\sim}\cong C^*_{\text{tight}}(T)$. 
\end{corol}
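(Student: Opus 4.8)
The plan is to deduce this directly from Theorem \ref{theor1} via the standard uniqueness of objects defined by a universal property. Recall that, following Exel \cite{Exel, ExelTight}, the $C^*$-algebra $C^*_{\text{tight}}(T)$ is by construction the universal $C^*$-algebra admitting a tight representation of $T$: it carries a canonical tight representation $j\colon T\to C^*_{\text{tight}}(T)$ whose image generates it, and with the property that any tight representation $\rho\colon T\to A$ into a (unital) $C^*$-algebra factors uniquely as $\rho=\bar\rho\circ j$ for a $*$-homomorphism $\bar\rho\colon C^*_{\text{tight}}(T)\to A$. Theorem \ref{theor1} asserts that $\iota\colon T\to\Clab^{\sim}$ enjoys exactly the same universal property.

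Given this, I would produce the isomorphism in the usual way. Applying the universal property of $j$ to the tight representation $\iota$ yields a $*$-homomorphism $\Phi\colon C^*_{\text{tight}}(T)\to\Clab^{\sim}$ with $\Phi\circ j=\iota$; applying the universal property of $\iota$ (Theorem \ref{theor1}) to the tight representation $j$ yields $\Psi\colon\Clab^{\sim}\to C^*_{\text{tight}}(T)$ with $\Psi\circ\iota=j$. The composite $\Psi\circ\Phi$ then fixes $j$, that is $(\Psi\circ\Phi)\circ j=j=\mathrm{id}\circ j$, and since $j(T)$ generates $C^*_{\text{tight}}(T)$ the uniqueness clause forces $\Psi\circ\Phi=\mathrm{id}$; symmetrically $\Phi\circ\Psi=\mathrm{id}$. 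Hence $\Phi$ and $\Psi$ are mutually inverse $*$-isomorphisms and $\Clab^{\sim}\cong C^*_{\text{tight}}(T)$.

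The only point requiring care is the unitality bookkeeping flagged in Remark \ref{Rem:Unitization}: tightness is formulated in terms of suprema and complements of projections and is naturally phrased for unital target algebras, so the universal receptacle $C^*_{\text{tight}}(T)$ is unital and must be matched against the unitization $\Clab^{\sim}$ rather than $\Clab$ itself. This is precisely why Theorem \ref{theor1} is stated with the unitization, so that the two universal objects live in the same category of unital $C^*$-algebras and the identification above goes through without modification. I do not expect any genuine obstacle here, since the entire content of the corollary is already carried by Theorem \ref{theor1}.
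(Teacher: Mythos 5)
Your proposal is correct and is exactly the argument the paper intends: the corollary is stated without proof precisely because it is the standard uniqueness-of-universal-objects consequence of Theorem \ref{theor1}, which you have spelled out carefully (two mutually inverse maps from the two universal properties, with uniqueness forced by generation). Your attention to the unitality bookkeeping of Remark \ref{Rem:Unitization} matches the paper's reason for phrasing Theorem \ref{theor1} in terms of $\Clab^{\sim}$.
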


\section{The tight groupoid of $T$}

In this section we will benefit of the previous work to construct a groupoid $\mathcal{G}$ such that $\Clab\cong C^*(\mathcal{G})$. Now, we proceed to recall the construction of $\mathcal{G}_{\text{tight}}(T)$. Let us recall  the construction in a generic form (see e.g. \cite{ExPa}):

\begin{itemize}
\item If $T$ is an inverse semigroup, then $\E=\ET=\{\text{idempotents of }T \}$ is a semilattice with ordering $e\leq f$ if and only if $ef=e$, and $e\wedge f=ef$. It extends to an order in $S$, $s\leq t$ if and only if $s=ts^*s=ss^*t$. We denote by $e\bot f$ if and only if $ef=0$, and $e\Cap f$ if and only if $ef\neq 0$.
\item A character on $\E$ is a nonzero map $\phi:\E\rightarrow\{0,1\}$ with $\phi(0)=0$, and $\phi(ef)=\phi(e)\phi(f)$ for every $e,f\in \E$. We denote the set of characters by $\widehat{\E}_0$. This is a topological space when equipped with the product topology inherited from $\{0,1\}^{\E}$. Since the zero map does not belong to  $\widehat{\E}_0$, it is a locally compact space and totally disconnected Hausdorff space.
\item A filter in $\E$ is a nonempty subset $\eta\subseteq \E$ such that:
\begin{enumerate}
\item $0\notin \eta$,
\item closed under $\wedge$,
\item $f\geq e\in \eta$ implies $f\in \eta$.
\end{enumerate}
\item Given a filter $\eta$,
$$\begin{array}{rl} \phi_\eta:\E & \longrightarrow \{0,1\} \\ e & \longrightarrow [e\in \eta]\end{array}$$
is a character. Conversely, if $\phi\in \widehat{\E}_0$, then $\eta_\phi=\{e\in \E| \phi(e)=1\}$ is a filter. These correspondences  are mutually inverses.
\item A filter $\eta$ is a ultrafilter if it is not properly contained in another filter. We denote $\widehat{\E}_\infty\subseteq \widehat{\E}_0$ the space of ultrafilters.
\item Tight filters are defined in analogy with tight representations. The set of tight filters (tight spectrum) is a closed subspace $\widehat{\E}_{\text{tight}}$ of $\widehat{\E}_0$, containing $\widehat{\E}_\infty$ as a dense subspace. 
\item We can define a standard action of $T$ on $\widehat{\E}_0$ as follows:
\begin{enumerate}
\item For each $e\in \E$, $D^\beta_e=\{\phi\in \widehat{\E}_0: \phi(e)=1\}$,
\item given $s\in T$,
$$\begin{array}{rl} \beta_s:D^\beta_{s^*s}& \longrightarrow D^\beta_{ss^*} \\ \phi & \longrightarrow \beta_s(\phi)(e)=\phi(s^*es)\end{array}$$
When working with filters, $D^\beta_e=\{\eta\in  \widehat{\E}_0| e\in \eta\}$ while $\beta_s(\eta)=\{f\in \E: f\geq ses^*\text{ for every }e\in \eta\}$.

\end{enumerate}
\item $\beta$ restricts to an action of $T$ on ultrafilters and on tight filters.
\end{itemize} 

\begin{defi}
Consider the set $\Omega=\{(s,x)\in T\times  \widehat{\E}_{\text{tight}}: x\in D^\beta_{s^*s}\}$ and define $(s,x)\sim (t,y)$ if and only if $x=y$ and exists $e\in \E$ such that $x\in D^\beta_e$ and $se=te$. 

Define $\mathcal{G}_{\text{tight}}(S)=\Omega/\sim $, with:
\begin{enumerate}
\item $d([s,x])=x$ and $r([s,x])=\beta_s(x)$,
\item $[s,z]\cdot [t,x]=[st,x]$ if and only if $z=\beta_t(x)$,
\item $[s,x]^{-1}=[s^*,\beta_s(x)]$,
\item $\mathcal{G}^{(0)}_{\text{tight}}=\{[e,x]: e\in \E\}\cong \widehat{\E}_{\text{tight}}$
\end{enumerate} 
$\mathcal{G}_{\text{tight}}(T)$ is the tight groupoid of the inverse semigroup $T$.
\end{defi}

Then, we have

\begin{lem}\label{lem6} 
$\mathcal{G}_{\text{tight}}(T)$ is Hausdorff. 
\end{lem}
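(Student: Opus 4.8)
The plan is to use the standard fact that an étale groupoid whose unit space is locally compact, Hausdorff and totally disconnected is itself Hausdorff precisely when its unit space is closed, and to feed into this the $E^*$-unitarity of $T$ established in Proposition \ref{prop2}. Since $\mathcal{G}_{\text{tight}}(T)$ is covered by the open bisections $\Theta(s):=\{[s,x]:x\in D^\beta_{s^*s}\}$, each of which is homeomorphic to $D^\beta_{s^*s}\subseteq\widehat{\mathcal{E}}_{\text{tight}}$, the closedness of $\mathcal{G}^{(0)}_{\text{tight}}$ is equivalent (by intersecting with each such bisection) to the assertion that for every $s\in T$ the set $\mathrm{Fix}(s):=\{x\in D^\beta_{s^*s}:[s,x]\in\mathcal{G}^{(0)}_{\text{tight}}\}$ is closed in $D^\beta_{s^*s}$. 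Thus the whole statement reduces to analysing, for a fixed $s$, exactly those tight filters $x$ at which the germ $[s,x]$ is a unit.

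First I would unwind what it means for $[s,x]$ to lie in $\mathcal{G}^{(0)}_{\text{tight}}$: by the germ relation there must exist $e\in\mathcal{E}(T)$ and an idempotent $g$ with $e\in x$ and $se=ge$. Replacing $e$ by $e\wedge s^*s$ (which still lies in $x$, as both $e$ and $s^*s$ belong to the filter $x$ and filters are closed under $\wedge$), I may assume $e\leq s^*s$; then $(se)^*(se)=e\,s^*s\,e=e\neq\emptyset$, so in particular $se\neq 0$, and $s\,(se)^*(se)=se$ shows $se\leq s$ in the natural order of $T$, while $se=ge$ is an idempotent. Hence $se$ is a \emph{nonzero idempotent lying below $s$}. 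At this point Proposition \ref{prop2} applies directly: $E^*$-unitarity of $T$ forces $s\in\mathcal{E}(T)$.

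It remains to read off $\mathrm{Fix}(s)$ in the two cases. If $s\notin\mathcal{E}(T)$, the previous paragraph shows that no $x$ can satisfy $[s,x]\in\mathcal{G}^{(0)}_{\text{tight}}$, so $\mathrm{Fix}(s)=\emptyset$, which is closed. If $s\in\mathcal{E}(T)$, then $[s,x]\in\mathcal{G}^{(0)}_{\text{tight}}$ for every $x\in D^\beta_{s^*s}$ by the very description of $\mathcal{G}^{(0)}_{\text{tight}}$, so $\mathrm{Fix}(s)=D^\beta_{s^*s}$, which is clopen in $\widehat{\mathcal{E}}_{\text{tight}}$. In either case $\mathrm{Fix}(s)$ is closed, whence $\mathcal{G}^{(0)}_{\text{tight}}$ is closed in $\mathcal{G}_{\text{tight}}(T)$ and the groupoid is Hausdorff. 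The only genuinely delicate point is the reduction step: matching the abstract Hausdorffness criterion for groupoids of germs (see \cite{Exel}) to the concrete semilattice datum, and in particular verifying that the witnessing idempotent $se$ is nonzero and genuinely below $s$ rather than merely below $s^*s$. Once this bookkeeping is set up correctly, the $E^*$-unitary property of Proposition \ref{prop2} does all the remaining work, and no feature of the Boolean dynamical system beyond $E^*$-unitarity is required.
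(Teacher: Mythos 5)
Your proof is correct, but it is organized quite differently from the paper's, which disposes of the lemma in one line by combining Proposition \ref{prop2} with the general result \cite[Corollary 3.17]{ExPa} that the tight groupoid of any $E^*$-unitary inverse semigroup is Hausdorff. You rely on the same key input ($E^*$-unitarity), but instead of invoking that corollary as a black box you reconstruct a proof of it: you reduce Hausdorffness to closedness of $\mathcal{G}^{(0)}_{\text{tight}}$ via the standard criterion for \'etale groupoids with Hausdorff unit space, localize along the covering bisections $\Theta(s)$, and show that a germ $[s,x]$ can only be a unit when $s$ itself is idempotent, because the germ relation produces a nonzero idempotent $se\leq s$ to which $E^*$-unitarity applies; hence each $\mathrm{Fix}(s)$ is either empty or all of $D^\beta_{s^*s}$, and closedness is immediate. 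This is in fact more direct than Exel and Pardo's own route to their corollary, which passes through the finite-cover characterization of Hausdorffness (every $J_s=\{e\in\mathcal{E}(T):e\leq s\}$ must admit a finite cover); your empty-or-everything dichotomy bypasses that machinery. What the paper's citation buys is brevity; what your argument buys is self-containedness and a transparent explanation of why $E^*$-unitarity is precisely the relevant hypothesis, at the cost of importing the \'etale criterion and the basic facts about germ groupoids. Two small remarks: the local compactness and total disconnectedness assumptions in your opening criterion are superfluous (\'etaleness plus a Hausdorff unit space suffices), and your insistence on verifying $(se)^*(se)=e\neq 0$ is exactly the right delicate point, since the paper's stated definition of $E^*$-unitary omits the word ``nonzero'' (evidently a typo, as $0\leq s$ holds for every $s$), and without nonvanishing of $se$ Proposition \ref{prop2} would give nothing.
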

\begin{proof} 
By Proposition \ref{prop2} and \cite[Corollary 3.17]{ExPa}
\end{proof}

Moreover, if we restrict our attention to the case of the inverse semigroup $T$ being countable (which corresponds to the requirement that both $\B$ and $\mathcal{L}$ are countable), then we can prove the following facts

\begin{theor}\label{theor2} 
If $\B$ and $\mathcal{L}$ are countable, then $\Clab\cong C^*(\mathcal{G}_{\text{tight}}(T))$.
\end{theor}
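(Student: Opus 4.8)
The plan is to deduce the result from Exel's reconstruction machinery \cite{Exel, Reconst}, which identifies the full $C^*$-algebra of the tight groupoid of an inverse semigroup with the universal $C^*$-algebra generated by a tight representation. First I would verify that the hypotheses of that machinery are met. Since $\B$ and $\Labe$ are countable, the set $\Labe^*$ is countable and hence $T$ is a countable inverse semigroup; consequently its tight groupoid $\mathcal{G}_{\text{tight}}(T)$ is second countable and \'etale. By Proposition \ref{prop2} the semigroup $T$ is $E^*$-unitary, and so by Lemma \ref{lem6} the groupoid $\mathcal{G}_{\text{tight}}(T)$ is Hausdorff. Thus $\mathcal{G}_{\text{tight}}(T)$ falls within the class of groupoids covered by Exel's theorem.

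Next I would record the canonical tight representation on the groupoid side. Each $t\in T$ determines an open bisection $\Theta_t$ of $\mathcal{G}_{\text{tight}}(T)$, and the assignment $j\colon t\mapsto 1_{\Theta_t}$ defines a tight representation of $T$ inside (the multiplier algebra of) $C^*(\mathcal{G}_{\text{tight}}(T))$ whose image generates $C^*(\mathcal{G}_{\text{tight}}(T))$; that is, $C^*(\mathcal{G}_{\text{tight}}(T))=\overline{\text{span}}\{j(t):t\in T\}$. Exel's theorem asserts precisely that $\bigl(C^*(\mathcal{G}_{\text{tight}}(T)),j\bigr)$ is universal among tight representations of $T$. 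To conclude, I would match this with the intrinsic side: by Theorem \ref{theor1}, $\iota\colon T\to\Clab^{\sim}$ is the universal tight representation of $T$, and by Corollary \ref{corol1} the $C^*$-subalgebra it generates is exactly $\Clab=\overline{\text{span}}\{\iota(t):t\in T\}$. Since both $(\Clab,\iota)$ and $(C^*(\mathcal{G}_{\text{tight}}(T)),j)$ are $C^*$-algebras generated by a universal tight representation of the same semigroup $T$, the uniqueness part of the universal property yields a $*$-isomorphism determined on generators by $\iota(t)\mapsto j(t)$, and therefore $\Clab\cong C^*(\mathcal{G}_{\text{tight}}(T))$.

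The step I expect to be the main obstacle is the unitization bookkeeping. As emphasised in Remark \ref{Rem:Unitization}, tightness is naturally formulated inside a unital algebra, and this is why the universal tight representation $\iota$ takes values in the unitization $\Clab^{\sim}$ (so that Corollary \ref{corol3} reads $\Clab^{\sim}\cong C^*_{\text{tight}}(T)$), whereas $C^*(\mathcal{G}_{\text{tight}}(T))$ is non-unital exactly when $\widehat{\E}_{\text{tight}}$ fails to be compact, i.e. when $\B$ is non-unital. The care required is therefore to apply Exel's theorem not to the ambient unitizations but to the genuine universal objects, namely the $C^*$-subalgebras generated by the respective universal tight representations: on the algebraic side this generated subalgebra is $\Clab$ by Corollary \ref{corol1}, and on the groupoid side it is $C^*(\mathcal{G}_{\text{tight}}(T))$ itself. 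When $\B$ is unital both algebras are already unital and coincide with their unitizations, so the identification is immediate; in general one simply notes that the canonical correspondence $\iota(t)\leftrightarrow j(t)$ preserves the non-unital cores, which is what gives $\Clab\cong C^*(\mathcal{G}_{\text{tight}}(T))$ rather than an isomorphism of the unitizations.
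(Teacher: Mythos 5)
Your proposal is correct and follows essentially the same route as the paper's own (much terser) proof: both rest on Exel's reconstruction theorem identifying $C^*(\mathcal{G}_{\text{tight}}(T))$ as universal for tight representations of $T$, combined with Theorem \ref{theor1}/Corollary \ref{corol3} on the algebra side and Corollary \ref{corol1} to identify $\Clab$ with the closed span of the image of $T$, handling the unitization exactly as the paper does by matching the non-unital cores generated by $\{1_{\Theta_t} : t\in T\}$ and $\{\iota(t) : t\in T\}$ respectively.
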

\begin{proof} 
Since $T$ is countable, the result holds by Definition \ref{DefAlgebra}, Corollary \ref{corol3} and \cite[Theorem 2.4]{Reconst}, because $C^*(\mathcal{G}_{\text{tight}}(T))$ is the closed $\ast$-subalgebra of $C^*(\mathcal{G}_{\text{tight}}(T))^{\sim}$ generated by $\{1_s : s\in T\}$, and this algebra is isomorphic to $\Clab$ because of Corollary \ref{corol1}.
\end{proof}

and

\begin{lem}\label{lem_nuclear}
If $\B$ and $\mathcal{L}$ are countable, then $\mathcal{G}_{\text{tight}}(T)$ is amenable. 
\end{lem}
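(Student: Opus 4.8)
The plan is to transport nuclearity along the identifications already in place and then to invoke the standard dictionary between amenability of an étale groupoid and nuclearity of its groupoid $C^*$-algebra. By Theorem \ref{theor2} we have $\Clab\cong C^*(\Gti(T))$, and by Corollary \ref{CK_unital} the algebra $\Clab$ is nuclear; moreover the countability of $\B$ and $\Labe$ forces $T$ to be countable, so $\Gti(T)$ is a second countable, étale groupoid, which is Hausdorff by Lemma \ref{lem6}. First I would observe that the canonical surjection $C^*(\Gti(T))\twoheadrightarrow C^*_r(\Gti(T))$ exhibits the reduced groupoid $C^*$-algebra as a quotient of a nuclear $C^*$-algebra, so that $C^*_r(\Gti(T))$ is itself nuclear.

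Next I would appeal to the theorem of Anantharaman-Delaroche and Renault that, for a second countable, locally compact, Hausdorff, étale groupoid, (measurewise) amenability is equivalent to nuclearity of the reduced $C^*$-algebra. The forward direction of this equivalence is classical; what is used here is the reverse implication, from nuclearity of $C^*_r(\Gti(T))$ back to amenability of $\Gti(T)$, which applies precisely because the standing hypotheses---Hausdorff, étale and second countable---have all been verified. This reverse implication, and the fact that nuclearity of a reduced groupoid $C^*$-algebra is a delicate invariant, is where I expect the main obstacle to lie.

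To make the argument self-contained, and thereby sidestep that obstacle, I would as an alternative establish amenability of $\Gti(T)$ directly by means of the length cocycle $c\colon \Gti(T)\to\Z$ sending the germ of $s_\alpha p_A s^*_\beta$ to $|\alpha|-|\beta|$; this is well defined and continuous exactly because $T$ is $E^*$-unitary (Proposition \ref{prop2}), so that the degree $|\alpha|-|\beta|$ is an invariant of the germ. The kernel $c^{-1}(0)$ is then an open subgroupoid realized as the increasing union of the elementary, proper subgroupoids determined by germs with $|\alpha|=|\beta|\le n$, and is therefore an approximately finite, hence amenable, groupoid. Since $\Z$ is amenable, the permanence of amenability under extensions by a discrete amenable group with amenable kernel yields amenability of $\Gti(T)$. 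In this second route the work is concentrated in verifying that $c^{-1}(0)$ carries the claimed AF filtration, which is where the combinatorial control afforded by Lemmas \ref{lem2} and \ref{lem5} would enter.
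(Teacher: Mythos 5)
Your first route is precisely the paper's proof: nuclearity of $\Clab\cong C^*(\Gti(T))$ (Corollary \ref{CK_unital}, Theorem \ref{theor2}) is transferred to the reduced algebra, and the paper then cites \cite[Theorem 5.6.18]{BrownOzawa}, which is exactly the Anantharaman-Delaroche--Renault equivalence you invoke; the ``reverse implication'' you single out as the main obstacle is the content of that citation, and it applies because Hausdorffness (Lemma \ref{lem6}), \'etaleness and second countability have all been verified, so there is nothing to sidestep. (If anything, your way of getting nuclearity of $C^*_{red}(\Gti(T))$ --- as a quotient of the nuclear full algebra --- is cleaner than the paper's phrasing, which asserts $C^*_{red}(\Gti(T))=C^*(\Gti(T))$ before amenability is available.) Your alternative argument via the degree cocycle sending the germ of $s_\alpha p_A s_\beta^*$ to $|\alpha|-|\beta|$ is a genuinely different strategy, standard for graph-type groupoids, but as written it is only a sketch: well-definedness and continuity of the cocycle from $E^*$-unitarity (Proposition \ref{prop2}) is the easy part, whereas the assertion that $c^{-1}(0)$ is an increasing union of proper (elementary) subgroupoids is the real work and does not follow formally from Lemmas \ref{lem2} and \ref{lem5}; moreover the extension step requires the groupoid permanence theorem (amenable kernel of a continuous $\Z$-valued cocycle implies amenability of the ambient groupoid, as in Renault--Williams or Spielberg), not merely amenability of $\Z$. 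What that second route would buy is a proof independent of the nuclearity of $\Clab$, hence of the topological-graph realization; but since your first route already closes the argument with the paper's own citations, the extra machinery is unnecessary.
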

\begin{proof}
Since $C^*(\Gti(T))\cong \Clab$ is nuclear, then $C_{red}^*(\Gti(T))=C^*(\Gti(T))$, and thus $C_{red}^*(\Gti(T))$ is nuclear. Hence, the result holds by \cite[Theorem 5.6.18]{BrownOzawa}.
\end{proof}

Suppose that $\B$ and $\mathcal{L}$ are countable. Then, since $\Gti(T)$ is the tight groupoid of an countable $\ast$-inverse semigroup, $\Gti(T)$ is an \'etale, second countable, topological groupoid \cite{Exel}. Hence, because of \cite[Lemma 3.3 \& Proposition 10.7]{Tu}, Lemma \ref{lem6} and Lemma \ref{lem_nuclear}, we conclude

\begin{lem}\label{UCT_groupoid}
If $\B$ and $\mathcal{L}$ are countable, then $C^*(\mathcal{G}_{\text{tight}}(T))$ is in the UCT class.
\end{lem}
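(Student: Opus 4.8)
The plan is to assemble the structural facts already established about $\Gti(T)$ and then feed them into Tu's bootstrap machinery, so the argument is essentially a bookkeeping of hypotheses followed by a citation. First I would record that countability of $\B$ and $\Labe$ forces $T$ to be a countable inverse semigroup, so that Exel's construction produces a groupoid $\Gti(T)$ which is étale, locally compact and second countable (this is exactly the input drawn from \cite{Exel} in the paragraph preceding the statement). Combined with Lemma \ref{lem6}, which supplies Hausdorffness, and Lemma \ref{lem_nuclear}, which supplies amenability, we land precisely in the setting of a second countable, locally compact, Hausdorff, étale and amenable groupoid, which is the framework in which Tu's results are formulated.

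The core of the argument is then a two-step invocation of \cite{Tu}. The first step uses \cite[Lemma 3.3]{Tu} to upgrade amenability of $\Gti(T)$ to the Haagerup property (a-T-menability), using that every amenable groupoid is a-T-menable. The second step uses \cite[Proposition 10.7]{Tu}, which asserts that a second countable, Hausdorff, locally compact, a-T-menable groupoid has reduced $C^*$-algebra lying in the bootstrap class, and hence satisfying the UCT. Finally, since amenability of $\Gti(T)$ identifies the reduced and full groupoid $C^*$-algebras, as already exploited in the proof of Lemma \ref{lem_nuclear}, we conclude that $C^*(\Gti(T))$ itself is in the UCT class. No computation is required beyond chaining these cited facts together with the equality $C^*_{red}(\Gti(T)) = C^*(\Gti(T))$.

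The only genuine care needed, and the step I expect to be the main obstacle, is matching the precise hypotheses of Tu's statements to our groupoid: checking that the étale structure indeed supplies the locally-compact-groupoid-with-Haar-system framework in which \cite{Tu} operates (the counting-measure Haar system on an étale groupoid), and verifying that ``amenable'' in the sense delivered by Lemma \ref{lem_nuclear} coincides with the notion for which \cite[Lemma 3.3]{Tu} yields a-T-menability. Once these identifications are pinned down, the conclusion follows formally, so the proof reduces to this verification rather than to any substantive new estimate.
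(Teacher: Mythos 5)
Your proposal is correct and follows essentially the same route as the paper: the paper's proof is precisely the chain "countable $T$ $\Rightarrow$ \'etale, second countable groupoid (Exel), Hausdorff (Lemma \ref{lem6}), amenable (Lemma \ref{lem_nuclear}), then \cite[Lemma 3.3 \& Proposition 10.7]{Tu}." Your elaboration of the two Tu citations (amenable $\Rightarrow$ a-T-menable, then a-T-menable $\Rightarrow$ UCT for the reduced algebra) and the final identification $C^*_{red}(\Gti(T))=C^*(\Gti(T))$ just makes explicit what the paper leaves implicit.
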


Notice that Lemma \ref{UCT_groupoid} proves Corollary \ref{CK_unital}(3) using groupoids instead of topological graphs.

\section{Simplicity of $\Clab$}

In this section we will characterise when $\Clab$ is simple, using information from $\Gti(T)$. To this end, we use a result of \cite{Clarketal}.

\begin{theor}[{\cite[Theorem 5.1]{Clarketal}}]\label{theor3}
Let $\mathcal{G}$ be an \'etale, Hausdorff, second countable, topological groupoid. If $\mathcal{G}$ is (elementary) amenable, then the following are equivalent:
\begin{enumerate}
\item $\mathcal{G}$ is minimal and essentially principal,
\item $C^*(\mathcal{G})$ is simple.
\end{enumerate} 
\end{theor}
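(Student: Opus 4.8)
The statement is the groupoid simplicity theorem cited from \cite{Clarketal}, and I would prove it by the now-standard ideal-intersection technique adapted to \'etale Hausdorff groupoids. Since $\mathcal{G}$ is amenable, the full and reduced $C^*$-algebras coincide, $C^*(\mathcal{G})=C^*_{red}(\mathcal{G})$, so throughout I may work in the reduced picture. The key structural tool is the canonical conditional expectation $\Phi\colon C^*_{red}(\mathcal{G})\to C_0(\mathcal{G}^{(0)})$ induced by the restriction map $f\mapsto f|_{\mathcal{G}^{(0)}}$ on $C_c(\mathcal{G})$, which is faithful on the reduced algebra; Hausdorffness guarantees that $C_c(\mathcal{G})$ embeds as genuine functions and that this restriction is well defined, while amenability transports faithfulness to $C^*(\mathcal{G})$ itself. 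The plan is to prove $(2)\Rightarrow(1)$ by contraposition and $(1)\Rightarrow(2)$ directly, the latter being the substantial implication.

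For $(2)\Rightarrow(1)$ I argue by contradiction. If $\mathcal{G}$ is not minimal, choose a nonempty proper open invariant subset $U\subsetneq \mathcal{G}^{(0)}$. Then $\mathcal{G}|_U$ is an open subgroupoid and $C^*(\mathcal{G}|_U)$ sits inside $C^*(\mathcal{G})$ as a proper nonzero ideal (the ideal generated by $C_0(U)$), contradicting simplicity. If instead $\mathcal{G}$ is minimal but fails to be essentially principal, then the interior of the isotropy bundle is nontrivial; restricting to an open bisection lying inside this interior produces a nonzero $\ast$-homomorphism out of $C^*(\mathcal{G})$ that is not injective, hence a proper nonzero ideal, again contradicting simplicity. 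Thus simplicity forces both conditions.

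For $(1)\Rightarrow(2)$, let $I$ be a nonzero closed two-sided ideal of $C^*(\mathcal{G})$; I must show $I=C^*(\mathcal{G})$. The crux is the generalized Cuntz--Krieger uniqueness theorem (in the form due to Renault, refined by Brown--Clark--Farthing--Sims): essential principality implies that the diagonal detects ideals, i.e. $I\cap C_0(\mathcal{G}^{(0)})\neq 0$. Concretely, take $0\leq a\in I$, approximate $a$ in norm by some $f\in C_c(\mathcal{G})$ with $\Phi(f)\neq 0$, and use the density of units with trivial isotropy together with a compression $\Phi(f)=\lim_\lambda h_\lambda f h_\lambda$ by suitable $h_\lambda\in C_c(\mathcal{G}^{(0)})$ supported near such a unit to conclude that the nonzero positive element $\Phi(a)$ lies in $I$. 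Thus $J:=I\cap C_0(\mathcal{G}^{(0)})$ is a nonzero ideal of $C_0(\mathcal{G}^{(0)})$, corresponding to a nonempty open set $V\subseteq \mathcal{G}^{(0)}$. Because $I$ is two-sided, conjugation of $J$ by the normalizers $1_s$, with $s$ ranging over open bisections, stays inside $I\cap C_0(\mathcal{G}^{(0)})=J$, which translates exactly into $V$ being $\mathcal{G}$-invariant. Minimality then gives $V=\mathcal{G}^{(0)}$, so $J=C_0(\mathcal{G}^{(0)})$; since $C_0(\mathcal{G}^{(0)})$ contains an approximate identity for $C^*(\mathcal{G})$, we conclude $I=C^*(\mathcal{G})$.

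The main obstacle is the ideal-detection lemma $I\cap C_0(\mathcal{G}^{(0)})\neq 0$ in $(1)\Rightarrow(2)$: controlling the off-diagonal part of a general positive element of $I$ and showing it can be compressed away using essential principality. This is precisely where essential principality (density of the trivial-isotropy units) and the faithfulness of $\Phi$ are used in tandem, and where second countability enters, to make the approximation and measure-theoretic estimates go through. Once this lemma is in hand, the invariance and minimality steps are formal.
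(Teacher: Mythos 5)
First, a point of reference: the paper offers no proof of this statement at all --- it is quoted as \cite[Theorem 5.1]{Clarketal} and used as a black box --- so your proposal has to be measured against the literature proof (Brown--Clark--Farthing--Sims) rather than against anything in the text. Your direction $(1)\Rightarrow(2)$ is essentially that standard argument and is correct in outline: amenability gives $C^*(\mathcal{G})=C^*_{red}(\mathcal{G})$, essential principality yields the ideal-detection property $I\cap C_0(\mathcal{G}^{(0)})\neq 0$ by compressing near a trivially-isotropic unit, and invariance of the corresponding open set, minimality, and the approximate unit finish the proof. (Two small imprecisions: the detection lemma does not give $\Phi(a)\in I$ --- the expectation is not multiplicative and need not map $I$ into $I$; what one actually shows is that if $I\cap C_0(\mathcal{G}^{(0)})=0$ then the quotient map would be isometric on $C_0(\mathcal{G}^{(0)})$, contradicting $\|hah-h\Phi(f)h\|<\varepsilon$ with $\|h\Phi(f)h\|$ close to $\|\Phi(a)\|$. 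Also the ``normalizers $1_s$'' are not elements of $C_c(\mathcal{G})$; one must conjugate by functions $f\in C_c(B)$ supported on bisections.)

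The genuine gap is in $(2)\Rightarrow(1)$, in the sub-step ``simple $\Rightarrow$ essentially principal''. The sentence ``restricting to an open bisection lying inside this interior produces a nonzero $*$-homomorphism out of $C^*(\mathcal{G})$ that is not injective'' is not an argument: restriction of functions to a bisection $B\subseteq \operatorname{Iso}(\mathcal{G})^{\circ}\setminus\mathcal{G}^{(0)}$ is not multiplicative and defines no map on $C^*(\mathcal{G})$, and you never say what the homomorphism is. Worse, since you announced you would work in the reduced picture, and your only use of amenability is the identification of full and reduced algebras, any argument of the shape you describe would prove ``$C^*_{red}(\mathcal{G})$ simple $\Rightarrow$ topologically principal'' with no amenability input --- and that is false: the free group $F_2$, viewed as a groupoid with a single unit, is \'etale, Hausdorff, second countable and minimal, has $\operatorname{Iso}(\mathcal{G})^{\circ}=F_2\neq\mathcal{G}^{(0)}$, yet $C^*_{red}(F_2)$ is simple. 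So this step cannot be formal; it requires a construction that lives naturally on the \emph{full} algebra. What is actually done (after the Baire-category lemma you correctly invoke, namely that failure of essential principality forces $\operatorname{Iso}(\mathcal{G})^{\circ}\neq\mathcal{G}^{(0)}$, which uses second countability) is this: fix a unit $x$, set $H:=\operatorname{Iso}(\mathcal{G})^{\circ}\cap\mathcal{G}_x^x$, and form the quasi-regular representation $\pi$ of the full $C^*(\mathcal{G})$ on $\ell^2(\mathcal{G}_x/H)$. Because $\operatorname{Iso}(\mathcal{G})^{\circ}$ is an open \emph{normal} subgroupoid, every $f\in C_c(B)$ acts in $\pi$ exactly as its push-down $f\circ (s|_{B})^{-1}\in C_0(\mathcal{G}^{(0)})$ does, so $\pi$ kills the nonzero element $f-f\circ (s|_{B})^{-1}$ while $\pi\neq 0$; hence $\ker\pi$ is a nonzero proper ideal of the full algebra, and only at this point does amenability enter, to conclude that the common algebra $C^*(\mathcal{G})=C^*_{red}(\mathcal{G})$ is not simple. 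This construction (or an equivalent one, e.g.\ inducing a character of an isotropy group) is the real content of the implication, and it is missing from your proposal.
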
   

If $\B$ and $\mathcal{L}$ are countable then, since $\Gti(T)$ is the tight groupoid of an countable $\ast$-inverse semigroup, $\Gti(T)$ is an \'etale, second countable, topological groupoid \cite{Exel}. We know that $\Gti(T)$ is Hausdorff and amenable. Hence, we need only to take care of $\Gti(T)$ being essentially principal and minimal. As $\Gti(T)$ is the tight groupoid of an inverse semigroup, we can benefit of the results of \cite{ExPa} for this task. 

\subsection{Essentially principal groupoids}

In this subsection we take care of the essential principal property.  For this and related properties we refer to \cite[Section 4]{ExPa}. In particular, we skip the definitions.
\vspace{.2truecm}

Recall the following facts. 

\begin{theor}[{\cite[Theorem 4.7]{ExPa}}]\label{theor4} $\Gti(T)$ is essentially principal if and only if $\beta:T\acts  \widehat{\mathcal{E}}_{\text{tight}}$ is topologically free.
\end{theor}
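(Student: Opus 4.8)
The statement is quoted from \cite[Theorem 4.7]{ExPa}, so the intended justification is simply to invoke that reference; nevertheless the underlying argument is a direct translation between the isotropy structure of the groupoid of germs $\Gti(T)$ and the fixed-point behaviour of the action $\beta$, and this is how I would carry it out. The plan is first to record that a germ $[s,x]$ lies in the isotropy bundle $\Gti(T)'=\{\gamma:d(\gamma)=r(\gamma)\}$ precisely when $\beta_s(x)=x$, and that such a germ is a unit (i.e.\ $[s,x]=[x,x]$) exactly when $x$ is a \emph{trivially fixed} point of $s$, meaning there is $e\in\E$ with $x\in D^\beta_e$ and $se=e$. Both facts are immediate from the definition of the germ equivalence $\sim$ in the construction of $\Gti(T)$.

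Next I would characterise the interior of the isotropy. The sets $\Theta(s,U):=\{[s,y]:y\in U\}$, for $U$ open with $x\in U\subseteq D^\beta_{s^*s}$, form a basis of open neighbourhoods of $[s,x]$ in $\Gti(T)$; hence $[s,x]\in\operatorname{Int}(\Gti(T)')$ if and only if there is such a $U$ with $\beta_s(y)=y$ for all $y\in U$, that is, $U$ lies in the fixed-point set $F_s:=\{y\in D^\beta_{s^*s}:\beta_s(y)=y\}$. Consequently $\Gti(T)$ is effective (the interior of the isotropy equals the unit space) if and only if for every $s\in T$ every $x$ possessing a neighbourhood contained in $F_s$ is in fact trivially fixed; unwinding the definition of topological freeness of $\beta$ given in \cite[Section 4]{ExPa}, this is exactly the assertion that $\beta$ is topologically free. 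Thus both implications of the equivalence reduce to this local translation applied in the two directions.

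The main obstacle is the passage between effectiveness and \emph{essential principality} as used in \cite{Clarketal}: the former is the interior-of-isotropy condition above, while the latter requires the units with trivial isotropy to be dense (on each closed invariant subset). Since $\B$ and $\mathcal{L}$ are countable, $T$ is countable and $\Gti(T)$ is second countable, so one may enumerate $T=\{s_n\}$ and write the set of points carrying nontrivial germ isotropy as a countable union of the locally closed loci coming from each $F_{s_n}$; a Baire category argument, using that $\widehat{\mathcal{E}}_{\text{tight}}$ is a locally compact Hausdorff (hence Baire) space and that $\Gti(T)$ is Hausdorff by Lemma \ref{lem6}, then upgrades topological freeness to the density statement and conversely. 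Verifying carefully that the non-trivially-fixed locus of each $s_n$ is nowhere dense \emph{exactly} under topological freeness is the technical heart of the argument, and is precisely what \cite[Theorem 4.7]{ExPa} supplies.
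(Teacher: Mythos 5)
Your first two paragraphs are correct and reproduce faithfully what actually happens behind the citation: the paper's entire ``proof'' of this statement is the reference itself, and the argument in \cite[Theorem 4.7]{ExPa} is precisely the local translation you describe. A germ $[s,x]$ lies in the isotropy bundle iff $\beta_s(x)=x$; it is a unit iff $x$ is trivially fixed (if $sf=ef$ with $x\in D^\beta_e\cap D^\beta_f$, then $g:=ef$ satisfies $sg=s(fe)=(sf)e=(ef)e=ef=g$, so $g\leq s$ and $x\in D^\beta_g$); and the basic bisections $\Theta(s,U)$ convert membership in the interior of the isotropy into the existence of an open neighbourhood inside the fixed-point set $F_s$. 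Up to that point your proposal matches the cited source.

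The problem is your third paragraph. In \cite{ExPa} --- whose definitions this paper adopts wholesale (the phrase ``we skip the definitions'' refers precisely to \cite[Section 4]{ExPa}) --- \emph{essentially principal} is \emph{defined} as the condition that the interior of the isotropy bundle of $\Gti(T)$ coincides with the unit space, i.e., exactly what you call effectiveness. Hence the local translation of your first two paragraphs is the entire content of \cite[Theorem 4.7]{ExPa}: there is no Baire category argument, no enumeration of $T$, and no countability hypothesis anywhere in it, which is consistent with the fact that Theorem \ref{theor4} as stated imposes no countability on $\B$ and $\Labe$. Your proposed Baire-category bridge to the density-type notion of \cite{Clarketal} is a genuinely different (and standard) fact about second countable Hausdorff \'etale groupoids; it is not ``what \cite[Theorem 4.7]{ExPa} supplies'', and if it were really needed here your proof would be consuming hypotheses (countability of $\B$ and $\Labe$) that the statement does not grant. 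That reconciliation of terminologies only becomes relevant when the paper invokes \cite[Theorem 5.1]{Clarketal} in Theorem \ref{theor3} on the way to Theorem \ref{theor_simple}, and there the countability assumptions are explicitly in force.
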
   

\begin{defi}[{\cite[Definition 4.8]{ExPa}}]  Let $s\in T$, $e\in \ET$ such that $e\leq ss^*$. Then, we say that:
\begin{enumerate}
\item $e$ is fixed under $s$ if $se=e$.
\item $e$ is weakly fixed under $s$, if $sfs^*\Cap f$ for every $f\in \ET\setminus\{0\}$ and $f\leq e$.
\end{enumerate}
\end{defi}

\begin{theor}[{\cite[Theorem 4.10]{ExPa}}]\label{theor5} Since $ {\mathcal{G}}_{\text{tight}}(T)$ is Hausdorff, the following statements are equivalent:
\begin{enumerate}
\item $\beta:T \acts  \widehat{\mathcal{E}}_{\text{tight}}$ is topologically free.
\item for every $s\in T$ and every $e\in \ET$ weakly fixed under $s$, there exists $F\subseteq\Sigma_e$ finite cover consisting of  fixed elements.
\end{enumerate}
\end{theor}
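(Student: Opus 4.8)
The plan is to transport everything to the tight groupoid $\Gti(T)$ and to read both conditions off its isotropy. By Theorem \ref{theor4} it suffices to decide when $\Gti(T)$ is essentially principal, i.e. when $\mathrm{Int}(\mathrm{Iso}(\Gti(T)))=\Gti(T)^{(0)}$. Write $F_s:=\{\phi\in D^\beta_{s^*s}:\beta_s(\phi)=\phi\}$ for the fixed--point set of $\beta_s$, and call a germ $[s,\phi]$ \emph{trivial} if there is $g\in\ET$ with $\phi(g)=1$ and $sg=g$, so that $[s,\phi]$ is a unit. Since the bisections $\Theta_s=\{[s,\phi]:\phi\in D^\beta_{s^*s}\}$ (and their restrictions to open subsets) form a basis for $\Gti(T)$, a nontrivial isotropy germ $[s,\phi]$ lies in the interior of the isotropy exactly when $\beta_s$ fixes an entire basic neighbourhood of $\phi$. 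After the usual reduction to the cylinder sets $D^\beta_e$, essential principality then becomes the statement I will call $(\star)$: \emph{for every $s\in T$ and every nonzero $e\in\ET$ such that every tight character $\phi$ with $\phi(e)=1$ is fixed by $\beta_s$, every such $\phi$ has trivial germ.}

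The bridge to condition (2) is a covering lemma (Lemma A): for fixed $s$ and $0\ne e$, $\Sigma_e$ admits a finite cover by elements fixed under $s$ if and only if every tight $\phi\in D^\beta_e$ has trivial germ. Indeed, if $\{f_1,\dots,f_n\}\subseteq\Sigma_e$ is such a cover (so $sf_i=f_i$), the defining property of tight characters forces any tight $\phi$ with $\phi(e)=1$ to satisfy $\phi(f_i)=1$ for some $i$, whence $[s,\phi]$ is trivial. Conversely, if every tight $\phi\in D^\beta_e$ has trivial germ, I choose $g$ with $\phi(g)=1$, $sg=g$, and replace it by $f_\phi:=ge\le e$, which is still fixed because $s(ge)=(sg)e=ge$; the clopen sets $D^\beta_{f_\phi}$ then cover the compact space $D^\beta_e\cap\widehat{\E}_{\text{tight}}$, and a finite subcover yields the required cover of $\Sigma_e$.

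It remains to match the combinatorial hypothesis ``$e$ weakly fixed under $s$'' with the topological hypothesis ``$D^\beta_e\cap\widehat{\E}_{\text{tight}}\subseteq F_s$'' of $(\star)$; this is Lemma B. One direction is a direct character computation: $sfs^*\in\ET$ for every idempotent $f$ (idempotents commute), and if every tight $\phi\in D^\beta_e$ is fixed then, for $0\ne f\le e$ and any tight $\phi\in D^\beta_f$, one has $\phi(sfs^*)=\beta_{s^*}(\phi)(f)=\phi(f)=1$, so $sfs^*\Cap f$ and $e$ is weakly fixed. The reverse implication is where Hausdorffness is indispensable and is the step I expect to be the main obstacle: the condition $sfs^*\Cap f$ for all $0\ne f\le e$ says precisely that $\beta_s$ has no wandering basic open subset of $D^\beta_e$, and since $\Gti(T)$ is Hausdorff (Lemma \ref{lem6}, a consequence of the $E^*$-unitarity of Proposition \ref{prop2}) the set $F_s$ is closed, so it would suffice to prove the fixed points dense in $D^\beta_e$. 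Promoting ``non-wandering'' to ``fixed'' is the delicate point, since recurrence alone never produces fixed points; it is exactly the rigidity of the germs in the Hausdorff/$E^*$-unitary setting (the locus where $s$ acts as the identity being clopen) that excludes rotation-like behaviour and forces a genuine fixed point in every basic open subset of $D^\beta_e$.

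With Lemmas A and B the equivalence follows formally. For $(2)\Rightarrow(1)$ I argue contrapositively: if $\Gti(T)$ is not essentially principal then $(\star)$ fails, giving $s$ and $e$ with $D^\beta_e\cap\widehat{\E}_{\text{tight}}\subseteq F_s$ and a nontrivial germ over $D^\beta_e$; the easy direction of Lemma B makes $e$ weakly fixed, so (2) supplies a finite cover of $\Sigma_e$ by fixed elements, and Lemma A then forces \emph{every} tight $\phi\in D^\beta_e$ to have trivial germ, a contradiction. For $(1)\Rightarrow(2)$, given a weakly fixed $e$ the hard direction of Lemma B yields $D^\beta_e\cap\widehat{\E}_{\text{tight}}\subseteq F_s$, essential principality via $(\star)$ gives trivial germs throughout $D^\beta_e$, and Lemma A converts this into the desired finite cover of $\Sigma_e$ by fixed elements. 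Thus (1) and (2) are equivalent, the only substantial input being the Hausdorffness--driven step in Lemma B.
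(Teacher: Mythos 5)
You should first be aware that the paper never proves this statement: it is imported verbatim from Exel--Pardo \cite[Theorem 4.10]{ExPa} and used as a black box, so your attempt can only be measured against the proof in that reference. Your architecture is in fact the natural one and essentially matches it: reduce to essential principality of $\Gti(T)$ via Theorem \ref{theor4}, then translate through your two lemmas. Your Lemma A is correct as you argue it (the forward direction is exactly the defining property of tight filters applied to a finite cover of $\Sigma_e$, and the backward direction is compactness of $D^\beta_e$ in $\widehat{\E}_{\text{tight}}$ together with the fact that every nonzero idempotent lies in an ultrafilter), and the easy half of your Lemma B is a correct character computation. One small caveat: your ``usual reduction'' of essential principality to the statement $(\star)$ quietly uses that the sets $D^\beta_e$ form a basis of $\widehat{\E}_{\text{tight}}$; this is Remark \ref{RemCKUT}(1) (i.e.\ \cite[Proposition 2.5]{ExPa}) and should be invoked explicitly, since it is also what lets you pass from ``$\phi$ lies in the interior of $F_s$'' to ``$D^\beta_e\subseteq F_s$ for some $e$ with $\phi(e)=1$''.

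The genuine gap is the hard half of Lemma B, which you explicitly leave unproven and which carries the whole implication $(1)\Rightarrow(2)$: weak fixedness of $e$ must be promoted to ``every tight character in $D^\beta_e$ is fixed by $\beta_s$''. The heuristic you offer does not close this, and it misplaces the role of Hausdorffness twice. First, closedness of $F_s$ has nothing to do with $\Gti(T)$ being Hausdorff: $F_s$ is the equalizer of two continuous maps into $\widehat{\E}_{\text{tight}}$, which is Hausdorff simply because it embeds in $\{0,1\}^{\ET}$. Second, no ``rigidity of germs'' or clopenness of an identity locus manufactures fixed points; the step is proved by a net argument through ultrafilters. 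Namely, if $\xi$ is an ultrafilter with $e\in\xi$, then $\{D^\beta_f : f\in\xi,\ f\le e\}$ is a neighbourhood basis of $\xi$ (for an ultrafilter, any $h\notin\xi$ satisfies $fh=0$ for some $f\in\xi$); for each such $f$, weak fixedness gives $sfs^*\cdot f\ne 0$, hence an ultrafilter $\psi_f$ containing both $f$ and $sfs^*$, so that $\psi_f\in D^\beta_f$ and $\beta_{s^*}(\psi_f)\in D^\beta_f$; letting $f$ shrink, both nets $(\psi_f)$ and $(\beta_{s^*}(\psi_f))$ converge to $\xi$, while continuity forces $\beta_{s^*}(\psi_f)\to\beta_{s^*}(\xi)$, whence $\beta_{s^*}(\xi)=\xi$ and so $\beta_s(\xi)=\xi$. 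Density of $\widehat{\E}_\infty$ in $\widehat{\E}_{\text{tight}}$ plus closedness of $F_s$ then yield $D^\beta_e\cap\widehat{\E}_{\text{tight}}\subseteq F_s$. Note that this uses only Hausdorffness of the spectrum; the Hausdorffness hypothesis in Exel--Pardo's theorem instead guarantees that the trivially fixed locus $\bigcup_{g\,:\,sg=g}D^\beta_g$ is closed, i.e.\ it controls the passage from isotropy to units, not the production of fixed points. As it stands, your proposal establishes $(2)\Rightarrow(1)$ (which only needs the easy half of Lemma B), but $(1)\Rightarrow(2)$ remains unproved.
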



\begin{defi}  $\mbox{ }$ Let $(\B,\Labe,\theta)$ be a Boolean dynamical system.
\begin{enumerate}
\item We say that the pair $(\alpha,A)$ with  $\alpha=\alpha_1\cdots\alpha_n\in \Labe^n$, $n\geq 1$, and $\emptyset\neq A\in \B$ with $A\subseteq\Ri_\alpha$, is a cycle if given $k\in\N\cup\{0\}$ we have that $\theta_{\alpha^k}(A)\neq \emptyset$ and for every $\emptyset\neq B\subseteq \theta_{\alpha^k}(A)$ we have that $B\cap \theta_{\alpha}(B)\neq \emptyset$.

\item A cycle $(\alpha,A)$ has no exits if  given any $k\in\N\cup\{0\}$ we have that $\theta_{\alpha^k\alpha_1\cdots\alpha_t}(A)\in\Breg$ with  $\Delta_{\theta_{\alpha^k\alpha_1\cdots\alpha_t}(A)}=\{\alpha_{t+1}\}$ for $t<n$ and $\theta_{\alpha^{k+1}}(A)\in\Breg$ with $\Delta_{\theta_{\alpha^{k+1}}(A)}=\{\alpha_1\}$. 
\item We say that $(\B,\Labe,\theta)$ satisfies condition $(L_\B)$ if there is no cycle without exits.
\end{enumerate}
\end{defi}

The following result justifies the above definitions in comparison with the definitions given in \cite[Definition 6.5]{KatsuraIII}.
\begin{prop}\label{cycles_top}  Let $(\B,\Labe,\theta)$ be a Boolean dynamical system, and let $E$ be the associated topological graph defined in Proposition \ref{prop_top}. Let $(\alpha,A)$ be a cycle, then $\Ne_A$ is an open subset of $E^0$ such that every point $x\in \Ne_A$ is a base for a loop. Moreover, if $(\alpha,A)$ is a cycle without exits then every point $x\in \Ne_A$ is a base for a loop without entrances. 
\end{prop}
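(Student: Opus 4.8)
The plan is to realize the combinatorial cycle as an honest loop inside the topological graph $E$ of Proposition \ref{prop_top}. That $\Ne_A$ is open is immediate, since $\Ne_A=Z(A)$ is a basic clopen subset of $E^0=\widehat{\B}$.

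So fix $\Cu\in\widehat{\B}$ with $A\in\Cu$, i.e. $v_\Cu\in\Ne_A$; I want a loop of length $n$ based at $v_\Cu$ whose $i$-th edge carries the label $\alpha_i$. Setting $\zeta_n=\Cu$ and $\zeta_i=\widehat{\theta_{\alpha_{i+1}}}\circ\cdots\circ\widehat{\theta_{\alpha_n}}(\Cu)$, the edges $f_i:=e^{\alpha_i}_{\zeta_i}$ satisfy $d(f_i)=v_{\zeta_i}=v_{\widehat{\theta_{\alpha_{i+1}}}(\zeta_{i+1})}=r(f_{i+1})$ and $d(f_n)=v_\Cu$ by construction; since the induced maps compose contravariantly, $\widehat{\theta_{\alpha_1}}\circ\cdots\circ\widehat{\theta_{\alpha_n}}=\widehat{\theta_\alpha}$, so $(f_1,\ldots,f_n)$ is a loop based at $v_\Cu$ exactly when $r(f_1)=v_{\widehat{\theta_\alpha}(\Cu)}=v_\Cu$, that is, when $\widehat{\theta_\alpha}(\Cu)=\Cu$.

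The crux is thus to prove that every ultrafilter containing $A$ is fixed by $\widehat{\theta_\alpha}$. By Lemma \ref{lemma_81}, applied to the word $\alpha$, it is enough to show $\theta_\alpha(B)\in\Cu$ for every $B\in\Cu$ with $B\subseteq A$. I would argue by contradiction: if $\theta_\alpha(B)\notin\Cu$, then \textbf{F3} applied to $B=(B\cap\theta_\alpha(B))\cup(B\setminus\theta_\alpha(B))$ forces $C:=B\setminus\theta_\alpha(B)\in\Cu$, whence $\emptyset\neq C\subseteq A$; but $\theta_\alpha(C)\subseteq\theta_\alpha(B)$ is disjoint from $C$, contradicting the overlap requirement $C\cap\theta_\alpha(C)\neq\emptyset$ from the definition of a cycle (case $k=0$). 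Note this uses only monotonicity of $\theta_\alpha$ and the cycle condition, no domain hypothesis. To invoke Lemma \ref{lemma_81} one also needs $\D_\alpha\in\Cu$: this follows from the same invariance applied to $B=A\setminus\D_\alpha$, together with the elementary fact (a consequence of $\theta_\alpha$ preserving intersections) that $\theta_\alpha$ vanishes off $\D_\alpha$, since then $\theta_\alpha(A\setminus\D_\alpha)=\emptyset\notin\Cu$ rules out $A\setminus\D_\alpha\in\Cu$.

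For the second statement, assume the cycle has no exits. Travelling along the loop, Lemma \ref{lemma_82} gives $r(f_{t+1})=d(f_t)\in\Ne_{\theta_{\alpha_1\cdots\alpha_t}(A)}$, so every loop vertex lies in $\Ne_B$ for the regular set $B=\theta_{\alpha_1\cdots\alpha_t}(A)$, for which the no-exit hypothesis gives $\Delta_B=\{\alpha_{t+1}\}$. By Lemma \ref{lemma_82} again, any edge whose range lies in $\Ne_B$ must carry the label $\alpha_{t+1}$; moreover $B\in\Breg$ forces $\lambda_{B'}=1$, hence $\theta_{\alpha_{t+1}}$ injective, on every nonempty $B'\subseteq B$, so the range map restricts to a bijection $\Me^{\alpha_{t+1}}_{\theta_{\alpha_{t+1}}(B)}\to\Ne_B$ and exactly one edge — the loop edge $f_{t+1}$ — has range at each loop vertex. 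Hence the loop has no entrances. I expect the main obstacle to be this fixed-point step: extracting $\widehat{\theta_\alpha}$-invariance of all ultrafilters over $A$ out of the purely combinatorial overlap condition, while keeping the domain and range ideals $\D_\alpha$, $\Ri_\alpha$ straight so that each $f_i$ genuinely exists in $E^1$.
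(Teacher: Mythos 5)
Your first half is correct, and it is essentially a per-ultrafilter version of what the paper does. The paper (after reducing, without loss of generality, to treating $\theta_\alpha$ as a single action) proves the stronger, algebra-level statement that $\theta_\alpha(A)=A$ and in fact $(\theta_\alpha)_{|\Id_A}=\mathrm{Id}$, by applying the overlap condition to $A\setminus\theta_\alpha(A)$ (case $k=0$) and to $\theta_\alpha(A)\setminus A$ (case $k=1$), and then asserts both claims of the proposition from this identity with no further detail. You instead prove only the pointwise statement that $\theta_\alpha(B)\in\Cu$ for every $B\in\Cu$ with $B\subseteq A$, and convert it into the fixed-point property $\widehat{\theta_\alpha}(\Cu)=\Cu$ via Lemma \ref{lemma_81}; the combinatorial trick is the same (the set $B\setminus\theta_\alpha(B)$ is disjoint from its own image), your treatment of $\D_\alpha\in\Cu$ is a detail the paper silently skips, and the edge-existence issue you flag is routine: setting $A_i:=\theta_{\alpha_{[1,i]}}(A)$, one checks inductively that $A_i\in\zeta_i$, since $\theta_{\alpha_{i+1}}(A_i\cap\D_{\alpha_{i+1}})=A_{i+1}\in\zeta_{i+1}$, and then $A_i\subseteq\Ri_{\alpha_i}$ gives $\Ri_{\alpha_i}\in\zeta_i$, so each $f_i$ exists.

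The second half, however, has a genuine gap: the inference ``$\theta_{\alpha_{t+1}}$ is injective on $\Id_{A_t}$, so $r$ restricts to a bijection $\Me^{\alpha_{t+1}}_{A_{t+1}}\to\Ne_{A_t}$'' runs Stone duality in the wrong direction. An injective Boolean homomorphism induces a \emph{surjective} map of spectra; what ``exactly one edge has range at each loop vertex'' requires is \emph{injectivity} of the induced map, which corresponds to \emph{surjectivity} of $\theta_{\alpha_{t+1}}\colon\Id_{A_t}\to\Id_{A_{t+1}}$, and nothing in your argument supplies that (injectivity alone only re-proves that incoming edges exist). The missing surjectivity is exactly where the paper's stronger algebra-level conclusion earns its keep: writing $u_t=\theta_{\alpha_{[1,t]}}|_{\Id_A}\colon\Id_A\to\Id_{A_t}$ and $w_t=\theta_{\alpha_{[t+1,n]}}|_{\Id_{A_t}}\colon\Id_{A_t}\to\Id_A$, the identity $(\theta_\alpha)_{|\Id_A}=\mathrm{Id}$ gives $w_t\circ u_t=\mathrm{Id}$, so $w_t$ is surjective; under the no-exit hypothesis every letter map is injective on these ideals (your own observation), so $w_t$ is bijective and $u_t=w_t^{-1}$; hence each letter map $\theta_{\alpha_{t+1}}|_{\Id_{A_t}}=u_{t+1}\circ u_t^{-1}$ is a Boolean isomorphism of $\Id_{A_t}$ onto $\Id_{A_{t+1}}$. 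Only with this in hand does the dual map become injective on $\Ne_{A_{t+1}}$ (equivalently, each cyclic rotation $\alpha_{t+2}\cdots\alpha_n\alpha_1\cdots\alpha_{t+1}$ acts as the identity on its ideal, so your fixed-point argument applies at every loop vertex), forcing the unique incoming edge at $v_{\zeta_t}$ to be $f_{t+1}$. So your conclusion is true and the ingredients are all on your table, but the step as written would fail; the repair needs the around-the-loop identity, not injectivity alone.
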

\begin{proof}
Let $(\alpha,A)$ be a cycle, then given $k\in \N\cup\{0\}$ and $\emptyset\neq B\subseteq\theta_{\alpha^k}(A)$ we have that $B\cap\theta_{\alpha}(B)\neq \emptyset$. Without lost of generality we can suppose that $\alpha\in \Labe^1$.
We claim that $\theta_\alpha(A)=A$. Indeed, first suppose that $\emptyset \neq A\setminus\theta_\alpha(A)\subseteq A$. Then 
$$(A\setminus\theta_\alpha(A))\cap \theta_\alpha(A\setminus\theta_\alpha(A))=(A\setminus\theta_\alpha(A))\cap (\theta_\alpha(A)\setminus \theta_{\alpha^2}(A))=\emptyset\,,$$
that contradicts the hypothesis. Then $A\subseteq\theta_\alpha(A)$. No suppose that $\emptyset\neq \theta_\alpha(A)\setminus A$, then 
 $$(\theta_\alpha(A)\setminus A)\cap\theta_\alpha(\theta_\alpha(A)\setminus A)=(\theta_\alpha(A)\setminus A)\cap(\theta_{\alpha^2}(A)\setminus \theta_\alpha(A))=\emptyset\,,$$
that contradicts the hypothesis. Thus, $\theta_\alpha(A)=A$. In particular observe that $\theta_\alpha(B)=B$ for every $B\subseteq A$, so $(\theta_\alpha)_{|\Id_A}=\text{Id}$.

Then using the definition of $E$ in Proposition \ref{prop_top} it follows that every point in $\Ne_A$ is a basis for a loop. Moreover, if $(\alpha,A)$ is a cycle without exits then the loops with base point in $\Ne_A$ have no entrances.
\end{proof}

Then we can visualise condition $(L_\B)$ in terms of the groupoid.
\begin{theor}\label{theor6} The following are equivalent:
\begin{enumerate}
\item $(\B,\Labe,\theta)$ satisfies condition $(L_\B)$,
\item $\beta:T  \acts  \widehat{\mathcal{E}}_{\text{tight}}$ is topologically free,
\item $\Gti(T)$ is essentially principal.
\end{enumerate}
\end{theor}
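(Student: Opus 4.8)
The plan is to establish the three-way equivalence as the cycle $(1)\Rightarrow(2)\Rightarrow(3)\Rightarrow(1)$ by isolating the only genuinely new content, namely $(1)\Leftrightarrow(2)$. The equivalence $(2)\Leftrightarrow(3)$ is immediate from Theorem \ref{theor4}, which identifies essential principality of $\Gti(T)$ with topological freeness of $\beta\colon T\acts\widehat{\mathcal{E}}_{\text{tight}}$. For $(1)\Leftrightarrow(2)$ I would work entirely with the combinatorial criterion of Theorem \ref{theor5}: since $\Gti(T)$ is Hausdorff (Lemma \ref{lem6}), $\beta$ is topologically free if and only if for every $s\in T$ and every $e\in\ET$ weakly fixed under $s$ there is a finite cover of $\Sigma_e$ consisting of fixed elements. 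Thus it suffices to match, on the nose, ``existence of a cycle without exits'' with ``existence of a weakly fixed idempotent admitting no finite fixed cover.''

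For the direction $\neg(1)\Rightarrow\neg(2)$, suppose $(\alpha,A)$ is a cycle without exits, $\alpha=\alpha_1\cdots\alpha_n$. I would take $s=s_\alpha p_A$ and $e=p_A$. Iterating the Cuntz-Krieger relation along the deterministic, single-letter, regular transitions guaranteed by the no-exit hypothesis collapses $p_A$ to $s_\alpha p_A s^*_\alpha$, so that $e=ss^*=s^*s$; in particular $e\le ss^*$. Weak fixedness of $e$ then reduces, via the idempotent multiplication of Proposition \ref{prop1}, to the inequalities $B\cap\theta_\alpha(B)\ne\emptyset$ for $\emptyset\ne B\subseteq A$, which are exactly the defining recurrence of a cycle; here one uses that the no-exit structure forces every nonzero idempotent below $p_A$ to have the form $s_\gamma p_C s^*_\gamma$ with $\gamma$ a prefix of $\alpha^\infty$, together with $\theta_{\delta}|_{\Id_A}=\mathrm{id}$ on the relevant returns, exactly as in the proof of Proposition \ref{cycles_top}. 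Finally, a word-length comparison shows $sf=s_{\alpha\gamma}p_C s^*_\gamma\ne f$ for every such nonzero $f$, so no nonzero idempotent below $e$ is fixed; hence $\Sigma_e$ admits no finite fixed cover and $\beta$ fails to be topologically free.

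For the converse $\neg(2)\Rightarrow\neg(1)$, I would start from a witnessing pair $(s,e)$ with $e$ weakly fixed and $\Sigma_e$ admitting no finite fixed cover, and normalise it. Writing $s=s_\mu p_M s^*_\nu$, the case $\mu=\nu$ makes $s$ an idempotent lying above $e$, so every element of $\Sigma_e$ is fixed and $\{e\}$ is a fixed cover --- excluded; hence $\mu\ne\nu$. Conjugating $e=s_\gamma p_C s^*_\gamma$ to $p_C$ by the partial isometry $s_\gamma p_C$ (legitimate by the cover-transport Lemma \ref{lem_cover}, which also sends fixed elements to fixed elements) and replacing $s$ by $s^*$ if necessary, I would reduce to the case $e=p_A$ and $s=s_\lambda p_M$ with $\lambda\ne\emptyset$ and $A\subseteq M$. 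The relation $p_A\le ss^*=s_\lambda p_M s^*_\lambda$ then forces, through Lemma \ref{lem2} and the collapse of the Cuntz-Krieger relation, the no-exit data: $A$ and each $\theta_{\lambda_1\cdots\lambda_t}(A)$ lie in $\Breg$ with singleton transition sets along $\lambda$, while weak fixedness again yields $B\cap\theta_\lambda(B)\ne\emptyset$ for $B\subseteq A$. It remains to close the orbit --- to promote $\theta_\lambda(A)$ to $A$ and to propagate the recurrence to every iterate $\theta_{\lambda^k}(A)$ --- which I would obtain from the no-fixed-cover hypothesis by running the cover-refinement analysis of \ref{Expansion} and Lemma \ref{lem5}: any escape of the orbit (a branching, a singular subset, or a failure to return) would produce, exactly as in \ref{Expansion}, an orthogonal fixed refinement covering $\Sigma_{p_A}$, contradicting the absence of a fixed cover. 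The resulting data is a cycle without exits, contradicting $(L_\B)$.

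The main obstacle is this last direction, and specifically the step that upgrades the local, $k=0$ information (deterministic flow and recurrence on $A$ itself) to the global cycle conditions on all iterates $\theta_{\lambda^k}(A)$ and the return $\theta_\lambda(A)=A$. The normal-form reduction is bookkeeping and weak fixedness on $A$ is a direct computation, but ruling out a \emph{delayed} exit --- an orbit that behaves deterministically for a while and only later branches or dies --- is where the no-fixed-cover hypothesis must be used in full strength, through the refinement machinery of Lemma \ref{lem5}, to manufacture the offending fixed refinement. Care is also needed to check that the conjugation genuinely lands $e$ in the form $p_A$ and preserves both ``weakly fixed'' and ``no finite fixed cover,'' which I would verify directly from Lemma \ref{lem_cover}.
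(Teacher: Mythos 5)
Your skeleton is the paper's: $(2)\Leftrightarrow(3)$ is exactly Theorem \ref{theor4}, the equivalence $(1)\Leftrightarrow(2)$ is run through the weakly-fixed criterion of Theorem \ref{theor5}, and your direction $\neg(1)\Rightarrow\neg(2)$ (a cycle without exits $(\alpha,A)$ makes $p_A$ weakly fixed under $s_\alpha p_A$, while no nonzero idempotent below $p_A$ is fixed) is essentially the paper's argument. The genuine gap is in your converse direction, at exactly the step you flag as the main obstacle. You propose to rule out a \emph{delayed} exit by using the no-fixed-cover hypothesis ``in full strength'': any branching, singular subset, or failure to return should produce, via the refinement machinery of \ref{Expansion} and Lemma \ref{lem5}, an orthogonal \emph{fixed} refinement covering $\Sigma_{p_A}$, contradicting the absence of a finite fixed cover. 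That mechanism cannot function. Your normalization has already forced $s=s_\lambda p_M$ with $\lambda\neq\emptyset$, so $s\notin\E(T)$, and under a non-idempotent $s$ there are \emph{no} nonzero fixed idempotents at all: $sf=f$ means $f\leq s$, and $E^*$-unitarity (Proposition \ref{prop2}) then forces $s\in\E(T)$. Hence no fixed cover or fixed refinement can ever be manufactured, no matter how the orbit behaves; for non-idempotent $s$ the ``no finite fixed cover'' clause of Theorem \ref{theor5} is automatically satisfied and carries no usable information. This observation is in fact the opening move of the paper's proof: by Proposition \ref{prop2} it rewrites the criterion of Theorem \ref{theor5} as ``for every $s\in T\setminus\E(T)$ and every $0\neq e\leq s^*s$ there exists $0\neq f\leq e$ with $sfs^*\cdot f=0$'', so that failure of topological freeness is precisely the existence of a weakly fixed idempotent under a non-idempotent element; the cover-refinement machinery of \ref{Expansion} and Lemma \ref{lem5} never enters this theorem.

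The global data you were trying to extract from the cover hypothesis actually comes from weak fixedness itself, because it quantifies over \emph{all} nonzero $f\leq p_A$, and these reach arbitrarily deep into the orbit. Every nonzero idempotent below $p_A$ has the form $f=s_\beta p_Bs^*_\beta$ with $\emptyset\neq B\subseteq\theta_\beta(A)$ (Lemma \ref{lem2}), and one computes $sfs^*\cdot f=s_{\lambda\beta}p_Bs^*_{\lambda\beta}\cdot s_\beta p_Bs^*_\beta$, which is nonzero only if $\lambda\beta$ and $\beta$ are comparable, i.e.\ only if $\beta$ is a prefix of $\lambda^\infty$, in which case it equals $s_{\lambda\beta}\,p_{B\cap\theta_\gamma(B)}\,s^*_{\lambda\beta}$ for the appropriate cyclic conjugate $\gamma$ of $\lambda$. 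Consequently a branching at depth $k$ (a letter $\delta\neq\lambda_{t+1}$ with $\theta_\delta(B)\neq\emptyset$ for some $\emptyset\neq B\subseteq\theta_{\lambda^k\lambda_1\cdots\lambda_t}(A)$) produces a word $\beta=\lambda^k\lambda_1\cdots\lambda_t\delta$ that is not a prefix of $\lambda^\infty$, while a death ($\theta_{\lambda_{t+1}}(B)=\emptyset$) or a failure to recur ($B\cap\theta_\gamma(B)=\emptyset$) annihilates the displayed product; in every case a \emph{single} idempotent $f$ already violates weak fixedness. This yields, for all $k$ and $t$, the regularity, the singleton transition sets, and the recurrence defining a cycle without exits --- this is the paper's Case (1) word-combinatorics computation --- and it closes your converse direction with no appeal to Lemma \ref{lem5}. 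With that replacement your proof becomes the paper's; as written, the key step rests on producing objects that cannot exist.
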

\begin{proof} $(2)\Leftrightarrow (3)$  by Theorem \ref{theor4}.

For $(1)\Leftrightarrow (2)$  First, let $s\in T$. If it satisfies condition $(2)$ in Theorem \ref{theor5}, then for any idempotent $g\in F$ we have that $sg=g$, equivalently $g\leq s$. So, when $s=h\in \E(T)$, condition $(2)$ in Theorem \ref{theor5} always holds (just take $F=\{ e\}$). But since $T$ is a $E^*$-unitary by Proposition \ref{prop2}, no element in $T\setminus \E(T)$ can satisfy condition $(2)$ in Theorem \ref{theor5}, as $g\leq s$ imply that $s\in \E(T)$. Hence, condition $(2)$ in Theorem \ref{theor5} is equivalent to the statement: 
$$\forall s\in T \setminus\E(T)\text{ and }\forall 0\neq e\in \E(T) \text{ with }e\leq s^*s\,, \text{ there exists }0\neq f \leq e\text{ such that }sfs^*\cdot f=0\,.$$
We will separate $3$ cases:
\begin{enumerate}
\item \underline{Case $s=s_\alpha p_A$} with $A\subseteq \Ri_\alpha$: Then, let $e\leq s^*s= p_A$. Thus, without loss of generality, we can assume $e=p_A$ and $f\leq p_A$. By Lemma \ref{lem2}, $f=s_\beta p_B s^*_\beta$ with $\emptyset\neq B\subseteq \theta_\beta(A)\subseteq\Ri_{\alpha\beta}$. Without loss of generality, we can assume that $|\alpha|<|\beta|$. Then 
$$0\neq sfs^*\cdot f=s_\alpha p_A s_{\beta}p_Bs^*_{\beta}p_As^*_\alpha s_\beta p_B s^*_\beta=s_{\alpha\beta}p_Bs^*_{\alpha\beta} s_\beta p_B s^*_\beta$$
implies $\beta=\alpha\hat{\beta}$. Assuming $|\alpha|<|\hat{\beta}|$, we have that $\hat{\beta}=\alpha\beta'$ and by recurrence 
$$\beta=\alpha\beta_1=\alpha^2\beta_2=\cdots=\alpha^n\beta_n=\cdots$$

Since $\vert \beta\vert < \infty$,  $\beta$ must be $\alpha^k$ for some $k\in \N$, and thus $0\ne sfs^*\cdot f=s_{\alpha^{k+1}}p_{B\cap \theta_{\alpha}(B)}s^*_{\alpha^{k+1}}$ is equivalent to $\beta=\alpha^k$ and $B\cap \theta_{\alpha}(B)\ne \emptyset$. But this is equivalent to say that $(\alpha,A)$ is a cycle without exits. This prove the equivalence for this case.

\item \underline{Case $s=p_As^*_\alpha$}  with $A\subseteq \Ri_\alpha$: Then, let $e\leq s^*s=s_\alpha p_{A} s^*_\alpha$.
Replacing $e$ for $ses^*$ we reduce to the case $(1)$.
\item \underline{Case $s=s_\gamma p_A s^*_\alpha$} with $A\subseteq \Ri_\alpha\cap \Ri_\gamma $: Then, let $e\leq s^*s=s_\alpha p_A s^*_\alpha$. Again replacing $e$ for $ses^*$ we reduce to the case $(1)$.
\end{enumerate}
\end{proof}



If $\B$ and $\mathcal{L}$ are countable, this picture allows to prove an analog of the Cuntz-Krieger Uniqueness Theorem for labelled graph $C^*$-algebras \cite[Theorem 5.5]{BPII} in our context. In order to prove such a theorem, we need to recall some facts:

\begin{rema}\label{RemCKUT} Suppose that $\B$ and $\mathcal{L}$ are countable. Then:
\begin{enumerate}
\item By \cite[Proposition 2.5]{ExPa}, the set $\{D_e : e\in \mathcal{E}(T)\}$ is a basis of $\widehat{\mathcal{E}}_{\text{tight}}(T)$ by clopen compact sets.
\item For any $s\in T$, the set $\Theta (s, D_{s^*s}):=\{[s, \eta] : \eta\in D_{s^*s}\}$ is a open bisection of $\Gti(T)$ \cite[Proposition 4.18]{Exel}. Moreover, the isomorphism $\Clab\cong C^*(\mathcal{G}_{\text{tight}}(T))$ sends each $s\in T\subset \Clab$ to the characteristic function $1_{\Theta (s, D_{s^*s})}\in C^*(\mathcal{G}_{\text{tight}}(T))$.
\item By \cite[Proposition 4.15]{Exel} and point (1) above, $\Theta (s, D_{s^*s})$ is open and compact for every $s\in T$.
\item By point (1) above and \cite[Proposition 3.8]{ExPa}, the set $\{ \Theta (s, D_{s^*s}) : s\in T\}$ is a basis of the topology of $\mathcal{G}_{\text{tight}}(T)$. In particular, since $\mathcal{G}^{(0)}_{\text{tight}}=\{[e,x]: e\in \E\}\cong \widehat{\E}_{\text{tight}}$, the set $\{ \Theta (e, D_{e}) : e\in \mathcal{E}(T)\}$ is a basis of the topology of $\mathcal{G}_{\text{tight}}(T)^{(0)}$.
\end{enumerate}
\end{rema}

Now, we are ready to prove our theorem.

\begin{theor}[{Cuntz-Krieger Uniqueness Theorem for $\Clab$}]\label{Thm:CKUTh}
Let $(\B, \Labe, \theta)$ be a Boolean dynamical system such that $\B$ and $\mathcal{L}$ are countable, satisfying condition $(L_\B)$, and let $\Clab$ be its associated $C^*$-algebra. Then, for any $\ast$-homomorphism $\pi: \Clab\rightarrow B$, the following are equivalent:
\begin{enumerate}
\item $\pi (s_{\alpha}P_As^*_{\alpha})\ne 0$ for every $\emptyset\ne A\in \mathcal{B}$ with $A\subseteq \mathcal{R}_{\alpha}$.
\item $\pi$ is injective.
\end{enumerate}
\end{theor}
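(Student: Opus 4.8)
The plan is to transport everything to the groupoid $\Gti(T)$ and apply the uniqueness theorem for $C^*$-algebras of effective étale groupoids. The implication $(2)\Rightarrow(1)$ is immediate: by Corollary \ref{faithful_rep} the universal representation is faithful, and as observed in the proof of Proposition \ref{prop1} the hypothesis $\emptyset\neq A\subseteq\Ri_\alpha$ guarantees that $s_\alpha p_A s^*_\alpha$ is a \emph{nonzero} element of $\Clab$; hence if $\pi$ is injective then $\pi(s_\alpha p_A s^*_\alpha)\neq 0$. The content of the theorem is therefore the reverse implication $(1)\Rightarrow(2)$.

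Since $\B$ and $\Labe$ are countable and $(\B,\Labe,\theta)$ satisfies condition $(L_\B)$, the groupoid $\Gti(T)$ is étale and second countable \cite{Exel}, Hausdorff (Lemma \ref{lem6}), amenable (Lemma \ref{lem_nuclear}) and essentially principal (Theorem \ref{theor6}); moreover $\Clab\cong C^*(\Gti(T))$ by Theorem \ref{theor2}. For such a groupoid the unit-space algebra $C_0(\Gti(T)^{(0)})\cong C_0(\widehat{\E}_{\text{tight}})$ detects ideals: because $\Gti(T)$ is effective, every nonzero ideal of $C^*_r(\Gti(T))$ meets $C_0(\Gti(T)^{(0)})$ nontrivially, and amenability yields $C^*(\Gti(T))=C^*_r(\Gti(T))$ (cf. \cite{Clarketal}). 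Consequently a $\ast$-homomorphism $\pi$ on $\Clab\cong C^*(\Gti(T))$ is injective if and only if its restriction to the commutative diagonal $C_0(\Gti(T)^{(0)})$ is injective. It thus suffices to show that condition $(1)$ forces $\pi$ to be injective on this subalgebra.

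Under the isomorphism $\Clab\cong C^*(\Gti(T))$, Remark \ref{RemCKUT}(2) identifies each idempotent $e\in\ET$ with the characteristic function $1_{D_e}$ of the compact open set $D_e\subseteq\Gti(T)^{(0)}$, and by Remark \ref{RemCKUT}(1) the family $\{D_e:e\in\ET\}$ is a basis of compact open sets for $\widehat{\E}_{\text{tight}}\cong\Gti(T)^{(0)}$; hence $C_0(\Gti(T)^{(0)})=\overline{\spa}\{e:e\in\ET\}$. Since every nonzero idempotent of $T$ has the form $s_\alpha p_A s^*_\alpha$ with $\emptyset\neq A\subseteq\Ri_\alpha$ (including $\alpha=\emptyset$, for which $\Ri_\emptyset$ is the whole unit and $s_\emptyset p_A s^*_\emptyset=p_A$), condition $(1)$ says exactly that $\pi(e)\neq 0$ for every nonzero $e\in\ET$. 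Now if $\pi$ failed to be injective on the commutative algebra $C_0(\widehat{\E}_{\text{tight}})$, its kernel would be $C_0(V)$ for some nonempty open $V\subseteq\widehat{\E}_{\text{tight}}$; such $V$ contains a nonempty basic set $D_e$, so $1_{D_e}=e\in\ker\pi$, i.e. $\pi(e)=0$ for a nonzero $e\in\ET$, contradicting $(1)$. Therefore $\pi$ is injective on $C_0(\Gti(T)^{(0)})$, and by the preceding paragraph $\pi$ is injective on all of $\Clab$. The main obstacle is the first reduction — the invocation of the effective-groupoid uniqueness theorem, together with the amenability needed to identify the full and reduced $C^*$-algebras; once these are in hand, the verification that $(1)$ encodes injectivity on the diagonal is the elementary spectral argument given above.
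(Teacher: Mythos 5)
Your proposal is correct and follows essentially the same route as the paper: both pass to $\Gti(T)$, use Hausdorffness (Lemma \ref{lem6}), amenability (Lemma \ref{lem_nuclear}) and essential principality (Theorem \ref{theor6}) to reduce injectivity of $\pi$ to injectivity of $\pi\vert_{C_0(\Gti(T)^{(0)})}$, and then conclude via the basis of compact open sets $D_e$, $e\in\ET$, exactly as in Remark \ref{RemCKUT}. The only cosmetic difference is that the paper invokes \cite[Theorem 4.4]{N_Haus} for the diagonal-detection step, whereas you cite the equivalent ideal-intersection property for effective \'etale groupoids from \cite{Clarketal}.
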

\begin{proof}
By Lemma \ref{lem6}, Lemma \ref{lem_nuclear} and Theorem \ref{theor6}, be can apply \cite[Theorem 4.4]{N_Haus} to $C^*(\Gti(T))$. Thus, in order to conclude our result,  it is enough to prove that $\pi\vert_{C_0({\Gti(T)}^{(0)})}$ is injective if and only if $\pi (s_{\alpha}P_As^*_{\alpha})\ne 0$ for every $\emptyset\ne A\in \mathcal{B}$ with $A\subseteq \mathcal{R}_{\alpha}$.

By Remark \ref{RemCKUT}(2), if $\pi\vert_{C_0({\Gti(T)}^{(0)})}$ is injective then $\pi (s_{\alpha}P_As^*_{\alpha})\ne 0$ for every $\emptyset\ne A\in \mathcal{B}$ with $A\subseteq \mathcal{R}_{\alpha}$. 

Conversely, suppose that $\pi (s_{\alpha}P_As^*_{\alpha})\ne 0$ for every $\emptyset\ne A\in \mathcal{B}$ with $A\subseteq \mathcal{R}_{\alpha}$. If there exists $0\ne f\in C_0({\Gti(T)}^{(0)})$ such that $\pi(f)=0$, then by Remark \ref{RemCKUT}(4)
there exists $e\in \mathcal{E}(T)$ such that $\Theta (e,D_e)\subseteq \mbox{supp}(f)$, whence $\pi(e)=0$, contradicting the assumption. So we are done.\end{proof}


%

Now we are going to prove that condition $(L_\B)$ is also a necessary condition to apply the Cuntz-Krieger uniqueness theorem. 
\begin{prop} Let $(\B, \Labe, \theta)$ be a Boolean dynamical system that does not satisfy condition $(L_\B)$. Then there exists a faithful representation $\{P_A,S_\alpha\}$ of $(\B, \Labe, \theta)$ that is not isomorphic to $\Clab$.
\end{prop}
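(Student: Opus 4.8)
The plan is to exploit the failure of $(L_\B)$ by fixing a cycle without exits $(\alpha,A)$, say $\alpha=\alpha_1\cdots\alpha_n$, and isolating the resulting ``circle'' in $\Clab$. First I would record the structural consequences that are already essentially in the proof of Proposition \ref{cycles_top}: $\theta_\alpha(A)=A$, $(\theta_\alpha)_{|\Id_A}=\mathrm{id}$, and $A\in\Breg$ with $\Delta$ a singleton at every vertex of the loop. Set $u:=s_\alpha p_A$. Iterating relation (4) of Definition \ref{DefAlgebra} along the loop (using $\Delta_A=\{\alpha_1\}$, then $\Delta_{A_1}=\{\alpha_2\}$ for $A_1:=\theta_{\alpha_1}(A)$, etc.) gives $uu^*=s_\alpha p_A s_\alpha^*=p_A$, while $u^*u=p_Ap_{\Ra}p_A=p_A$ since $A\subseteq\Ra$; thus $u$ is a unitary of the corner $p_A\Clab p_A$. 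It commutes with every $p_B$, $B\subseteq A$ (because $\theta_\alpha(B)=B$), and satisfies $\beta_z(u)=z^n u$ under the gauge action $\beta$. The key point, which uses the absence of entrances, is that $p_A s_\mu\neq0$ forces $\mu$ to be an initial segment of the periodic word $\alpha^\infty$ (as $\theta_\mu(A)\ne\emptyset$ forces $\mu_1\in\Delta_A=\{\alpha_1\}$, $\mu_2\in\Delta_{A_1}=\{\alpha_2\}$, and so on); consequently the corner $p_A\Clab p_A$ is exactly the circle algebra generated by $u$ and $\{p_B:B\subseteq A\}$, namely $C(\widehat{\Id_A}\times\mathbb{T})$ for $n=1$ (a matrix amplification $M_n(C(\widehat{\Id_A}\times\mathbb{T}))$ in general), precisely as in the computation behind Lemma \ref{lem3}.

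Next I would fix $w\in\mathbb{T}$, let $J_w\subseteq\Clab$ be the closed two-sided ideal generated by $u-wp_A$, and let $q_w:\Clab\to\Clab/J_w$ be the quotient. Putting $P_B:=q_w(p_B)$ and $S_\beta:=q_w(s_\beta)$, the family $\{P_B,S_\beta\}$ satisfies the relations of Definition \ref{DefAlgebra}, since any quotient of a Cuntz--Krieger representation is again one; thus it is a representation of $(\B,\Labe,\theta)$ with $\pi_{S,P}=q_w$, and it ``evaluates the loop at $w$'' in the sense that $S_\alpha P_A=q_w(u)=wP_A$. Non-injectivity is then immediate via the canonical faithful conditional expectation $\Phi_0(x)=\int_{\mathbb{T}}\beta_z(x)\,dz$ onto the gauge-fixed subalgebra: since $\Phi_0(u)=\big(\int_{\mathbb{T}}z^n\,dz\big)u=0$ for $n\ge1$, we get $\Phi_0(u-wp_A)=-wp_A\neq0$, using that $p_A\neq0$ because the universal representation is faithful (Theorem \ref{theorem_1}). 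Hence $u-wp_A\neq0$ in $\Clab$ while $q_w(u-wp_A)=0$, so $\pi_{S,P}$ is not injective and $C^*(P_B,S_\beta)\not\cong\Clab$.

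It remains to show that $\{P_B,S_\beta\}$ is faithful, i.e.\ $p_B\notin J_w$ for every $\emptyset\neq B\in\B$, and this is the main obstacle. Since $u-wp_A=p_A(u-wp_A)p_A$ lies in the corner, $J_w$ is contained in the ideal $I_A:=\overline{\Clab p_A\Clab}$ generated by $p_A$, so every $p_B\notin I_A$ survives trivially. For $p_B\in I_A$ I would use that $p_A$ is full in $I_A$, whence $K\mapsto p_AKp_A$ is a lattice isomorphism from the ideals of $I_A$ onto those of the corner $p_A\Clab p_A\cong C(\widehat{\Id_A}\times\mathbb{T})$; under it $J_w$ corresponds to the evaluation ideal $\{f:f|_{\widehat{\Id_A}\times\{w\}}=0\}$, which contains no nonzero projection $p_{A\cap B}=\chi_{Z(A\cap B)\times\mathbb{T}}$. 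The delicate case is $B$ lying strictly downstream of the loop through an exit (so $A\cap B=\emptyset$ while $p_B\in I_A$), where the circle-phase $w$ is irrelevant to $B$ and one must verify directly that collapsing it does not annihilate $p_B$. Organizing this bookkeeping cleanly --- equivalently, proving $J_w\cap\overline{\text{span}}\{p_B:B\in\B\}=0$ --- is the crux, and I expect it to be most transparent in the topological-graph picture $\Clab\cong\mathcal{O}(E)$ of Theorem \ref{proposition_51}, where $\Ne_A$ is an open loop without entrances and $J_w$ collapses the circle-direction over this loop only, leaving $C_0(E^0)$ (hence every $p_B$) intact.
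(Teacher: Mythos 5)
Your construction is genuinely different from the paper's proof, and most of its individual steps are correct (the unitary $u=s_\alpha p_A$ with $uu^*=u^*u=p_A$, the commutation with $\{p_B:B\subseteq A\}$, and the gauge-expectation argument showing $u-wp_A\neq 0$, hence that $q_w$ is not injective). But the step you yourself flag as ``the crux'' --- faithfulness of the quotient representation, i.e.\ that $p_B\notin J_w$ for every $\emptyset\neq B\in\B$ --- is a genuine gap, not bookkeeping, and your sketch of it is not yet an argument. Concretely: the set $\He_{J_w}:=\{B\in\B: p_B\in J_w\}$ is a hereditary and saturated ideal of $\B$ (as the paper notes in Section 10 for any ideal), and your Rieffel-correspondence computation only shows that no $B$ with $A\cap B\neq\emptyset$ lies in it. The remaining sets are mischaracterized in your write-up: since the cycle has \emph{no} exits, nothing lies ``downstream of the loop through an exit''; the dangerous sets are the translates $\theta_{\alpha_{[1,t]}}(A)$ elsewhere along the loop (typically disjoint from $A$) and the sets swept in by saturation upstream of the loop. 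Ruling these out needs further work, e.g.: if $\emptyset\neq B\subseteq\theta_{\alpha_{[1,t]}}(A)$ had $p_B\in J_w$, then $s_{\alpha_{[1,t]}}p_Bs^*_{\alpha_{[1,t]}}\in J_w$, and iterating relation (4) of Definition \ref{DefAlgebra} along the exit-free loop shows this element equals $p_D$ for some $\emptyset\neq D\subseteq A$, contradicting your corner computation; one must then propagate this up the saturation using $p_{\theta_\beta(B)}=s_\beta^*p_Bs_\beta\in J_w$ whenever $p_B\in J_w$. None of this appears in your proposal, whose last sentence defers precisely this point. (Two smaller issues: for $n>1$ the corner $p_A\Clab p_A$ is not $M_n(C(\widehat{\Id_A}\times\mathbb{T}))$ but is still commutative, because the partial-loop elements $s_{\alpha_{[1,t]}}p_Cs^*_{\alpha_{[1,t]}}$ collapse to projections $p_D$, $D\subseteq A$, via relation (4); the matrix amplification describes the ideal $I_A$, not the corner. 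Also, non-injectivity of $\pi_{S,P}=q_w$ does not by itself give $C^*(P_B,S_\beta)\not\cong\Clab$ as abstract $C^*$-algebras; what is actually being proved, here and in the paper, is that the canonical surjection fails to be an isomorphism.)

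For comparison, the paper's proof avoids the whole problem by outsourcing it: it passes to the topological graph $E$ of Proposition \ref{prop_top}, uses Proposition \ref{cycles_top} to see that a cycle without exits makes $E$ fail to be topologically free in the sense of \cite[Definition 6.6]{KatsuraIII}, and then cites \cite[Theorem 6.14]{KatsuraIII} together with the identification $\Clab\cong\mathcal{O}(E)$ of Theorem \ref{proposition_51}; Katsura's theorem produces an injective Cuntz--Krieger $E$-pair whose induced homomorphism is not injective, which via Proposition \ref{proposition_5} is exactly the desired faithful, non-universal representation of $(\B,\Labe,\theta)$. In other words, the faithfulness statement you could not close \emph{is} the content of Katsura's theorem. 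Your approach would yield a self-contained proof if you supply the ideal-theoretic argument sketched above; as written, it is incomplete.
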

\begin{proof}
Let $E$ be the associated topological graph defined in Proposition \ref{prop_top}. 
Since $(\B, \Labe, \theta)$ does not satisfy condition $(L_\B)$, there exists a cycle without exits $(\alpha,A)$. Then by Proposition  \ref{cycles_top} we have that $E$ is not a topologically free graph (see \cite[Definition 6.6]{KatsuraIII}).  Then identifying $\Clab$ with the topological graph $C^*$-algebra $\mathcal{O}(E)$ (Theorem \ref{proposition_51}) and using  \cite[Theorem 6.14]{KatsuraIII}, it follows the result.
\end{proof}

\subsection{Minimal groupoids}

In this subsection we deal with the question of minimality of the groupoid. As in the previous subsection, we refer \cite[Section 5]{ExPa} for definitions and results. We will use the following

\begin{theor}[{\cite[Theorem 5.5]{ExPa}}]\label{theor7} The following statements are equivalent:
\begin{enumerate}
\item $\beta:T  \acts  \widehat{\mathcal{E}}_{\text{tight}}$ is irreducible,
\item $\Gti(T)$ is minimal,
\item for every $0\neq e,f\in \ET$ there exists $s_1,\ldots,s_n\in S$ such that $\{s_i fs^*_i\}_{i=1}^n$ is an outer cover for $e$.
\end{enumerate} 
\end{theor}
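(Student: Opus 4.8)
The statement is \cite[Theorem 5.5]{ExPa} specialised to the inverse semigroup $T=T_{(\B,\Labe,\theta)}$, so the plan is simply to invoke that general result: we have already realised $\Gti(T)$ as the tight groupoid of germs of the standard action $\beta\colon T\acts\widehat{\E}_{\text{tight}}$, which is exactly the setting of \cite[Section 5]{ExPa}, and no further hypotheses are required. For completeness I would also indicate the mechanism behind the three equivalences.

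For $(1)\Leftrightarrow(2)$ I would use that the groupoid of germs is built so that the $\Gti(T)$-orbit of a unit $[e,\xi]\in\Gti(T)^{(0)}\cong\widehat{\E}_{\text{tight}}$ coincides with the $\beta$-orbit of $\xi$. Consequently a subset of the unit space is invariant under the orbit equivalence relation of $\Gti(T)$ if and only if it is $\beta$-invariant, so the two assertions ``$\Gti(T)$ has no nontrivial open invariant subset'' (minimality) and ``$\beta$ has no nontrivial open invariant subset'' (irreducibility) say the same thing, and the equivalence is immediate.

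The substance is the translation into the combinatorial condition $(3)$. Here I would work with the basis $\{D_e\colon e\in\ET\}$ of compact-open sets of $\widehat{\E}_{\text{tight}}$ (Remark \ref{RemCKUT}(1)), noting that each $\beta_s$ carries $D_{s^*s}$ onto $D_{ss^*}$ with $\beta_s(D_f\cap D_{s^*s})\subseteq D_{sfs^*}$, and that $D_{g}\cap D_{h}=D_{gh}$ is nonempty exactly when $gh\neq 0$. Density of the $\beta$-orbit of every point of $D_f$ inside $D_e$ then unwinds, over the basic pieces $0\neq e'\leq e$, to the requirement that for each such $e'$ some translate satisfies $s f s^*\,e'\neq 0$, i.e. $s f s^*\Cap e'$; a finite family $\{s_i f s_i^*\}_{i=1}^n$ achieving this for all $e'\leq e$ is precisely an outer cover of $e$.

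The main obstacle is exactly this last point: passing from the pointwise density statement to a single \emph{finite} outer cover. This is not a soft consequence of the compactness of $D_e$ alone, since a dense union of open sets need not admit a finite dense subfamily, and it is here that the tight structure of $T$ is genuinely used. This is the technical heart of \cite[Theorem 5.5]{ExPa}, and since it is established there in full generality, in the present paper it suffices to cite the result.
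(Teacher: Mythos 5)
Your proposal coincides with the paper's treatment: the paper gives no proof of this statement at all, presenting it purely as \cite[Theorem 5.5]{ExPa} applied to the inverse semigroup $T$, exactly as you propose. Your supplementary sketch of the mechanism (orbits of $\Gti(T)$ matching $\beta$-orbits, and compactness of the sets $D_e$ together with nonemptiness of $D_{e'}$ for $0\neq e'\leq e$ yielding the finite outer cover) is extra material beyond what the paper records, but the proof step itself --- the citation --- is identical.
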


By analogy with the case of graph $C^*$-algebras, we propose the following definition:
\begin{defi}
We say that $(\B,\Labe,\theta)$ is cofinal if for every $\emptyset\neq A\in \B$ and for every $\zeta\in 	\widehat{\mathcal{E}}_{\text{tight}}$ there exist $\alpha,\beta\in \Labe^*$ such that $s_\alpha p_{\theta_{\beta}(A)}s^*_\alpha\in \zeta$. 
\end{defi}

Recall that given $e\in \E$, we define the cylinder set of $e$ in $\widehat{\mathcal{E}}_{\text{tight}}$ as 
$$Z(e):=\{\zeta\in\widehat{\mathcal{E}}_{\text{tight}}:e\in \zeta\}\,.$$
For every $e\in \mathcal{E}$, $Z(e)$ is a compact open subset of $\widehat{\mathcal{E}}_{\text{tight}}$.\vspace{.2truecm}

Then, we have

\begin{prop}\label{prop7} The following statements are equivalent:
\begin{enumerate}
\item $(\B,\Labe,\theta)$ is cofinal.
\item $\Gti(T)$ is minimal.
\end{enumerate}
\end{prop}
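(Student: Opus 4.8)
The plan is to route everything through Theorem \ref{theor7}, which identifies minimality of $\Gti(T)$ with irreducibility of $\beta:T\acts\widehat{\E}_{\text{tight}}$, equivalently with the absence of nontrivial open invariant subsets of $\widehat{\E}_{\text{tight}}$. The conceptual bridge is, for each $\emptyset\neq A\in\B$, the open set
$$U_A:=\bigcup_{\alpha,\beta\in\Labe^*}Z\big(s_\alpha p_{\theta_\beta(A)}s^*_\alpha\big),$$
the union being over those $\alpha,\beta$ for which $\emptyset\neq\theta_\beta(A)\subseteq\Ri_\alpha$. Each $U_A$ is open, being a union of the basic compact-open cylinders of Remark \ref{RemCKUT}(1), and is nonempty since $Z(p_A)\subseteq U_A$ (take $\alpha=\beta=\emptyset$). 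By upward closure of filters, cofinality of $(\B,\Labe,\theta)$ is exactly the statement that $U_A=\widehat{\E}_{\text{tight}}$ for every $\emptyset\neq A\in\B$. Two computations underlie everything: $s^*_\beta p_B s_\beta=p_{\theta_\beta(B)}$ for $B\subseteq\Ri_\beta$ (from Definition \ref{DefAlgebra}(2) together with $s^*_\beta s_\beta=p_{\Ri_\beta}$), and the range identity $\theta_\delta(\Ri_\gamma)\subseteq\Ri_{\gamma\delta}$, coming from $\theta_{\gamma\delta}=\theta_\delta\circ\theta_\gamma$ and $\theta_\gamma(\D_\gamma)=\Ri_\gamma$; the latter is what keeps the transported idempotents inside $T$.

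For the implication $(1)\Rightarrow(2)$ I would verify Theorem \ref{theor7}(3) directly. Given $0\neq e,f\in\ET$, write $f=s_\delta p_B s^*_\delta$. Each $\zeta\in Z(e)$ satisfies, by cofinality applied to $B$, $\zeta\in Z(s_{\alpha_\zeta}p_{\theta_{\beta_\zeta}(B)}s^*_{\alpha_\zeta})$ for suitable $\alpha_\zeta,\beta_\zeta$; compactness of $Z(e)$ extracts a finite subcover indexed by $i=1,\dots,n$. Putting $s_i:=s_{\alpha_i}s^*_{\beta_i}s^*_\delta\in T$, the two computations above give $s_i f s^*_i=s_{\alpha_i}p_{\theta_{\beta_i}(B)}s^*_{\alpha_i}$, whence $Z(e)\subseteq\bigcup_i Z(s_i f s^*_i)$. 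By the standard correspondence between covers in $\ET$ and cylinder covers of the tight spectrum (Exel's characterisation of tightness, cf. \cite{Exel}), $\{s_i f s^*_i\}_{i=1}^n$ is a finite outer cover of $e$, which is Theorem \ref{theor7}(3); hence $\Gti(T)$ is minimal.

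For $(2)\Rightarrow(1)$ I would use the invariant-set picture: if $\Gti(T)$ is minimal, hence irreducible, then the nonempty open invariant set $U_A$ must be all of $\widehat{\E}_{\text{tight}}$, and this is precisely cofinality. The whole weight therefore falls on the key lemma that each $U_A$ is invariant, i.e. $\beta_t(\zeta)\in U_A$ whenever $\zeta\in U_A\cap D^\beta_{t^*t}$. Writing $g=s_\alpha p_{\theta_\beta(A)}s^*_\alpha\in\zeta$ and $t=s_\mu p_D s^*_\nu$, one computes $tgt^*\in\beta_t(\zeta)$ from Proposition \ref{prop1}; after the case analysis of that proposition the outcome is an idempotent of the form $s_{\mu\alpha'}p_H s^*_{\mu\alpha'}$ with $\emptyset\neq H\subseteq\theta_\beta(A)$, and by upward closure this places $\beta_t(\zeta)$ in $U_A$, the range identity ensuring the relevant elements belong to $T$.

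I expect this invariance step to be the main obstacle. The subtlety is that the base $H$ of the transported idempotent is in general only a nonempty \emph{subset} of $\theta_\beta(A)$, not $\theta_\beta(A)$ itself, and the validity constraint $\theta_\beta(A)\subseteq\Ri_\alpha$ need not survive conjugation; so the argument must be organised so that membership in $U_A$ is witnessed by the weaker datum ``$\zeta$ meets some $\theta_\beta(A)$ at some level $\alpha$'', and one must check, via upward closure and the range identity, that this weaker datum is all that $U_A$ ever requires. The same transport computation, run in the opposite direction, also shows $U_C\subseteq V$ for any nonempty open invariant $V$ containing a cylinder $Z(s_\gamma p_C s^*_\gamma)$, giving an alternative route to $(1)\Rightarrow(2)$ entirely within the invariant-set language.
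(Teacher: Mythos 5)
Your direction $(1)\Rightarrow(2)$ is correct and is essentially the paper's own argument: cofinality places each $\zeta\in Z(e)$ in a basic cylinder $Z\bigl(s_{\alpha_\zeta}p_{\theta_{\beta_\zeta}(B)}s^*_{\alpha_\zeta}\bigr)$, compactness of $Z(e)$ extracts a finite subcover, and the cylinder-cover/outer-cover correspondence (the paper cites \cite[Proposition 3.7]{ExPa}) yields condition (3) of Theorem \ref{theor7}; your bookkeeping $s_i=s_{\alpha_i}s^*_{\beta_i}s^*_\delta$ is fine.

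The genuine gap is in $(2)\Rightarrow(1)$, exactly at the step you flag and do not close: the invariance of $U_A$. The transport computation via Proposition \ref{prop1} turns a witness $g=s_\alpha p_{\theta_\beta(A)}s^*_\alpha\in\zeta$ and $t=s_\mu p_D s^*_\nu$ into an element of $\beta_t(\zeta)$ of the form $s_{\mu\alpha'}p_H s^*_{\mu\alpha'}$ with $H=\theta_{\beta''}(A)\cap\theta_{\alpha'}(D)$ (or $H=\theta_{\beta''}(A)\cap D$), hence in general a proper subset of $\theta_{\beta''}(A)$. Upward closure cannot restore the full image: by Lemma \ref{lem2}, an element of the required shape lying above $s_{\mu\alpha'}p_H s^*_{\mu\alpha'}$ must be some $s_{\alpha''}p_{\theta_{\beta''}(A)}s^*_{\alpha''}$ with $\mu\alpha'=\alpha''\gamma$, $H\subseteq\theta_{\beta''\gamma}(A)$ \emph{and} $\theta_{\beta''}(A)\subseteq\Ri_{\alpha''}$, and this last constraint (needed merely for the element to belong to $T$) can fail, since the original validity constraint lives in $\Ri_{\nu\alpha'}$ and there is no relation between $\Ri_{\nu\alpha'}$ and $\Ri_{\mu\alpha'}$. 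Your proposed fix --- reorganising $U_A$ around the ``weaker datum'' that $\zeta$ contains some $s_\alpha p_H s^*_\alpha$ with $\emptyset\neq H\subseteq\theta_\beta(A)$ --- does make the set invariant, but then the concluding step ``the weak datum holds everywhere, hence $(\B,\Labe,\theta)$ is cofinal'' is not an upward-closure statement and you give no argument for it. That passage from the weak to the strong witness is precisely where the paper uses tightness of the filters: from an outer cover $\{s_i p_A s_i^*\}_{i=1}^n$ of $p_B$ with $p_B\in\xi$, it multiplies by $p_B$ to get a finite cover of $p_B$ inside $\Sigma_{p_B}$ and invokes \cite[(2.10)]{ExPa} --- a \emph{tight} filter containing $p_B$ must contain a member of any finite cover of $p_B$ --- before applying upward closure to reach $s_{\alpha_j}p_{\theta_{\beta_j}(A)}s^*_{\alpha_j}\in\xi$. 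Your $(2)\Rightarrow(1)$ never uses tightness at all, which is the structural symptom of the gap: for general filters the pointwise upgrade you need is unavailable, so the invariant-set route either leaves the invariance of the strong $U_A$ unproved or must smuggle the cover-plus-tightness argument back in.
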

\begin{proof}
First, we will prove that cofinality implies condition (3) in Theorem \ref{theor7}. For this end, suppose that $e=s_{\alpha}p_As^*_{\alpha}$and $f=s_{\beta}p_Bs^*_{\beta}$. Since $A\subseteq \mathcal{R}_{\alpha}$ we have
$$s_{\alpha}p_As^*_{\alpha}\leq s_{\alpha}p_{\mathcal{R}_{\alpha}}s^*_{\alpha}=s_{\alpha}s^*_{\alpha}\leq p_{\mathcal{D}_{\alpha}}.$$
As every cover of $p_{\mathcal{D}_{\alpha}}$ is a cover of $s_{\alpha}p_As^*_{\alpha}$, we can assume without loss of generality that $e=p_A$ for some $A\in \mathcal{B}$. Since $p_B=s^*_{\beta}fs_{\beta}$, we can assume without loss of generality that $f=p_B$ for some $B\in \mathcal{B}$.

Given $\xi\in Z(p_A)$, cofinality implies that there exist $\alpha_{\xi}, \beta_{\xi}\in \mathcal{L}^*$ such that 
$$s_{\alpha_{\xi}}p_{\theta_{\beta_{\xi}}(B)}s^*_{\alpha_{\xi}}\in \xi.$$
Hence,
$$Z(p_A)\subseteq \bigcup\limits_{\xi\in Z(p_A)}Z(s_{\alpha_{\xi}}p_{\theta_{\beta_{\xi}}(B)}s^*_{\alpha_{\xi}}).$$
Since $Z(p_A)$ is compact, there exist ${\alpha_{\xi}}_1, \dots, {\alpha_{\xi}}_n, {\beta_{\xi}}_1, \dots ,{\beta_{\xi}}_n$ such that
$$Z(p_A)\subseteq \bigcup\limits_{i=1}^nZ(s_{{\alpha_{\xi}}_i}p_{\theta_{{\beta_{\xi}}_i}(B)}s^*_{{\alpha_{\xi}}_i}).$$
By \cite[Proposition 3.7]{ExPa}, this is equivalent to say that $\{s_{{\alpha_{\xi}}_i}p_{\theta_{{\beta_{\xi}}_i}(B)}s^*_{{\alpha_{\xi}}_i}\}_{i=1}^n$ is an outer cover for $p_A$. Notice that $s_{{\alpha_{\xi}}_i}p_{\theta_{{\beta_{\xi}}_i}(B)}s^*_{{\alpha_{\xi}}_i}=(s_{{\alpha_{\xi}}_i}s^*_{{\beta_{\xi}}_i})p_B(s_{{\alpha_{\xi}}_i}s^*_{{\beta_{\xi}}_i})^*$. Thus, the result holds for $s_i:=(s_{{\alpha_{\xi}}_i}s^*_{{\beta_{\xi}}_i})$.

Now, we will prove that condition (3) in Theorem \ref{theor7} implies cofinality. For this end, take any $\emptyset \ne A\in \B$ and any $\xi\in \widehat{\E}_{\text{tight}}$. By the argument at the start of this proof, there exists $\emptyset \ne B\in \B$ such that $p_B\in \xi$. By condition (3) in Theorem \ref{theor7}, there exists $s_i:=s_{\alpha_i}p_{C_i}s^*_{\beta_i}$ for $1\leq i\leq n$ such that $\{s_ip_As^*_i\}_{i=1}^n$ is an outer cover for $p_B$. Without loss of generality, we can assume that $\theta_{\beta_i}(A)\subseteq C_i$ for every $1\leq i\leq n$, so that $s_ip_As^*_i=s_{\alpha_i}p_{\theta_{\beta_i}(A)}s^*_{\alpha_i}$ for every $1\leq i\leq n$. By multiplying by $p_B$, we conclude that $\{s_{\alpha_i}p_{(\theta_{\alpha_i}(B)\cap \theta_{\beta_i}(A))}s^*_{\alpha_i}\}_{i=1}^n$ is a finite cover for $p_B$. Since $\xi$ is tight and $p_B\in\xi$, then there exists $1\leq j\leq n$ such that $$p_B\cdot s_{\alpha_j}p_{\theta_{\beta_j}(A)}s^*_{\alpha_j}=s_{\alpha_j}p_{(\theta_{\alpha_j}(B)\cap \theta_{\beta_j}(A))}s^*_{\alpha_j}\in \xi$$ by \cite[(2.10)]{ExPa}. As $\xi$ is a filter and $p_B\cdot s_{\alpha_j}p_{\theta_{\beta_j}(A)}s^*_{\alpha_j}\leq s_{\alpha_j}p_{\theta_{\beta_j}(A)}s^*_{\alpha_j}$, we conclude that $s_{\alpha_j}p_{\theta_{\beta_j}(A)}s^*_{\alpha_j}\in \xi$, as desired.
\end{proof}

Our next goal is to give a characterization of the cofinality of $(\B,\Labe,\theta)$ in terms of the elements in $\B$ and the actions $\theta$. First we need the following definitions.

\begin{defi}\label{def_her_sat}
We say that an ideal $\Id$ of $\B$ is hereditary if given $A\in \Id$ and $\alpha\in \Labe$ then $\theta_\alpha(A)\in \Id$. We also say that $\Id$ is saturated if given $A\in \Breg$  with $\theta_{\alpha}(A)\in \Id$ for every $\alpha\in \Delta_A$ then $A\in \Id$.
\end{defi} 


Given a collection $\Id$ of elements of $\B$ we define the hereditary expansion of $\Id$ as 
$$\He(\Id):=\{B\in \B: B\subseteq \bigcup\limits_{i=1}^n \theta_{\alpha_i}(A_i)\text{ where }A_i\in \Id \text{ and }\alpha_i\in \Labe^*\}\,.$$
Clearly, $\He(\Id)$ is  the minimal hereditary ideal of $\B$ containing $\Id$. Also, we define the saturation of $\Id$, denoted by $\Sa(\Id)$, to be the minimal ideal of $\B$ generated by the set
$$\bigcup\limits_{n=0}^\infty\Sa^{[n]}(\Id),$$
defined by recurrence on $n\in\Z^+$ as follows:
\begin{enumerate}
\item $\Sa^{[0]}(\Id):=\Id$
\item For every $n\in \N$, $\Sa^{[n]}(\Id):=\{B\in \Breg:\theta_\alpha(B)\in\Sa^{[n-1]}(\Id)\text{ for every }\alpha\in \Delta_B\}\,$.
\end{enumerate}  
Observe that if $\Id$ is hereditary, then $\Sa(\Id)$ is also hereditary. Therefore, given a collection $\Id$ of elements of $\B$, $\Sa(\He(\Id))$ is the minimal hereditary and saturated ideal of $\B$ containing $\Id$.\vspace{.2truecm}


We set $\Labe^\infty:=\prod_{n=1}^\infty \Labe$. Given $\alpha\in \Labe^\infty$ and $k\in \N$, we define $\alpha_{[1,k]}=\alpha_1\cdots\alpha_k\in \Labe^k$.

\begin{theor}\label{prop8}
Let $(\B,\Labe,\theta)$ a Boolean dynamical system. Then the following statements are equivalent:
\begin{enumerate}
\item  The only hereditary and saturated ideals of $\B$ are $\emptyset$ and $\B$,
\item Given $A,B\in \B$, there exists $C\in \Breg\cup\{\emptyset\}$ such that 
\begin{enumerate}
\item $B\setminus C\in \He(A)$, and
\item For every $\alpha\in \Labe^\infty$ there exists $k\in \N$ such that $\theta_{\alpha_{[1,k]}}(C)\in \He(A)$.
\end{enumerate}
\item For every $0\neq e,f\in \ET$, there exist $s_1,\ldots,s_n\in S$ such that $\{s_i fs^*_i\}_{i=1}^n$ is an outer cover for $e$.
\item $(\B,\Labe,\theta)$ is cofinal.
\item $\Gti(T)$ is minimal.
\end{enumerate} 
\end{theor}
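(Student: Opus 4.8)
The plan is to take advantage of the fact that $(3)\Leftrightarrow(4)\Leftrightarrow(5)$ is already available: Theorem \ref{theor7} gives $(3)\Leftrightarrow(5)$ and Proposition \ref{prop7} gives $(4)\Leftrightarrow(5)$. Thus the only genuinely new content is to weave conditions $(1)$ and $(2)$ into this chain, and I would do so by proving $(1)\Leftrightarrow(2)$ purely combinatorially and then $(2)\Leftrightarrow(4)$ through the semilattice $\ET$.

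For $(1)\Leftrightarrow(2)$, the key observation is that condition $(2)$ is a reformulation of the assertion that $\Sa(\He(A))=\B$ for every $\emptyset\neq A\in\B$, which in turn is equivalent to $(1)$: if the only hereditary and saturated ideals are $\emptyset$ and $\B$, then $\Sa(\He(A))$, being a hereditary saturated ideal containing $A\neq\emptyset$, must be all of $\B$; conversely, any nonzero hereditary saturated ideal $\Id$ contains $\Sa(\He(A))=\B$ for any $\emptyset\neq A\in\Id$. To identify ``$B\in\Sa(\He(A))$'' with condition $(2)$, I would write $B=(B\setminus C)\cup(B\cap C)$, separating the part $B\setminus C$ that already lies in the hereditary ideal $\He(A)$ from a regular set $C$ reached by the saturation process. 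The routine half is to verify the closure properties of the inductively defined sets $\Sa^{[n]}(\He(A))$ (each $\Sa^{[n]}$ with $n\geq 1$ being a regular, subset- and finite-union-closed family, with $\Sa^{[n]}\subseteq\Sa^{[n+1]}$), so that membership in $\Sa(\He(A))$ forces $C\in\Sa^{[N]}(\He(A))$ for a single finite $N$.

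The crux, and the step I expect to be the main obstacle, is the passage between finiteness of the saturation depth and the infinite-path formulation $(2)(b)$. Given $C\in\Breg$, I would build the tree whose nodes are the nonempty sets $\theta_{\alpha_{[1,k]}}(C)$ that have not yet entered $\He(A)$. This tree is finitely branching precisely because every non-terminal node is a regular set, so its set of outgoing labels $\Delta$ is finite, while the terminal nodes are exactly those landing in $\He(A)$. Condition $(2)(b)$ says that no branch is infinite, so by K\"onig's Lemma the tree is finite; reading its depth off as $N$ and arguing downward through the definition of $\Sa^{[\cdot]}$ yields $C\in\Sa^{[N]}(\He(A))$. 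Conversely, $C\in\Sa^{[N]}$ bounds the length of every branch by $N$, giving $(2)(b)$.

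Finally, for $(2)\Leftrightarrow(4)$ I would work inside $\ET$ and its tight filters. For $(2)\Rightarrow(4)$, given $\emptyset\neq A$ and a tight filter $\zeta$, first note that every idempotent of $\zeta$ lies below some $p_{\mathcal{D}_\gamma}$, so $\zeta$ contains some $p_B$ with $B\neq\emptyset$; applying $(2)$ to $A,B$ and assembling the complete expansion of the regular set $C$ together with the inclusion $B\setminus C\subseteq\bigcup_i\theta_{\beta_i}(A)$ produces a finite cover of $p_B$ all of whose members are dominated by translates $s_\alpha p_{\theta_\beta(A)}s^*_\alpha$; since $p_B\in\zeta$ and $\zeta$ is tight, one such member, hence one such translate, lies in $\zeta$, which is exactly cofinality. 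For the converse $(4)\Rightarrow(1)$, I would argue contrapositively: from a nontrivial hereditary saturated ideal $\Id$ I would build, using a point of the Stone spectrum $\widehat{\B/\Id}$, a tight filter $\zeta$ avoiding every element of $\Id$ (here the saturation of $\Id$ is exactly what guarantees that the resulting filter is tight), and since $\Id$ is hereditary, $\zeta$ can contain no translate $s_\alpha p_{\theta_\beta(A)}s^*_\alpha$ of any $\emptyset\neq A\in\Id$, contradicting cofinality. Together with the already-established $(3)\Leftrightarrow(4)\Leftrightarrow(5)$, this closes the cycle.
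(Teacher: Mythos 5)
Your overall architecture (establish $(1)\Leftrightarrow(2)$ combinatorially, then $(2)\Rightarrow(4)$ and $(4)\Rightarrow(1)$ through tight filters, quoting $(3)\Leftrightarrow(4)\Leftrightarrow(5)$) closes the logical cycle, and your direction $(1)\Rightarrow(2)$ agrees with the paper's. But the step you yourself identify as the crux is genuinely false. You claim that in the tree of nonempty sets $\theta_{\alpha_{[1,k]}}(C)\notin\He(A)$ ``every non-terminal node is a regular set, so its set of outgoing labels is finite,'' and you then invoke K\"onig's Lemma to convert (2)(b) into membership $C\in\Sa^{[N]}(\He(A))$ for a finite $N$. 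Regularity, however, is \emph{not} preserved by the actions $\theta_\alpha$, so the nodes at depth $k\geq 1$ need not be regular, the tree need not be finitely branching, and the conversion fails. Concretely, let $\B$ be the finite subsets of $\{v,w\}\cup\{u_i\}_{i\in\N}$ with actions $\theta_e$ (sending $S\mapsto\{w\}$ if $v\in S$, else $\emptyset$) and $\theta_{f_i}$ (sending $S\mapsto\{u_i\}$ if $w\in S$, else $\emptyset$), $i\in\N$; this is a Boolean dynamical system, $\Breg=\{\emptyset,\{v\}\}$, and for $A=\{u_1\}$ one has $\He(A)=\{\emptyset,\{u_1\}\}$. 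Taking $B=C=\{v\}$, conditions (2)(a) and (2)(b) hold (every infinite path has image in $\He(A)$, indeed image $\emptyset$ or $\{u_1\}$, within three steps), yet $\{v\}\notin\Sa^{[N]}(\He(A))$ for any $N$, because $\theta_e(\{v\})=\{w\}$ has $\lambda_{\{w\}}=\infty$, so it is neither in $\He(A)$ nor regular, and the saturation ladder, whose levels above $0$ consist of regular sets only, can never absorb it. So the pairwise identification ``(2)(a),(b) for some regular $C$ $\Leftrightarrow$ $B\in\Sa(\He(A))$'' on which your $(1)\Leftrightarrow(2)$ rests is wrong; only the \emph{global} equivalence in the theorem is true (in this example (2) fails for the pair $(\{u_1\},\{w\})$). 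This is exactly why the paper does not argue this way: its proof of $(2)\Rightarrow(3)$ re-applies hypothesis (2) to each new set $\theta_{\gamma_1\cdots\gamma_m}(\cdot)\notin\He(A)$ met in the recursion, so that a fresh regular set and a fresh instance of (b) are available at every node, and it recovers $(1)$ from $(3)$ by a separate induction on the maximal length of the $\alpha_i$ appearing in an outer cover.

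The same oversight infects your $(2)\Rightarrow(4)$: a single application of (2) plus ``the complete expansion of the regular set $C$'' yields the cover $\{s_\gamma p_{\theta_\gamma(C)}s^*_\gamma\}_{\gamma\in\Delta_C}$ of $p_C$, whose members are dominated by translates of $p_A$ only when every $\theta_\gamma(C)$ already lies in $\He(A)$, i.e.\ only when $C\in\Sa^{[1]}(\He(A))$; in general one must iterate, and each iteration needs a new use of (2) for the reason above. Finally, your $(4)\Rightarrow(1)$ is a genuinely different route from the paper's $(3)\Rightarrow(1)$, and you do put your finger on the right role of saturation (for regular $B\notin\Id$ it produces some $\gamma\in\Delta_B$ with $\theta_\gamma(B)\notin\Id$, which is what lets a filter avoiding $\Id$ survive the tightness constraints imposed by complete expansions); but the construction is only asserted: the pullback filter $\{x\in\ET : x\geq p_B\text{ for some }B\in\eta\}$ is typically \emph{not} tight, and extending it to an ultrafilter (which would be tight) may well introduce elements $s_\alpha p_Cs^*_\alpha$ with $C\in\Id$. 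Reconciling tightness with avoidance of $\Id$ requires a real argument in the spirit of the Zorn-type constructions of Lemmas \ref{lemma_11} and \ref{lemma_12}, which your sketch omits. As written, then, the proposal has a fatal gap at its central step and leaves its remaining new steps under-justified.
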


\begin{proof}
First observe that $(3)\Leftrightarrow (4) \Leftrightarrow (5)$ follows from Theorem \ref{theor7} and Proposition \ref{prop7}.

$(1)\Rightarrow (2)$. Suppose that the only hereditary and saturated are $\emptyset$ and $\B$. Then, given $A\neq \emptyset$ we have that  
$\Sa({\He(A)})=\B$. By definition, 
$$\He(A)=\{C\in \B:\exists\beta_1,\ldots,\beta_m\in \Labe^*\text{ and }n\in \N\text{ such that }C\subseteq \bigcup\limits_{i=1}^m\theta_{\beta_i}(A)\}\,.$$
Since  $\Sa({\He(A)})=\B$, by definition of saturation we have that $\B=\{C\cup D: C\in \He(A) \text{ and } D\in \Breg\}$. Thus, given any $B\in \B$, there exists $D\in \He(A)$ such that $C:=B\setminus D\in \Breg$, and there exists $n\in \N$ such that $C\in \Sa^{[n]}(\He(A))$. Therefore, for every $\alpha\in \Labe^\infty$, we have that $\theta_{\alpha_{[1,n]}}(C)\in \He(A)$. 

$(2)\Rightarrow (3)$.  Without loss of generality, we can assume that $f=p_A$ and $e=p_B$ for some $\emptyset\neq A,B\in \B$.  By hypothesis, there exists $C\in \Breg\cup\{\emptyset\}$ such that $B\setminus C\in \He(A)$. So, there exist $\beta_1,\ldots,\beta_m\in \Labe^*$ such that $B\setminus C\subseteq \bigcup\limits_{i=1}^m\theta_{\beta_i}(A)$. Thus, if we define $s_i:=s^*_{\beta_i}$ for $1\leq i\leq m$, then $s_ifs^*_i=p_{\theta_{\beta_i}(A)}$. Hence, since $\bigvee_{i=1}^m p_{\theta_{\beta_i}(A)}=p_{ \bigcup\limits_{i=1}^m\theta_{\beta_i}(A)}$, we can reduce the proof to the case that $e=p_C$. Now, if $\theta_{\gamma}(C)\in \He(A)$ for every $\gamma\in \Delta_C$ and $C\in \Breg$, we have that $C\in\Sa^{[1]}(\He(A))$, whence we can find a finite cover for $p_C$. Otherwise, there exists $\gamma_1\in \Delta_C$ such that $\theta_{\gamma_1}(C)\notin \He(A)$. Now, we repeat the argument  to find a finite cover for $p_{\theta_{\gamma_1}(C)}$. By recurrence, we either construct a finite path $\gamma=\gamma_1\cdots\gamma_m$ such that $\theta_\gamma(C)\in \He(A)$, or we construct an infinite path $\alpha\in \Labe^\infty$ such that $\alpha_{[1,k]}(C)\notin\He(A)$ for every $k\in \N$. In the first case we obtain a finite cover for $p_C$. In the second case we get an infinite path, contradicting the hypothesis. So we are done.

$(3)\Rightarrow (1)$.  Let $\emptyset\neq A\in \B$. We want to prove that $\Sa(\He(A))= \B$. If we take $\emptyset\neq B\in \B$ then, by hypothesis, there exist $s_1,\ldots,s_n$ such that $\{s_ifs_i^*\}_{i=1}^n=\{s_{\alpha_i} p_{\theta_{\beta_i}(A)} s^*_{\alpha_i}\}_{i=1}^n$ is an outer cover for $p_B$. So, 
$$p_B\leq \bigvee_{i=1}^ns_{\alpha_i} p_{\theta_{\beta_i}(A)} s^*_{\alpha_i}\,.$$
We set $N_1:=\text{max }\{|\alpha_i|:i=1,\ldots,n\}$. Since only regular sets can have finite covers, it must exists $C\in \Breg$ such that 
$$B\setminus C\subseteq \bigcup\limits_{\alpha_i=\emptyset}\theta_{\beta_i}(A)\in \He(A)\,.$$
So we have that 
$$p_C\leq \bigvee_{i=1,\alpha_i\neq\emptyset}^ns_{\alpha_i} p_{\theta_{\beta_i}(A)} s^*_{\alpha_i}\,,$$
and $C\in \Breg$. Thus, we can assume that $B\in \Breg$ and $\alpha_i\neq\emptyset$ with  
$$p_B\leq \bigvee_{i=1}^ns_{\alpha_i} p_{\theta_{\beta_i}(A)} s^*_{\alpha_i}\,.$$

Now, we label $\Delta_B=\{\gamma_1,\ldots,\gamma_m\}$, and relabel $\{\alpha_i\}$ so that there exist $0=j_0< j_1<j_2<\cdots< j_m= n$ with $\gamma_k\leq \alpha_i$ for every $j_{i-1}< k\leq j_i$ and $\gamma_k\nleq \alpha_i$ otherwise. Then,  we have that 
$$s_{\gamma_i}p_{\theta_{\gamma_i}(B)}s^*_{\gamma_i}\leq \bigvee_{k=j_{i-1}+1}^{j_i}s_{\alpha_k} p_{\theta_{\beta_k}(A)}  s^*_{\alpha_k}\qquad\text{for every }i=1,\ldots,m\,,$$
or equivalently
$$p_{\theta_{\gamma_i}(B)}\leq \bigvee_{k=j_{i-1}+1}^{j_i}s_{\alpha_k\setminus \gamma_i} p_{\theta_{\beta_k}(A)}  s^*_{\alpha_k\setminus \gamma_i}\qquad\text{for every }i=1,\ldots,m\,.$$
Observe that we have $|\alpha_k\setminus \gamma_i|<|\alpha_k|$. Thus, we can assume that
$$p_{\theta_{\gamma}(B)}\leq \bigvee_{i=1}^{n}s_{\alpha_i} p_{\theta_{\beta_i}(A)}  s^*_{\alpha_i}\qquad\text{for every }\gamma\in \Delta_B$$
with $N_2:=\text{max }\{|\alpha_i|:i=1,\ldots,n\}=N_1-1<N_1$. By hypothesis, we can also assume that $\theta_{\gamma}(A)\in \Breg$ for every $\gamma\in \Delta_B$.

Therefore, after repeating this process $N_1$ times, we prove that $p_{\theta_\gamma(B)}\in \Breg$ for every $\gamma\in \Delta^{\leq N_1-1}_B$, and ${\theta_\gamma(B)}\in \He(A)$ for every $\gamma\in \Delta^{N_1}_B$. Thus, $B\in \Sa^{[N_1]}(\He(A))$, and hence $B\in \Sa(\He(A))$.

\end{proof}

\subsection{The simplicity result}

Now, we are ready to state a result, giving a characterization of simplicity for $\Clab$ in terms of properties enjoyed by $(\B, \Labe, \theta)$.

\begin{theor}\label{theor_simple}
Let $(\B, \Labe, \theta)$ be a Boolean dynamical system such that $\B$ and $\mathcal{L}$ are countable, and let $\Clab$ be its associated $C^*$-algebra. Then, the following statements are equivalent:
\begin{enumerate}
\item $\Clab$ is simple.
\item The following properties hold:
\begin{enumerate}
\item $(\B,\Labe,\theta)$ satisfies condition $(L_\B)$, and
\item The only hereditary and saturated ideals of $\B$ are $\emptyset$ and $\B$.
\end{enumerate}
\end{enumerate}
\end{theor}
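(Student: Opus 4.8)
The plan is to reduce the simplicity of $\Clab$ to the corresponding structural properties of the tight groupoid $\Gti(T)$ and then translate those properties back into the combinatorial conditions via the equivalences already established. First I would recall that, since $\B$ and $\mathcal{L}$ are countable, Theorem \ref{theor2} provides an isomorphism $\Clab\cong C^*(\Gti(T))$. Moreover, $\Gti(T)$ is an \'etale, second countable topological groupoid, being the tight groupoid of a countable $\ast$-inverse semigroup; it is Hausdorff by Lemma \ref{lem6}; and it is amenable by Lemma \ref{lem_nuclear}. Thus $\Gti(T)$ meets all the hypotheses required to invoke Theorem \ref{theor3}.

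Applying Theorem \ref{theor3}, the groupoid $C^*$-algebra $C^*(\Gti(T))$ is simple if and only if $\Gti(T)$ is both minimal and essentially principal. Through the isomorphism of Theorem \ref{theor2}, this means $\Clab$ is simple if and only if $\Gti(T)$ is minimal and essentially principal, so it only remains to render each of these two groupoid properties in the intrinsic language of the Boolean dynamical system.

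This last step is precisely where the work of the preceding subsections pays off. By Theorem \ref{theor6}, $\Gti(T)$ is essentially principal if and only if $(\B,\Labe,\theta)$ satisfies condition $(L_\B)$. By Theorem \ref{prop8} (in combination with Proposition \ref{prop7}), $\Gti(T)$ is minimal if and only if the only hereditary and saturated ideals of $\B$ are $\emptyset$ and $\B$. Conjoining these two equivalences with the dichotomy of Theorem \ref{theor3} yields exactly the claimed characterization: $\Clab$ is simple precisely when condition $(L_\B)$ holds \emph{and} the only hereditary and saturated ideals are the trivial ones.

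The argument is therefore essentially an assembly of earlier results, and the substantive content lies in Theorems \ref{theor6} and \ref{prop8} rather than in this statement itself. The one point I would treat with care is confirming that the amenability furnished by Lemma \ref{lem_nuclear} is exactly the hypothesis demanded by Theorem \ref{theor3}, so that the reduced and full groupoid $C^*$-algebras coincide and the dichotomy between simplicity and (minimality plus essential principality) genuinely applies to $C^*(\Gti(T))$.
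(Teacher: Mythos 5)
Your proposal is correct and follows essentially the same route as the paper: identify $\Clab$ with $C^*(\Gti(T))$ via Theorem \ref{theor2}, verify the Hausdorff and amenability hypotheses through Lemmas \ref{lem6} and \ref{lem_nuclear}, invoke the simplicity dichotomy of Theorem \ref{theor3}, and translate essential principality and minimality into condition $(L_\B)$ and triviality of hereditary saturated ideals via Theorems \ref{theor6} and \ref{prop8}. In fact your citation of Theorem \ref{theor2} for the isomorphism is the correct one, whereas the paper's proof mistakenly attributes it to Theorem \ref{theor3}.
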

\begin{proof}
By Theorem \ref{theor3}, $\Clab\cong C^*(\Gti (T))$. By Lemma \ref{lem6} and Lemma \ref{lem_nuclear}, $\Gti (T)$ is Hausdorff and amenable. Then, the result holds by Theorem \ref{theor6}, Theorem \ref{prop8} and Theorem \ref{theor3}.
\end{proof}

Theorem \ref{theor_simple} generalizes \cite[Theorem 6.4]{BPII} (where only sufficient conditions are given) and \cite[Theorem 3.8, 3.14 \& 3.16]{JeKi} (which provided an equivalence, and solved a problem in Bates and Pask's result) in our context, the point being the use of a completely different approach to fix the conditions equivalent to simplicity, that are stated in terms of both the groupoid properties and the Boolean dynamical system.

\section{Gauge invariant ideals}
Now,  using the characterization of the Cuntz-Krieger Boolean $C^*$-algebras as topological graph $C^*$-algebras explained in Section \ref{section_4}, we will use the work of Katsura \cite{KatsuraIII} to determine the gauge invariant ideals of the Cuntz-Krieger Boolean $C^*$-algebras. We will restrict for simplicity, to the class of locally finite Boolean dynamical systems (see definition \ref{loc_finite}).

Given a Boolean dynamical system $(\B,\Labe,\theta)$, we will denote by $E_{(\B,\Labe,\theta)}$ the associated topological graph defined in Proposition \ref{prop_top}. If there is no confusion, we will just write $E$.

\begin{defi} Let $E=(E^0,E^1,d,r)$ be a topological graph. A subset $X^0$ of $E^0$ is said to be \emph{positively invariant} if $d(e)\in X^0$ implies $r(e)\in X^0$ for each $e\in E^1$, and to be \emph{negatively invariant} if for every $v\in X^0\cap E^0_{rg}$ there exists $e\in E^1$ with $r(e)=v$ and $d(e)\in X^0$. A subset $X^0$ of $E^0$ is called \emph{invariant} if $X^0$ is both positively and negatively invariant.  
\end{defi}


\begin{defi} Let $E=(E^0,E^1,d,r)$ be a topological graph. A subset $Y$ of $E^0$ is said to be \emph{hereditary} if $r(e)\in Y$ implies $d(e)\in Y$, and \emph{saturated} if $v\in E^0_{rg}$ with $d(r^{-1}(v))\subseteq Y$ implies $v\in Y$.
\end{defi}

Observe that a subset $X^0$ of $E^0$ is positively invariant if and only if $E^0\setminus X^0$ is hereditary, and it is negatively invariant if and only if $E^0\setminus X^0$ is saturated.
 
\begin{lem}\label{lemma_11} Let $(\B,\Labe,\theta)$ be a Boolean dynamical system, and let $E=E_{(\B,\Labe,\theta)}$ be the associated topological graph. If $\He$ is an ideal of $\B$, then $\He$ is  hereditary (definition \ref{def_her_sat}) if and only if $Y:=\bigcup\limits_{A\in \He} \Ne_A$ is a hereditary subset of $E^0$. 
\end{lem}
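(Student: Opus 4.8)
The plan is to exploit the defining property $v_\Cu\in\Ne_A\Leftrightarrow A\in\Cu$, so that $v_\Cu\in Y$ holds exactly when $\Cu$ meets $\He$, and to pass between the algebra and the graph via Lemma~\ref{lemma_82}, which for an edge $e\in E^1_\alpha$ identifies the three conditions $r(e)\in\Ne_A$, $d(e)\in\Ne_{\theta_\alpha(A)}$ and $e\in\Me^\alpha_{\theta_\alpha(A)}$. Both implications will be reduced to this equivalence.

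For the forward implication, I would assume $\He$ is a hereditary ideal and take an edge $e$ with $r(e)\in Y$. Writing $e\in E^1_\alpha$, there is some $A\in\He$ with $r(e)\in\Ne_A$; Lemma~\ref{lemma_82} then gives $d(e)\in\Ne_{\theta_\alpha(A)}$, and since $\theta_\alpha(A)\in\He$ by hereditariness of the ideal, we get $d(e)\in\Ne_{\theta_\alpha(A)}\subseteq Y$. Hence $Y$ is a hereditary subset of $E^0$.

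For the converse, assume $Y$ is hereditary, fix $A\in\He$ and $\alpha\in\Labe$, and aim for $\theta_\alpha(A)\in\He$ (the case $\theta_\alpha(A)=\emptyset$ being immediate since $\emptyset\in\He$, we may assume $\theta_\alpha(A)\neq\emptyset$). First I would show $\Ne_{\theta_\alpha(A)}\subseteq Y$: given $v_\Cu\in\Ne_{\theta_\alpha(A)}$, i.e. $\theta_\alpha(A)\in\Cu$, condition \textbf{F1} together with $\theta_\alpha(A)\subseteq\Ra$ forces $\Ra\in\Cu$, so $\Cu\in\widehat{\Id_{\Ra}}$ and the edge $e^\alpha_\Cu$ is defined with $d(e^\alpha_\Cu)=v_\Cu\in\Ne_{\theta_\alpha(A)}$. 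By Lemma~\ref{lemma_82} we obtain $r(e^\alpha_\Cu)\in\Ne_A\subseteq Y$, and hereditariness of $Y$ yields $v_\Cu=d(e^\alpha_\Cu)\in Y$.

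The main obstacle is then the final step: passing from the topological inclusion $\Ne_{\theta_\alpha(A)}\subseteq Y=\bigcup_{B\in\He}\Ne_B$ back to the algebraic membership $\theta_\alpha(A)\in\He$. Here I would use that $\Ne_{\theta_\alpha(A)}=Z(\theta_\alpha(A))$ is compact (Corollary~\ref{corollary_1}) while each $\Ne_B=Z(B)$ is open, extract a finite subcover $\Ne_{\theta_\alpha(A)}\subseteq\bigcup_{i=1}^n Z(B_i)$, and put $B:=\bigcup_{i=1}^n B_i\in\He$, using that an ideal is closed under finite unions. Since $Z$ respects finite unions, $Z(\theta_\alpha(A))\subseteq Z(B)$. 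Finally I would verify the order-reflecting property $Z(C)\subseteq Z(B)\Rightarrow C\subseteq B$: if $C\setminus B\neq\emptyset$, any ultrafilter $\Cu$ containing the nonempty element $C\setminus B$ would contain $C$, hence $B$, forcing $\emptyset=(C\setminus B)\cap B\in\Cu$ by \textbf{F2} and contradicting \textbf{F0}. Applying this with $C=\theta_\alpha(A)$ gives $\theta_\alpha(A)\subseteq B$, so $\theta_\alpha(A)=\theta_\alpha(A)\cap B\in\He$, which completes the argument.
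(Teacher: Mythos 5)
Your proof is correct, and while your forward implication coincides with the paper's (you invoke Lemma~\ref{lemma_82} where the paper redoes the same computation by hand), your converse direction takes a genuinely different route. The paper argues by contradiction: assuming $\theta_\alpha(A)\notin\He$ for some $A\in\He$, it runs two separate Zorn's Lemma constructions --- first an ultrafilter $\Cu\ni A$ with $\theta_\alpha(B)\notin\He$ for all $B\in\Cu$, then an ultrafilter $\Cu'$ containing every $\theta_\alpha(B)$, $B\in\Cu$, and meeting $\He$ nowhere --- each requiring a hands-on verification of property \textbf{F3} for the maximal filter obtained; the edge $e^\alpha_{\Cu'}$ then has range in $Y$ but source outside $Y$, contradicting hereditariness of $Y$. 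You instead prove the positive statement directly: every ultrafilter $\Cu$ in the compact set $Z(\theta_\alpha(A))$ gives an edge $e^\alpha_\Cu$ whose range lies in $\Ne_A\subseteq Y$ by Lemma~\ref{lemma_82}, so hereditariness of $Y$ yields $Z(\theta_\alpha(A))\subseteq Y$; then a finite subcover, closure of the ideal under finite unions, and order-reflection of the Stone map ($Z(C)\subseteq Z(B)\Rightarrow C\subseteq B$) convert this topological inclusion into the algebraic membership $\theta_\alpha(A)\in\He$. This compactness device is exactly what the paper itself uses in the converse of the companion Lemma~\ref{lemma_12} (the saturated case), so your argument makes the two proofs uniform. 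As for what each approach buys: the paper's route exhibits an explicit edge violating hereditariness, but at the cost of two bespoke maximality arguments; yours is shorter, relies only on the standard fact that every nonempty element of $\B$ lies in some ultrafilter (needed in the order-reflection step), and delegates the rest to the compactness of cylinder sets already recorded in Corollary~\ref{corollary_1}.
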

\begin{proof} Suppose that $\He$ is a hereditary ideal of $\B$. Let $v_\Cu\in Y$, so there exists $A\in \He$ such that $v_\Cu\in \Ne_A$, and suppose that there exists $\alpha\in \Labe$ such that $v_{\Cu}\in r(E^1_\alpha)$. Let $\Cu'\in \widehat{\Id_{\Ra}}$ such that $r(e^\alpha_{\Cu'})=v_\Cu$, so that $\Cu=\widehat{\theta_\alpha}(\Cu')=\{B\in \Id_{\Do}:\theta_\alpha(B)\in \Cu'\}$. Since $A\in \Cu$, we have that $\theta_\alpha(A)\in \Cu'$, so $v_{\Cu'}\in \Ne_{\theta_\alpha(A)}$. As $A\in \He$, by hypothesis $\theta_\alpha(A)\in \He$, and therefore $v_{\Cu'}\in \Ne_{\theta_\alpha(A)}\subseteq Y$. Thus, $d(e^{\alpha}_{\Cu})=v_{\Cu'}\in Y$, as desired.

Conversely, suppose that $Y:=\bigcup\limits_{A\in \He} \Ne_A$ is a hereditary subset of $E^0$, and suppose that there exists $A\in \He$ such that $\theta_\alpha(A)\notin \He$. We claim that there exists an ultrafilter $\Cu$ of $\B$ such that $A\in \Cu$ and $\theta_\alpha(B)\notin \He$ for every $B\in \Cu$. Indeed, let us consider  the set $\Gamma$ of all the filters $\Cu$ of $\B$ such that $A\in \Cu$ and $\theta_\alpha(B)\notin \He$ for every $B\in \Cu$. $\Gamma$  is a partially ordered set with the inclusion.  

 First observe that $\Gamma\neq \emptyset$, because the minimal filter containing $A$ belongs to $\Gamma$. Now, let $\{\Cu_n\}_{n\in \N}$ be an ascending sequence of filters of $\Gamma$. $\Cu=\bigcup\limits_{n\in \N}\Cu_n$ is clearly a filter from $\Gamma$ with $\Cu_n\subseteq \Cu$ for every $n\in \N$. Then, by  Zorn's Lemma, there exist maximal elements in $\Gamma$. If $\Cu$ is a maximal element of $\Gamma$, we claim that $\Cu$ is an ultrafilter of $\B$. Indeed, we only have to check condition $\textbf{F3}$. Let $B\in \Cu$, and let $C,C'\in \B\setminus\{\emptyset\}$ such that $B=C\cup C'$ and $C\cap C'=\emptyset$. Suppose that $C,C'\notin \Cu$. Then, $C\cap D, C'\cap D\neq \emptyset$ for every $D\in \Cu$; otherwise, if there exists $D\in \Cu$ such that $C\cap D=\emptyset$, then  $\Cu\ni(B\cap D)\subseteq C'$ by condition $\textbf{F2}$. Thus, $C'\in \Cu$ by condition $\textbf{F1}$, a contradiction, whence $C\cap D\neq \emptyset$ for every $D\in \Cu$. By the same argument $C'\cap D\neq \emptyset$ for every $D\in \Cu$. Now, suppose that there exists $D\in \Cu$ such that $\theta_\alpha(C\cap D)\in \He$. Then, for every $D'\in \Cu$ with $D'\subseteq D$,  we have that $\theta_\alpha(C\cap D')\subseteq \theta_\alpha(C\cap D)$. So, $\theta_\alpha(C\cap D')\in \He$ too, since $\He$ is an ideal. Now, suppose that $\theta_\alpha(C\cap G)\in \He$ for some $G\in \Cu$. By the same argument as above, $\theta_\alpha(C\cap G')\in \He$ for every $G'\in \Cu$ with $G'\subseteq G$. Thus, $B\cap D\cap G\in \Cu$ and 
$$\theta_\alpha(B\cap D\cap G\cap C)\cup \theta_\alpha(B\cap D\cap G\cap C')=\theta_\alpha(B\cap D\cap G)\notin \He\,.$$
But by the above arguments, we have that $\theta_\alpha(B\cap D\cap G)\in \He$ because $\He$ is an ideal, a contradiction. Therefore, we can assume that $\theta_\alpha(C\cap D)\notin \He$ for every $D\in \Cu$. Now, we construct the filter $\Cu'=\{B\in \B: C\cap D\subseteq B \text{ for some }D\in \Cu\}$. We clearly have that $\Cu'\in \Gamma$ with $\Cu\subsetneq \Cu'$, contradicting with the maximality of $\Cu$. Thus, $\Cu$ is an ultrafilter of $\B$, as desired.
 
Now, we claim that there exists an ultrafilter  $\Cu'$ of $\B$ such that $\theta_\alpha(B)\in \Cu'$ for every $B\in \Cu$ and $C\notin \He$ for every $C\in \Cu'$, where $\Cu$ is the ultrafilter constructed above. Let $\Gamma'$ be the set of all filters of $\B$ satisfying the above requirements. We have that $\Gamma'\neq \emptyset$ since the filter $\D=\{C:\in \B: \theta_\alpha(B)\subseteq C \text{ for some }B\in \Cu\}$ belongs to $\Gamma'$. Also,  $\Gamma'$ is a partially ordered set with the inclusion, and clearly every ascending sequence of filters of $\Gamma'$ has an upper-bound. By the Zorn's Lemma, $\Gamma'$ has maximal elements. Let $\Cu'$ be a maximal element. We claim that $\Cu'$ is an ultrafilter of $\B$. Indeed, we only have to check condition $\textbf{F3}$. Let $C\in \Cu'$ and let $D,D'\in \B\setminus\{\emptyset\}$ with $C=D\cap D'$ and $D\cap D'=\emptyset$ and $D,D'\notin \Cu'$. We have that $D\cap G,D'\cap G\neq \emptyset$ for every $G\in \Cu'$; otherwise, if there exists $G\in \Cu'$ such that $D\cap G=\emptyset$, then we have that $(C\cap G)\subseteq D'$. So, $D'\in \Cu$ by condition $\textbf{F1}$, a contradiction. Thus, $D\cap G\neq \emptyset$ for every $G\in \Cu'$. By the same argument we have that  $D'\cap G\neq \emptyset$ for every $G\in \Cu'$. Finally suppose that there exists $G,G'\in \Cu'$ such that $D\cap G, D'\cap G'\in \He$. Then, 
$$(C\cap G\cap G'\cap D)\cup (C\cap G\cap G'\cap D')=C\cap G\cap G'\notin \He\,,$$
but since $\He$ is an ideal, we have that $C\cap G\cap G'\in \He$, a contradiction. Therefore, suppose that $\emptyset\neq D\cap G\notin \He$ for every $G\in \Cu'$. Then, we can define the filter $\Cu''=\{C\in \B:D\cap G\subseteq C\text{ for some }G\in \Cu'\}$. We have that $\Cu''\in \Gamma'$ and $\Cu'\subsetneq \Cu''$, contradicting the maximality of $\Cu'$. Thus, $\Cu'$ is an ultrafilter, as desired.

Finally, since $\Cu'\in \widehat{\Id_{\Ra}}$, we can define  $e^\alpha_{\Cu'}\in E^1_\alpha$. But $v_{\Cu'}\notin Y$, since $B\notin \He$ for every $B\in \Cu'$. Observe that by Lemma \ref{lemma_81} we have that $\widehat{\theta_\alpha}(\Cu')=\Cu$, whence $r(e^\alpha_{\Cu'})=v_\Cu$.  Moreover, $v_{\Cu}\in \Ne_A\subseteq Y$, since $A\in \Cu$. But this contradicts that $Y$ is a hereditary set of $E^0$. Thus, $\theta_\alpha(A)\in \He$, as desired, whence $\He$ is a hereditary ideal of $\B$.  
\end{proof}

Observe that, if $A\in \Breg$, then given any $\Cu\in \widehat{\Id_A}$ we have that $v_\Cu\in E^0_{rg}$.

\begin{lem}\label{lemma_12} Let $(\B,\Labe,\theta)$ be a  Boolean dynamical system, and let $E=E_{(\B,\Labe,\theta)}$  be the associated topological graph. If $\He$ is an ideal of $\B$, then $\He$ is  saturated (definition \ref{def_her_sat}) if and only if $Y:=\bigcup\limits_{A\in \He} \Ne_A$ is a saturated subset of $E^0$. 
\end{lem}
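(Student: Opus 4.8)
The plan is to base both implications on the observation that, for any ideal $\He$ of $\B$, one has $A\in\He$ if and only if $\Ne_A\subseteq Y$. The forward direction is immediate from $Y=\bigcup_{C\in\He}\Ne_C$. For the converse, $\Ne_A=\widehat{\Id_A}$ is compact by Corollary \ref{corollary_1}, so the open cover $\{\Ne_C:C\in\He\}$ of $\Ne_A$ admits a finite subcover $\Ne_{C_1},\dots,\Ne_{C_n}$; putting $C=\bigcup_{i=1}^n C_i\in\He$ we get $\Ne_A\subseteq\Ne_C$, hence $A\subseteq C$, and therefore $A=A\cap C\in\He$. This equivalence will be used repeatedly, in the same spirit as in Lemma \ref{lemma_11}.

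Assume first that $Y$ is saturated and let $A\in\Breg$ satisfy $\theta_\alpha(A)\in\He$ for every $\alpha\in\Delta_A$; I must show $A\in\He$, for which it suffices to prove $\Ne_A\subseteq Y$. Fix $v_\Cu\in\Ne_A$, so $A\in\Cu$, and note that $v_\Cu\in E^0_{rg}$ because $A\in\Breg$. Any edge $e^\beta_{\Cu'}$ with $r(e^\beta_{\Cu'})=v_\Cu$ satisfies $\widehat{\theta_\beta}(\Cu')=\Cu$, so $A\in\Cu$ forces $\theta_\beta(A)\in\Cu'$; in particular $\theta_\beta(A)\neq\emptyset$, whence $\beta\in\Delta_A$ and $\theta_\beta(A)\in\He$, while $d(e^\beta_{\Cu'})=v_{\Cu'}\in\Ne_{\theta_\beta(A)}\subseteq Y$. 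Thus $d(r^{-1}(v_\Cu))\subseteq Y$, and saturation of $Y$ gives $v_\Cu\in Y$. Since $v_\Cu\in\Ne_A$ was arbitrary, $\Ne_A\subseteq Y$ and $A\in\He$.

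For the converse, assume $\He$ is saturated and take $v_\Cu\in E^0_{rg}$ with $d(r^{-1}(v_\Cu))\subseteq Y$. As $E^0_{rg}$ is open and the sets $\Ne_A$ form a clopen basis, there is $A_0\in\Cu\cap\Breg$ with $v_\Cu\in\Ne_{A_0}\subseteq E^0_{rg}$; recall $\Delta_{A_0}$ is finite since $A_0$ is regular. The key step is the claim that for each $\alpha\in\Delta_{A_0}$ there exists $B_\alpha\in\Cu$ with $\theta_\alpha(B_\alpha)\in\He$. If some $B\in\Cu$ has $\theta_\alpha(B)=\emptyset$, take $B_\alpha=B$ (as $\emptyset\in\He$). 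Otherwise $\theta_\alpha(B)\neq\emptyset$ for all $B\in\Cu$, so $\{\theta_\alpha(B):B\in\Cu\}$ is a filter base in $\Id_{\Ra}$; were no such $B_\alpha$ to exist, this base---and hence the filter it generates---would be disjoint from $\He$ (which is downward closed), and a Zorn's Lemma argument would yield an ultrafilter $\Cu'\in\widehat{\Id_{\Ra}}$ containing it and disjoint from $\He$. By Lemma \ref{lemma_81}, $\widehat{\theta_\alpha}(\Cu')=\Cu$, so $e^\alpha_{\Cu'}\in r^{-1}(v_\Cu)$ while $d(e^\alpha_{\Cu'})=v_{\Cu'}\notin Y$, contradicting the hypothesis. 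Granting the claim, set $A:=A_0\cap\bigcap_{\alpha\in\Delta_{A_0}}B_\alpha$, a nonempty element of $\Cu$ contained in $A_0$, hence regular; for each $\alpha\in\Delta_A\subseteq\Delta_{A_0}$ we have $\theta_\alpha(A)\subseteq\theta_\alpha(B_\alpha)\in\He$, so $\theta_\alpha(A)\in\He$. Saturation of $\He$ now gives $A\in\He$, and therefore $v_\Cu\in\Ne_A\subseteq Y$, completing the proof.

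The main obstacle is the Zorn's Lemma step inside the claim. One must check that a filter maximal among those disjoint from the ideal $\He$ is automatically an ultrafilter; this amounts to verifying \textbf{F3}, using that $\He$ is closed under finite unions, so that $E=E_1\cup E_2\in\Cu'$ together with $E_1,E_2\notin\Cu'$ would trap a nonzero element of $\Cu'$ inside $\He$. One must also ensure Lemma \ref{lemma_81} genuinely applies, i.e. that for $\alpha\in\Delta_{A_0}$ the ultrafilter $\Cu$ restricts to an ultrafilter of $\Id_{\Do}$, so that $\widehat{\theta_\alpha}(\Cu')=\Cu$ correctly identifies the edge $e^\alpha_{\Cu'}$ pointing at $v_\Cu$.
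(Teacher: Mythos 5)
Your proof is correct and takes essentially the same approach as the paper's: one direction via a Zorn's Lemma construction of an ultrafilter containing $\{\theta_\alpha(B):B\in\Cu\}$ and avoiding $\He$, combined with Lemma \ref{lemma_81} to produce an edge into $v_\Cu$ whose source lies outside $Y$, and the other via compactness of $\widehat{\Id_A}$ (Corollary \ref{corollary_1}) together with the ideal property of $\He$. The two technical points you flag resolve exactly as you sketch (the paper glosses over both); in particular, under the assumption $\theta_\alpha(B)\neq\emptyset$ for all $B\in\Cu$, the set $\{C\in\B:\theta_\alpha(C)\in\Cu'\}$ is an ultrafilter of $\B$ containing $\Cu$, hence equal to it, which simultaneously yields $\Do\in\Cu$ and the identity $r(e^\alpha_{\Cu'})=v_\Cu$.
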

\begin{proof} First,  suppose that $\He$ is a saturated subset of $\B$, and let $\Cu\in \widehat{\B}$ such that $v_\Cu\in E^0_{rg}$. Recall that 
$$r^{-1}(v_\Cu)=\{e^\alpha_{\Cu'}:\Cu'\in \widehat{\B} \text{ such that }\exists \alpha\in \Labe \text{ with }\Cu=\{A\in \B:\theta_\alpha(A)\in \Cu'\}\}\,.$$
Suppose that $d(e^\alpha_{\Cu'})=v_{\Cu'}\in Y$ for every $e^\alpha_{\Cu'}\in r^{-1}(v_\Cu)$. Hence, there exists $B_{\Cu'}\in \Cu'$ such that $B_{\Cu'}\in \He$. We claim that, for every $\alpha\in \Labe$  such that $\theta_\alpha(A)\neq \emptyset$ for every $A\in \Cu$, there exists $A\in \Cu$ such that $\theta_\alpha(A)\in \He$. Indeed, suppose that there exists $\alpha\in \Labe$ such that $\theta_\alpha(A)\notin \He$ for every $A\in \Cu$. Let $\Gamma$ the set of all filters $\F$ of $\B$ such that $\theta_\alpha(A)\in \F$ and $\theta_\alpha(A)\notin \He$ for every $A\in \Cu$. Then, $\F=\{B\in \B:\theta_\alpha(A)\subseteq B\text{ for some }A\in \Cu\}$ is a filter in $\Gamma$, whence $\Gamma\neq \emptyset$. We have that $\Gamma$ is a partially ordered set with the inclusion, and it is clear that $\Gamma$ contains an upper-bound for every ascending chain. Therefore, by the Zorn's Lemma, $\Gamma$ has maximal elements. Given any maximal element $\Cu'\in \Gamma$, we have that $\Cu'$ is an ultrafilter. Therefore, we have that $\Cu'\notin \widehat{\Id_B}$ for every $B\in \He$, and hence $v_{\Cu'}\notin Y$. Moreover, by Lemma \ref{lemma_81} we have that $r(e^\alpha_{\Cu'})=v_{\Cu}$. But this contradicts the hypothesis that $d(r^{-1}(v_\Cu))\subseteq Y$. Thus, there exists $A\in \Cu$ such that $\theta_\alpha(A)\in \He$. Then, given any $\alpha\in \Labe$ such that $\theta_\alpha(A)\neq \emptyset$ for every $A\in \Cu$, there exists $A_\alpha\in \Cu$ such that $\theta_\alpha(A_\alpha)\in \He$.

Now, since $v_{\Cu}\in E^0_{rg}$, there exists $A\in \Cu$ such that $\lambda_A<\infty$, and given any $B\in \B$ with $B\subseteq A$ then $\lambda_B\neq 0$. So, $A$ is a regular set of $\B$. If replace $A$ by $A\cap \left(\bigcap\limits_{\alpha\in \Delta_A} A_\alpha\right)\in \Cu$, we can suppose that $\theta_\alpha(A)\in \He$ for every $\alpha\in \Delta_A$. Then, since $\He$ is saturated, we have that $A\in \He$, and hence $v_\Cu\in \Ne_A\subseteq Y$. Thus, $Y$ is a saturated subset of $E^0$.

Conversely, suppose that $Y$ is a saturated subset of $E^0$, and let $\He$ be an ideal of $\B$. Let $A\in \He$ and regular such that $\{\theta_\alpha(A):\alpha\in \Labe\}\subseteq \He$. We claim that for every ultrafilter $\Cu\in \widehat{\Id_A}$ there exists $B_\Cu\in \He$ with $\Cu\in \widehat{\Id_{B_\Cu}}$. Indeed, since $A$ is regular, we have that $v_\Cu\in E^0_{rg}$. Moreover, since  $\{\theta_\alpha(A):\alpha\in \Labe\}\subseteq \He$,  we have that $d(r^{-1}(v_\Cu))\subseteq Y$. Therefore, since $Y$ is saturated, it follows that $v_\Cu\in Y$, so $B_\Cu\in \Cu$ for some $B_\Cu\in \He$, as desired. 

Let $\Cu\in \widehat{\Id_A}$. By the above claim, there exists $B_\Cu\in \He$ with $\Cu\in \widehat{\Id_{B_\Cu}}$, and then $A\cap B_\Cu\in \Cu\cap \He$ and  $\Ne_{A}\cap\Ne_{B_\Cu}=\Ne_{A\cap B_\Cu}$.  Therefore, $\Ne_{A}=\bigcup\limits_{\Cu\in \widehat{\Id_A}}\Ne_{A\cap B_\Cu}$. But since  $\widehat{\Id_A}$ is compact by Corollary \ref{corollary_1}, we have that $\Ne_{A}=\Ne_{A\cap B_{\Cu_1}}\cup\cdots\cup \Ne_{A\cap B_{\Cu_n}}$ for some $n\in \N$. Hence, it is easy to check that  $A=\bigcup\limits_{i=1}^n(A\cap B_{\Cu_i})$. As $A\cap B_{\Cu_i}\in \He$ for every $i=1,\ldots,n$, and $\He$ is an ideal, it follows that $A\in \He$, as desired.
\end{proof}

We have proved in the previous lemmas that, given a hereditary and saturated ideal $\He$ of $\B$, then $Y=\bigcup\limits_{A\in \He} \Ne_A$ is a hereditary and saturated subset of $E^0$. The converse is also true. Indeed, let $Y$ be a hereditary and saturated subset of $E^0$. Given $v\in Y$, pick $A_v\in \B$ such that $v\in \Ne_{A_v}$ and $\Ne_{A_v}\subseteq Y$. We define $\He$ to be the minimum ideal of $\B$ containing the $A_v$'s. Observe that since every $\Ne_{A_v}$ is compact by Corollary \ref{corollary_1}, and since $\He$ is an ideal, $\He$ is independent of the choice of the $A_v$'s. Now, following the proof of Lemmas \ref{lemma_11} \& \ref{lemma_12}, one can check that $\He$ is a hereditary and saturated ideal of $\B$. Thus, the following results follows:

\begin{prop}\label{proposition_6} Let $(\B,\Labe,\theta)$ be a  Boolean dynamical system, and let $E=E_{(\B,\Labe,\theta)}$  be the associated topological graph. Then, there is a bijection between the hereditary and saturated subsets of $\B$ and the invariant subsets of $E$.
\end{prop}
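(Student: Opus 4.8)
The plan is to realize the claimed correspondence as the composition of two bijections. First I would use Lemmas \ref{lemma_11} and \ref{lemma_12} to set up the forward map sending a hereditary and saturated ideal $\He$ of $\B$ to the set $Y_\He := \bigcup_{A \in \He}\Ne_A \subseteq E^0$; those two lemmas say precisely that $Y_\He$ is both hereditary and saturated. Since each $\Ne_A$ (which corresponds to the cylinder set $Z(A)$ under the identification $E^0=\widehat{\B}$) is clopen and compact, $Y_\He$ is open, so by the observation recorded just before Lemma \ref{lemma_11} its complement $X^0_\He := E^0 \setminus Y_\He$ is an invariant subset of $E^0$. This defines the candidate map $\He \mapsto X^0_\He$.

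Next I would construct the inverse map. Given $Y = E^0 \setminus X^0$ hereditary and saturated, for each $v \in Y$ choose, using that $\{\Ne_A\}_{A\in\B}$ is a basis of clopen compact sets for $E^0$ (Stone's Representation Theorem, Proposition \ref{proposition_2}), an $A_v \in \B$ with $v \in \Ne_{A_v} \subseteq Y$, and let $\He_Y$ be the ideal of $\B$ generated by $\{A_v : v \in Y\}$. The first point to check is that $\He_Y$ is independent of the choices $A_v$: this follows because $\Ne_{A_v}$ is compact by Corollary \ref{corollary_1}, so if $\Ne_{A_v} \subseteq \bigcup_w \Ne_{A_w}$ then finitely many suffice, and the identity $Z(B_1)\cup\cdots\cup Z(B_n) = Z(B_1\cup\cdots\cup B_n)$ together with the order-isomorphism property of the Stone representation gives $A_v \subseteq B_1 \cup \cdots \cup B_n$ for finitely many generators $B_i$, so $A_v$ already lies in the ideal they generate. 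Then, repeating the arguments of Lemmas \ref{lemma_11} and \ref{lemma_12}, $\He_Y$ is a hereditary and saturated ideal.

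It then remains to verify that $\He \mapsto Y_\He$ and $Y \mapsto \He_Y$ are mutually inverse. For $\He_{Y_\He} = \He$ I would again invoke the order-isomorphism of the Stone representation: $\Ne_A \subseteq Y_\He$ forces, by compactness of $\Ne_A$, that $A \subseteq B_1\cup\cdots\cup B_n$ for finitely many $B_i \in \He$, whence $A \in \He$ since $\He$ is an ideal, while conversely every $A \in \He$ clearly satisfies $\Ne_A \subseteq Y_\He$. For $Y_{\He_Y} = Y$, the inclusion $\supseteq$ is immediate from $v \in \Ne_{A_v} \subseteq Y_{\He_Y}$, and $\subseteq$ holds because every member of $\He_Y$ is contained in a finite union of the generators $A_v$ and hence has $\Ne_A \subseteq Y$. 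Finally, composing with the complementation bijection $Y \leftrightarrow E^0\setminus Y$ (exactly the content of the observation preceding Lemma \ref{lemma_11}) yields the asserted bijection between hereditary and saturated ideals of $\B$ and invariant subsets of $E^0$.

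The main obstacle I anticipate is not any single computation but the bookkeeping in the inverse map: showing that $\He_Y$ is well-defined and simultaneously hereditary and saturated reuses the delicate Zorn's-lemma and ultrafilter constructions of Lemmas \ref{lemma_11} and \ref{lemma_12}, and one must be careful that the correspondence is clean only for \emph{open} hereditary–saturated $Y$ (equivalently, \emph{closed} invariant $X^0$), since $Y_\He$ is always open as a union of clopen sets. The compactness of the cylinder sets $\Ne_A = Z(A)$ from Corollary \ref{corollary_1} is what repeatedly converts topological containments back into finite Boolean relations, and keeping that conversion consistent in both directions is the technical heart of the argument.
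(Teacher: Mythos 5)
Your proposal is correct and follows essentially the same route as the paper: the forward map via Lemmas \ref{lemma_11} and \ref{lemma_12}, the inverse map obtained by choosing $A_v\in\B$ with $v\in\Ne_{A_v}\subseteq Y$ and taking the ideal these generate (well-defined by compactness of the cylinder sets, Corollary \ref{corollary_1}), and complementation to pass between hereditary--saturated subsets of $E^0$ and invariant subsets. If anything you do slightly more than the paper, which leaves the mutual-inverse verification implicit; your caveat that the correspondence really concerns \emph{open} hereditary--saturated subsets (equivalently, closed invariant sets) is a genuine subtlety that the paper's statement glosses over, and your proof handles it correctly.
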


\begin{exem}\label{example_nonsimple} Let $(\B,\Labe,\theta)$ be the Boolean dynamical system of Example \ref{example_4}. Then, the only hereditary and saturated subset of $\B$ is the set $\He=\{F: F\subseteq \Z\text{ finite}\}$, the associated open hereditary and saturated subspace $Y=\bigcup\limits_{A\in \He}\Ne_A$ of $E^0$ is $\{v_{\Cu_n}:n\in \Z\}$, and let $X=E^0\setminus Y=\{\Cu_\infty\}$ is the associated invariant space.
\end{exem}

\begin{prop}\label{proposition_7} Let $(\B,\Labe,\theta)$ be the Boolean dynamical system, and let $\He$ be a hereditary ideal of $\B$. If for any $\alpha\in \Labe$ and any $[A]\in \B/\He$ we define $\theta_\alpha([A])=[\theta_\alpha(A)]$, then $(\B/\He,\Labe,\theta)$ is a Boolean dynamical system.
\end{prop}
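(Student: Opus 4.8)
The plan is to verify directly the three requirements in the definition of a Boolean dynamical system for the triple $(\B/\He,\Labe,\theta)$: that $\theta_\alpha([A]):=[\theta_\alpha(A)]$ is a well-defined action on $\B/\He$ for each $\alpha\in\Labe$, and that for each $\alpha\in\Labe^{\geq 1}$ the composite $\theta_\alpha=\theta_{\alpha_n}\circ\cdots\circ\theta_{\alpha_1}$ has compact range and closed domain. The quotient $\B/\He$ carries its standard Boolean algebra structure, with operations computed on representatives ($[A]\cap[B]=[A\cap B]$, and likewise for $\cup$ and $\setminus$), and the hypothesis that $\He$ is \emph{hereditary} will enter in exactly one place, namely the well-definedness of $\theta_\alpha$; this is where the only real content lies.

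First I would show that $\theta_\alpha$ descends to classes. Suppose $[A]=[A']$, so that $A\cup H_1=A'\cup H_2$ for some $H_1,H_2\in\He$. Applying the Boolean homomorphism $\theta_\alpha$ on $\B$ and using that it preserves unions yields $\theta_\alpha(A)\cup\theta_\alpha(H_1)=\theta_\alpha(A')\cup\theta_\alpha(H_2)$. Since $\He$ is hereditary, $\theta_\alpha(H_1),\theta_\alpha(H_2)\in\He$, and this is precisely the statement that $[\theta_\alpha(A)]=[\theta_\alpha(A')]$. Thus $\theta_\alpha([A]):=[\theta_\alpha(A)]$ is well defined. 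Hereditariness under single letters propagates to composites, so for $\alpha\in\Labe^{\geq 1}$ the induced map on $\B/\He$ is likewise well defined and agrees with $[A]\mapsto[\theta_\alpha(A)]$ for the composite $\theta_\alpha$ on $\B$.

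Next I would check that each $\theta_\alpha$ is an action on $\B/\He$, i.e. a Boolean-algebra homomorphism with $\theta_\alpha([\emptyset])=[\emptyset]$. This is routine and descends formally: since the quotient operations are computed on representatives, one has for instance $\theta_\alpha([A]\cap[B])=[\theta_\alpha(A\cap B)]=[\theta_\alpha(A)\cap\theta_\alpha(B)]=\theta_\alpha([A])\cap\theta_\alpha([B])$, using that $\theta_\alpha$ is a homomorphism on $\B$; the same computation handles $\cup$ and $\setminus$, and $\theta_\alpha([\emptyset])=[\theta_\alpha(\emptyset)]=[\emptyset]$. For the compact range and closed domain of a composite $\alpha\in\Labe^{\geq 1}$, let $\Ra$ and $\Do$ be the compact range and closed domain of $\theta_\alpha$ on $\B$, so $\theta_\alpha(\Do)=\Ra$. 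I claim $[\Ra]$ is the least upper-bound of $\{\theta_\alpha([A])\}_{[A]\in\B/\He}$: each $\theta_\alpha(A)\subseteq\Ra$ in $\B$ gives $[\theta_\alpha(A)]\subseteq[\Ra]$, so $[\Ra]$ is an upper bound; and if $[B]$ is any upper bound, then specializing to $A=\Do$ gives $[\Ra]=[\theta_\alpha(\Do)]\subseteq[B]$, so $[\Ra]$ is least. Hence $\theta_\alpha$ has compact range $[\Ra]$ on $\B/\He$, and since $\theta_\alpha([\Do])=[\theta_\alpha(\Do)]=[\Ra]$, the class $[\Do]$ witnesses closed domain.

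The main obstacle — in truth the only nontrivial point — is the well-definedness step, which is exactly what forces the hereditarity hypothesis on $\He$. Everything else descends formally from the corresponding structure on $\B$; the decisive observation for the range and domain conditions is simply that $\Ra$ is \emph{attained} as $\theta_\alpha(\Do)$, which makes $[\Ra]$ automatically least among the upper bounds of the image.
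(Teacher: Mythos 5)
Your proof is correct and takes essentially the same route as the paper's: the paper's proof likewise reduces everything to the well-definedness of $\theta_\alpha$ on classes (which it attributes to hereditariness of $\He$, exactly the union argument you give) and then notes that the range and domain descend to $[\Ra]$ and $[\Do]$. You have simply supplied the details the paper leaves implicit, including the correct observation that $[\Ra]$ is least among upper bounds precisely because it is attained as $\theta_\alpha([\Do])$.
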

\begin{proof} We only need to prove that, given $\alpha\in \Labe$, the map $\theta_\alpha:\B/\He\longrightarrow \B/\He$ is a well-defined map. But this clear because $\He$ is a hereditary ideal of $\B$. Also, the range and domain of $\theta_\alpha$ are $[\Ra]$ and $[\D_\alpha]$ respectively. \end{proof}

Let $X^0$ be an invariant space of $E^0$. If we define $X^1=\{e\in E^1: d(e)\in X^0\}$, then $(X^0,X^1,d,r)$ is also a topological graph. 

\begin{prop}\label{proposition_8}  Let $(\B,\Labe,\theta)$ be a  Boolean dynamical system, and let $E=E_{(\B,\Labe,\theta)}$  be the associated topological graph. Given a hereditary and saturated ideal $\He$ of $\B$, define  $X^0:=E^0\setminus \bigcup\limits_{A\in \He} \Ne_A$. Then, $E_{\He}:=E_{(\B/\He,\Labe,\theta)}=(X^0,X^1,d,r)$.
\end{prop}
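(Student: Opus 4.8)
The plan is to prove the equality of topological graphs componentwise: first I would match the vertex spaces, then the edge spaces, and finally check that the source and range maps $d$ and $r$ agree. Throughout I would use the identifications $\widehat{\He}\subseteq\widehat{\B}$ and $\widehat{\B/\He}\subseteq\widehat{\B}$ set up before Remark \ref{rema_sub}, together with the identification $\widehat{\Id_A}=Z(A)$ valid in any Boolean algebra. Note first that $(\B/\He,\Labe,\theta)$ is indeed a Boolean dynamical system by Proposition \ref{proposition_7}, so that $E_{\He}$ makes sense, and its vertex space is $\widehat{\B/\He}$ while its range sets are $[\Ra]$.

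For the vertex spaces, I would observe that $v_\Cu\in\Ne_A$ iff $A\in\Cu$, so $\bigcup_{A\in\He}\Ne_A=\{v_\Cu:\Cu\cap\He\neq\emptyset\}$, which under the canonical identification is exactly $\widehat{\He}$. By Remark \ref{rema_sub}, $\widehat{\B}=\widehat{\He}\sqcup\widehat{\B/\He}$, hence $X^0=E^0\setminus\widehat{\He}=\widehat{\B/\He}=E^0_{(\B/\He,\Labe,\theta)}$, and the topologies coincide because $\widehat{\B/\He}\subseteq\widehat{\B}$ is the homeomorphic embedding introduced before Remark \ref{rema_sub}.

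For the edge spaces, since the range of the quotient action is $[\Ra]$ (Proposition \ref{proposition_7}), we have $E^1_{(\B/\He,\Labe,\theta)}=\bigsqcup_{\alpha}\widehat{\Id_{[\Ra]}}$, and I would identify $\widehat{\Id_{[\Ra]}}$ with $Z([\Ra])\subseteq\widehat{\B/\He}$. The crucial point is that for an ultrafilter $\Cu$ of $\B$ with $\Cu\cap\He=\emptyset$ one has $[B]\in[\Cu]\Leftrightarrow B\in\Cu$: indeed, if $[B]=[B']$ with $B'\in\Cu$ then $B'\cup C=B\cup C'$ for some $C,C'\in\He$, so $B'\subseteq B\cup C'$ forces $B\cup C'\in\Cu$ by $\textbf{F1}$, and then $\textbf{F3}$ yields $B\in\Cu$ since $C'\in\He$ cannot lie in $\Cu$. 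Applying this with $B=\Ra$ shows that, under $\widehat{\B/\He}\subseteq\widehat{\B}$, the set $Z([\Ra])$ corresponds to $\{\Cu\in\widehat{\B}:\Ra\in\Cu,\ \Cu\cap\He=\emptyset\}$. On the other hand $X^1_\alpha=\{e^\alpha_\Cu:\Cu\in\widehat{\Id_{\Ra}},\ d(e^\alpha_\Cu)\in X^0\}$, and since $d(e^\alpha_\Cu)=v_\Cu$ this is precisely $\{e^\alpha_\Cu:\Ra\in\Cu,\ \Cu\cap\He=\emptyset\}$; the two edge spaces therefore agree.

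Finally I would check the structure maps. The map $d$ is literally the same ($d(e^\alpha_\Cu)=v_\Cu$), so only $r$ needs attention. Here $r(e^\alpha_\Cu)=v_{\widehat{\theta_\alpha}(\Cu)}$ with $\widehat{\theta_\alpha}(\Cu)=\{B:\theta_\alpha(B)\in\Cu\}$, whereas for the quotient the range map sends $[\Cu]$ to $\{[B]:[\theta_\alpha(B)]\in[\Cu]\}$. Using heredity of $\He$ (which guarantees that $\widehat{\theta_\alpha}(\Cu)$ again avoids $\He$, as $H\in\widehat{\theta_\alpha}(\Cu)\cap\He$ would force $\theta_\alpha(H)\in\Cu\cap\He$) together with the equivalence $[\theta_\alpha(B)]\in[\Cu]\Leftrightarrow\theta_\alpha(B)\in\Cu$ from the $\textbf{F3}$ argument above, these two range maps coincide under the identification. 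I expect the main obstacle to be exactly that $\textbf{F3}$ lemma, $[B]\in[\Cu]\Leftrightarrow B\in\Cu$ for $\Cu$ avoiding $\He$: it is the one place where the ultrafilter axioms must be used carefully, and everything else reduces to bookkeeping with the identifications already established.
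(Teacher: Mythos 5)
Your proposal is correct and follows essentially the same route as the paper's proof: identify $X^0$ with $\widehat{\B/\He}$ via Remark \ref{rema_sub}, identify $X^1$ with $\bigsqcup_{\alpha\in\Labe}\widehat{\Id_{[\Ra]}}$ by $e^\alpha_\Cu\mapsto e^\alpha_{[\Cu]}$, and check that $d$ and $r$ correspond under these identifications. The paper simply declares the compatibility of the structure maps ``clear,'' whereas you supply the details it leaves implicit (the $\textbf{F3}$ argument giving $[B]\in[\Cu]\Leftrightarrow B\in\Cu$ for $\Cu$ avoiding $\He$, and the use of heredity of $\He$ to see that $r$ lands in $X^0$), which is exactly the bookkeeping on which those identifications rest.
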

\begin{proof} 
Since $E^0=\widehat{\B}$ and $\bigcup\limits_{A\in \He} \Ne_A=\widehat{\He}$, using Remark  \ref{rema_sub} we can identify $X^0$ with $ \widehat{\B/\He}=E^0_\He$ by $v_\Cu\mapsto v_{[\Cu]}$. By definition, $X^1=\bigsqcup\limits_{\alpha\in \Labe}\{e^\alpha_\Cu: \Cu\in \widehat{\Id_{\Ra}}\,\text{ and }\,[\Cu]\in \widehat{\B/\He}\}$. So, we can identify it with $E^1_\He=\bigsqcup\limits_{\alpha\in \Labe}\widehat{\Id_{[\Ra]}}$ by $e^\alpha_\Cu\mapsto e^\alpha_{[\Cu]}$. With these identifications, it is clear that the maps $d$ and $r$ are the corresponding ones.
\end{proof}

A topological graph $E=(E^0,E^1,d,r)$ is called row-finite if $r(E^1)=E^0_{rg}$.

\begin{lem}
Let $(\B,\Labe,\theta)$ be a locally finite Boolean dynamical system, then the associated topological graph $E$ is row-finite.
\end{lem}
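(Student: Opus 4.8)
The plan is to prove row-finiteness in its working form, namely that $r$ is proper, equivalently that $E^0=E^0_{fin}$. It is enough to establish the pointwise statement that every vertex $v_\Cu\in E^0$ has an open neighbourhood $V$ with $r^{-1}(V)$ compact; the passage from this to $r^{-1}(K)$ compact for all compact $K\subseteq E^0$ is the routine finite-subcover argument for locally compact Hausdorff spaces, which I would not spell out.

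First I would fix $\Cu\in\widehat{\B}$ and invoke local finiteness (Definition \ref{loc_finite}) to choose $A\in\Cu$ with $\{\alpha\in\Labe:\theta_\alpha(A\cap B)\neq\emptyset\}$ finite for every $B\in\Cu$. The decisive move is to feed in $B=A\in\Cu$: since $A\cap A=A$, the condition collapses to finiteness of $\Delta_A=\{\alpha\in\Labe:\theta_\alpha(A)\neq\emptyset\}$. As the neighbourhood of $v_\Cu$ I would take the cylinder $\Ne_A=\{v_\eta:A\in\eta\}$, which is a clopen neighbourhood of $v_\Cu$ (because $A\in\Cu$) and is compact by Corollary \ref{corollary_1}.

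Then I would compute $r^{-1}(\Ne_A)$ via Lemma \ref{lemma_82}, whose equivalence $(1)\Leftrightarrow(3)$ says that for $e\in E^1_\alpha$ one has $r(e)\in\Ne_A$ if and only if $e\in\Me^\alpha_{\theta_\alpha(A)}$. Taking the disjoint union over $\alpha\in\Labe$ gives
$$r^{-1}(\Ne_A)=\bigsqcup_{\alpha\in\Labe}\Me^\alpha_{\theta_\alpha(A)}=\bigsqcup_{\alpha\in\Delta_A}\Me^\alpha_{\theta_\alpha(A)},$$
the second equality because $\Me^\alpha_{\theta_\alpha(A)}$ is empty exactly when $\theta_\alpha(A)=\emptyset$. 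Each $\Me^\alpha_{\theta_\alpha(A)}$ is a well-defined clopen compact subset of $E^1$ (indeed $\theta_\alpha(A)\subseteq\Ra$), so $r^{-1}(\Ne_A)$ is a finite union of compact sets, hence compact, giving $v_\Cu\in E^0_{fin}$. Since $\Cu\in\widehat{\B}$ was arbitrary, $E^0=E^0_{fin}$ and $E$ is row-finite. There is no genuine obstacle here: the whole argument turns on the reduction $B=A$, which is exactly what forces $\Delta_A$ to be finite, together with the splitting of $r^{-1}(\Ne_A)$ into the disjoint union indexed by $\Delta_A$, so that a finite index set yields compactness.
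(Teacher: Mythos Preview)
Your argument is correct and establishes $E^0=E^0_{fin}$ (equivalently, $r$ is proper): the decisive move of taking $B=A$ in Definition~\ref{loc_finite} forces $\lambda_A<\infty$, and Lemma~\ref{lemma_82} then gives $r^{-1}(\Ne_A)=\bigsqcup_{\alpha\in\Delta_A}\Me^\alpha_{\theta_\alpha(A)}$, a finite union of compact sets. This is the standard meaning of row-finiteness for topological graphs and is what the subsequent invocation of \cite[Corollary~3.25]{KatsuraIII} actually requires.

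The paper proceeds along a different line: it \emph{defines} row-finite as $r(E^1)=E^0_{rg}$ and argues directly for that equality via ultrafilter descriptions of both sides, declaring the inclusion $r(E^1)\subseteq E^0_{rg}$ ``obvious by locally finiteness''. Be aware that your condition $E^0=E^0_{fin}$ and the paper's $r(E^1)=E^0_{rg}$ are \emph{not} equivalent---indeed the latter can fail for a locally finite system. For example, let $\B$ be the clopen algebra of the Cantor set $C$, $\Labe=\{\alpha\}$, and $\theta_\alpha(A)=C$ when a fixed point $p\in C$ lies in $A$ and $\theta_\alpha(A)=\emptyset$ otherwise; then $|\Labe|<\infty$ so the system is locally finite and $E^0=E^0_{fin}$, yet $r(E^1)=\{v_p\}$ while $E^0_{rg}=\emptyset$ (no clopen $A$ is regular, since any nonempty $A$ contains a clopen avoiding $p$). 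So the paper's stated definition appears to be a slip for the standard one; your interpretation is the one that both holds under local finiteness and is what is needed downstream.
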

\begin{proof} Recall that 
$$r(E^1)=\{v_\Cu\in E^0| \exists \alpha\in\Labe\,, \theta_\alpha(A)\neq \emptyset\text{ }\forall A\in\Cu\}$$
and
$$E^0_{rg}=\{v_\Cu\in E^0| \exists A\in\Cu\,, \text{ }\forall B\subseteq A\text{ we have that }0<\lambda_B<\infty \}\,.$$
The inclusion $E^0_{rg}\subseteq r(E^1)$ is always valid, and the converse is obvious by locally finiteness of the Boolean dynamical system.
\end{proof}

Given a Boolean dynamical system $(\B,\Labe,\theta)$ and a hereditary and saturated set $\He$ of $\B$, we define $I_\He$ as the ideal of $C^*(\B,\Labe,\theta)$ generated by the projections $\{p_A:A\in \He\}$.

Conversely, given an ideal $I$ of $C^*(\B,\Labe,\theta)$ let us define $\rho_I:C^*(\B,\Labe,\theta)\longrightarrow C^*(\B,\Labe,\theta)/I$ to be the quotient map, and $\He_I:=\{A\in \B: \rho_I(p_A)=0\}$. Clearly $\He_I$ is a hereditary and saturated set of $\B$.

Then using Proposition \ref{proposition_8} it follows: 
	
\begin{prop}[{cf. \cite[Proposition 3.15]{KatsuraIII}}] Let $(\B,\Labe,\theta)$ be a  Boolean dynamical system. If $I$ is an ideal of $C^*(\B,\Labe,\theta)$, then there exists a natural surjection $C^*(\B/\He_I,\Labe,\theta)\rightarrow C^*(\B,\Labe,\theta)/I$ which is injective in $C^*(\B/\He_I)$.
\end{prop}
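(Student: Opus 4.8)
The plan is to transport the statement to the associated topological graph and apply Katsura's description of the quotient of a topological graph $C^*$-algebra by an arbitrary ideal. Throughout write $\He:=\He_I$ and identify $C^*(\B,\Labe,\theta)$ with $\mathcal{O}(E)$ via Theorem \ref{proposition_51}, where $E=E_{(\B,\Labe,\theta)}$ is the topological graph of Proposition \ref{prop_top}; let $I$ also denote the corresponding ideal of $\mathcal{O}(E)$. Since we are assuming $(\B,\Labe,\theta)$ locally finite, the preceding lemma guarantees that $E$ is row-finite, so Katsura's machinery applies directly.

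The first step is to recognise the closed invariant subset of $E^0$ that $I$ determines. Under the identification $C^*(\B)=t^0(C_0(E^0))\cong C_0(\widehat\B)$ furnished by Stone's Representation Theorem (Proposition \ref{proposition_2}) together with the equalities $p_A=t^0(\chi_{\Ne_A})$, the ideal $I\cap C^*(\B)$ of the commutative algebra $C_0(\widehat\B)$ has the form $C_0(U)$ for a unique open set $U\subseteq\widehat\B$. By the definition of $\He_I$ we have, for $A\in\B$, that $A\in\He_I$ if and only if $\rho_I(p_A)=0$, i.e. if and only if $\chi_{\Ne_A}\in I\cap C^*(\B)$, i.e. if and only if $\Ne_A=\widehat{\Id_A}\subseteq U$. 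Hence $U=\bigcup_{A\in\He}\Ne_A=\widehat\He$ and the closed set that Katsura associates to $I$ is exactly $X^0:=E^0\setminus\widehat\He$. Since $\He$ is hereditary and saturated (as verified immediately before the statement), Proposition \ref{proposition_6} shows that $\widehat\He$ is a hereditary and saturated open subset, equivalently that $X^0$ is an invariant subset of $E^0$, so the restricted graph $E_{X^0}=(X^0,X^1,d,r)$ is defined.

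Now I would invoke Katsura \cite[Proposition 3.15]{KatsuraIII}: for an arbitrary ideal $I$ with associated closed invariant set $X^0$, there is a natural surjection $\mathcal{O}(E_{X^0})\to\mathcal{O}(E)/I$ that is injective on $C_0(X^0)$. Proposition \ref{proposition_8} identifies $E_{X^0}$ with $E_{(\B/\He,\Labe,\theta)}$, the topological graph of the induced Boolean dynamical system $(\B/\He,\Labe,\theta)$ (which is a Boolean dynamical system by Proposition \ref{proposition_7}), and Theorem \ref{proposition_51} then yields $\mathcal{O}(E_{X^0})\cong C^*(\B/\He,\Labe,\theta)$, under which, via Stone duality, $C_0(X^0)$ corresponds to the Boolean $C^*$-algebra $C^*(\B/\He)$. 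Composing these isomorphisms with Katsura's surjection produces the asserted natural surjection $C^*(\B/\He_I,\Labe,\theta)\to C^*(\B,\Labe,\theta)/I$ that is injective on $C^*(\B/\He_I)$.

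The main obstacle is the identification carried out in the second step: one must verify that the combinatorially defined set $\He_I$ reproduces Katsura's analytically defined invariant set, namely that $I\cap C^*(\B)=C_0(\widehat{\He_I})$, and that the hereditary and saturated conditions on $\He_I$ translate precisely into the invariance of $X^0$ needed to form $E_{X^0}$. Once these matchings are secured, the remainder is a formal assembly of Theorem \ref{proposition_51}, Proposition \ref{proposition_6}, Proposition \ref{proposition_8} and the cited result of Katsura.
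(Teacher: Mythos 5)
Your proof is correct and takes essentially the same route as the paper: the paper's own argument is exactly the one-line reduction ``using Proposition \ref{proposition_8}'' to Katsura's Proposition 3.15, i.e.\ identify the invariant set attached to $I$ with $E^0\setminus\widehat{\He_I}$, apply Katsura's surjection, and translate back through Theorem \ref{proposition_51}. You have simply made explicit the details the paper leaves implicit (the equality $I\cap C^*(\B)=C_0(\widehat{\He_I})$, the invariance via Proposition \ref{proposition_6}, and the matching of $C_0(X^0)$ with $C^*(\B/\He_I)$), all of which are verified correctly.
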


\begin{prop}[{cf. \cite[Proposition 3.16]{KatsuraIII}}] Let $(\B,\Labe,\theta)$ be a locally finite Boolean dynamical system. For an ideal $I$ of $C^*(\B,\Labe,\theta)$, the following statements are equivalent:
\begin{enumerate}
\item $I$ is a gauge-invariant ideal,
\item The natural surjection $C^*(\B/\He_I,\Labe,\theta)\rightarrow C^*(\B,\Labe,\theta)/I$ is an isomorphism,
\item $I=I_{\He_I}$.
\end{enumerate}
\end{prop}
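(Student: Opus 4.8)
The plan is to transport the statement to the associated topological graph $E=E_{(\B,\Labe,\theta)}$ through the isomorphism $\Psi\colon\Clab\to\mathcal{O}(E)$ of Theorem \ref{proposition_51}, and then to invoke Katsura's description of gauge-invariant ideals \cite[Proposition 3.16]{KatsuraIII}. Recall that $\Psi$ intertwines the gauge actions $\beta$ and $\beta'$ (the comment preceding Theorem \ref{theorem_2}), so an ideal $I$ of $\Clab$ is gauge-invariant if and only if $\Psi(I)$ is a gauge-invariant ideal of $\mathcal{O}(E)$. Since $(\B,\Labe,\theta)$ is locally finite, the preceding lemma shows that $E$ is row-finite, which is precisely the situation in which \cite[Proposition 3.16]{KatsuraIII} yields a clean equivalence: a gauge-invariant ideal is recovered from its canonically associated invariant open subset of $E^0$, and the corresponding quotient is the restricted graph algebra.

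First I would make explicit the dictionary between the Boolean data defining $I$ and the topological data used by Katsura. The intersection $C_0(E^0)\cap\Psi(I)$ is an ideal of the commutative algebra $C_0(E^0)=C^*(\B)$, hence of the form $C_0(U_I)$ for a unique open $U_I\subseteq E^0$. Because $E^0=\widehat{\B}$ is zero-dimensional with the compact-open sets $\{\Ne_A:A\in\B\}$ as a basis and $\Psi(p_A)=t^0(\chi_{\Ne_A})$, a standard argument (for $v\in U_I$ choose a compact-open $\Ne_A\ni v$ on which a witnessing function is bounded away from $0$, and multiply by its inverse on $\Ne_A$) shows that $U_I=\bigcup\{\Ne_A:p_A\in I\}=\bigcup_{A\in\He_I}\Ne_A=\widehat{\He_I}$, where $\He_I=\{A\in\B:\rho_I(p_A)=0\}$. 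Thus the hereditary and saturated ideal $\He_I$ corresponds, under the bijection of Proposition \ref{proposition_6}, to the invariant open subset $\widehat{\He_I}$, which is exactly the set Katsura attaches to the gauge-invariant ideal $\Psi(I)$. Likewise, $\Psi$ carries the ideal $I_{\He_I}$ generated by $\{p_A:A\in\He_I\}$ onto the gauge-invariant ideal $I_{\widehat{\He_I}}$ of $\mathcal{O}(E)$ associated to $\widehat{\He_I}$ (that is, generated by $t^0(C_0(\widehat{\He_I}))$).

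With this dictionary the three conditions translate term by term. Condition (1) is gauge-invariance of $\Psi(I)$. For condition (2), Proposition \ref{proposition_7} guarantees that $(\B/\He_I,\Labe,\theta)$ is again a Boolean dynamical system, Theorem \ref{proposition_51} gives $C^*(\B/\He_I,\Labe,\theta)\cong\mathcal{O}(E_{\He_I})$, and Proposition \ref{proposition_8} identifies $E_{\He_I}$ with the restriction $(X^0,X^1,d,r)$ to $X^0=E^0\setminus\widehat{\He_I}$; under these identifications the natural surjection of the preceding proposition becomes Katsura's surjection $\mathcal{O}(E_{X^0})\to\mathcal{O}(E)/\Psi(I)$, so (2) asserts exactly that this map is an isomorphism. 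Condition (3), $I=I_{\He_I}$, becomes $\Psi(I)=I_{\widehat{\He_I}}$. Since $E$ is row-finite, \cite[Proposition 3.16]{KatsuraIII} states precisely the equivalence of these three statements, and pulling back along $\Psi$ finishes the argument.

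The main obstacle is the identification carried out in the second paragraph: one must verify that the invariant open subset Katsura canonically attaches to the gauge-invariant ideal $\Psi(I)$ really coincides with $\widehat{\He_I}=\bigcup_{A\in\He_I}\Ne_A$. This hinges on the zero-dimensionality of $E^0$, which forces $C_0(E^0)\cap\Psi(I)$ to be pinned down by the compact-open projections $p_A$ it contains, together with Proposition \ref{proposition_6} to ensure that the resulting set is genuinely invariant. Once this reduction is secured, the equivalences are a direct transcription of Katsura's result for row-finite topological graphs.
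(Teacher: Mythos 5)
Your proof is correct and takes essentially the same approach as the paper, which states this proposition without a written-out proof precisely because it is intended as the direct transcription, through the isomorphism $\Clab\cong\mathcal{O}(E)$ of Theorem \ref{proposition_51} together with Propositions \ref{proposition_6}--\ref{proposition_8} and the row-finiteness lemma, of \cite[Proposition 3.16]{KatsuraIII}. Your second paragraph, pinning down via zero-dimensionality that the open set Katsura attaches to $\Psi(I)$ is exactly $\bigcup_{A\in\He_I}\Ne_A$, fills in the one step the paper leaves implicit.
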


\begin{theor}[{cf. \cite[Corollary 3.25]{KatsuraIII}}]  Let $(\B,\Labe,\theta)$ be a locally Boolean dynamical system and let $E$ the associated topological graph. Then the maps $I\rightarrow \He_I$ and $\He\rightarrow I_\He$ define a one-to-one correspondence between the set of all gauge invariant ideals of $C^*(\B,\Labe,\theta)$ and the set of all hereditary and saturated sets of $(\B,\Labe,\theta)$. \end{theor}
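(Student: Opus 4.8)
The plan is to transport everything to the topological graph $E=E_{(\B,\Labe,\theta)}$ via the isomorphism $\Clab\cong\mathcal{O}(E)$ of Theorem \ref{proposition_51}, under which the gauge action $\beta$ of $\Clab$ is intertwined with the gauge action $\beta'$ of $\mathcal{O}(E)$ (as noted before Theorem \ref{theorem_2}). Hence the gauge invariant ideals of $\Clab$ are precisely those of $\mathcal{O}(E)$. Since $(\B,\Labe,\theta)$ is locally finite, $E$ is row-finite, so Katsura's classification \cite[Corollary 3.25]{KatsuraIII} gives a bijection between the gauge invariant ideals of $\mathcal{O}(E)$ and the invariant open subsets $X^0$ of $E^0$, equivalently the open hereditary and saturated subsets $Y=E^0\setminus X^0$. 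Composing with Proposition \ref{proposition_6}, which identifies the hereditary and saturated ideals $\He$ of $\B$ with the invariant subsets of $E^0$ via $\He\mapsto Y_\He:=\bigcup_{A\in\He}\Ne_A$ (Lemmas \ref{lemma_11} and \ref{lemma_12}), already yields an abstract bijection between gauge invariant ideals of $\Clab$ and hereditary and saturated ideals of $\B$. The remaining task is to show that this bijection is implemented by the two explicit maps $I\mapsto\He_I$ and $\He\mapsto I_\He$, and that these are mutually inverse.

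Two routine verifications come first. The ideal $I_\He$ is gauge invariant, since it is generated by the projections $\{p_A:A\in\He\}$, each fixed by $\beta$, so $\beta_z(I_\He)=I_\He$ for all $z\in\mathbb{T}$; and $\He_I$ is a hereditary and saturated ideal of $\B$ for every ideal $I$, as observed before the statement. The inclusion $\He\subseteq\He_{I_\He}$ is immediate, because $p_A\in I_\He$ forces $\rho_{I_\He}(p_A)=0$, that is $A\in\He_{I_\He}$.

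The crux is the reverse inclusion $\He_{I_\He}\subseteq\He$, i.e.\ that $p_A\in I_\He$ implies $A\in\He$; this is where I would argue in the graph picture. Writing $Y=Y_\He$, each $\Ne_A$ with $A\in\He$ is compact and clopen (Corollary \ref{corollary_1}), so $C_0(Y)=\overline{\spa}\{\chi_{\Ne_A}:A\in\He\}$, and therefore $I_\He$, the ideal generated by $\{t^0(\chi_{\Ne_A}):A\in\He\}$, coincides with the Katsura ideal of $E$ attached to the invariant set $X^0=E^0\setminus Y$. Under the resulting quotient $\mathcal{O}(E)\to\mathcal{O}(E_{X^0})$ of \cite[Corollary 3.25]{KatsuraIII} the image of $p_A=t^0(\chi_{\Ne_A})$ is $t^0(\chi_{\Ne_A\cap X^0})$, which vanishes exactly when $\Ne_A\subseteq Y$. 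Finally, $\Ne_A\subseteq\bigcup_{B\in\He}\Ne_B$ together with compactness of $\Ne_A$ yields $\Ne_A\subseteq\Ne_{B_1}\cup\cdots\cup\Ne_{B_n}$ for finitely many $B_i\in\He$, whence $A\subseteq B_1\cup\cdots\cup B_n\in\He$ and so $A\in\He$. This gives $\He_{I_\He}=\He$.

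With these facts the theorem follows. The preceding proposition (cf. \cite[Proposition 3.16]{KatsuraIII}) shows that a gauge invariant ideal satisfies $I=I_{\He_I}$, so $\He\mapsto I_\He$ surjects onto the gauge invariant ideals with $I\mapsto\He_I$ as a left inverse, while $\He_{I_\He}=\He$ shows $I\mapsto\He_I$ surjects onto the hereditary and saturated ideals with $\He\mapsto I_\He$ as a left inverse; hence the two maps are mutually inverse bijections. I expect the main obstacle to be precisely the inclusion $\He_{I_\He}\subseteq\He$ of the previous paragraph: one must rule out that the ideal generated by $\{p_A:A\in\He\}$ collapses some $p_{A'}$ with $A'\notin\He$, and this is exactly where the quotient description of the topological graph $C^*$-algebra and the compactness of the cylinder sets $\Ne_A$ are indispensable.
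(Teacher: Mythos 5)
Your proposal is correct and follows essentially the same route as the paper, which obtains this theorem by transporting everything to the associated topological graph via Theorem \ref{proposition_51}, using local finiteness to get row-finiteness, and then invoking Katsura's results in \cite{KatsuraIII} through Propositions \ref{proposition_6} and \ref{proposition_8} and the proposition cf.\ \cite[Proposition 3.16]{KatsuraIII}. In fact you supply details the paper leaves implicit, most usefully the verification that $\He_{I_\He}\subseteq\He$ via the quotient graph description and the compactness of the cylinder sets $\Ne_A$.
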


\begin{exem} Let $(\B,\Labe,\theta)$  be the Boolean dynamical system from Example \ref{example_4}. By Example \ref{example_nonsimple} there exists only one non-trivial hereditary and saturated subset $\He$. Then, the only gauge invariant ideal of $C^*(\B,\Labe,\theta)$ is the ideal $I_\He$ generated by the projections $\{p_F:F\subseteq \Z\text{ finite}\}$. Then the quotients $C^*(\B,\Labe,\theta)/I_\He$ is isomorphic to $C^*(\B/\He,\Labe,\theta)$. Observe that  $\B/\He$ has only one non-empty element $[\Z]$, and $\theta_a([\Z])=[\emptyset]$ and $\theta_b([\Z])=\theta_c([\Z])=[\Z]$, thus $C^*(\B/\He,\Labe,\theta)$ is isomorphic to the Cuntz algebra $\mathcal{O}_2$.
\end{exem}

\section{Examples} 

Our motivation to define the Boolean Cuntz-Krieger algebras was to study the labelled graph $C^*$-algebras from a more general point of view.
Therefore, our first example will be how, given a Labelled graph, to construct a Boolean dynamical system.  Besides of that, as we showed that the Boolean Cuntz-Krieger algebras are $0$-dimensional topological graphs, the $C^*$-algebras that we can construct as Boolean Cuntz-Krieger algebras includes homeomorphism $C^*$-algebras over $0$-dimensional compact spaces, and graph $C^*$-algebras, among others \cite{KatsuraI}. Finally, we present the $C^*$-algebras associated to one-sided subshifts as Cuntz-Krieger Boolean algebras, and apply our result about simplicity.

\begin{exem}(Weakly left-resolving labelled graphs)
First, we refer the reader to \cite{BPI,BPII,BPIII} for the basic definitions and terminology about labelled graphs $C^*$-algebras. Let $(E,\Labe,\B)$ be a labelled graph, where $E$ is a directed graph, $\Labe:E^1\rightarrow \mathcal{A}$ is a labelling map over an alphabet $\mathcal{A}$, and $\B$ is an accommodating set of vertices $E^0$ \cite[Section 2]{BPIII} that contains $\{\{v\}:v\in E^0_{sink}\}$. We will suppose that $(E,\Labe,\B)$ is weakly left-resolving and that $\B$ is a Boolean algebra. 

 Then, given $A,B\in \B$ and $\alpha\in\Labe(E^1)$, we have that $r(A\cup B,\alpha)=r(A,\alpha)\cup r( B,\alpha)$ by definition, and $r(A\cap B,\alpha)=r(A,\alpha)\cap r(B,\alpha)$ since $(E,\Labe,\B)$ is weakly left-resolving. We claim that $r(A\setminus B,\alpha)=r(A,\alpha)\setminus r(B,\alpha)$. Indeed, observe that 
$$r(A\setminus B,\alpha)\cap r(B,\alpha)=r((A\setminus B)\cap B,\alpha)=r(\emptyset,\alpha)=\emptyset$$
 since $(E,\Labe,\B)$ is weakly left-resolving and $A\setminus B\in\B$. Thus, since 
$$r(A\setminus B,\alpha)\cup r(B,\alpha)=r(A\cup B,\alpha)=r(A,\alpha)\cup r(B,\alpha)=(r(A,\alpha)\setminus r(B,\alpha))\cup r(B,\alpha)\,,$$
it follows that $r(A\setminus B,\alpha)=r(A,\alpha)\setminus r(B,\alpha)$, as desired.

Since $\B$ is an accommodating set we have that $r(\alpha)\in\B$ for every $\alpha\in\Labe$, that we will call $\Ra=r(\alpha)$. Moreover, we will assume that there exists $\Do\in\B$ such that $r(\Do,\alpha)=r(\alpha)$ for every $\alpha\in\Labe$.

Then the  triple $(\B,\Labe(E^1),\theta)$,  where $\theta_\alpha:=r(-,\alpha)$ for every $\alpha\in\Labe$, is a Boolean dynamical system, and  $C^*(E,\Labe,\B)\cong C^*(\B,\Labe(E^1),\theta)$.

Now we are going assume that the graph $E$ has no sinks or sources, and that the labelled graph $(E,\Labe)$  is receiver set-finite, set-finite
and weakly left-resolving (see \cite{BPII}).   Let $\B$ be the Boolean algebra generated by $\{\Ra:\alpha\in\Labe^*\}$, and given $l\in\N$ we define $\Omega_l:=\{x_F\in\B: F\subseteq \Labe^{\leq l}\}$ where 
$$x_F:=\bigcap_{\alpha\in F} \Ra \setminus\left( \bigcup_{\beta\in \Labe^{\leq l}\setminus F }\mathcal{R}_\beta \right)\,,$$
 that is well-defined because \cite[Lemma 2.3]{BPII}. We set $\Omega:=\bigcup_{l=1}^\infty \Omega_l$. Given any $F\subseteq \Labe^{\leq l}$ we have that $x_F\in\B$, and that for every $A\in\B$ there exist $k\in \N$  and $x_1,\ldots, x_n\in\Omega_k$ such that $A=\bigcup_{i=1}^n x_i$ (c.f. \cite[Proposition 2.4]{BPII}). Thus, the Boolean algebra generated by $\Omega$ is $\B$.

Observe that given $F\subseteq \Labe^{\leq l}$ 
$$x_F=\{v\in E^0: v\in r(\alpha) \text{ for every  }\alpha\in F,\text{ but }v\notin r(\beta)\text{ if }\beta\notin F\}.$$
Two vertices $v,w\in x_F$ are called \emph{$l$-past equivalent}. The set of $l$-past equivalent vertices to $v$ is denoted by $[v]_l$. Thus, for every $x\in \Omega_l$ there exists $v\in E^0$ such that $[v]_l=x$.

We will determine the cycles of the above defined Boolean dynamical system $(\B,\Labe(E^1),\theta)$. Let $(\alpha,A)$ be a cycle, where $\alpha=\alpha_1\cdots\alpha_n\in\Labe^n$ and $\emptyset \neq A\subseteq \Ri_\alpha=r(\alpha)$. Then $\emptyset \neq r(A,\alpha^k)\subseteq r(\alpha^k)$ for every $k\in\N_0$, and given $\emptyset \neq B\subseteq r(A,\alpha^k)$ we have that $B\cap r(B,\alpha)\neq \emptyset$.

Given $l\in\{0,\ldots,n-1\}$ we define $A_l:=r(A,\alpha_1\cdots \alpha_l)$, where $A_0:=A$ and $\alpha_0:=\alpha$. Then since $(\alpha,A)$ is a cycle without exits we have that $(A_l,\alpha_{l+1}\cdots \alpha_n\alpha_1\cdots\alpha_l)$ is also cycle and $\Delta_{A_l}=\{\alpha_{l+1}\}$ for every $0\leq l\leq n-1$. Then, as it is shown in the proof of Proposition \ref{cycles_top}, given any $B\subseteq A_l$ we have that $r(B,\alpha_{l+1}\cdots \alpha_n\alpha_1\cdots\alpha_l)=B$ for every $0\leq l\leq n-1$. Therefore, that given $0\leq l\leq n-1$ and  $v\in A_l$ then  $v\in r((\alpha_{l+1}\cdots \alpha_n\alpha_1\cdots\alpha_l)^k)$ for every $k\in \N$. In particular, given any $v\in A_l$ and $k\in\N$ with $[v]_k\subseteq A_l$ we have that $r([v]_k,\alpha_{l+1}\cdots \alpha_n\alpha_1\cdots\alpha_l)=[v]_k$, and $r([v]_k,\beta)=\emptyset$ whenever $\beta\neq \alpha_{l+1}$.

Then $(A,\alpha)$ with $A\in\B$ and $\alpha\in \Labe^n$ is a cycle without exits if given $l\in\{0,\ldots,n-1\}$ and $v\in A_l$ there exists $e\in E^n$ with  $r(e)=v$ such that $v\sim_k s(e)$ for every $k\in \N$. Moreover this path $e\in E^n$ must satisfy $\Labe(e)=\alpha_{l+1}\cdots \alpha_n\alpha_1\cdots\alpha_l$.  Observe that if $(E,\Labe,\B)$ is left-resolving the above $e$ is unique.

Now easily follows that if there exists a cycle without exits then the label graph is not disagreeable (see \cite[Definition 5.1]{BPII}). Thus, if the labelled graph $(E,\Labe,\B)$ is disagreeable then the associated Boolean dynamical system $(\B,\Labe(E^1),\theta)$ has no cycle without exits, whence satisfies condition $(L_{\B})$. 

The authors cannot prove the converse, that is $(E,\Labe,\B)$ is disagreeable when $(\B,\Labe(E^1),\theta)$ satisfies condition $(L_{\B})$.

Finally to describe when $(\B,\Labe(E^1),\theta)$ is cofinal we use Proposition \ref{prop8} and the fact that $\B=\Breg$, whence $(\B,\Labe(E^1),\theta)$ is cofinal if and only if  given $A,B\in\B\setminus\{\emptyset\}$ and $\alpha\in \Labe^{\infty}$ there exist $n\in\N$ and $\lambda_1,\ldots,\lambda_k\in \Labe^*$ such that $r(A,\alpha_{[1,n]})\subseteq \bigcup_{i=1}^kr(B,\lambda_i)$. It is then clear that $(E,\Labe,\B)$ is cofinal (see \cite[Definition 6.1]{BPII}) when $(\B,\Labe(E^1),\theta)$ is cofinal.
\end{exem}

\begin{exem} Now, we will construct a unital Boolean Cuntz-Krieger algebra with infinitely generated K-theory. Let us define the Boolean algebra $$\B:=\{F\subseteq \Z: F\text{ finite}\}\cup\{\Z\setminus F:F\text{ finite}\}\,,$$ and let $\Labe:=\{\alpha_i\}_{i\in \Z}\cup\{\beta\}$. Then, given $A\in \B$, we define the actions
$$\theta_{\alpha_i}(A)=A+i=\{x+i:x\in A\}\qquad\text{for every }i\in \Z$$
$$\theta_\beta(A)=\left\{ \begin{array}{ll}\Z & \text{if }0\in A \\ \emptyset & \text{otherwise,}\end{array}\right.$$
and then $\mathcal{R}_{\alpha_i}=\mathcal{R}_\beta=\D_{\alpha_i}=\D_\beta=\Z\in \B$ for every $i\in \Z$. Thus, $(\B,\Labe,\theta)$ is a Boolean dynamical system. Then $\Clab$ is a unital $C^*$-algebra, and since $(\B,\Labe,\theta)$ satisfies condition $(L_\B)$ and there are non-trivial hereditary and saturated ideals $\Clab$ is simple by Theorem \ref{theor_simple}. Since $\Breg=\emptyset$, it follows from Theorem \ref{K-theory} that $$K_0(\Clab)=\left(\bigoplus_{i\in\Z}\Z\right)^1 \text{ and } K_1(\Clab)=0\,.$$
Therefore, since $\Clab$ is unital and has non-finitely generated $K$-theory.
\end{exem}

\begin{exem} Let $X$ be a Cantor set, and let $Y,Z\subseteq X$ be compact clopen subsets, and let 
$\varphi:Y\rightarrow Z$ be a homeomorphism. Let $\bar{\varphi}:C(Z)\rightarrow C(Y)$ the induced isomorphism. We define $\B$ as the Boolean algebra of the compact and clopens of $X$, and $\Labe=\{\alpha\}$ with the single action $\theta_\alpha:\B\rightarrow \B$ defined as $\theta_\alpha(A):=\varphi^{-1}(A)$ for every $A\in \B$. Whence $\theta_\alpha$ has compact range, with $\mathcal{R}_\alpha=Y$, and compact domain because $\theta_\alpha(Z)=Y$. Then $\Clab$ is generated by projections $\{p_A\}_{A\in \B}$ and a partial isometry $s_\alpha$ such that 
$$p_As_\alpha=s_\alpha p_{\varphi^{-1}(A)},\qquad s_\alpha^*s_\alpha=p_Y\qquad\text{and}\qquad s_\alpha s_\alpha^*=p_Z\,.$$
since $Z\in \Breg$. Then $\Clab$ is isomorphic to the partial automorphism crossed product $C^*(C(X),\bar{\varphi})$ (see \cite{Exel2}).


The Boolean dynamical system $(\B,\Labe,\theta)$ satisfies condition $(L_\B)$ if and only if for every $A\subseteq Y\cap Z$ such that $\varphi^{-n}(A)\neq\emptyset$, there exists $k\in \N$ and $\emptyset\neq B\subseteq\varphi^{-k}(A)$ such that $\varphi^{-1}(B)\cap B=\emptyset$, and it is cofinal if given $A,B\in\B\setminus \emptyset$ there exist  $n_1,\ldots,n_k\in\Z$ such that $A\subseteq \bigcup_{i=1}^k\varphi^{n_i}(B)$.

In particular, if $\varphi:X\rightarrow X$ is a homeomorphism then $\Clab\cong C(X)\times_{\bar{\varphi}} \Z$. Thus, $(\B,\Labe,\theta)$ satisfies condition $(L_\B)$ if for every $\emptyset\neq A\in \B$ there exists $\emptyset\neq B\subseteq A$ with $B\cap \varphi^{-1}(B)=\emptyset$.

Moreover, observe that if $\varphi$ is minimal, i.e. all the orbits are dense, then $(\B,\Labe,\theta)$ satisfies condition $(L_\B)$ and minimality. The converse is also obvious.
\end{exem}

\begin{exem}{(Algebras associated with one-sided subshifts)}
In this section we are going to study the $C^*$-algebra associated with a general one-sided subshift \cite{Car,CarSil}. Given a one-sided subshift $(X,\sigma)$ with a finite alphabet $\Sigma$, we define the subsets
$$C(\alpha)=\{ x\in X:\,\alpha x\in X\}$$
for $\alpha\in\Sigma^*$, where $C(\emptyset)=X$. Let $\B_X$ be the minimal Boolean subalgebra of $2^X$ generated by the subsets $\{C(\alpha):\,\alpha\in\Sigma^*\}$.

Given $l\in\N$ and $x,y\in X$, we say that $x$ and $y$ are $l$-past equivalent, written $x\sim_l y$, if given $z\in \Sigma ^{\leq l}$ we have that 
$$zx\in X\qquad\text{if and only if}\qquad zy\in X\,.$$
We denote by $[x]_l$ the set of all the point of $X$ that are $l$-past equivalent to $x$. Observe that 
$$[x]_l:=\bigcap_{\{\alpha\in \Sigma^{\leq l}:\, \alpha x\in X\}} C(\alpha)\setminus\left( \bigcup_{\{\beta\in \Sigma^{\leq l}:\, \beta x\notin X\}}C(\beta) \right)\in\B_X\,,$$
and that $\B_X$ is generated by $\{[x]_l:x\in X\,,l\in\N\}$.

 Now given $\alpha\in\Sigma$ we define $\theta_\alpha$ as the action that extends $\theta_\alpha(C(a))=C(a\alpha)$ for $a\in \Sigma^*$ to $\B_X$. Observe that $\Ri_\alpha=\theta_\alpha(C(\emptyset))=C(\alpha)$, so $\theta_\alpha$ is an action with compact range and domain. Then $C^*(\B_X,\Sigma,\theta)$ is the universal algebra generated by $\{p_A\}_{A\in\B_X}$ and $\{s_\alpha\}_{\alpha\in \Sigma}$ satisfying:
\begin{enumerate}
\item The map $C(\alpha)\rightarrow p_{C(\alpha)}$ for $\alpha\in \Labe^*$, extends to a map of Boolean algebras,
\item $s^*_\alpha s_\alpha=p_{C(\alpha)}$ for every $\alpha\in \Sigma$,
\item $p_As_\alpha=s_\alpha p_{\theta_\alpha(A)}$ for $A\in\B_X$,
\item given $A\in(\B_X)_{reg}$ then
$$p_A=\sum_{\alpha} s_\alpha p_{\theta_\alpha(A)}s^*_\alpha\,.$$ 
\end{enumerate} 

First observe that given $\alpha,\beta\in \Sigma$ using $(3)$ we have that $s^*_\alpha s_\alpha s_\beta=s_\beta s^*_\beta s^*_\alpha s_\alpha s_\beta$, and then $s^*_\alpha s_\alpha s_\beta s^*_\beta=s_\beta s^*_\beta s^*_\alpha s_\alpha s_\beta s^*_\beta=s_\beta s^*_\beta  s_\beta  s^*_\beta s^*_\alpha s_\alpha=s_\beta  s^*_\beta s^*_\alpha s_\alpha$. The converse is also clear, so condition $(3)$ is equivalent to $s^*_\alpha s_\alpha s_\beta s^*_\beta= s_\beta  s^*_\beta s^*_\alpha s_\alpha$.

Finally, since $|\Sigma|<\infty$ we have that $C(\emptyset)\in(\B_X)_{reg}$, and then by condition $(4)$ it follows
$$1=p_{C(\emptyset)}=\sum_\alpha s_\alpha p_{C(\alpha)} s^*_\alpha=\sum_\alpha s_\alpha s^*_\alpha\,.$$
From the observation that every nonempty element $C(\alpha)\in\B_X$ is regular, it follows that every $\emptyset\neq A\in\B_X$ is also regular. Thus, condition $(4)$ is equivalent to  
$$1=\sum_\alpha s_\alpha s^*_\alpha\,.$$

Therefore, we have that $C^*(\B_X,\Sigma,\theta)$ is isomorphic to the $C^*$-algebra $\mathcal{O}_X$ associated to a one-sided subshift. 

Our task will be first to determine the cycles of the Boolean system $(\B_X,\Sigma,\theta)$. Let $(\alpha,A)$ be a cycle, where $\alpha=\alpha_1\cdots\alpha_n\in\Sigma^n$ and $\emptyset \neq A\subseteq \Ri_\alpha=C(\alpha)$. Then $\emptyset \neq \theta_{\alpha^k}(A)\subseteq C(\alpha^k)$ for every $k\in\N_0$. This means that the cycle $\alpha^\infty\in X$ since $X$ is closed. Moreover, by the proof of Proposition \ref{cycles_top} we have that  $(\theta_\alpha)_{|A}=\text{Id}$, whence for every $x\in A$ we have that  $\alpha^kx\in X$. 

Now suppose that $(\alpha,A)$ is a cycle without exits,  this implies that if $\alpha x\in A$ then $x=\alpha^\infty$. In particular $A=\{\alpha^\infty\}$. Conversely, if $\{\alpha^\infty\}\in \B_X$ we have that $(\alpha,\{\alpha^\infty\})$ is a cycle without exits.

Observe that $\{\alpha^\infty\}\in\B_X$ is equivalent to say that there exists $l\in\N$ such that $[\alpha^\infty]_l=\{\alpha^{\infty}\}$. Then $\alpha^\infty$ is said to be \emph{isolated in past equivalence} \cite{Car0}.


Given $x\in X$ we define the map $\xi_x:\mathcal{E}(T)\rightarrow\{0,1\}$ such that $\xi_x(S_\alpha p_AS^*_\alpha )=1$ if $x=\alpha x'$ for some $x'\in A$, and $0$ otherwise.  It is clear that $\xi_x$ is an ultrafilter and that $\{\Cu_x:x\in X\}$  is dense in  $\widehat{\mathcal{E}}_{\infty}$. Then it is only necessary to check cofinality of $(\B_X,\Sigma,\theta)$ for the characters of the form $\xi_x$ for $x\in X$. Let us suppose that $(\B_X,\Sigma,\theta)$ is cofinal, then giving $x,y\in X$ and $l\in \N$ there exist $\alpha,\beta\in\Labe^*$ such that $s_\alpha p_{\theta_\beta([y]_l)}s^*_\alpha\in \xi_x$, that is equivalent to there exists $z\in X$ such that $z\sim_l y$ and that $\sigma^m(x)=\sigma^n(z)$ for some $m,n\in\N$. Then we say that $X$ is \emph{cofinal in past equivalence} \cite{Car0}. 

Therefore, we have that $C^*(\B_X,\Sigma,\theta)$ is simple if and only if there is no cyclic point isolated in past equivalence and $X$ is cofinal in past equivalence \cite{Car0}.
\end{exem}

\section*{Acknowledgments}

The collaboration that crystallised in this paper started at the conference "Classification of $C^*$-algebras, flow equivalence of shift spaces, and graph and Leavitt path algebras", hosted at the University of Louisiana at Lafayette from 11 to 15 May 2015. The authors are indebted to the organizers and to the NSF for providing this opportunity. Part of this work was done during a visit of the third author to the Institutt for Matematiske Fag, Norges Teknisk-Naturvitenskapelige Universitet (Trondheim, Norway). The third author thanks the center for its kind hospitality.

\end{document}